\newtheorem{theorem}{Theorem}[section]
\newtheorem{proposition}[theorem]{Proposition}
\newtheorem{lemma}[theorem]{Lemma}
\newtheorem{cor}[theorem]{Corollary}
\theoremstyle{definition}
\newtheorem{definition}[theorem]{Definition}
\newtheorem{example}[theorem]{Example}
\theoremstyle{definition}
\newtheorem{remark}[theorem]{Remark}
\newtheorem{assumption}[theorem]{Assumption}
\numberwithin{equation}{section}
\newcommand{\rawFilt}{{\mathbb{F}^0}} 
\newcommand{\Filt} {{\mathbb{F}^0_+}}
\newcommand{\rawField}[1]{{\mathcal{F}^0_{#1}}}
\newcommand{\Field}[1]{{\mathcal{F}^0_{{#1}+}}}
\newcommand{\bfnull}{{\mathbf{0}}}
\newcommand{\bfone}{{\mathbf{1}}}
\newcommand{\abs}[1]{\left|{#1}\right|}
\newcommand{\norm}[1]{\lVert{#1}\rVert}
\newcommand{\D}{{\mathbb{D}}}
\newcommand{\F}{{\mathbb{F}}}
\newcommand{\N}{{\mathbb{N}}}
\newcommand{\R}{{\mathbb{R}}}
\newcommand{\Q}{{\mathbb{Q}}}
\newcommand{\Mean}{{{\mathbb{E}}}}
\newcommand{\Prob}{{{\mathbb{P}}}}
\newcommand{\llb}{{\llbracket}}
\newcommand{\rrb}{{\rrbracket}}
\newcommand{\cA}{{\mathcal{A}}}
\newcommand{\cF}{{\mathcal{F}}}
\newcommand{\cH}{{\mathcal{H}}}
\newcommand{\cL}{{\mathcal{L}}}
\newcommand{\cT}{{\mathcal{T}}}
\newcommand{\overh}{{\overline{h}}}
\newcommand{\om}{{\omega}}
\newcommand{\loc}{{\text{loc}}}
\newcommand{\eps}{{\varepsilon}}
\newcommand{\tiom}{{\tilde{\omega}}}
\newcommand{\ttau}{{\tilde{\tau}}}
\newcommand{\esssup}{\operatornamewithlimits{esssup}}
\newcommand{\dist}{{\mathbf{d}}}
\def\ch{\textsc{h}}
\newcommand{\sint}{\stackrel{\mbox{\tiny$\bullet$}}{}}
\begin{document}
\title[PPIDE]{Viscosity Solutions of\\Path-de\-pendent\\ Integro-Differential Equations}
\author{Christian Keller}
\address{Department of Mathematics, 
University of Southern California}
\email{kellerch@usc.edu}

\date{12/29/2014}

\keywords{Path-dependent integro-differential equations,
viscosity solutions, backward SDEs with jumps, 
Skorohod topologies,
martingale problems}

\subjclass[2000]{45K05,35D40, 60H10, 60H30}

\thanks{The author would like
to thank Remigijus Mikulevicius
and Jianfeng Zhang for very valuable
discussions.}

\begin{abstract}
We extend the notion of viscosity solutions for
path-de\-pendent PDEs introduced by
Ekren et al.~[\textit{Ann.~Probab.~}\textbf{42} (2014), no.~1, 204-236]
to path-dependent integro-differential equations and
 establish well-posedness, i.e., existence, uniqueness,
and stability, for a class
of semilinear path-dependent
integro-differential equations with uniformly continuous data.
Closely related are non-Markovian
backward SDEs with jumps, which provide a probabilistic
representation for solutions of our equations. 
The results are potentially useful for  applications using
non-Markovian jump-diffusion models.
\end{abstract}

\maketitle

\section{Introduction}
The goal of this paper is to extend the
theory of viscosity solutions
(in the sense of \cite{EKTZ11}
and \cite{ETZ12a})
for  path-dependent partial differential
equations (PPDEs) to path-dependent 
integro-differential equations. In particular,
we investigate semilinear path-dependent integro-differential equations of
the form
\begin{equation}\label{E:PPIDE}
\begin{split}
\mathcal{L}u(t,\om)-f_t(\om,u(t,\om),
\partial_\om u(t,\om),\mathcal{I}u(t,\om))=0, \\(t,\om)\in
[0,T)\times\mathbb{D}([0,T],\R^d),
\end{split}
\end{equation}
where $\mathbb{D}([0,T],\R^d)$ is the space of right-continuous
functions with left limits from $[0,T]$ to $\R^d$,  $\mathcal{L}$ is a linear integro-differential operator of the form
\begin{align*}
\mathcal{L}u(t,\om)&=
-\partial_t u(t,\om)-\sum_{i=1}^d b^i_t(\om)\partial_{\om^i}u(t,\om)
-\frac{1}{2}\sum_{i,j=1}^d c^{ij}_t(\om)\partial^2_{\om^i\om^j}u(t,\om)\\ &\qquad
-\int_{\R^d} \left[u(t,\om+z.\bfone_{[t,T]})-u(t,\om)-
\sum_{i=1}^d z^i\,\partial_{\om^i}u(t,\om)\right]\,K_t(\om,dz),
\end{align*}
and $\mathcal{I}$ is an integral operator of the form
\begin{align*}
\mathcal{I}u(t,\om)=\int_{\R^d} \left[u(t,\om+z.\bfone_{[t,T]})-u(t,\om)
\right]\,\eta_t(\om,z)\,K_t(\om,dz).
\end{align*}

Well-posedness for semilinear PPDEs 
has been first established by 
Ekren, Keller, Touzi, and Zhang
\cite{EKTZ11}, where
also the here used notion of viscosity solutions
has been introduced. Subsequent work by Ekren, Touzi and Zhang
deals
with fully nonlinear PPDEs 
(\cite{ETZ12a}
and \cite{ETZ12b}), by Pham and Zhang with
path-dependent Isaacs equations 
 (\cite{PhamZhang12}), 
 by Ekren with obstacle PPDEs (\cite{Ekren13}),
  and by Ren with
 fully nonlinear elliptic PPDEs
 (\cite{Ren14}).

Initial motivation for this line of research
came from Peng \cite{PengICM}, who considered
non-Markovian backward stochastic differential
equations (BSDEs) as PPDEs analogously 
to the relationship between Markovian BSDEs
and (standard) PDEs,
from Dupire \cite{dupirefunctional}, who
introduced new derivatives on $\mathbb{D}([0,T],\R^d)$
so that for smooth functionals on
$[0,T)\times\mathbb{D}([0,T],\R^d)$ a functional counterpart
to It\^os formula holds,
and from Cont and Fourni\'e
(see \cite{ContFournie10_CompteRendus},
\cite{ContFournie10_JFA},  and
\cite{ContFournier13_AOP}),
who extended Dupire's seminal work.
However, fully nonlinear PPDEs of first order
have been studied earlier by Lukoyanov
(see, for example, \cite{Lukoyanov00},
\cite{Lukoyanov03},
\cite{Lukoyanov07}).
He used derivatives introduced by Kim
\cite{KimBook} and adapted first 
the notion of so-called minimax solution
and then of viscosity solutions from PDEs
to PPDEs. Minimax solutions for PDEs have
been introduced by Subbotin 
(see, e.g., \cite{Subbotin80} and \cite{Subbotin84})
motivated by the study of differential games.
In the case of PDEs of first order, 
minimax and viscosity solutions are equivalent
(see \cite{SubbotinBook}). Another approach for 
generalized solution for first-order PPDEs
can be found in work by Aubin and Haddad \cite{AubinHaddad02}, where
so-called Clio derivatives for path-dependent
functionals are introduced in order to 
study certain path-dependent Hamiltion-Jacobi-Bellman
equations that occur in portfolio theory.

Possible applications of path-dependent integro-differential
equations are 
non-Markovian problems in
control, differential games, 
and financial mathematics that 
involve jump processes.  

Some comments about differences between PDEs and PPDEs
seem to be in order.
Contrary to standard PDEs, even linear PPDEs have rarely classical solutions
in most relevant situations. Hence, one needs to consider a weaker
forms of solutions. In the case of PDEs, the notion
of viscosity solutions introduced by Crandall and Lions
\cite{CrandallLions83TAMS} turned out
to be extremely successful.
 The main difficulty in the path-dependent
case compared to the standard PDE case is the lack of local compactness
of the state space, e.g., $[0,T]\times\mathbb{D}$  vs.~$[0,T]\times\R^d$.
Local compactness is essential for proofs of uniqueness of viscosity solutions
to PDEs, i.e., PDE standard methods can, in general, not easily adapted to
the path-dependent case. The main contribution of \cite{EKTZ11} was
to replace the pointwise supremum/ infimum occuring in the definition
of viscosity solutions to PDEs via test functions by an optimal
stopping problem. The lack of local compactness could be
circumvented by the existence of an optimal stopping time.
This is crucial in establishing the comparison principle.
 In this paper, additional
intricacies caused by the jumps have to be faced.
 For example, it turns  out
that in contrast to the PPDE case
 the uniform topology is not always
appropriate. In order to prove the comparison principle, it seems
necessary to equip $\mathbb{D}$ with one of Skorohod's nonuniform
topologies.

The general methodology to establish well-posedness
of viscosity solutions for \eqref{E:PPIDE}
follows \cite{EKTZ11} and \cite{ETZ12a}.
Existence will be proven by
using a stochastic representation.
An intermediate result is
a so-called partial comparison principle,
which is a comparison
principle, where one the involved solutions
is a viscosity subsolution
(resp.~a viscosity supersolution),
and the other one
a classical super- (resp.~a classical 
subsolution).
The partial comparison principle is essential to
prove the comparison principle. 


The rest of the paper is organized as follows.
In Section~2, we introduce most of the notation
and preliminaries. In Section~3, viscosity solutions for semilinear 
path-dependent integro-differential equation
are defined and the main results are stated.
In Section~4, we prove consistency of classical
solutions with viscosity solutions as 
well as existence of viscosity solutions.
In Section~5, the partial comparison principle
and a stability result is proved. This section
also contains some auxiliary results about
backward SDEs and optimal stopping.
In Section~6, the comparison principle is proved.
Appendix~A deals with conditional probability
distributions and their applications to 
martingale problems.
In Appendix~B, Skorohod's topologies are
defined.
Appendix~C contains additional auxiliary results.
\section{Setup}
\subsection{Notation and preliminaries}
For unexplained notation, we refer to
\cite{JacodShiryaevBook}
and \cite{RogersWilliamsI}.

Let $\N=\{1,2,\ldots\}$ be the set of all strictly positive integers,
$\N_0:=\N\cup\{0\}$, $\Q$ be the set of rational numbers, and
$\R$ be the set of real numbers. 
Given $d^\prime\in\N$, we denote by
$\mathbb{S}^{d^\prime}$  the set of all symmetric real-valued 
 $d^\prime\times d^\prime$-matrices. For any matrix $A$, we denote by 
 $A^\top$ its transpose. Given a topological space $E$, let
 $\mathcal{B}(E)$ its Borel $\sigma$-field.
 We write $\bfnull$ for zero vectors, zero matrices, constant functions
 attaining only the value $0$, etc. The meaning should be clear from 
 context. We write $\bfone$ for indicator functions.
 The expected value with respect to some probability measure $\Prob$
 is denoted by $\Mean^\Prob$.  On $\R^{d^\prime}$, $d^\prime\in\N$,
 denote the $\ell^p$-norms by $\abs{\cdot}_p$, $p\in\N\cup\{\infty\}$.
 Also set $\abs{\cdot}:=\abs{\cdot}_2$.
  
 Fix $T>0$ and $d\in\N$.
 Let $\Omega:=\mathbb{D}([0,T],\R^d)$ be the canonical space,
$X$ the canonical process on $\Omega$, i.e.,
$X_t(\om)=\om_t$, and
$\F^0=\{\cF^0_t\}_{t\in [0,T]}$ the (raw) filtration generated by $X$.
Denote the right-limit of $\F^0$ by 
$\F^0_+=\{\cF^0_{t+}\}_{t\in [0,T]}$.
 Given $t\in [0,T]$, let $\Lambda^t:=[t,T)\times\Omega$ and
$\bar{\Lambda}^t:=[t,T]\times\Omega$.  Also, put
$\Lambda:=\Lambda^0$ and $\bar{\Lambda}:=\bar{\Lambda}^0$.
Given random times $\tau_1$, $\tau_2:\Omega\to [0,\infty]$, put
\begin{align*}
\llb \tau_1,\tau_2\rrb:=\{(t,\om)\in\bar{\Lambda}:\tau_1(\om)\le t\le\tau_2(\om)\}.
\end{align*}
The other \emph{stochastic intervals} $\llb \tau_1,\tau_2\llb$, etc., are defined
similarly.
We equip $\Omega$ with the uniform norm
$\norm{\cdot}_\infty$ and $\bar{\Lambda}$
with the pseudometric $\dist_\infty$ defined by
\begin{align*}
\dist_\infty((t,\om),(t^\prime,\om^\prime)):=
\abs{t-t^\prime}+\norm{
\om_{\cdot\wedge t}
-\om^\prime_{\cdot\wedge t^\prime}
}_\infty.
\end{align*}

Often, we consider a functional $u:\bar{\Lambda}\to\R$ as a stochastic
process, in which case, we write $u_t$ instead of $u(t,X)$.
\begin{definition}
Let $E_1$ and $E_2$ be  nonempty sets. Let $A$ be
a nonempty subset of $\bar{\Lambda}\times E_1$.
Consider a mapping $u=u(t,\om,x):A\to E_2$. We call $u$
\emph{non-anticipating} if, for every 
$(t,\om,x)\in A$,
\begin{align*}
u(t,\om,x)=u(t,\om_{\cdot\wedge t},x).
\end{align*}
We call $u$ \emph{deterministic} if it does not depend on $\om$.
\end{definition}

Given a nonempty subset $A$ of $\bar{\Lambda}$ and a topological space $E$,
we denote by $C(A,E)$ the set of all functionals from $A$ to $E$ that are
continuous under $\dist_\infty$. If $E=\R$, we just write $C(A)$ instead.
 
\begin{remark}\label{R:C(Lambda)}
 Note that any $u\in C(\bar{\Lambda})$
satisfies the following:

(i)  $u$ is non-anticipating.
This follows immediately from the definition
of $\dist_\infty$.

(ii) The trajectories
 $t\mapsto u(t,\om)$ are c\`adl\`ag
and the trajectories 
$t\mapsto u(t,\om_{\cdot\wedge t-})$
are left-continuous (Proposition~1 of \cite{ContFournie10_JFA}).
Also, for fixed $t\in (0,T]$,the path
$\tiom:=\om_{\cdot \wedge t-}$ is
continuous at $t$, which again, by
Proposition~1 of \cite{ContFournie10_JFA},
implies that
\begin{align*}
u_{t-}(\om)=\lim_{s\uparrow t}
 u(s,\tiom)=u(t,\tiom)=u(t,\om_{\cdot\wedge t-}).
\end{align*}
That is, $u_-=(u(t,\om_{\cdot\wedge t-}))_t$.
Moreover, considered as processes,
$u$ and $u_-$ are $\F^0$-adapted
(Theorem~2 of \cite{ContFournie10_JFA}).

(iii) $X$ jumps whenever $u$ jumps
(Lemma~\ref{L:XjumpsWhenUjumps}).
\end{remark}

Often, we write $H\sint S$ for stochastic integrals with
respect to semimartingales, i.e.,
$H\sint S_t=\int_{s}^t H_r\,dS_r$. The initial time $s$
is usually clear from context. We also write sometimes
$H\sint t$ instead of $\int H_t\,dt$.  Similarly, we write
$W\ast \mu$ for stochastic integrals with respect to random measures
(see Chapter~II of \cite{JacodShiryaevBook}).
Given a probability measure $\Prob$, denote by
$\F^\Prob$ its induced filtration satisfying the usual
conditions. If $S$ is an $(\F^\Prob,\Prob)$-semimartingale,
write $L^2_{\loc}(S,\Prob)$ for the set of all $\F^\Prob$-predictable 
processes $H$ such that $H^2\sint\langle X,X\rangle$ is locally integrable
(cf.~I.439 in \cite{JacodShiryaevBook}). Similarly, given a random measure
$\mu$, the set $G_\loc(\mu,\Prob)$ is defined (see~Definition~II.1.27
in \cite{JacodShiryaevBook}).
Given a process $S$ with left limits, define $\Delta S$ by
$\Delta S_t:=S_t-S_{t-}$.
If $S$ is a semimartingale
under a probability measure $\Prob$, then we denote
by $S^c=S^{c,\Prob}$ the 
 continuous local martingale part
  of $S$ 
(p.~45 in \cite{JacodShiryaevBook}) and by
$\mu^S$ the random measure 
 associated to the jumps of $S$
(p.~69 in \cite{JacodShiryaevBook}). It is defined by
\begin{align*}
\mu^S(\cdot;dt,dx):=\sum_s
\bfone_{\{\Delta S_s\neq 0\}}\,
\delta_{(s,\Delta S_s)} (dt,dx),
\end{align*}
where $\delta$ denotes the Dirac measure.

Given a nonempty domain $D$ in $\R^{d^\prime}$, $d^\prime\in\N$,
denote by $\norm{\cdot}_{n+\alpha,D}$, $n\in\N_0$, $\alpha\in (0,1)$,
the standard H\"older norms. 
Similarly, denote by  $\norm{\cdot}_{n+\alpha,Q}$, $Q=(t_1,t_2)\times D$,
$t_1<t_2$, the corresponding parabolic H\"older norms. We refer to \cite{Mikulevicius1994classical}
for the definition.
The corresponding H\"older spaces are denoted by
$C^{n+\alpha}(\bar{D})$ and $C^{n+\alpha}(\bar{Q})$, resp., and the corresponding
local H\"older spaces by
$C_{\text{loc}}^{n+\alpha}({D})$ and
 $C_{\text{loc}}^{n+\alpha}({Q})$, resp.
Also, put $\norm{\cdot}_D:=\norm{\cdot}_{0,D}$ and 
$\norm{\cdot}_Q:=\norm{\cdot}_{0,Q}$.
 
\subsection{Standing assumptions}
The assumptions in this section are always in force unless explicitly stated
otherwise.

Let $b=(b^i)_{i\le d}$
be a $d$-dimensional,
non-anticipating, and
 $\F^0_+$-predictable process,
$c=(c^{ij})_{i,j\le d}$  a
non-anticipating and
$\F^0_+$-predictable
process with values in the set of  nonnegative definite
real $d\times d$-matrices, 
and  $K=K_t(dz)$  a
non-anticipating and $\F^0_+$-predictable
process with values  in the set of 
$\sigma$-finite measures on $\mathcal{B}(\R^d)$.

\begin{assumption}\label{A:diffChar}
Let $(b,c,K)$ satisfy 
$c^{ij}=\sum_{k\le d} \sigma^{ik}\sigma^{jk}$,
$i$, $j\le d$, and
$K_t(A)=\int \bfone_{A\setminus\{\bfnull\}}
(\delta_t(z))\,F(dz)$, $A\in\mathcal{B}(\R^d)$,
where $\sigma=(\sigma^{i,j})_{i,j\le d}$ is
a non-anticipating and
 $\F^0_+$-predictable process with values in the set of
real $d\times d$-matrices,
 $\delta=(\delta^i)_{i\le d}$ is
a $d$-dimensional,
non-anticipating, and
 $\F^0_+$-predictable
 random field on $\R^d$, and $F$ is a nonnegative
 $\sigma$-finite
 measure on $\mathcal{B}(\R^d)$.
Let $b$, $\sigma$, and $\delta$ be right-continuous in $t$.
Let $b$ and $\sigma$ be
bounded  by a common constant $C^\prime_0\ge 1$
and  Lipschitz continuous in $\om$ with common
Lipschitz constant $L_0\ge 1$.
Let $\delta_t(\cdot,z)$ be bounded by $\abs{z}\wedge C^\prime_0$
and Lipschitz continuous with
Lipschitz constant $L_0(\abs{z}\wedge C^\prime_0)$.
Also, assume that $\int_{\R^d} \abs{z}^2\wedge C^\prime_0\,F(dz)\le C^\prime_1$
for some constant $C^\prime_1\ge 1$.
Moreover, let $d=1$ or let $\sigma_s(\om)$ be
invertible for every $(s,\om)\in\Lambda$.
\end{assumption}

Let
$\eta=\eta_t(\om,z):\bar{\Lambda}\times\R^d\to\R$ and
$f=f_t(\om,y,z,p):\bar{\Lambda}\times\R\times\R^d\times\R\to\R$
be functions that are non-anticipating in $(t,\om)$. 
\begin{assumption}\label{A:SemilinCoeff}
Let $\xi$ and $f$ be bounded from above by $C^\prime_0$.
Let $\xi$ be uniformly continuous
under $\norm{\cdot}_\infty$
 with modulus of continuity
$\rho_0$.
Let $\eta$ and $f$ be
uniformly continuous in $(t,\om)$
under $\dist_\infty$
uniformly in $(y,z,p)$
with modulus of continuity $\rho_0$.
Also let, uniformly in $(t,\om)$,
\begin{align*}
\abs{f_t(\om,y,z,p)-
f_t(\om,y^\prime,z^\prime,p^\prime)}\le 
L_0\left[\abs{y-y^\prime}+
\abs{\sigma(t,\om)^\top (z-z^\prime)}_1+
\abs{p-p^\prime}\right].
\end{align*}
Let $0\le \eta(t,\om,z)\le C^\prime_0(1\wedge \abs{z})$.
\end{assumption}

\begin{remark}
Note that our Lipschitz condition of $f$ in $z$ is
the same as in \cite{CarboneEtAl08_BSDE}.
\end{remark}

To be able to use the comparison principle for backward SDEs
with jumps, we  also need
the following assumption.
\begin{assumption}\label{A:f_increasing}
Let $f$ be nondecreasing in $p$.
\end{assumption}

\subsection{Canonical setup}
We introduce probability measures $\Prob_{s,\om}$,
which will be employed in the rest of
this paper. To this end, 
let $(B,C,\nu)$ be a candidate for a characteristic
triplet of $X$
(see \S III.2.a in \cite{JacodShiryaevBook})
such that 
\begin{align*}
dB_t=b_t\,dt,\quad dC_t=c_t\,dt,\quad
\nu(dt,dz)=K_t(z)\,dt.
\end{align*}
For every $s\in [0,T]$, define (cf.~\S III.2.d~in \cite{JacodShiryaevBook})
\begin{align*}
&p_s B:[s,T]\times\Omega\to \R^d,\,
 (p_sB)_t:=B_t-B_s,\\
 &p_s C:[s,T]\times\Omega\to \mathbb{S}^d,\,
 (p_s C)_t:=C_t-C_s,\\
&p_s \mu:\mathcal{B}([s,T]\times \R^d)\to\R,\,
(p_s\mu)((s,t]\times A):=
\mu(((s,t]\times A).
\end{align*}
Then, by Assumption~\ref{A:diffChar},  for every 
$(s,\om)\in [0,T]\times\Omega$,
the martingale problem for $(p_sB,p_sC,p_s\nu)$
starting at $(s,\om)$ has a unique solution $\Prob_{s,\om}$
(cf.~Theorem~III.2.26 
in \cite{JacodShiryaevBook} for the Markovian case). That is, 
$X$ is an 
$(\F^0_+,\Prob_{s,\om})$-semimartingale on $[s,T]$ with characteristics
$(p_sB,p_sC,p_s\nu)$ and $X.\bfone_{[0,s]}=
\om.\bfone_{[0,s]}$, $\Prob_{s,\om}$-a.s.

We also write $\Mean_{s,\om}$
instead of $\Mean^{\Prob_{s,\om}}$ and  the continuous 
local 
martingal part  of $X$ under $\Prob_{s,\om}$
 on $[s,T]$ is denoted by $X^{c,s,\om}$.

\begin{remark}\label{R:jumpSize}
By Theorem~II.2.34
 in \cite{JacodShiryaevBook},
the canonical representation of $X$ on $[s,T]$
is given by
\begin{align*}
X=X_s+p_s B+X^{c,s,\om}+
z.\bfone_{\{\abs{z}\le C_0^\prime\}}\ast (\mu^X-\nu)+
z.\bfone_{\{\abs{z}> C_0^\prime\}}\ast \mu^X,\quad
\text{$\Prob_{s,\om}$-a.s.}
\end{align*}
Also note that, since $\delta$ is bounded from above by $C_0^\prime$,
the random measure $K$ assigns no mass to
$\{z\in\R^d:\abs{z}>C_0^\prime\}$. Consequently,
the jumps of $X$ on $[s,T]$ are bounded from
above by $C_0^\prime$, $\Prob_{s,\om}$-a.s., 
i.e., we have on $[s,T]$, 
\begin{align*}
X=X_s+p_sB+X^{c,s,\om}+
z\ast (\mu^X-\nu),\quad
\text{$\Prob_{s,\om}$-a.s.}
\end{align*}
and $X$ is a special
$(\Prob_{s,\om},\F^{0}_{+})$-semimartingale
on $[s,T]$.
\end{remark}

We  augment the raw filtration $\F^0$ 
similarly as in the theory of Markov processes
(see, e.g., \cite{RogersWilliamsI}).
To this end, let $\mathcal{N}_{s,\om}$ be the collection
of all $\Prob_{s,\om}$-null sets in $\cF^0_T$
and put,
for every $t\in [0,T]$,
\begin{align*}
\cF_t^{s,\om}:=
\sigma(\cF_{t+}^0,\mathcal{N}_{s,\om}),\quad
\cF_t:=\bigcap_{(s,\om)\in\bar{\Lambda}} 
\cF_t^{s,\om}.
\end{align*}
Now we can define the following filtrations:
\begin{align*}
\F:=\{\cF_t\}_{t\in [0,T]},\quad
\F^{s,\om}:=\{\cF^{s,\om}_t\}_{t\in [0,T]},\quad
(s,\om)\in\bar{\Lambda}.
\end{align*}
Note that $\F$ is right-continuous.

Next, we introduce several classes 
of stopping times.
\begin{definition}
Let $s\in [t,T]$. Given a filtration
$\mathbb{G}=\{\mathcal{G}_t\}_{t\in [0,T]}$ on $\Omega$,
denote by $\cT_s(\mathbb{G})$ the set of all
$\mathbb{G}$-stopping times $\tau$ such that 
$s\le \tau$.
Set $\cT_s:=\cT_s(\F)$.
Let $\om\in\Omega$.
Denote by $\mathcal{H}_s$ (resp.~$\mathcal{H}_{s,\om}$)  the set of 
all $\tau\in\cT_s$ for which there exist some
$d^\prime\in\N$, a right-continuous,
non-anticipating, $\F$-adapted, $d^\prime$-dimensional
process $Y=(Y^i)_{i\le d^\prime}$, and a closed subset $E$
of $\R^{d^\prime}$
such that, for every $\tiom\in\Omega$ 
(resp.~for $\Prob_{s,\om}$-a.e.~$\tiom\in\Omega$),
\begin{align*}
\tau(\tiom)=\inf\{t\ge s: Y_t(\tiom)\in E\}\wedge T.
\end{align*}
\end{definition}

Given a stopping time $\tau$ and
a path $\om\in\Omega$, we often write $(\tau,\om)$
instead of $(\tau(\om),\om)$ if there is no danger
of confusion.

\begin{lemma}\label{L:ShiftingHittingtime}
Fix $(s,\om)\in\Lambda$ and $t\in [s,T)$.
Let $\tau\in\cH_{s,\om}$. If $\tau(\om)>t$ and $X$
coincides with $\om$ on $[0,t]$, $\Prob_{s,\om}$-a.s., then $\tau>t$,
$\Prob_{s,\om}$-a.s.
\end{lemma}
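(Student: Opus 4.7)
The plan is to unwind the hitting-time representation of $\tau$ guaranteed by $\tau\in\cH_{s,\om}$ and combine it with the non-anticipation and right-continuity of $Y$, closedness of $E$, and the hypothesis that $X=\om$ on $[0,t]$ almost surely. Concretely, I would fix $Y$ and $E$ witnessing $\tau\in\cH_{s,\om}$, so that $\tau(\tiom)=\inf\{r\ge s:Y_r(\tiom)\in E\}\wedge T$ for $\Prob_{s,\om}$-a.e.~$\tiom$, and work throughout with this infimum, which by the d\'ebut theorem is itself an $\F$-stopping time and, up to a $\Prob_{s,\om}$-null set, is identifiable with $\tau$.

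The first step is purely pathwise: from $\tau(\om)>t$ I would deduce $Y_r(\om)\notin E$ for every $r\in[s,t]$. If some $r_0\in[s,t]$ satisfied $Y_{r_0}(\om)\in E$, the infimum would be at most $r_0\le t$; the endpoint case where the infimum equals $t$ without being attained cannot occur, since a sequence $r_n\downarrow r^*\le t$ with $Y_{r_n}(\om)\in E$ would, by right-continuity of $Y$ and closedness of $E$, force $Y_{r^*}(\om)\in E$ and hence again place the infimum in $[s,t]$.

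The second step transfers this pathwise conclusion to $\Prob_{s,\om}$-a.e.~$\tiom$. Non-anticipation of $Y$ gives $Y_r(\tiom)=Y_r(\tiom_{\cdot\wedge r})$, so on the full-measure event $\{\tiom_{\cdot\wedge t}=\om_{\cdot\wedge t}\}$ one has $Y_r(\tiom)=Y_r(\om)\notin E$ for every $r\in[s,t]$. Right-continuity of $Y$ and closedness of $E$ then extend the non-hitting slightly past $t$: since $Y_t(\tiom)$ lies in the open set $E^c$, there exists $\eps=\eps(\tiom)>0$ with $Y_r(\tiom)\in E^c$ on $[t,t+\eps)$, yielding $\inf\{r\ge s:Y_r(\tiom)\in E\}\wedge T\ge t+\eps>t$, hence $\tau(\tiom)>t$ almost surely, as required.

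The main obstacle I anticipate is a bookkeeping matter rather than a substantive one: the defining formula for $\tau\in\cH_{s,\om}$ only holds $\Prob_{s,\om}$-almost surely, so the hypothesis $\tau(\om)>t$ must be interpreted through the hitting-time version of $\tau$ at the specific path $\om$. I would handle this by replacing $\tau$ from the outset by the $\F$-stopping time $\tiom\mapsto\inf\{r\ge s:Y_r(\tiom)\in E\}\wedge T$, which agrees with $\tau$ off a null set; beyond this identification, the argument is a routine combination of non-anticipation with a right-continuity first-hitting-time argument.
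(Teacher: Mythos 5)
Your proof is correct and essentially reproduces the paper's argument: extract the witnessing pair $(Y,E)$, deduce from $\tau(\om)>t$ that $Y_r(\om)\in E^c$ on $[s,t]$, transfer this via non-anticipation to $\Prob_{s,\om}$-a.e.\ $\tiom$ on the full-measure event where $\tiom$ agrees with $\om$ on $[0,t]$, and conclude $\tau(\tiom)>t$ from right-continuity of $Y$ and closedness of $E$. The bookkeeping point you flag --- that the hitting-time representation is only $\Prob_{s,\om}$-a.e.\ and so the hypothesis $\tau(\om)>t$ must be read through the hitting-time version at the specific path $\om$ --- is a genuine subtlety that the paper's proof also tacitly assumes when it infers $Y_r(\om)\in E^c$ from $\tau(\om)>t$.
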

\begin{proof}
Let $Y$ be the corresponding process,
$E$ be the corresponding closed set, and $\Omega^\prime$ the
corresponding subset of $\Omega$ with $\Prob_{s,\om}(\Omega^\prime)=1$
in the definition of $\cH_{s,\om}$ such
that, for every $\tiom\in\Omega^\prime$, 
$\tau(\tiom)=\inf\{t\ge s:Y_t(\tiom)\in E\}\wedge T$
and $\om$ coincides with $\tiom$ on $[0,s]$.
Since $\tau(\om)>t$,
we have $Y_r(\tiom)=Y_r(\om)\in E^c$ for every $r\in [s,t]$. This yields $\tau(\tiom)>t$
because $Y$ is right-continuous and $E$ is 
closed.
\end{proof}

\subsection{Path-dependent stochastic analysis}


First, we introduce a new space of 
continuous functionals. The reason 
is that we want the 
trajectories 
$t\mapsto u(t,\om+x.\bfone_{[t,T]})$ to be 
right-continuous, which,
in general, is not the case if 
 the functional $u$ is only in $C(\bar{\Lambda})$
 as Example~\ref{Ex:C_0} below demonstrates.

\begin{definition}
Let $s\in [0,T]$.
Denote by $C^0(\bar{\Lambda}^s)$ the set of all  $u\in C(\bar{\Lambda}^s)$ such that, for every $x\in\R^d$,
the map $(t,\om)\mapsto u(t,\om+x\bfone_{[t,T]})$ is continuous under $\dist_\infty$.
Denote by $C^0_b(\bar{\Lambda}^s)$ the set of all bounded 
functionals in $C^0(\bar{\Lambda}^s)$ and by $UC^0_b(\bar{\Lambda}^s)$
the set of all uniformly continuous functionals in $C^0_b(\bar{\Lambda}^s)$.
\end{definition}

\begin{example}\label{Ex:C_0}
Consider
$u=u(t,\om):=\sup_{0\le s\le t} \abs{\om_s}$.
Fix $t>0$. Let $\om=-2.\bfone_{[t,T]}$.
Then $u(t,\om+\bfone_{[t,T]})=1$
but $u(t+n^{-1},\om+\bfone_{[t+n^{-1},T]})=2$
for every $n\in\N$.
\end{example}

Next, we give an implicit definition of 
our path-dependent derivatives.
\begin{definition}
Let $(s,\om)\in\Lambda$
and let $\ch\in\cH_{s,\om}$ with $\ch>s$, $\Prob_{s,\om}$-a.s.
Denote by $C_b^{1,2}(\llb s,\ch\rrb)$ 
 the set of all bounded functionals
 $u\in C(\bar{\Lambda}^s)$ for which 
there exist bounded, right-continuous, non-anticipating,
$\F^{s,\om}$-adapted functionals
$\partial_t u:\bar{\Lambda}^s\to\R$,
$\partial_\om u=(\partial_{\om^i} u)_{i\le d}:\bar{\Lambda}^s\to\R^d$,
and $\partial^2_{\om\om} u=(\partial_{\om^i\om^j} u)_{i,j\le d}:
\bar{\Lambda}^s\to\mathbb{S}^d$ such that
$\partial_t u\in C(\llb s,\ch\llb)$,
$\partial_\om u\in C(\llb s,\ch\llb,\R^d)$, 
$\partial^2_{\om\om} u
\in C(\llb s,\ch\llb,\mathbb{S}^d)$, and that,
 for every $\tau\in\cT_s$,
\begin{align*}
&u_{\tau\wedge\ch}=u_s+
\int_s^{\tau\wedge\ch}\partial_t u_t \,dt+
\sum_{i=1}^d\int_s^{\tau\wedge\ch} \partial_{\om^i} u_{t-} 
 \,dX^i_t\\&\quad+\frac{1}{2}
\sum_{i,j=1}^d\int_s^{\tau\wedge\ch} \partial^2_{\om^i\om^j} u_{t-}
\,d\langle X^{i,s,\om,c},X^{j,s,\om,c}\rangle_t
\\ &\quad+\int_s^{\tau\wedge\ch}
 \int_{\R^d}\left[
 u_t(X_{\cdot\wedge t-}+z.\bfone_{[t,T]})-u_{t-}
-\sum_{i=1}^d z^i
\partial_{\om^i} u_{t-}\right]
\mu^X(dt,dz),\,\text{$\Prob_{s,\om}$-a.s.}
\end{align*}
\end{definition}

Given $z\in\R^d$, we sometimes use the 
operator $\nabla^2_z$ defined by
\begin{align*}
\nabla^2_z u(t,\om):=
u(t,\om+z.\bfone_{[t,T]})-u(t,\om)-
\sum_{i=1}^d z^i\partial_{\om^i} u(t,\om).
\end{align*}
\begin{remark}\label{R:L_Ito}
 If $u\in C_b^{1,2}(\llb s,\ch\rrb)$, then, for every 
 $\tau\in\cT_s$,
\begin{align*}
u_{\tau\wedge\ch}=u_s-\int_s^{\tau\wedge\ch}\mathcal{L}u_t\,dt
+\,\text{local martingale part, $\Prob_{s,\om}$-a.s.}
\end{align*}
\end{remark}

\section{Viscosity solutions and main results} \label{SS:ViscSol}
In this section, we introduce the notion of
viscosity solutions for equations of the
 form \eqref{E:PPIDE}. A minimal requirement
 for those solutions is consistency with
 classical solutions. They are defined
 as follows:
\begin{definition}
If $u\in C_b^0(\bar{\Lambda})\cap C_b^{1,2}(\Lambda)$
and
\begin{align*}
\cL u-f(\cdot,u,\partial_\om u,
\mathcal{I} u)\le\,\text{(resp.~$\ge$, $=$) $0$} &&\text{in $\Lambda$},
\end{align*}
 then
$u$ is a   \textit{classical subsolution} (resp.~\textit{classical supersolution},
 \textit{classical solution}) of \eqref{E:PPIDE}.
\end{definition}

To state the actual definition of viscosity 
solutions, we need first to
introduce two nonlinear expectations and spaces
of test functionals.

Fix $(s,\om)\in\Lambda$ and $L\ge0$.
Given a process 
$H\in L^2_{\mathrm{loc}}(X^{c,s,\om},
\Prob_{s,\om})$
and a random field 
$W\in G_{\mathrm{loc}}(p_s\mu^X,\Prob_{s,\om})$,
denote by $\Gamma^{H,W}$  the solution to 
\begin{align*}
\Gamma=1+(\Gamma_-H)\sint X^{c,s,\om}+
(\Gamma_-W)\ast(\mu^X-\nu)
\end{align*}
on $[s,T]$ with $\Gamma=1$ on $[0,s)$,
$\Prob_{s,\om}$-a.s.

\begin{definition}
Let $L\ge 0$ and let $(s,\om)\in\bar{\Lambda}$.
Denote by $\mathcal{P}^L(s,\om)$ the set of 
all probability measures $\Prob$ on $(\Omega,\cF_T^0)$
for which there exists a process 
$H\in L^2_{\mathrm{loc}}(X^{c,s,\om},
\Prob_{s,\om})$ with $\abs{\sigma^\top H}_\infty\le L$
and a random field $W\in G_{\mathrm{loc}}(p_s\mu^X,\Prob_{s,\om})$
with $0\le W\le L\eta$ such that, for every $A\in\cF_T^0$,
\begin{align*}
\Prob(A)=\int_{A}
\Gamma^{H,W}_T(\tiom)\,d\Prob_{s,\om}(\tiom).
\end{align*}
\end{definition}
Now we can define the following
 nonlinear expectations:
\begin{align*}
\underline{\mathcal{E}}^L_{s,\om}:=
\inf_{\Prob\in\mathcal{P}^L(s,\om)}\Mean^\Prob,
\qquad
\overline{\mathcal{E}}^L_{s,\om}:=
\sup_{\Prob\in\mathcal{P}^L(s,\om)}\Mean^\Prob.
\end{align*}

\begin{definition}
Let  
$u:\bar{\Lambda}\to\R$ be an $\F$-adapted process,
let $L\ge 0$, and let $(s,\om)\in\Lambda$. Denote by
$\underline{\cA}^Lu(s,\omega)$ 
(resp.~$\overline{\cA}^Lu(s,\omega)$) 
the set of all functionals $\varphi\in C_b^0(\bar{\Lambda}^s)$
for which there exists a hitting time 
$\ch\in\mathcal{H}_{s,\om}$ with $\ch>s$,
$\Prob_{s,\om}$-a.s., such that
$\varphi\in C^{1,2}_b(\llb s, \ch\rrb)$ and that
\begin{align*}
0=(\varphi-u)(s,\omega)=\inf_{\tau\in\cT_s} 
\underline{\mathcal{E}}^L_{s,\om} 
\left[(\varphi-u)_{\tau\wedge\ch}\right]\,
\text{(resp.~$=\sup_{\tau\in\cT_s} 
\overline{\mathcal{E}}^L_{s,\om} 
\left[(\varphi-u)_{\tau\wedge\ch}\right] $).}
\end{align*}
\end{definition}

\begin{definition}
Let $u$ be a bounded, right-continuous,
non-anticipating,
$\F$-adapted process that is
 $\Prob_{s,\om}$-quasi-left-continuous
 on $[s,T]$
for every $(s,\om)\in\bar{\Lambda}$.

(i) Given $L\ge 0$, we say $u$ is a \textit{viscosity $L$-sub\-solution}
(resp.~\textit{viscosity $L$-super\-solution})
 of \eqref{E:PPIDE} if,
for every $(t,\om)\in\Lambda$  and every
 $\varphi\in\underline{\cA}^Lu(t,\omega)$
 (resp.~$\overline{\cA}^Lu(t,\omega)$),
\begin{align*}
  \cL \varphi(t,\om)-
 f_t(\om,\varphi(t,\om),\partial_\om 
 \varphi(t,\om),\mathcal{I}\varphi(t,\om))
 \le\,\text{(resp.~$\ge$) } 0.
\end{align*}
(ii) We say $u$ is a \textit{viscosity subsolution}
(resp.~\textit{viscosity supersolution})
of \eqref{E:PPIDE} if it
is a viscosity $L$-subsolution
(resp.~viscosity $L$-supersolution)
of \eqref{E:PPIDE} for some $L\ge 0$.

(iii) We say $u$ is a \textit{viscosity solution} of
\eqref{E:PPIDE} if it is both a viscosity subsolution and 
a viscosity supersolution of \eqref{E:PPIDE}.
\end{definition}

\begin{remark}
If $u\in C(\bar{\Lambda})$, then $u$ is 
$\Prob_{s,\om}$-quasi-left-continuous 
on $[s,T]$
for every $(s,\om)\in\Lambda$.

Indeed, since $X$ is $\Prob_{s,\om}$-quasi-left-continuous on $[s,T]$,
 there exists, by
Proposition~I.2.26 in  \cite{JacodShiryaevBook},
a sequence of totally inaccessible stopping times
exhausting the jumps of $X$. 
By Remark~\ref{R:C(Lambda)}~(iii),
this sequence also exhausts the jumps of $u$.
 Hence, again by
Proposition~I.2.26 
 in \cite{JacodShiryaevBook}, $u$ is
$\Prob_{s,\om}$-quasi-left-continuous.
\end{remark}

\begin{theorem}[Consistency with 
classical solutions]\label{T:Cons}
Let $u\in C^0(\bar{\Lambda})\cap 
C^{1,2}(\Lambda)$. Then $u$ is a classical
subsolution (classical supersolution,
classical solution) 
of \eqref{E:PPIDE} if and only if 
$u$ is a
viscosity subsolution 
(viscosity supersolution, viscosity solution)
of \eqref{E:PPIDE}.
\end{theorem}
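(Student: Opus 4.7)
\emph{Proof plan.} I argue only the subsolution equivalence; the supersolution case is symmetric (replacing $\underline{\cA}^L, \underline{\mathcal{E}}^L$ by $\overline{\cA}^L, \overline{\mathcal{E}}^L$), and the full solution case follows by combining the two.

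For the easy direction (viscosity $\Rightarrow$ classical), fix $(s,\om) \in \Lambda$ and introduce the localizing hitting time
\begin{align*}
\ch_n := \inf\{r \ge s : |\partial_t u_r| + |\partial_\om u_r| + |\partial^2_{\om\om} u_r| \ge n\} \wedge T \in \cH_{s,\om},
\end{align*}
which satisfies $\ch_n > s$, $\Prob_{s,\om}$-a.s., for $n$ large. Take $\varphi := u$ as a test function; it lies in $C^{1,2}_b(\llb s, \ch_n \rrb) \cap C_b^0(\bar{\Lambda}^s)$ by hypothesis plus localization, and $\varphi - u \equiv 0$ trivially satisfies the infimum condition defining $\underline{\cA}^L u(s,\om)$ for any $L \ge 0$. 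The viscosity $L$-subsolution inequality applied to $\varphi$ at $(s,\om)$ is exactly the classical subsolution inequality.

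For the hard direction (classical $\Rightarrow$ viscosity), set $L := L_0$, fix $(t,\om) \in \Lambda$, and let $\varphi \in \underline{\cA}^{L_0} u(t,\om)$ with localizing hitting time $\ch$; intersecting $\ch$ with the analogue for $u$ yields $\psi := \varphi - u \in C^{1,2}_b(\llb t, \ch \rrb)$. Apply Remark~\ref{R:L_Ito} to $\psi$ and Girsanov's theorem under each $\Prob \in \mathcal{P}^{L_0}(t,\om)$ with density $\Gamma^{H,W}$: the continuous martingale part of $\psi$ acquires drift $\partial_\om \psi_s^\top c_s H_s \, ds$ and the jump martingale acquires drift $\int_{\R^d} \nabla^0_z \psi_s(z) W_s(z) K_s(dz)\, ds$, where $\nabla^0_z g(s,\om) := g(s, \om + z \bfone_{[s,T]}) - g(s,\om)$. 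Setting
\begin{align*}
\Theta^{H,W}_s := -\cL \psi_s + \partial_\om \psi_s^\top c_s H_s + \int_{\R^d} \nabla^0_z \psi_s(z)\, W_s(z)\, K_s(dz),
\end{align*}
one finds $\Mean^\Prob[\psi_{\tau \wedge \ch}] = \Mean^\Prob[\int_t^{\tau \wedge \ch} \Theta^{H,W}_s \, ds]$, and the test property forces this quantity to be $\ge 0$ for every admissible $(H, W)$ and every $\tau \in \cT_t$.

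Taking $\tau := (t + \eps) \wedge \ch$, dividing by $\eps$, and letting $\eps \downarrow 0$ (using right-continuity of $b, \sigma, \delta$ from Assumption~\ref{A:diffChar} and of the derivatives of $\psi$, together with $X_t = \om_t$, $\Prob_{t,\om}$-a.s.) yields the pointwise inequality $\Theta_t^{H,W}(\om) \ge 0$ for every $|\sigma^\top H|_\infty \le L_0$ and $0 \le W \le L_0 \eta$. To close, linearize $f$: since $\varphi(t,\om) = u(t,\om)$, Assumptions~\ref{A:SemilinCoeff} and~\ref{A:f_increasing} furnish $\alpha \in \R^d$ with $|\alpha|_\infty \le L_0$ and $\beta \in [0, L_0]$ with
\begin{align*}
f_t(\om, \varphi, \partial_\om \varphi, \mathcal{I}\varphi) - f_t(\om, \varphi, \partial_\om u, \mathcal{I}u) = \alpha^\top \sigma_t(\om)^\top \partial_\om \psi(t,\om) + \beta\,(\mathcal{I}\varphi - \mathcal{I}u)(t,\om).
\end{align*}
Plugging $H := (\sigma_t(\om)^\top)^{-1}\alpha$ (permitted since $\sigma$ is invertible or $d = 1$) and $W := \beta \eta$ into $\Theta^{H,W}_t(\om) \ge 0$, then adding the classical inequality $\cL u(t,\om) \le f_t(\om, u, \partial_\om u, \mathcal{I}u)$ and using $u(t,\om) = \varphi(t,\om)$, produces $\cL \varphi(t,\om) - f_t(\om, \varphi, \partial_\om \varphi, \mathcal{I}\varphi) \le 0$. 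The most delicate step is the $\eps \downarrow 0$ limit that extracts a pointwise drift inequality from an integrated nonlinear-expectation condition: one must control the jump integrand $\int \nabla^0_z \psi_s\, W_s\, K_s(dz)$ uniformly in $s$ near $t$, leveraging the Taylor bound $|\nabla^0_z \psi_s| \lesssim |\partial_\om \psi_s|\,|z| + |z|^2$, the bound $W \le L_0 C_0'(|z| \wedge 1)$, and the L\'evy-type integrability $\int (|z|^2 \wedge C_0')\,F(dz) \le C_1'$ from Assumption~\ref{A:diffChar}; the invertibility of $\sigma$ is essential to realize $\sigma^\top H = \alpha$ within the admissible class.
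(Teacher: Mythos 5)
Your proof is correct, but for the nontrivial direction you take a genuinely different route from the paper. The paper argues by contradiction: it assumes $u$ is a classical subsolution but not a viscosity $L_0$-subsolution, extracts a test functional $\varphi$ with $c' := \mathcal{L}\varphi(s_0,\omega) - f_{s_0}(\cdots) > 0$, and then, after exactly the same integration-by-parts and linearization machinery (the $\kappa^i$ and $\lambda$ coefficients, which play the role of your $\alpha$ and $\beta$), works with a \emph{small but fixed} stopping time $\ttau$ to show that $\Mean_{0,\omega}[\Gamma_{\ttau}(\varphi-u)_{\ttau}] \le -c'\ttau/8 < 0$, contradicting the defining infimum of $\underline{\mathcal{A}}^{L_0}u(0,\omega)$. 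You instead argue directly: given any admissible $\varphi$, the test property gives $\Mean^{\Prob}\bigl[\int_t^{\tau\wedge\ch}\Theta^{H,W}_s\,ds\bigr] \ge 0$ for every $\Prob\in\mathcal{P}^{L_0}(t,\omega)$ and $\tau$, and you pass to the pointwise inequality $\Theta^{H,W}_t(\omega)\ge 0$ via $\tau = t+\eps$ and $\eps\downarrow 0$ before plugging in the linearizing pair $(H,W)$ and adding the classical subsolution inequality. Both strategies are sound and hinge on the same three ingredients (integration by parts from Lemma~\ref{L:IntegrByParts}, the Lipschitz linearization of $f$ in $(z,p)$ using Assumptions~\ref{A:SemilinCoeff} and~\ref{A:f_increasing}, and the invertibility of $\sigma$ to realize $\sigma^\top H = \alpha$). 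The tradeoff: your direct version is closer in spirit to the classical PDE viscosity argument but shifts the technical weight onto the $\eps\downarrow 0$ limit, which needs right-continuity of $\Theta^{H,W}_s$, the dominated-convergence bound on the jump integrand, and $X_t=\omega_t$ $\Prob_{t,\omega}$-a.s.\ to identify $\Mean^{\Prob}[\Theta^{H,W}_t]$ with the deterministic value $\Theta^{H,W}_t(\omega)$ -- all of which you correctly flag as the delicate step; the paper's contradiction avoids this limit entirely by exploiting the strict constant $c'$ over a short deterministic time horizon. One small imprecision: the admissible process should be $H_s = (\sigma_s^\top)^{-1}\alpha$ for all $s$ (so that $\sigma_s^\top H_s = \alpha$ identically), not the single value at $s=t$ as your notation suggests; this is needed for $\abs{\sigma^\top H}_\infty \le L_0$ to hold as a process. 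Finally, your expanded treatment of the "easy" direction via a localizing hitting time $\ch_n$ is a reasonable way to fit $\varphi=u$ into the class $C_b^{1,2}(\llb s,\ch_n\rrb)$; the paper dismisses this direction in one line.
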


Our semilinear path-dependent integro-differential
equation is closely connected to a family of
non-Markovian BSDEs with jumps.
To introduce this family,  
fix first $(s,\om)\in\bar{\Lambda}$.
  Denote by 
  $(Y^{s,\om},Z^{s,\om},U^{s,\om})$
  the unique solution
  to the BSDE
  \begin{align*}
  Y_t^{s,\om}&=
  \xi+\int_t^T f_r\left(X,Y_r^{s,\om},Z_r^{s,\om},
  \int_{\R^d} U_r^{s,\om}(z)\,\eta_r(z)
  \,K_r(dz)\right)\,dr\\
  &\qquad -\int_t^T Z_r^{s,\om}\,dX_r^{c,s,\om}
  -\int_t^T \int_{\R^d} U_r^{s,\om}(z)\,
  (\mu^X-\nu)(dr,dz),\, t\in [s,T],\,
  \text{$\Prob_{s,\om}$-a.s.}
  \end{align*}
Without loss of generality, we assume that 
$Y^{s,\om}$ is
right-continuous and $\F_+^0$-adapted.

\begin{remark}
By Theorem~III.4.29
 in \cite{JacodShiryaevBook}
 every $(\Prob_{s,\om},\F^0_+)$-local martingale
 has the representation property relative to $X$
 (see Definition~III.4.22
  in \cite{JacodShiryaevBook}). 
  Therefore, one can prove well-posedness of
  the BSDE above by standard methods.
  For related results for BSDEs driven
by c\`adl\`ag martingales see
\cite{ElKarouiHuang97}
and \cite{CarboneEtAl08_BSDE}. 
In \cite{Xia00}, which deals
with BSDEs driven by c\`adl\`ag martingales
and random measures,  a special case
of our BSDE is covered. 
Moreover, BSDEs driven by random measures
in a general setting are treated in
\cite{ConfortolaFuhrman13}.
\end{remark}

Next, define a functional $u^0:\bar{\Lambda}\to\R$ by
\begin{align*}
u^0(t,\om):=
\Mean_{t,\om} [Y^{t,\om}_t].
\end{align*}
It will turn out that under additional assumptions $u^0$
is the unique solution to \eqref{E:PPIDE} satisfying $u^0_T=\xi$.

\begin{theorem}[Existence]\label{T:Existence}
If $(B,C,\nu)$ and $\nu$ are deterministic,
then $u^0$ is  a viscosity solution of \eqref{E:PPIDE} and $u^0\in UC_b(\bar{\Lambda})$.
\end{theorem}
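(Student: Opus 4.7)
The plan is to proceed in three stages: (i) verify that $u^0 \in UC_b(\bar{\Lambda})$, (ii) verify the viscosity subsolution property, and (iii) verify the viscosity supersolution property. The central tool throughout is the identification of $u^0(t,\om)$ with the initial value of the BSDE $(Y^{t,\om},Z^{t,\om},U^{t,\om})$, combined with standard $L^2$-a priori estimates and the comparison theorem for BSDEs with jumps.

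For (i), boundedness of $Y^{t,\om}$ follows from standard BSDE estimates since $\xi$ and $f$ are bounded from above by $C_0'$. For uniform continuity, given two initial data $(t,\om)$ and $(t',\om')$, I would invoke strong existence and uniqueness for the SDE with jumps associated to the characteristics (Assumption~\ref{A:diffChar} gives Lipschitz $b,\sigma$ and the weighted Lipschitz bound on $\delta$) to couple the canonical processes starting from the two data, then apply the classical stability estimate for Lipschitz BSDEs with jumps. The uniform continuity of $\xi$, $\eta$, and $f$ under $\dist_\infty$ with modulus $\rho_0$ then transfers to a modulus of continuity for $u^0$.

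For (ii), fix $(t,\om)\in\Lambda$ and $\varphi\in\underline{\cA}^L u^0(t,\om)$ with associated hitting time $\ch\in\cH_{t,\om}$. The dynamic programming principle, which follows from uniqueness of the BSDE combined with the regular conditional probability results of Appendix~A (using the determinism of $(B,C,\nu)$ to ensure the characteristics of $X$ are path-invariant under conditioning), yields $Y^{t,\om}_s=u^0(s,X_{\cdot\wedge s})$ on $[t,T]$, $\Prob_{t,\om}$-a.s. Applying Itô's formula (Remark~\ref{R:L_Ito}) to $\varphi$ on $\llb t,\tau\wedge\ch\rrb$ and subtracting the BSDE for $Y^{t,\om}$, the integrand in the $dt$-part equals
\begin{align*}
-\cL\varphi_r + f_r\bigl(Y^{t,\om}_r, Z^{t,\om}_r, \textstyle\int U^{t,\om}_r\eta_r\,K_r(dz)\bigr).
\end{align*}
Linearizing the driver via the Lipschitz condition in $(y,z,p)$ and Assumption~\ref{A:f_increasing}, I would construct bounded predictable $\alpha$, $H$ with $|\sigma^\top H|_\infty\le L_0$, and $W$ with $0\le W\le L_0\eta$, such that $f_r(\varphi,\partial_\om\varphi,\mathcal{I}\varphi)-f_r(Y,Z,\int U\eta\,K)$ splits as a multiple of $\varphi-Y$ plus the natural $H$- and $W$-couplings. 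The Doléans exponential $\Gamma^{H,W}$ then defines a measure $\Prob^{H,W}\in\cP^L(t,\om)$ (for $L:=L_0$) under which the identity
\begin{align*}
\Mean^{\Prob^{H,W}}\!\bigl[e^{-\int_t^{\tau\wedge\ch}\alpha_r\,dr}(\varphi-u^0)_{\tau\wedge\ch}\bigr] = \Mean^{\Prob^{H,W}}\!\Bigl[\int_t^{\tau\wedge\ch} e^{-\int_t^r\alpha_s\,ds}\bigl(\cL\varphi-f(\cdot,\varphi,\partial_\om\varphi,\mathcal{I}\varphi)\bigr)_r\,dr\Bigr]
\end{align*}
holds. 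If $\cL\varphi(t,\om)-f_t(\om,\varphi,\partial_\om\varphi,\mathcal{I}\varphi)>0$, continuity together with right-continuity of $b$, $\sigma$, $\delta$ and of the derivatives of $\varphi$ on $\llb t,\ch\llb$ lets me choose $\tau\in\cH_{t,\om}$ as the first exit from a small neighbourhood of $(t,\om)$ where the sign persists. This forces the right-hand side to be strictly positive, hence $\underline{\mathcal{E}}^L_{t,\om}[(\varphi-u^0)_{\tau\wedge\ch}]>0$, contradicting $\varphi\in\underline{\cA}^L u^0(t,\om)$.

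The supersolution property in (iii) is entirely symmetric. The main obstacle is ensuring that the linearization coefficients $(H,W)$ genuinely lie inside the admissible set defining $\cP^L(t,\om)$: this is precisely where Assumption~\ref{A:f_increasing} is essential, because the monotonicity of $f$ in $p$ guarantees $W\ge0$; the upper bound $W\le L_0\eta$ in turn follows from the Lipschitz bound on $f$ in $p$ combined with $0\le\eta\le C_0'(1\wedge|z|)$. A secondary subtlety is choosing the stopping time $\tau$ so that it remains strictly below $\ch$ on a set of positive $\Prob^{H,W}$-measure; for this I would use Lemma~\ref{L:ShiftingHittingtime} together with right-continuity to ensure that $\tau\wedge\ch>t$, $\Prob_{t,\om}$-a.s., which then transfers to $\Prob^{H,W}$ since $\Gamma^{H,W}$ is an $(\F^{t,\om}_+,\Prob_{t,\om})$-martingale.
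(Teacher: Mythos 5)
Your plan mirrors the paper's overall route (represent $u^0$ via the BSDE, use the conditioning/DPP results from Appendix~A made available by the deterministic hypothesis on $(B,C,\nu)$, then linearize, change measure, and contradict the test-function condition), but it has two genuine gaps.

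First, for $u^0\in UC_b(\bar{\Lambda})$, your coupling-plus-BSDE-stability argument only handles the spatial perturbation $\norm{\om_{\cdot\wedge t}-\om'_{\cdot\wedge t'}}_\infty$. The temporal perturbation $\abs{t-t'}$ is a separate issue: even with the canonical processes coupled, they differ on $[t,t']$ by a genuine random increment, and passing the modulus $\rho_0$ through the expectation requires controlling $\Prob_{t,\om}\bigl(\sup_{s\in[t,t']}\abs{X_s-X_t}\ge\delta''\bigr)$ before the BSDE stability estimate can be applied. The paper does this explicitly in the $A_1$ estimate of Proposition~\ref{P:Regularity}, splitting $X-X_t$ into drift plus martingale and invoking Doob's inequality together with the second-moment bound $\int(\abs{z}^2\wedge C_0')F(dz)\le C_1'$. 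Your proposal does not mention this step at all.

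Second, for the viscosity sub/super-solution property, your discount-factor linearization of the $y$-dependence does not fit the nonlinear-expectation framework. Leaving aside the sign error (with your orientation the right-hand side should involve $f-\cL\varphi$, or the identity should carry a minus sign, so that the derived quantity is strictly \emph{negative}, not positive), the derived inequality is of the form $\Mean^{\Prob^{H,W}}\bigl[D_{\tau\wedge\ch}(\varphi-u^0)_{\tau\wedge\ch}\bigr]<0$ with a \emph{random} discount $D=e^{\pm\int\alpha}$. This does not directly contradict $\underline{\mathcal{E}}^L_{t,\om}\bigl[(\varphi-u^0)_{\tau\wedge\ch}\bigr]\ge 0$, since $D$ cannot be absorbed into any measure in $\mathcal{P}^L(t,\om)$. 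One can repair the argument by shrinking the stopping time so that $\sup_{\llb t,\tau\rrb}\abs{\varphi-u^0}$ is small (via right-continuity and $(\varphi-u^0)(t,\om)=0$), which dominates the error introduced by $D-1$; but at that point the discount factor is superfluous, and this is exactly the paper's approach in the proof of Theorem~\ref{T:Cons}: choose a strictly positive $\ttau\le\tau\wedge\ch$ on which $\abs{f_t(X,\varphi_t,\cdot)-f_t(X,u_t,\cdot)}\le c'/4$, so that the $y$-dependence is absorbed into the small error term and no discount factor is needed. Without that shrinking step your argument as stated does not close.
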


\begin{theorem}[Partial comparison I]
\label{T:PartialComp}
Fix $(s,\om)\in\Lambda$. Let $u^1$ be 
a viscosity subsolution of \eqref{E:PPIDE}
on $\Lambda^s$ and let $u^2$ be a 
classical supersolution of \eqref{E:PPIDE} on
$\Lambda^s$. Suppose that $u^1_T\le u^2_T$,
$\Prob_{s,\om}$-a.s. Then $u^1(s,\om)\le 
u^2(s,\om)$.
\end{theorem}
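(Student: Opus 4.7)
I would argue by contradiction. Suppose $c_0 := u^1(s,\om) - u^2(s,\om) > 0$. Following the methodology of \cite{EKTZ11} and \cite{ETZ12a} adapted to the jump setting, the plan is to use a time-linear tilt of the classical supersolution $u^2$ as a test functional for $u^1$ at $(s,\om)$ and extract a contradiction from the viscosity subsolution inequality combined with the classical supersolution inequality of $u^2$.

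Fixing $\mu > L_0$, I would set $\varphi_t := u^2_t + c_0 - \mu c_0 (t-s)$. Then $\varphi \in C_b^0(\bar{\Lambda}^s) \cap C_b^{1,2}(\Lambda^s)$, $(\varphi - u^1)(s,\om) = 0$, and, since the tilt depends only on $t$, we have $\partial_\om\varphi = \partial_\om u^2$, $\mathcal{I}\varphi = \mathcal{I}u^2$, and $\cL\varphi = \cL u^2 + \mu c_0$. Once the membership $\varphi \in \underline{\cA}^{L_0} u^1(s,\om)$ is established, the viscosity subsolution inequality at $(s,\om)$ combined with the classical supersolution inequality of $u^2$ at $(s,\om)$ yields, after substitution and subtraction,
\[
\mu c_0 \;\le\; f_s\!\bigl(\om, u^2(s,\om)+c_0, \partial_\om u^2, \mathcal{I}u^2\bigr) - f_s\!\bigl(\om, u^2(s,\om), \partial_\om u^2, \mathcal{I}u^2\bigr) \;\le\; L_0\, c_0,
\]
by Lipschitz continuity of $f$ in $y$, contradicting the choice $\mu > L_0$.

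The technical heart will be verifying $\varphi \in \underline{\cA}^{L_0} u^1(s,\om)$, i.e., exhibiting $\ch \in \cH_{s,\om}$ with $\ch > s$ $\Prob_{s,\om}$-a.s.\ such that $\inf_{\tau \in \cT_s} \underline{\mathcal{E}}^{L_0}_{s,\om}\bigl[(\varphi - u^1)_{\tau \wedge \ch}\bigr] = 0$. The bound $\le 0$ is immediate from $\tau = s$; the opposite direction demands $\Mean^\Prob[(\varphi - u^1)_{\tau \wedge \ch}] \ge 0$ for every $\Prob \in \cP^{L_0}(s,\om)$ and every $\tau$. To obtain this nonlinear-submartingale-type property, I would linearize the driver using Lipschitz continuity and $p$-monotonicity (Assumption~\ref{A:f_increasing}),
\[
f_r(u^1_r, \partial_\om u^1_r, \mathcal{I}u^1_r) - f_r(u^2_r, \partial_\om u^2_r, \mathcal{I}u^2_r) = \alpha_r\,\Delta u + \beta_r\cdot\sigma^\top\Delta p + \gamma_r\,\Delta I,
\]
with $|\alpha|,|\beta|,\gamma \le L_0$ and $\gamma \ge 0$. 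Then, for each $\Prob \in \cP^{L_0}$ with parameters $(H, W)$, applying Ito (Remark~\ref{R:L_Ito}) to the classical $\varphi$ together with the supersolution inequality $\cL u^2 \ge f(u^2,\partial_\om u^2, \mathcal{I}u^2)$ would bound the $\Prob$-drift of $\varphi - u^1$; the absence of Ito on the viscosity side is handled by iterating the viscosity definition on short sub-intervals. A suitable $\ch$ (e.g., the first exit time of $X$ from a small uniform tube around $\om$, which lies in $\cH_{s,\om}$) would control higher-order remainders of the linearization.

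This verification is the main obstacle of the proof. It requires a nonlinear submartingale property for an expression involving the viscosity subsolution $u^1$ despite the absence of Ito's formula for $u^1$, and relies essentially on Assumption~\ref{A:f_increasing} (ensuring $\gamma \ge 0$, so that $W := \gamma\eta$ is admissible in $\cP^{L_0}$) together with the careful choice of $\ch \in \cH_{s,\om}$.
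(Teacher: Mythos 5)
The algebra at the end of your argument is fine (tilting $u^2$ by a linear-in-$t$ term, computing $\cL\varphi = \cL u^2 + \mu c_0$, and contradicting the Lipschitz constant), and it is essentially the same closing step the paper uses. The serious problem is exactly the step you flag as ``the technical heart'': establishing $\varphi\in\underline{\cA}^{L_0}u^1(s,\om)$ \emph{at the original point $(s,\om)$}. Your proposed verification does not work, and this is not a repairable detail --- it is the reason the paper (and \cite{EKTZ11}, \cite{ETZ12a}) do not attempt to test at $(s,\om)$ in the first place.

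The requirement is $\Mean^\Prob[(\varphi-u^1)_{\tau\wedge\ch}]\ge 0$ for every $\Prob\in\cP^{L_0}(s,\om)$ and every $\tau\in\cT_s$. To control that expectation you would need to track the $\Prob$-drift of $\varphi-u^1$, which in turn requires an It\^o decomposition of $u^1$. But $u^1$ is only a viscosity subsolution: it has no semimartingale decomposition and the definition gives you information about $u^1$ \emph{only at points $(t',\om')$ where some test functional dominates $u^1$ locally in the $\underline{\cA}^{L}$ sense}. It does not give you a drift bound for $u^1$ along paths, and ``iterating the viscosity definition on short sub-intervals'' does not create one --- on each sub-interval you would still face the same circular problem of first having to exhibit a valid test function before you can invoke the subsolution inequality. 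Your linearization of the driver and the choice $W=\gamma\eta$ are correct computations \emph{once} you are at a point where you may apply It\^o to the whole expression, but you never get to such a point by this route.

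The paper's proof (given explicitly for Theorem~\ref{T:PartialCompII}, with Theorem~\ref{T:PartialComp} noted as analogous) avoids the issue entirely by \emph{changing the base point}. It sets $R_t:=(u^1-u^2)_t+\frac{ct}{2T}$, solves the RBSDE with lower barrier $R$ to obtain $\bar Y^{0,\om}(L,\ch,R)$, and uses Lemmas~\ref{L:PathSnell}--\ref{L:PathSnellII} to produce a point $(t^\ast,\om^\ast)$ with $\tau^\ast(\om^\ast)<\ch(\om^\ast)$ at which the Snell envelope touches $R$ and $\bar K$ has not yet increased. At \emph{that} point the optimal-stopping representation gives $R_{t^\ast}(\om^\ast)=\sup_\tau\overline{\mathcal{E}}^L_{t^\ast,\om^\ast}[R_{\tau\wedge\tilde\ch}]$ for free, so that $\varphi:=u^1-\tilde R$ automatically lies in $\underline{\cA}^{L}u^1(t^\ast,\om^\ast)$ --- no submartingale property for $u^1$ is ever claimed or needed. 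The contradiction is then derived at $(t^\ast,\om^\ast)$, not at $(s,\om)$, using $(u^1-u^2)(t^\ast,\om^\ast)\ge 0$ (from $t^\ast<\tilde\ch$) together with the monotonicity of $f$ in $y$. In short: your proposal is missing the RBSDE/Snell-envelope machinery that identifies a touching point, and there is no elementary substitute for it, since without it the membership $\varphi\in\underline{\cA}^{L_0}u^1(s,\om)$ is simply false in general.
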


\begin{theorem}[Stability]\label{T:Stab}
For every $\eps>0$, let
$(b^\eps,c^\eps,K^\eps)$ together
with some process $\sigma^\eps$, some
random field $\delta^\eps$, and some 
$\sigma$-finite measure $F^\eps$
satisfy Assumption~\ref{A:diffChar}
in place of $(b,c,K)$ together with 
$\sigma$, $\delta$, and $F$, and denote
the corresponding linear integro-differential 
operator by $\mathcal{L}^\eps$ (see Section~3).
Also, for every $\eps>0$, let
$\eta^\eps=\eta^\eps_t(\om,z)$ and
 $f^\eps=f^\eps_t(\om,y,z,p)$ 
satisfy Assumption~\ref{A:SemilinCoeff}
and Assumption~\ref{A:f_increasing}
in place of $\eta$ and $f$, respectively,
and denote the corresponding integral operator
by $\mathcal{I}^\eps$, where
$(\eta,K)$ is replaced with $(\eta^\eps,K^\eps)$.
 Suppose that
$b^\eps\to b$, $c^\eps\to c$, 
$K^\eps\to K$,
 $\eta^\eps\to\eta$, and
$f^\eps\to f$ uniformly as $\eps\downarrow 0$.
Fix $L>0$. For every $\eps>0$, let
$u^\eps=u^\eps(t,\om)$ be a
 viscosity $L$-supersolution
of \eqref{E:PPIDE} with
$\mathcal{L}$ replaced by
$\mathcal{L}^\eps$ and
 $f$ replaced by $f^\eps$.
Suppose that $u^\eps$ converges to some
functional $u=u(t,\om)$ on $\bar{\Lambda}$
uniformly as $\eps\downarrow 0$. Then
$u$ is a viscosity $L$-supersolution of
\eqref{E:PPIDE}.
\end{theorem}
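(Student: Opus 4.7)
The plan is a Barles--Perthame type stability argument, transplanted to the EKTZ/ETZ framework where viscosity solutions are defined through an optimal stopping problem under a nonlinear expectation. Fix $(t,\om)\in\Lambda$ and any $\varphi\in\overline{\cA}^L u(t,\om)$ with associated hitting time $\ch\in\cH_{t,\om}$; by definition $(\varphi-u)(t,\om)=0$ and $\sup_{\tau\in\cT_t}\overline{\mathcal{E}}^L_{t,\om}[(\varphi-u)_{\tau\wedge\ch}]=0$. One must show $\mathcal{L}\varphi(t,\om)-f_t(\om,\varphi(t,\om),\partial_\om\varphi(t,\om),\mathcal{I}\varphi(t,\om))\ge 0$.

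Set $\delta_\eps:=\norm{u-u^\eps}_\infty\to 0$ and define the perturbed test functional $\varphi^\eps:=\varphi-(u-u^\eps)(t,\om)$, so that $(\varphi^\eps-u^\eps)(t,\om)=0$. The identity $(\varphi^\eps-u^\eps)_{\tau\wedge\ch}=(\varphi-u)_{\tau\wedge\ch}+(u-u^\eps)_{\tau\wedge\ch}-(u-u^\eps)(t,\om)$ gives $\sup_{\tau\in\cT_t}\overline{\mathcal{E}}^L_{t,\om}[(\varphi^\eps-u^\eps)_{\tau\wedge\ch}]\le 2\delta_\eps$. Since $\varphi^\eps$ differs from $\varphi$ only by a constant, $\varphi^\eps\in C^0_b(\bar\Lambda^t)\cap C^{1,2}_b(\llb t,\ch\rrb)$ with the same derivatives as $\varphi$. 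Hence $\varphi^\eps$ is an \emph{approximate} test function for $u^\eps$ at $(t,\om)$.

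The delicate step is to upgrade $\varphi^\eps$ to a genuine element of $\overline{\cA}^L u^\eps(t_\eps,\om_\eps)$ for some base point $(t_\eps,\om_\eps)$ near $(t,\om)$. Following the optimization/localization technique of ETZ12a, and using (a)~the quasi-left-continuity of $u^\eps$ under every $\Prob_{s,\tilde\om}$, (b)~the dynamic programming property of the Snell envelope associated with $\overline{\mathcal{E}}^L$ along the conditional probability distributions $\Prob_{t,\om}$, and (c)~the existence of $\eps$-optimal stopping times, one produces $(t_\eps,\om_\eps)\in\llb t,\ch\rrb$ with $\dist_\infty((t_\eps,\om_\eps),(t,\om))\to 0$ and a hitting time $\ch_\eps\in\cH_{t_\eps,\om_\eps}$, $\ch_\eps\le\ch$, such that $\varphi^\eps$ (possibly after adding a constant $\theta_\eps\to 0$) belongs to $\overline{\cA}^L u^\eps(t_\eps,\om_\eps)$ with localizing hitting time $\ch_\eps$. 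The viscosity $L$-supersolution property of $u^\eps$ then delivers
\begin{equation*}
\mathcal{L}^\eps\varphi^\eps(t_\eps,\om_\eps)-f^\eps_{t_\eps}\bigl(\om_\eps,\varphi^\eps(t_\eps,\om_\eps),\partial_\om\varphi^\eps(t_\eps,\om_\eps),\mathcal{I}^\eps\varphi^\eps(t_\eps,\om_\eps)\bigr)\ge 0.
\end{equation*}

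To conclude I would pass to the limit $\eps\downarrow 0$. The derivatives of $\varphi^\eps$ agree with those of $\varphi$ and are continuous at $(t,\om)$ and uniformly bounded on $\llb t,\ch\rrb$; the Taylor-type bound $\abs{\nabla^2_z\varphi(t,\om)}\le C(\abs{z}^2\wedge 1)$ combined with the uniform tail bound from Assumption~\ref{A:diffChar} permits dominated convergence for the nonlocal integrand against $K^\eps\to K$, and the same estimate handles $\mathcal{I}^\eps\varphi^\eps$. Together with the uniform convergence of $b^\eps$, $c^\eps$, $\eta^\eps$, $f^\eps$ and with continuity of $f$, the displayed inequality passes to the desired one for $\varphi$ at $(t,\om)$. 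The principal obstacle is precisely the third paragraph: in the classical pointwise definition this perturbation is trivial, whereas the optimal-stopping definition requires the \emph{exact} equality $\sup_\tau\overline{\mathcal{E}}^L[(\varphi^\eps-u^\eps)_{\tau\wedge\ch}]=0$ under a nonlinear expectation, which cannot be arranged by adding a constant alone and forces a localization argument leveraging quasi-left-continuity of $u^\eps$ and the tower property of $\overline{\mathcal{E}}^L$ under the canonical measures $\Prob_{t,\om}$.
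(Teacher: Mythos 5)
Your overall architecture is in the right spirit (optimal-stopping localization $\to$ transfer to a genuine test function for $u^\eps$ at a nearby point $\to$ apply the $u^\eps$ supersolution inequality $\to$ pass to the limit), and you correctly identify the crux: in the optimal-stopping definition of test functions, a constant shift alone cannot produce $\varphi^\eps\in\overline{\cA}^L u^\eps(t_\eps,\om_\eps)$. But you do not actually resolve that crux, and the black box in your third paragraph is the entire difficulty of the theorem. Invoking ``quasi-left-continuity,'' the ``DPP of the Snell envelope,'' and ``$\eps$-optimal stopping times'' without a concrete perturbation will not force the Snell envelope to touch the obstacle at a point strictly interior to the localizing interval; you need a quantitative source of slackness.

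The paper supplies precisely this by arguing by contradiction and adding a strictly decreasing linear-in-time perturbation. Concretely, it assumes $c':=\mathcal{L}\varphi(s_0,\om)-f_{s_0}(\cdots)<0$, sets $R_t:=\varphi_t-u_t-\eps' t$ and $R^\eps_t:=\varphi_t-u^\eps_t-\eps' t$ with a small fixed $\eps'>0$, localizes with $\tau_1:=\inf\{t:\mathcal{L}\varphi_t-f_t(\cdots)\ge c'/2\}$ (so that the operator inequality is controlled on the entire interval, not just at the base point), and uses the RBSDE Snell envelope $\bar{Y}^\eps=\bar{Y}^{0,\om}(L,\tau_2,R^\eps)$ with $\tau_2:=\tau_1\wedge\ch$. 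The strict term $-\eps' t$ makes $\overline{\mathcal{E}}^L_{0,\om}[R^\eps_{\tau_2}]<R^\eps_0$ for small $\eps$, which in turn forces $\Prob_{0,\om}(\tau^\eps<\tau_2)>0$ for the optimal stopping time $\tau^\eps$, yielding a point $(t^\eps,\om^\eps)\in\llb 0,\tau_2\llb$ at which $\varphi^\eps_t:=\varphi_t-\eps'(t-t^\eps)-(\varphi-u)(t^\eps,\om^\eps)$ is an \emph{exact} element of $\overline{\cA}^L u^\eps(t^\eps,\om^\eps)$. The extra $\eps'$ also shifts $\mathcal{L}\varphi^\eps$ by $+\eps'$, and choosing $\eps'=-c'/8$ absorbs the $\eps\downarrow 0$ errors in the coefficients, giving $0\le c'/4<0$. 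Note two further divergences from your write-up: (i) the paper never asserts $\dist_\infty((t^\eps,\om^\eps),(t,\om))\to 0$, which your plan claims but does not prove and which the Snell-envelope argument does not deliver; instead it relies on the $\tau_1$-localization to bound $\mathcal{L}\varphi-f(\cdots)$ everywhere on $\llb 0,\tau_2\llb$; (ii) without the $-\eps' t$ slope your constant-shift $\varphi^\eps$ has the same derivatives as $\varphi$, so there is no slack left to absorb the errors from $b^\eps,c^\eps,K^\eps,\eta^\eps,f^\eps\to b,c,K,\eta,f$ in the final step. You should restructure the argument by contradiction and build in the linear-in-$t$ perturbation.
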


For the comparison principle, we have to employ the
subsequent set of assumptions.
\begin{assumption}\label{A:xi}
Let $\xi$ be uniformly continuous with respect to the weak $M_1$-topology,
i.e., with respect to the metric $d_p$ defined by
$d_p(\om,\tiom):=\max_{i\le d} d_{M_1}(\om^i,\tiom^i)$,
$\om=(\om^i)_{i\le d}$, $\tiom=(\tiom^i)_{i\le d}\in\Omega$
(see Theorem~12.5.2 in \cite{WhittBook}).
\end{assumption}

\begin{remark}
If $d=1$, then the weak $M_1$-topology coincides with the $M_1$-topology.
For its definition, see 
Appendix~\ref{S:Skorohod}.  For more details,
we  refer the reader to \cite{WhittBook}.
\end{remark}

\begin{assumption}\label{A:L}
The triple $(b,c,K)$ is constant, the random field $\eta$ is
deterministic and does not depend on $z$, and there exist
positive constants $(L_\eps)_{\eps\in (0,1)}$ and
$\check{\nu}\in (0,1)$ such that the following holds:
\begin{enumerate}
\renewcommand{\labelenumi}{(\roman{enumi})}
\item For every $\zeta=(\zeta^i)_{i\le d}\in\R^d$,
\begin{align*}
\check{\nu}\abs{\zeta}^2\le \frac{1}{2}\sum_{i,j=1}^d c^{ij}\zeta^i\zeta^j
\le \check{\nu}^{-1}\abs{\zeta}^2.
\end{align*}
\item For every $\alpha\in (0,1)\cap\mathbb{Q}$, there exists
a positive constant $L_2(\alpha)$ such that
\begin{align*}
\abs{\eta}_{\alpha/2,[0,T]}\le L_2(\alpha).
\end{align*}
\item \label{I3:A:L} For every $\eps\in (0,1)$, there exist nonnegative
$\sigma$-finite measures
$K_{1,\eps}$ and $K_{2,\eps}$ 
on $\mathcal{B}(\R^d)$ such that
\begin{align*}
K(dz)&\le K_{1,\eps}(dz)+K_{2,\eps}(dz),\\
\int_{\R^d} \left(\abs{z}^2+\abs{z}\right)\,K_{1,\eps}(dz)&\le \eps,\\
K_{2,\eps}(\R^d\setminus\{\bfnull\})&\le L_\eps.
\end{align*}
\end{enumerate}
\end{assumption}
\begin{assumption}\label{A:jumps}
Suppose that there exists a constant
$c_0^\prime\in (0,C_0^\prime)$ such
that $K(\{z\in\R^d:\abs{z}<c_0^\prime\})=0$
and that $K(\R^d)<\infty$.
\end{assumption}
\begin{assumption}\label{A:f}
For every $\om\in\Omega$ and every 
$\alpha\in (0,1)\cap\mathbb{Q}$, there exists a positive
constant $L_1(\om,\alpha)$ such that the following holds:
\begin{enumerate}
\renewcommand{\labelenumi}{(\roman{enumi})}
\item For every $(y,z,p)\in\R\times\R^d\times\R$,
\begin{align*}
\abs{f_{\cdot}(\om,y,z,p)}_{\alpha/2,[0,T]}\le 
L_1(\om,\alpha)\cdot\left[\abs{(y,z,p)}+1\right].
\end{align*}
\item For every $(t,p)\in [0,T]\times\R$, we have
$f_t(\om,\cdot,p)\in C^\infty(\R\times\R^d)$.
\end{enumerate}
\end{assumption}
The following sets are also used in the proof of the 
partial comparison principle and the
comparison
principle.
\begin{definition}[The sets $\Pi^t_\infty$ and $\Pi^t_i$]\label{D:TheSets}

Let $t\in[0,T]$ and $i\in\N$.

Denote by $\Pi^t_\infty$ the set of all
$\pi_\infty=(t_0,x_0;t_1,x_1;\ldots;x_\infty)$
such that
\begin{enumerate}
\renewcommand{\labelenumi}{(\roman{enumi})}
\item $t=t_0\le t_1\le \ldots \le T$,
\item $t_i= T$ for all but finitely many $i\in\N_0$,
\item $x_i\in\R^d$ for all $i\in\N_0\cup\{\infty\}$.
\end{enumerate}

Denote by $\Pi^t_i$ the set of all
$\pi_i=(s_0,y_0;\ldots;s_{i-1},y_{i-1})$
such that
\begin{enumerate}
\renewcommand{\labelenumi}{(\roman{enumi})}
\item $t=s_0\le \ldots\le s_{i-1}\le T$,
\item $y_j\in\R^d$ for all $j\in\{0,\ldots,i-1\}$.
\end{enumerate}
\end{definition}

The following assumption is only needed for measurability
issues in the proof of the comparison principle.
\begin{assumption}\label{A:pi}
For every $(s,\om)\in\Lambda$, $i\in\N$, and $\tiom\in\Omega$,
let the functions
\begin{align*}
&\Pi_i\to\R, \pi_i=(s_0,y_0;\ldots;s_{i-1},y_{i-1})\mapsto
\xi\left(
\om.\bfone_{[0,s_0)}+\sum_{j=0}^{i-1} y_j.\bfone_{[s_j,s_{j+1})}+
\tiom.\bfone_{[s_i,T]}
\right),\\
&\Pi_i\to\R, \pi_i\mapsto f_t\left(
\om.\bfone_{[0,s_0)}+\sum_{j=0}^{i-1} y_j.\bfone_{[s_j,s_{j+1})}+
\tiom.\bfone_{[s_i,T]},y,z,p
\right)
\end{align*}
be continuous uniformly in $(t,y,z,p)$.
\end{assumption}

\begin{theorem}[Comparison]\label{T:Comparison}
Suppose that Assumptions~\ref{A:xi},
\ref{A:L},
\ref{A:f}, \ref{A:jumps},
and \ref{A:pi}
are satisfied.
If $u^1$ is a viscosity subsolution
of \eqref{E:PPIDE},
if $u^2$ is a viscosity supersolution
of \eqref{E:PPIDE}, and if
$u^1_T\le u^2_T$, then $u^1\le u^2$.  
\end{theorem}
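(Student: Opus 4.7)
The plan is to combine the partial comparison principle (Theorem~\ref{T:PartialComp}) with a path-freezing scheme that produces, on the fly, classical sub- and supersolutions bracketing $u^1$ and $u^2$. By shifting the initial datum it suffices to establish $u^1(0,\bfnull)\le u^2(0,\bfnull)$; the same argument applied at each $(t,\om)\in\Lambda$ then delivers the pointwise inequality everywhere. The idea is to replace arbitrary paths by piecewise constant skeletons drawn from the sets $\Pi^t_i$ of Definition~\ref{D:TheSets}, solve a genuinely finite-dimensional PIDE on each cell of the skeleton, and iterate partial comparison backwards in time.

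Fix $i\in\N$ and a mesh $\{s_j\}_{j\le i}$ of $[0,T]$. For each skeleton $\pi_i=(s_0,y_0;\ldots;s_{i-1},y_{i-1})\in\Pi^0_i$, the PPIDE with path input frozen to $\pi_i$ on $[0,s_{j-1}]$ reduces on the $j$-th cell to a semilinear parabolic PIDE in the $\R^d$-variable $x=X_{s_{j-1}}$. Assumption~\ref{A:L}, in particular constant characteristics, uniform ellipticity of $c$, and the splitting of $K$ in item~(iii) into a small-mass and a large-mass part, together with the H\"older regularity of $\eta$, $f$ (Assumption~\ref{A:f}) and $\xi$ (Assumption~\ref{A:xi}) and the uniform control on jumps (Assumption~\ref{A:jumps}), place the frozen PIDE within the H\"older theory of \cite{Mikulevicius1994classical}. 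That theory delivers a unique bounded classical solution on each cell with uniform $C^{1,2}$-estimates. Splicing the cell solutions produces a non-anticipating functional $v^{i,\pi_i}\in C^0_b(\bar{\Lambda})$ that is $C^{1,2}_b$ on the interior of each cell and is simultaneously a classical sub- and supersolution of the \emph{frozen} PPIDE there.

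Equipped with $v^{i,\pi_i}$ we iterate partial comparison. On the terminal cell $[s_{i-1},T]$ the difference between $\xi$ evaluated at the true path and at $\pi_i$ is controlled by the weak $M_1$-modulus of $\xi$ from Assumption~\ref{A:xi}; adding a small constant $\kappa_i\to 0$ (resp.\ subtracting one) turns $v^{i,\pi_i}$ into a genuine classical super- (resp.\ sub-) solution $\bar v^{i,\pi_i}$ (resp.\ $\underline v^{i,\pi_i}$) of the original PPIDE on that cell with $\bar v^{i,\pi_i}_T\ge \xi$ (resp.\ $\underline v^{i,\pi_i}_T\le\xi$). Theorem~\ref{T:PartialComp} then yields $u^1\le\bar v^{i,\pi_i}$ and $\underline v^{i,\pi_i}\le u^2$ on $[s_{i-1},T]$. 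Propagating the inequality backward cell-by-cell, using the cell-end value as the new terminal bound at each step and selecting the skeleton $\pi_i(\om)$ measurably from $\om$ by virtue of Assumption~\ref{A:pi}, sandwiches $u^1$ and $u^2$ between two classical objects whose gap vanishes uniformly as $i\to\infty$, by Theorem~\ref{T:Stab} together with the $M_1$-continuity of $\xi$ and $f$.

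The main obstacle is the step that converts the frozen PPIDE on a cell into the original one: the operator $\cL$ still integrates against $K$-jumps that are invisible to a fixed skeleton, and the resulting discrepancy must be absorbed into the modulus $\rho_0$ (for small jumps, via item~(iii) of Assumption~\ref{A:L}) or removed by refining the partition adaptively around the finitely many large jumps permitted by Assumption~\ref{A:jumps}; only then do the spliced $\bar v^{i,\pi_i(\om)}$ and $\underline v^{i,\pi_i(\om)}$ become legitimate classical super- and subsolutions at the price of an additive $\kappa_i\to 0$. A secondary but nontrivial technicality is the measurability and $\dist_\infty$-continuity of the $\om$-dependent skeleton $\pi_i(\om)$ and of the spliced functional $v^{i,\pi_i(\om)}$; this is what Assumption~\ref{A:pi} delivers, ensuring $v^{i,\pi_i(\om)}\in C^0_b(\bar{\Lambda})$ so that Theorem~\ref{T:PartialComp} remains applicable throughout the iteration.
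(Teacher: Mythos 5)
Your overall blueprint (freeze the path to a finite skeleton, solve a genuinely finite-dimensional PIDE per cell via the H\"older theory of \cite{Mikulevicius1994classical}, splice, and squeeze $u^1$, $u^2$ via partial comparison) is the right idea, but three of the concrete choices you make do not close the argument, and they are precisely the points the paper takes care of differently.

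First, a deterministic time mesh $\{s_j\}$ does not control the freezing error: inside a time cell the path can move arbitrarily far, so the discrepancy between the true path and the piecewise-constant skeleton is not $O(\rho_0(\eps))$, and the ``small constant $\kappa_i\to 0$'' you add cannot be quantified. The paper uses instead the random spatial hitting times $\ch^{t,\eps}_n$ (exit times from $\eps$-balls), so that between consecutive knots the path stays in an $\eps$-ball and the freezing error is genuinely of order $\rho_0(\eps)$. Second, once the skeleton $\pi_i(\om)$ depends on $\om$ (as it must), the spliced $v^{i,\pi_i(\om)}$ is only piecewise $C^{1,2}$, with kinks at the random knots; it is \emph{not} a classical supersolution in the sense of $C^0_b(\bar\Lambda)\cap C^{1,2}_b(\Lambda)$, so Theorem~\ref{T:PartialComp} (Partial Comparison~I) does not apply to it. This is exactly why the paper defines the space $\bar C^{1,2}_b(\bar\Lambda^t)$ and proves Theorem~\ref{T:PartialCompII} (Partial Comparison~II), which is formulated precisely for these piecewise-smooth, skeleton-indexed functionals; your proposal does not prove or use such a refined version, and the backward cell-by-cell iteration of Theorem~\ref{T:PartialComp} would also break down because the $\Prob_{s_{j-1},\om}$-a.s.\ terminal dominance at $s_j$ fails once $\pi_i$ is frozen but the path is random. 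Third, Theorem~\ref{T:Stab} is a stability result under perturbation of the coefficients $(b,c,K,\eta,f)$ and is not what closes the gap here; the paper instead shows $\overline u(t,\om)-\underline u(t,\om)\le 2\rho_0(2\eps)[1+T-t]$ by construction of the functionals $\overline\psi^{t,\om,\eps}$, $\underline\psi^{t,\om,\eps}$ and lets $\eps\downarrow 0$. Finally, a secondary point you leave unaddressed is how the non-local part of $\cL$ is treated inside the frozen PDE: since $\cL$ samples the functional at shifted arguments $\om+z\bfone_{[t,T]}$, the paper carefully replaces it by the auxiliary operators $\mathbf{L}^h$, $\mathbf{I}^h$ acting against a fixed smooth $h$ (a polynomial Weierstrass approximant of $\theta_{i+1}$), which is what makes the cell problem a bona fide PIDE for \cite{Mikulevicius1994classical}; without this step the ``finite-dimensional PIDE on each cell'' is not actually self-contained.
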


Theorems~\ref{T:Existence} and \ref{T:Comparison}
immediately yield our final main result.
\begin{theorem}[Well-posedness]
Suppose that Assumptions~\ref{A:xi},
\ref{A:L},
\ref{A:f}, \ref{A:jumps},
and \ref{A:pi}
are satisfied. Then
$u^0$ is the unique viscosity solution
of \eqref{E:PPIDE} with $u^0_T=\xi$.
\end{theorem}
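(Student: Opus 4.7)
The plan is to derive well-posedness directly from the two main theorems already established, namely the existence result (Theorem~\ref{T:Existence}) and the comparison principle (Theorem~\ref{T:Comparison}), so the proof amounts to verifying that their hypotheses are in force and then combining their conclusions.

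First, I would observe that Assumption~\ref{A:L} stipulates that the triple $(b,c,K)$ is constant. In particular, $(B,C,\nu)$ defined by $dB_t = b\,dt$, $dC_t = c\,dt$, and $\nu(dt,dz) = K(dz)\,dt$ are deterministic. Hence the hypothesis of Theorem~\ref{T:Existence} is satisfied, and we conclude that $u^0 \in UC_b(\bar\Lambda)$ is a viscosity solution of \eqref{E:PPIDE}. The terminal condition $u^0_T = \xi$ follows immediately from the defining formula $u^0(t,\om) = \Mean_{t,\om}[Y^{t,\om}_t]$ together with the BSDE terminal condition $Y_T^{t,\om} = \xi$ and the fact that $\Prob_{T,\om}$ is the Dirac mass at the path $\om_{\cdot\wedge T}$, so that $\Mean_{T,\om}[\xi] = \xi(\om)$. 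This gives existence.

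For uniqueness, suppose that $v$ is any viscosity solution of \eqref{E:PPIDE} with $v_T = \xi$. Since the assumptions of Theorem~\ref{T:Comparison} (namely Assumptions~\ref{A:xi}, \ref{A:L}, \ref{A:f}, \ref{A:jumps}, and \ref{A:pi}) are all in force by hypothesis, I would apply Theorem~\ref{T:Comparison} twice: once with $u^1 := u^0$ (viscosity subsolution) and $u^2 := v$ (viscosity supersolution) to obtain $u^0 \le v$, and once with the roles reversed (using that $u^0$ is also a viscosity supersolution and $v$ a viscosity subsolution) to obtain $v \le u^0$. Together these yield $v = u^0$ on $\bar\Lambda$, which establishes uniqueness.

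There is no real obstacle in this final step; the substantive work has been done in Theorems~\ref{T:Existence} and \ref{T:Comparison}. The only thing to double-check is the compatibility of the terminal values: both $u^0$ and any competing viscosity solution $v$ share the same terminal datum $\xi$, which is precisely the hypothesis $u^1_T \le u^2_T$ (in fact with equality) required by Theorem~\ref{T:Comparison}. The conclusion of the well-posedness theorem then follows in one line.
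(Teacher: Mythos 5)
Your proposal is correct and follows the same route as the paper, which simply states that Theorems~\ref{T:Existence} and \ref{T:Comparison} immediately yield the well-posedness result. You have merely spelled out the (straightforward) verification that Assumption~\ref{A:L} supplies the deterministic-characteristics hypothesis of Theorem~\ref{T:Existence} and that two applications of the comparison principle give uniqueness.
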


\section{Consistency and Existence}
\begin{proof}[Proof of Theorem~\ref{T:Cons}]
Clearly, if $u$ is a viscosity subsolution
of \eqref{E:PPIDE}, then it is a classical
subsolution of \eqref{E:PPIDE}.

Let us now assume that $u$ is not a viscosity
$L_0$-subsolution of \eqref{E:PPIDE} 
 but a classical
subsolution of \eqref{E:PPIDE}. Then
there exist $(s_0,\om)\in\Lambda$ and
$\varphi\in\underline{\mathcal{A}}^{L_0}
 u(s_0,\om)$
with corresponding hitting time 
$\ch\in\cH_{s_0}$ (see the definition of
$\underline{\mathcal{A}}^{L_0}$) such that
\begin{align*}
c^\prime:=\mathcal{L}\varphi(s_0,\om)-
f_{s_0}(\om,\varphi(s_0,\om),\partial_\om 
\varphi(s_0,\om),\mathcal{I}\varphi(s_0,\om))>0.
\end{align*}
Without loss of generality, $s_0=0$. Put
\begin{align*}
\tau:=\inf\left\{t\ge 0:
\mathcal{L}\varphi_t-
f_t(X,\varphi_t,\partial_\om\varphi_t,
\mathcal{I}\varphi_t)\le \frac{c^\prime}{2}
\right\}\wedge T.
\end{align*}
By right-continuity of the involved process,
$\tau>t$, $\Prob_{0,\om}$-a.s.
Let $H=(H^i)_{i\le d}$
 be a stochastic process with
$\abs{\sigma^\top H}_\infty\le L_0$, let
$W$ be a random field with 
$0\le W\le L_0\eta$, and let $\Gamma=\Gamma^{H,W}$
(see Section~\ref{SS:ViscSol}).
Then integration-by-parts
 (Lemma~\ref{L:IntegrByParts})
 yields
 \begin{equation}\label{E0:Cons}
 \begin{split}
 &\Gamma(\varphi-u)\\&\qquad=\Gamma\Biggl[
 -\mathcal{L}\varphi+\mathcal{L} u+
 \sum_{k,l} H^k\partial_{\om^l} (\varphi-u)c^{kl}
+\int_{\R^d} W\nabla^2_z(\varphi-u)K(dz)
 \Biggr]\sint t\\&\qquad\qquad+\text{ martingale.}
\end{split}
 \end{equation}
Given a strictly positive 
stopping time $\ttau\le\tau\wedge\ch$
such that 
\begin{align*}
\abs{f_t(X,\varphi_t,\partial_\om \varphi_t,
\mathcal{I}\varphi_t)-
f_t(X,u_t,\partial_\om \varphi_t,
\mathcal{I}\varphi_t)}\le \frac{c^\prime}{4}
\end{align*}
on $\llb 0,\ttau\llb$,
taking expectations yields
\begin{equation}\label{E1:Cons}
\begin{split}
&\Mean_{0,\om}[\Gamma_\ttau(\varphi-u)_\ttau]
\\&\qquad\le 
\Mean_{0,\om}\Biggl[
\int_0^\ttau \Gamma_t\Biggl[
-\frac{c^\prime}{2}-
f_t(X,\varphi_t,\partial_\om\varphi_t,
\mathcal{I}\varphi_t)+
f_t(X,u_t,\partial_\om u_t,
\mathcal{I}u_t)\\
&\qquad\qquad\qquad\qquad +\sum_{k,l} H^k_t
\partial_{\om^l}(\varphi-u)_t c^{kl}\\
&\qquad\qquad\qquad\qquad +
\int_{\R^d} W_t\nabla^2_z(\varphi-u)_t\,
K_t(dz)
\Biggr]\,dt
\Biggr]\\
&\qquad\le \Mean_{0,\om}\Biggl[
\int_0^\ttau \Gamma_t\Biggl[
-\frac{c^\prime}{4}-
f_t(X,u_t,\partial_\om\varphi_t,
\mathcal{I}\varphi_t)+
f_t(X,u_t,\partial_\om u_t,
\mathcal{I}u_t)\\
&\qquad\qquad\qquad\qquad +\sum_{k,l} H^k_t
\partial_{\om^l}(\varphi-u)_t c^{kl}\\
&\qquad\qquad\qquad\qquad +
\int_{\R^d} W_t\nabla^2_z(\varphi-u)_t\,
K_t(dz)
\Biggr]\,dt
\Biggr].
\end{split}
\end{equation}
Define a function $\tilde{f}:\bar{\Lambda}\times\R
\times\R^d\times\R\to\R$ by
\begin{align*}
\tilde{f}_t(\tiom,y,z,p):=
f_t(\tiom,y,(\sigma^\top_t(\tiom))^{-1}z,p).
\end{align*}
Then $f_t(\tiom,y,z,p)=
\tilde{f}_t(\tiom,u,\sigma_t^\top(\tiom)z,p)$
and
\begin{align*}
\abs{\tilde{f}_t(\tiom,y,z,p)-
\tilde{f}_t(\tiom,y,z^\prime,p)}\le 
L_0\abs{z-z^\prime}_1.
\end{align*}
Consequently,
\begin{align*}
&-f_t(X,u_t,\partial_\om\varphi_t,
\mathcal{I}\varphi_t)+
f_t(X,u_t,\partial_\om u_t,
\mathcal{I} u_t)\\
&\quad=
\tilde{f}_t(X,u_t,\sigma^\top_t\partial_\om u_t,
\mathcal{I} u_t)-
\tilde{f}_t(X,u_t,
\sigma^\top_t\partial_\om \varphi_t,
\mathcal{I} \varphi_t)\\
&\quad =
[\tilde{f}_t(X,u_t,\sigma^\top_t\partial_\om u_t,
\mathcal{I} u_t)\\ &\quad\quad\quad\quad-
\tilde{f}_t(X,u_t,
(\sigma^\top_t\partial_\om u_t)^1,
\ldots,
(\sigma^\top_t\partial_\om u_t)^{d-1},
(\sigma^\top_t\partial_\om \varphi_t)^d
),
\mathcal{I} u_t)]\\
&\quad\quad +[\tilde{f}_t(X,u_t,
(\sigma^\top_t\partial_\om u_t)^1,
\ldots,
(\sigma^\top_t\partial_\om u_t)^{d-1},
(\sigma^\top_t\partial_\om \varphi_t)^d
),
\mathcal{I} u_t)\\
&\quad\quad\quad\quad-
\tilde{f}_t(X,u_t,
(\sigma^\top_t\partial_\om u_t)^1,
\ldots,
(\sigma^\top_t\partial_\om u_t)^{d-2},
(\sigma^\top_t\partial_\om \varphi_t)^{d-1},
(\sigma^\top_t\partial_\om \varphi_t)^d
),
\mathcal{I} u_t)]\\&\quad\quad +\cdots\\
&\quad\quad +
\tilde{f}_t(X,u_t,
(\sigma_t^\top \partial_\om u_t)^1,
(\sigma_t^\top \partial_\om \varphi_t)^2,
\ldots,
(\sigma_t^\top \partial_\om\varphi_t)^d,
\mathcal{I} u_t)\\
&\quad\quad\quad\quad -
\tilde{f}_t(X,u_t,\sigma_t^\top\partial_\om 
\varphi_t,\mathcal{I}u_t)]\\
&\quad\quad +
[\tilde{f}_t(X,u_t,\sigma_t^\top\partial_\om 
\varphi_t,\mathcal{I}u_t)-
\tilde{f}_t(X,u_t,\sigma_t^\top\partial_\om 
\varphi_t,\mathcal{I}\varphi_t)]\\
&\quad =
\kappa^d_t(\sigma_t^\top
\partial_\om(u-\varphi)_t)^d+\cdots
\kappa^1_t(\sigma_t^\top
\partial_\om(u-\varphi)_t)^1\\ &\quad\quad+
\int_{\R^d}\lambda_t\eta_t(z)
\nabla_z^2(u-\varphi)_t\,K_t(dz),
\end{align*}
where, for every $i\in\{1,\ldots,d\}$,
\begin{align*}
\kappa^i_t&:=[(\sigma_t^\top\partial_\om (u-\varphi)_t)^i]^{-1}.\bfone_{
\{(\sigma_t^\top\partial_\om (u-\varphi)_t)^i
\neq 0\}
}\\ &\qquad\cdot
[\tilde{f}_t(X,u_t,
(\sigma_t^\top\partial_\om u_t)^1,\ldots,
(\sigma_t^\top\partial_\om u_t)^{i},
(\sigma_t^\top\partial_\om \varphi_t)^{i+1},\ldots,
(\sigma_t^\top\partial_\om \varphi_t)^{d})\\
&\qquad\qquad
-\tilde{f}_t(X,u_t,
(\sigma_t^\top\partial_\om u_t)^1,\ldots,
(\sigma_t^\top\partial_\om u_t)^{i-1},
(\sigma_t^\top\partial_\om \varphi_t)^{i},\ldots,
(\sigma_t^\top\partial_\om 
\varphi_t)^{d})]
\end{align*}
and 
\begin{align*}
\lambda_t&:=
[\mathcal{I}(u_t-\varphi_t)]^{-1}.\bfone_{
\{
\mathcal{I}(u_t-\varphi_t)\neq 0
\}
}\\ &\qquad \cdot
[\tilde{f}_t(X,u_t,\sigma_t^\top\partial_\om 
\varphi_t,\mathcal{I}u_t)-
\tilde{f}_t(X,u_t,\sigma_t^\top\partial_\om 
\varphi_t,\mathcal{I}\varphi_t)].
\end{align*}
Note that, by Assumption~\ref{A:SemilinCoeff},
 we have $\abs{\kappa^i}\le L_0$
and that, by 
Assumption~\ref{A:SemilinCoeff}
and Assumption~\ref{A:f_increasing},
 we have $0\le \lambda\le L_0$.
Our goal now is to establish
$H^\top c\, \partial_\om(\varphi-u)=
\kappa^\top [\sigma^\top \partial_\om 
(\varphi-u)]$, which, by putting
$\tilde{H}=\sigma^\top H$ and
$\tilde{Z}=\sigma^\top 
\partial_\om (\varphi-u)$, is equivalent
to $\tilde{H}^\top \tilde{Z}=
\kappa^\top\tilde{Z}$. 
Hence, if $H$ is given by $H=(\sigma^\top)^{-1}
\tilde{H}$
in the case $d>1$ and
by $H=\sigma^{-1}\tilde{H}.\bfone_{
\{\sigma\neq 0\}
}$ in the case $d=1$, where $\tilde{H}^i=\kappa^i$,
and if $W$ is given by $W_t(z)=\lambda_t
\eta_t(z)$, then
$\abs{\sigma^\top H}_\infty=
\abs{\kappa}_\infty\le L_0$,
$0\le W\le L_0\eta$, and,
provided $\ttau$ is sufficiently small,
which is possible because $\Gamma$ is 
right-continuous, we have, by \eqref{E1:Cons},
\begin{align*}
\Mean_{0,\om}[\Gamma_T(\varphi-u)_\ttau]=
\Mean_{0,\om}[\Gamma_\ttau(\varphi-u)_\ttau]&\le 
\Mean_{0,\om}\Biggl[
\int_0^\ttau \Gamma_t\Biggl[
-\frac{c^\prime}{4}
\Biggr]\,dt
\Biggr]\le 
\Mean_{0,\om}\left[-
\frac{\ttau c^\prime}{8}
\right]<0,
\end{align*}
i.e., $\underline{\mathcal{E}}_{0,\om}^{L_0}
[(\varphi-u)_\ttau]<0$,
which contradicts
$\varphi\in\underline{\mathcal{A}}^{L_0} u
(0,\om)$. Thus $u$ is a 
viscosity $L_0$-subsolution.

Similarly, one can show the corresponding statement
for supersolutions.
\end{proof}

Next, we prove regularity of $u^0$.
To this end, we need the following result.

\begin{lemma}\label{L:BSDE-ChangeMeasure}
Suppose that $(B,C,\nu)$ and $\eta$ are deterministic.
Fix $(s,\om)\in\bar{\Lambda}$.
Define a process $\tilde{Y}$ on $[s,T]$ by
\begin{align*}
\tilde{Y}_t:=Y^{s,\om}_t
(\om.\bfone_{[0,s)}+(X+\om_s).\bfone_{[s,T]}).
\end{align*}
Then there exist processes $\tilde{Z}$ and
$\tilde{U}(z)$ in the appropriate spaces such that
$(\tilde{Y},\tilde{Z},\tilde{U})$ is the solution to the BSDE
\begin{equation}\label{E1:BSDE-ChangeMeasure}
\begin{split}
&\tilde{Y}_t=
\xi(\om.\bfone_{[0,s)}+(X+\om_s).\bfone_{[s,T]})\\
&\qquad +\int_t^T f_r(
\om.\bfone_{[0,s)}+(X+\om_s).\bfone_{[s,T]},
\tilde{Y}_r,\tilde{Z}_r,
\int_{\R^d} \tilde{U}_r(z)\,\eta_r(z)\,K_r(dz))\,dr\\
&\qquad -\int_t^T \tilde{Z}_r\,dX_r^{c,s,\bfnull}
-\int_t^T\int_{\R^d} \tilde{U}_r(z)\,(\mu^X-\nu)(dr,dz),\,
t\in[s,T],\,\text{$\Prob_{s,\bfnull}$-a.s.}
\end{split}
\end{equation}
\end{lemma}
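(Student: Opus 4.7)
The plan is to reduce the result to a change-of-path-space argument using the uniqueness of the martingale problem. Introduce the measurable shift map $\Phi:\Omega\to\Omega$ defined by
\[
\Phi(\tiom) := \om.\bfone_{[0,s)} + (\tiom + \om_s).\bfone_{[s,T]}.
\]
Because the triple $(B,C,\nu)$ is deterministic, translating a path by the constant $\om_s$ on $[s,T]$ leaves the continuous martingale part and the jump measure unchanged. Hence, under $\Prob_{s,\bfnull}$, the process $\Phi(X)$ is a semimartingale on $[s,T]$ with characteristics $(p_sB, p_sC, p_s\nu)$ that coincides with $\om$ on $[0,s]$. By the uniqueness of the solution to the martingale problem, $\Phi_*\Prob_{s,\bfnull} = \Prob_{s,\om}$.

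Next, I would transport the defining BSDE under this pushforward. Define
\[
\tilde{Z}_r := Z^{s,\om}_r\circ\Phi,\qquad \tilde{U}_r(z) := U^{s,\om}_r(z)\circ\Phi,
\]
so that $\tilde{Y}_t$ as in the statement is exactly $Y^{s,\om}_t\circ\Phi$. The integrability assertions $\tilde{Z}\in L^2_{\loc}(X^{c,s,\bfnull},\Prob_{s,\bfnull})$ and $\tilde{U}\in G_{\loc}(p_s\mu^X,\Prob_{s,\bfnull})$ follow by transferring the analogous properties of $Z^{s,\om}$ and $U^{s,\om}$ along $\Phi$. Applying $\Phi$ to the original BSDE identity, which holds $\Prob_{s,\om}$-a.s., yields an identity that holds $\Prob_{s,\bfnull}$-a.s.\ and whose terminal value and driver are precisely those appearing in \eqref{E1:BSDE-ChangeMeasure}.

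The crux is to verify that the stochastic integrals transform correctly, that is,
\[
\bigl(\textstyle\int_\cdot^T Z^{s,\om}_r\,dX^{c,s,\om}_r\bigr)\circ\Phi = \textstyle\int_\cdot^T \tilde{Z}_r\,dX^{c,s,\bfnull}_r,
\]
and similarly for the compensated random-measure integral. For this I would observe that, on $[s,T]$, $\Prob_{s,\bfnull}$-a.s., $X^{c,s,\om}\circ\Phi = X^{c,s,\bfnull}$ (because the continuous martingale part of $\Phi(X) = (X+\om_s)$ on $[s,T]$ under $\Prob_{s,\bfnull}$ agrees with that of $X$, the shift by $\om_s$ being constant) and $\mu^X\circ\Phi = \mu^X$ (shifts preserve jump times and sizes), together with the standard fact that for predictable integrands stochastic integration commutes with a measurable change of probability space induced by a pushforward.

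The main obstacle is this last commutation step, since the BSDE involves both a Brownian-type integral against $X^{c,s,\om}$ and a compensated jump integral, and we must confirm that predictability, the localizing sequences, and the integrability conditions pass through $\Phi$ without loss. Once these identifications are verified, the transported equation is exactly \eqref{E1:BSDE-ChangeMeasure}, completing the proof.
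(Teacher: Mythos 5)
Your approach is sound but genuinely different from the paper's. You define the shift map $\Phi$, prove $\Phi_*\Prob_{s,\bfnull}=\Prob_{s,\om}$ via uniqueness of the martingale problem (correct; the determinism of $(B,C,\nu)$ is exactly what ensures the pushed-forward measure solves the same problem), set $\tilde{Z}:=Z^{s,\om}\circ\Phi$, $\tilde{U}:=U^{s,\om}\circ\Phi$, and transport the entire BSDE identity through $\Phi$. You correctly identify the technical crux: one must verify that $X^{c,s,\om}\circ\Phi=X^{c,s,\bfnull}$, $\mu^X\circ\Phi=\mu^X$ on $(s,T]$ (again using that $B$, $\nu$ are deterministic so the canonical decomposition shifts cleanly), and that stochastic integration commutes with the pushforward. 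The last point is standard (approximate by simple predictable integrands, then use $L^2$-continuity and the law identity), but it does require a genuine argument involving localizing sequences, predictability, and the random-measure integral. The paper sidesteps exactly this step: rather than transporting $(Z^{s,\om},U^{s,\om})$ and arguing that the integrals transform, it only uses the pushforward identity to conclude that the drift-corrected process $M_t:=\tilde Y_t-\tilde Y_s-\int_s^t f_r(\cdot,\tilde Y_r,Z^0_r,\cdot)\,dr$ is a $\Prob_{s,\bfnull}$-martingale, and then produces $(\tilde Z,\tilde U)$ \emph{fresh} via the martingale representation property of $X$ under $\Prob_{s,\bfnull}$ (Theorem~III.4.29 of Jacod--Shiryaev) together with an iterative scheme. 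The martingale property of $M$ is a statement about the law only, so it is inherited directly from $\Phi_*\Prob_{s,\bfnull}=\Prob_{s,\om}$ without any claim about how the integral \emph{representations} transform. Your route is shorter and gives the explicit identification $\tilde Z=Z^{s,\om}\circ\Phi$, $\tilde U=U^{s,\om}\circ\Phi$, at the price of the transfer-of-stochastic-integrals lemma; the paper's route is slightly more roundabout (and the iteration as written needs a small repair -- the $Y$-slot should arguably be iterated as well -- ) but avoids that lemma entirely. Both are acceptable; if you go your way, make sure to spell out the commutation step, since the integrand $\tilde U$ lives in $G_{\loc}(p_s\mu^X,\Prob_{s,\bfnull})$ and the corresponding localization argument deserves a sentence or two.
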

\begin{proof}
Put
\begin{align*}
Z^0&:=Z^{s,\om}(\om.\bfone_{[0,s)}+(X+\om_s).\bfone_{[s,T]}),\\
U^0&:=U^{s,\om}(\om.\bfone_{[0,s)}+(X+\om_s).\bfone_{[s,T]}).
\end{align*}
Since  $(B,C,\nu)$ and $\eta$ are deterministic,
the process $M$ on $[s,T]$ defined by
\begin{align*}
M_t&:=\tilde{Y}_t-\tilde{Y}_s-
\int_s^t f_r(\om.\bfone_{[0,s)}+(X+\om_s).\bfone_{[s,T]},
\tilde{Y}_r,Z^0_r,\int_{\R^d}U_r^0(z)\,\eta_r(z)\,K_r(dz))
\end{align*}
is an $(\F^{s,\bfnull},\Prob_{s,\bfnull})$-martingale. Thus, we can,
for every $n\in\N_0$, define a pair $(Z^{n+1},U^{n+1})$
inductively by
\begin{align*}
&\tilde{Y}_t=
\xi(\om.\bfone_{[0,s)}+(X+\om_s).\bfone_{[s,T]})\\
&\qquad +\int_t^T f_r(
\om.\bfone_{[0,s)}+(X+\om_s).\bfone_{[s,T]},
\tilde{Y}_r,Z^n_r,
\int_{\R^d} U^n_r(z)\,\eta_r(z)\,K_r(dz))\,dr\\
&\qquad -\int_t^T Z^{n+1}_r\,dX_r^{c,s,\bfnull}
-\int_t^T\int_{\R^d} U^{n+1}_r(z)\,(\mu^X-\nu)(dr,dz),\,
t\in[s,T],\,\text{$\Prob_{s,\bfnull}$-a.s.}
\end{align*}
Since $(Z^n,U^n)$ converges to some limit $(\tilde{Z},\tilde{U})$,
the triple $(\tilde{Y},\tilde{Z},\tilde{U})$ is a solution
to  \eqref{E1:BSDE-ChangeMeasure}.
\end{proof}

\begin{proposition}\label{P:Regularity}
If $(B,C,\nu)$ and $\nu$
are deterministic, 
then $u^0\in UC_b(\bar{\Lambda})$.
\end{proposition}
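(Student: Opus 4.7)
The plan is to establish boundedness of $u^0$, then a uniform modulus of continuity in $\om$ at fixed time and a uniform modulus in $t$ at fixed path, and combine them via the triangle inequality on $\dist_\infty$. Boundedness follows at once from the standard a priori estimate for Lipschitz BSDEs: since $\xi$ and $f(\cdot,0,0,0)$ are bounded by $C^\prime_0$ (Assumption~\ref{A:SemilinCoeff}), Gronwall's inequality under each $\Prob_{s,\om}$ yields a uniform bound $\norm{Y^{s,\om}}_\infty \le \const$ depending only on $C^\prime_0$, $L_0$, and $T$, and hence $\norm{u^0}_\infty \le \const$.

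For spatial regularity at a fixed $s$, fix $\om,\om^\prime\in\Omega$. By Lemma~\ref{L:BSDE-ChangeMeasure}, both $u^0(s,\om)$ and $u^0(s,\om^\prime)$ arise as the ($\Prob_{s,\bfnull}$-a.s.\ constant) initial values of solutions to BSDEs of the form~\eqref{E1:BSDE-ChangeMeasure} \emph{under the common measure} $\Prob_{s,\bfnull}$, with data obtained by evaluating $\xi$ and $f$ along the paths $\om.\bfone_{[0,s)}+(X+\om_s).\bfone_{[s,T]}$ and $\om^\prime.\bfone_{[0,s)}+(X+\om^\prime_s).\bfone_{[s,T]}$, respectively. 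These two paths differ in sup-norm exactly by $\norm{\om_{\cdot\wedge s}-\om^\prime_{\cdot\wedge s}}_\infty$, so by Assumption~\ref{A:SemilinCoeff} the differences of the terminals and drivers are controlled by $\rho_0(\norm{\om_{\cdot\wedge s}-\om^\prime_{\cdot\wedge s}}_\infty)$. A standard $L^2$-stability estimate for Lipschitz BSDEs driven by the quasi-left-continuous semimartingale $X$ then yields a modulus $\rho_1$ independent of $s$ with
\[
\abs{u^0(s,\om)-u^0(s,\om^\prime)} \le \rho_1\bigl(\norm{\om_{\cdot\wedge s}-\om^\prime_{\cdot\wedge s}}_\infty\bigr).
\]

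For temporal regularity at a fixed $\om$, fix $0\le s<t\le T$. The key ingredient is a BSDE flow identity: using a regular conditional distribution of $\Prob_{s,\om}$ on $\cF_t$ (Appendix~A), uniqueness of the martingale problem for $(p_sB,p_sC,p_s\nu)$, and uniqueness of BSDE solutions, one obtains
\[
Y_t^{s,\om}(\tiom)=u^0(t,\tiom)\qquad\text{for $\Prob_{s,\om}$-a.e.\ }\tiom.
\]
Taking $\Prob_{s,\om}$-expectations in the BSDE between times $s$ and $t$ then gives
\[
u^0(s,\om)-u^0(t,\om)=\Mean_{s,\om}\!\left[\int_s^t f_r(X,Y_r^{s,\om},Z_r^{s,\om},\ldots)\,dr\right]+\Mean_{s,\om}\bigl[u^0(t,X)-u^0(t,\om)\bigr].
\]
The first term is bounded by $C^\prime_0(t-s)$. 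For the second, apply $\rho_1$ from the previous step together with the BDG-type estimate $\Mean_{s,\om}[\sup_{s\le r\le t}\abs{X_r-\om_s}^2]=O(t-s)$, which follows from the boundedness of $b$, $\sigma$ and of $\int \abs{z}^2\wedge C^\prime_0\,F(dz)$ (Assumption~\ref{A:diffChar} and Remark~\ref{R:jumpSize}); dominated convergence, exploiting the uniform bound $\rho_1\le 2\norm{u^0}_\infty$, then produces a modulus $\rho_2$ with $\abs{u^0(s,\om)-u^0(t,\om)}\le \rho_2(t-s)$. Combining both bounds via the triangle inequality yields a common $\dist_\infty$-modulus. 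The main technical obstacle is justifying the flow identity $Y_t^{s,\om}=u^0(t,X)$ in this non-Markovian jump setting: one must select regular conditional probabilities on $\cF_t$, verify that the conditional law coincides $\Prob_{s,\om}$-a.s.\ with $\Prob_{t,\tiom}$ via uniqueness of the martingale problem, and glue together the resulting BSDE solutions with the requisite measurability — precisely the machinery developed in Appendix~A.
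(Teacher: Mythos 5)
Your overall architecture — freeze the path and compare BSDEs under a common measure $\Prob_{\cdot,\bfnull}$ for the spatial part (via Lemma~\ref{L:BSDE-ChangeMeasure}), use a flow/dynamic-programming identity $Y^{s,\om}_t=u^0(t,X)$ $\Prob_{s,\om}$-a.s. plus a martingale increment bound for the temporal part, and combine via the triangle inequality — is exactly the paper's proof. Your spatial step via an $L^2$-stability estimate for Lipschitz BSDEs is interchangeable with the paper's Girsanov linearization and integration-by-parts bound; both yield the same $C^\prime\rho_1(\cdot)$ estimate. Likewise, Doob's inequality on $\abs{M}^2$ (paper) and BDG (you) for the jump-diffusion martingale produce the same $O(t-s)$ control.

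There is, however, a genuine gap in the way you set up the triangle inequality, and it matters because paths in $\Omega$ are c\`adl\`ag. You use $u^0(t,\om)$ as the intermediate point, proposing a temporal arm $\abs{u^0(s,\om)-u^0(t,\om)}\le\rho_2(t-s)$ ``at fixed $\om$.'' This is false in general: if $\om$ jumps in $(s,t)$, then $\om_{\cdot\wedge t}$ differs from $\om_{\cdot\wedge s}$ by a fixed amount however small $t-s$ is, so $u^0(t,\om)$ need not be close to $u^0(s,\om)$. Moreover, the matching spatial arm $\abs{u^0(t,\om)-u^0(t,\om^\prime)}$ requires control of $\norm{\om_{\cdot\wedge t}-\om^\prime_{\cdot\wedge t}}_\infty$, which is \emph{not} controlled by $\dist_\infty((s,\om),(t,\om^\prime))=\abs{t-s}+\norm{\om_{\cdot\wedge s}-\om^\prime_{\cdot\wedge t}}_\infty$, again because $\om$'s behavior on $(s,t]$ is unconstrained. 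The correct intermediate point is $u^0(t,\om_{\cdot\wedge s})$, i.e., the path stopped at $s$, exactly as in the paper's $A_1$/$A_2$ decomposition: the temporal arm becomes $\abs{u^0(s,\om)-u^0(t,\om_{\cdot\wedge s})}$, whose flow representation under $\Prob_{s,\om}$ involves $\sup_{s\le r\le t}\abs{X_r-\om_s}$ (which is what your BDG estimate actually controls — so your subsequent computation implicitly assumes this fix), and the spatial arm $\abs{u^0(t,\om_{\cdot\wedge s})-u^0(t,\om^\prime)}$ is bounded by $\rho_1(\norm{\om_{\cdot\wedge s}-\om^\prime_{\cdot\wedge t}}_\infty)$, which is precisely what $\dist_\infty$ controls. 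With this one change, your proof goes through and coincides in substance with the paper's.
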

\begin{proof}
Let $\rho_1$ be an increasing modulus of continuity of $\xi$ and of 
$(t,\tiom)\mapsto f_t(\tiom,y,z,p)$, uniformly in $t$, $y$, $z$, and $p$,
with upper bound $\norm{\rho_1}_\infty>0$.
Let $(s,\om)$, $(s^\prime,\om^\prime)\in\bar{\Lambda}$ with $s\le s^\prime$.
Then
\begin{equation*}\label{E1:Reg}
\begin{split}
\abs{u^0(s,\om)-u^0(s^\prime,\om^\prime)}&\le
\abs{u^0(s,\om)-u^0(s^\prime,\om_{\cdot\wedge s})}+
\abs{u^0(s^\prime,\om_{\cdot\wedge s})-u^0(s^\prime,\om^\prime)}\\ &=:A_1+A_2.
\end{split}
\end{equation*}

Let us start with estimating $A_2$. To this end, put
\begin{align*}
\tilde{Y}^1_t&:=Y^{s^\prime,\om_{\cdot\wedge s}}_t
(\om_{\cdot\wedge s}.\bfone_{[0,s^\prime)}+(X+\om_s).\bfone_{[s^\prime,T]}),\\
\tilde{\xi}^1&:=\xi
(\om_{\cdot\wedge s}.\bfone_{[0,s^\prime)}+(X+\om_s).\bfone_{[s^\prime,T]}),\\
\tilde{f}^1_t(X,y,z,p)&:=f_t
(\om_{\cdot\wedge s}.\bfone_{[0,s^\prime)}+(X+\om_s).\bfone_{[s^\prime,T]},y,z,p),\\
\end{align*}
and
\begin{align*}
\tilde{Y}^2_t&:=Y^{s^\prime,\om^\prime}_t
(\om^\prime_{\cdot\wedge s^\prime}.\bfone_{[0,s^\prime)}
+(X+\om^\prime_{s^\prime}).\bfone_{[s^\prime,T]}),\\
\tilde{\xi}^2&:=\xi
(\om^\prime_{\cdot\wedge s^\prime}.\bfone_{[0,s^\prime)}
+(X+\om^\prime_{s^\prime}).\bfone_{[s^\prime,T]}),\\
\tilde{f}^2_t(X,y,z,p)&:=f_t
(\om^\prime_{\cdot\wedge s^\prime}.\bfone_{[0,s^\prime)}
+(X+\om^\prime_{s^\prime}).\bfone_{[s^\prime,T]},y,z,p).
\end{align*}
Since $(B,C,\nu)$ and $\eta$ are deterministic, there exists, 
by Lemma~\ref{L:BSDE-ChangeMeasure}, for every $i\in\{1,2\}$,
a pair $(\tilde{Z}^i,\tilde{U}^i)$ such that the triple $(\tilde{Y}^i,\tilde{Z}^i,\tilde{U}^i)$
is the solution to the BSDE
\begin{align*}
\tilde{Y}^i_t&=\tilde{\xi}^i
+\int_t^T \tilde{f}^i_r(X,
\tilde{Y}^i_r,\tilde{Z}^i_r,
\int_{\R^d} \tilde{U}^i_r(z)\,\eta_r(z)\,K_r(dz))\,dr\\
&\qquad -\int_t^T \tilde{Z}^i_r\,dX_r^{c,s^\prime,\bfnull}
-\int_t^T\int_{\R^d} \tilde{U}^i_r(z)\,(\mu^X-\nu)(dr,dz),\,
t\in[s^\prime,T],\,\text{$\Prob_{s^\prime,\bfnull}$-a.s.}
\end{align*}
Therefore and using again the fact  that $(B,C,\nu)$ is
deterministic, we have 
\begin{equation}\label{E2:Reg}
\begin{split}
A_2&=\abs{\Mean_{s^\prime,\om_{\cdot\wedge s}}
[Y^{s^\prime,\om_{\cdot\wedge s}}_{s^\prime}]-
\Mean_{s^\prime,\om^\prime}
[Y^{s^\prime,\om^\prime}_{s^\prime}]}
=\abs{\Mean_{s^\prime,\bfnull}[
\tilde{Y}^1_{s^\prime}-\tilde{Y}^2_{s^\prime}
]}.
\end{split}
\end{equation}
Now, note that, for every $t\in [s^\prime,T]$,
\begin{align*}
&\tilde{f}_t^1(X,\tilde{Y}^1_t,\tilde{Z}^1_t,
\int_{\R^d}\tilde{U}^1_t(z)\,\eta_t(z)\,K_t(dz))-
\tilde{f}_t^2(X,\tilde{Y}^2_t,\tilde{Z}^2_t,
\int_{\R^d}\tilde{U}^2_t(z)\,\eta_t(z)\,K_t(dz))\\
&\qquad = \gamma_t[\tilde{Y}_t^1-\tilde{Y}_t^2]+
\sum_{j=1}^d \kappa^j_t\,[\sigma_t^\top
(\tilde{Z}_t^1-\tilde{Z}_t^2)
]^j+
\int_{\R^d}\lambda_t\,\eta_t(z)\,
(\tilde{U}^1_t(z)-\tilde{U}^2_t(z))\,K_t(dz)\\
&\qquad\qquad +
\tilde{f}_t^1(X,\tilde{Y}^2_t,\tilde{Z}^2_t,
\int_{\R^d}\tilde{U}^2_t(z)\,\eta_t(z)\,K_t(dz))\\
&\qquad\qquad\qquad\qquad-
\tilde{f}_t^2(X,\tilde{Y}^2_t,\tilde{Z}^2_t,
\int_{\R^d}\tilde{U}^2_t(z)\,\eta_t(z)\,K_t(dz)),
\end{align*}
where
\begin{align*}
&\gamma_t:=[\tilde{Y}_t^1-\tilde{Y}_t^2]^{-1}.\bfone_{
\{\tilde{Y}_t^1-\tilde{Y}_t^2\neq 0\}
}\\
&\quad \cdot[
\tilde{f}_t^1(X,\tilde{Y}^1_t,\tilde{Z}^1_t,
\int_{\R^d}\tilde{U}^1_t(z)\,\eta_t(z)\,K_t(dz))-
\tilde{f}_t^1(X,\tilde{Y}^2_t,\tilde{Z}^1_t,
\int_{\R^d}\tilde{U}^1_t(z)\,\eta_t(z)\,K_t(dz))
],
\end{align*}
and the processes $\kappa^j$, $j=1$, $\ldots$, $d$, and
$\lambda$ are defined similarly (with the obvious changes)
as in the proof of Theorem~\ref{T:Cons}. Also, as in said proof,
define a $d$-dimensional process $H$ by
$H^j:=[(\sigma^\top)^{-1}\kappa]^j$ in the case $d>1$
and by $H:=\sigma^{-1}\kappa$ in the case $d=1$, define
a random field $W$ by $W_t(z):=\lambda_t\,\eta_t(z)$, and consider the solution
$\tilde{\Gamma}$ of 
\begin{align*}
\tilde{\Gamma}=1+(\tilde{\Gamma}_-\gamma)\sint t+
(\tilde{\Gamma}_-H)\sint X^{c,s^\prime,\bfnull}+
(\tilde{\Gamma}_-W)\ast(\mu^X-\nu)
\end{align*}
on $[s^\prime,T]$ with $\tilde{\Gamma}=1$ on $[0,s^\prime)$,
$\Prob_{s^\prime,\bfnull}$-a.s.
Integration-by-parts (Lemma~\ref{L:IntegrByParts}) yields
\begin{align*}
\tilde{\Gamma}(\tilde{Y}^1-\tilde{Y}^2)&=
(\tilde{Y}^1-\tilde{Y}^2)_{s^\prime}-\tilde{\Gamma}
[\tilde{f}^1(X,\tilde{Y}^2,\tilde{Z}^2,
\int_{\R^d}\tilde{U}^2(z)\,\eta(z)\,K(dz))\\ &\qquad\qquad\qquad\qquad\qquad-
\tilde{f}^2(X,\tilde{Y}^2,\tilde{Z}^2,
\int_{\R^d}\tilde{U}^2(z)\,\eta(z)\,K(dz))]
\sint t \\ &\qquad\qquad+\text{martingale, $\Prob_{s^\prime,\bfnull}$-a.s.}
\end{align*}
Since $\xi$, $f$, and $\gamma$ are bounded, we get, together with \eqref{E2:Reg}, 
\begin{equation*}\label{E3:Reg}
\begin{split}
A_2&\le e^{(T-s^\prime)L_0}
\Mean_{s^\prime,\bfnull}\left[\abs{\tilde{\xi}^1-\tilde{\xi}^2}\right]
+ \int_{s^\prime}^T e^{(t-s^\prime)L_0}
\rho_1((t,\om_{\cdot\wedge s}),(t,\om^\prime_{\cdot\wedge s^\prime}))\,dt\\
&\le C^\prime \rho_1(\norm{\om_{\cdot\wedge s}-
\om^\prime_{\cdot\wedge s^\prime}}_\infty),
\end{split}
\end{equation*}
where $C^\prime$ does not depend on $(s,\om)$ and $(s^\prime,\om^\prime)$.

To deal with $A_1$, let $\eps>0$. 
Our goal is to find a $\delta^\prime>0$ such that
\begin{align*}
\dist_\infty((s,\om),(s^\prime,\om^\prime))<\delta^\prime
\end{align*}
implies $A_1<\eps$.
In order to estimate $A_1$, put
for every $\tiom\in\Omega$, 
\begin{align*}
\tilde{Y}^{3,\tiom}_t&:=
Y^{s,\om}_t(
\tiom.\bfone_{[0,s^\prime)}+(X+\tiom_{s^\prime}).\bfone_{[s^\prime,T]}
),\\
\tilde{\xi}^{3,\tiom}&:=\xi(
\tiom.\bfone_{[0,s^\prime)}+(X+\tiom_{s^\prime}).\bfone_{[s^\prime,T]}
),\\
\tilde{f}^{3,\tiom}_t(X,y,z,p)&:=f_t(
\tiom.\bfone_{[0,s^\prime)}+(X+\tiom_{s^\prime}).\bfone_{[s^\prime,T]},
y,z,p).
\end{align*}
Since $(B,C,\nu)$ is deterministic and since,
for $\Prob_{s,\om}$-a.e.~$\tiom\in\Omega$,
\begin{align*}
Y^{s,\om}_{s^\prime}(\tiom)=
\Mean_{s,\om}[Y^{s,\om}_{s^\prime}\vert\cF^0_{s^\prime+}](\tiom)=
\Mean_{s^\prime,\tiom}[Y^{s,\om}_{s^\prime}],
\end{align*}
we have
\begin{equation}\label{E4:Reg}
\begin{split}
A_1&=\abs{
\Mean_{s,\om}[Y^{s,\om}_s]-
\Mean_{s^\prime,\om_{\cdot\wedge s}}
[Y^{s^\prime,\om_{\cdot\wedge s}}_{s^\prime}]
}\\
&\le 
\abs{
\Mean_{s,\om}\left[
\int_s^{s^\prime} f_t\left(
X,Y^{s,\om}_t,Z^{s,\om}_t,
\int_{\R^d}U^{s,\om}_t(z)\,\eta_t(z)\,K_t(dz)
\right)\,dt
\right]
}\\
&\qquad +
\abs{
\Mean_{s,\om}[Y^{s,\om}_{s^\prime}]-
\Mean_{s^\prime,\om_{\cdot\wedge s}}
[Y^{s^\prime,\om_{\cdot\wedge s}}_{s^\prime}]
}\\
&\le (s^\prime-s)C_0^\prime+
\abs{
\int_\Omega\Mean_{s^\prime,\tiom}[Y^{s,\om}_{s^\prime}]-
\Mean_{s^\prime,\om_{\cdot\wedge s}}
[Y^{s^\prime,\om_{\cdot\wedge s}}_{s^\prime}]\,\Prob_{s,\om}(d\tiom)
}\\
&= (s^\prime-s)C_0^\prime+
\abs{
\int_\Omega\Mean_{s^\prime,\bfnull}[\tilde{Y}^{3,\tiom}_{s^\prime}-
\tilde{Y}^{1}_{s^\prime}]\,\Prob_{s,\om}(d\tiom)
}.
\end{split}
\end{equation}
Similarly, as we estimated $A_2$ in \eqref{E3:Reg}, we get,
for every $\tiom\in\Omega$,
\begin{align}\label{E5:Reg}
\abs{
\Mean_{s^\prime,\bfnull}[\tilde{Y}^{3,\tiom}_{s^\prime}-
\tilde{Y}^{1}_{s^\prime}]
}\le C^\prime\rho_1(\norm{
\tiom_{\cdot\wedge s^\prime}-\om_{\cdot\wedge s}
}_\infty),
\end{align}
where $C^\prime>0$ does not depend on $s$, $s^\prime$,
$\om$, and $\tiom$. 
Note that, since
$\rho_1$ is continuous at $0$, there exists a 
$\delta^{\prime\prime}$ such that 
$C^\prime \rho_1(\delta^{\prime\prime})<\eps/2$.
Thus, by \eqref{E4:Reg} together with \eqref{E5:Reg},
\begin{equation}\label{E6:Reg}
\begin{split}
A_1&\le (s^\prime-s)C^\prime_0+
C^\prime\Mean_{s,\om}[
\rho_1(\norm{X_{\cdot\wedge s^\prime}-\om_{\cdot\wedge s}}_\infty
]\\
&= (s^\prime-s)C^\prime_0+
C^\prime\Mean_{s,\om}[
\rho_1(\sup_{t\in [s,s^\prime]}\abs{X_t-X_s})\\
&\qquad\qquad\cdot
\bfone_{
\{\sup_{t\in [s,s^\prime]}\abs{X_t-X_s}<\delta^{\prime\prime}\}
}+\bfone_{
\{\sup_{t\in [s,s^\prime]}\abs{X_t-X_s}\ge\delta^{\prime\prime}\}
}
]\\
&\le  (s^\prime-s)C^\prime_0+\frac{\eps}{2}+C^\prime
\norm{\rho_1}_\infty\,
\Prob_{s,\om}\left(
\sup_{t\in [s,s^\prime]}\abs{X_t-X_s}\ge\delta^{\prime\prime}
\right).
\end{split}
\end{equation}
Recall that on $[s,T]$,
\begin{align*}
X=X_s+p_s B+M,\qquad\text{$\Prob_{s,\om}$-a.s.,}
\end{align*}
where $M:=X^{c,s,\om}+z\ast(\mu^X-\nu)$ is
a $(\Prob_{s,\om},\F^0_+)$-martingale on $[s,T]$.
Without loss of generality, let
\begin{align}\label{E7:Reg}
\delta^\prime C_0^\prime<\frac{\delta^{\prime\prime}}{2}\qquad
\text{and}\qquad s^\prime-s<\delta^\prime.
\end{align}
Thus, since 
\begin{align*}
\sup_{t\in [s,s^\prime]}\abs{X_t-X_s}\ge\delta^{\prime\prime}
\end{align*}
implies 
\begin{align*}
\sup_{t\in [s,s^\prime]}\abs{p_s B_t}+
\sup_{t\in [s,s^\prime]}\abs{M_t}
\ge\delta^{\prime\prime}
\end{align*}
but, by \eqref{E7:Reg},
\begin{align*}
\sup_{t\in [s,s^\prime]}\abs{p_s B_t}\le 
\int_s^{s\prime}\abs{b_t}\,dt\le (s^\prime-s)C_0^\prime\le 
\frac{\delta^{\prime\prime}}{2},
\end{align*}
we have, by Doob's inequality and It\^o's lemma,
\begin{align*}
&\Prob_{s,\om}\left(
\sup_{t\in [s,s^\prime]}\abs{X_t-X_s}\ge\delta^{\prime\prime}
\right)\\&\qquad\le
\Prob_{s,\om}\left(
\sup_{t\in [s,s^\prime]}\abs{M_t}\ge\frac{\delta^{\prime\prime}}{2}
\right)\\&\qquad\le
\Prob_{s,\om}
\left(\sup_{t\in [s,s^\prime]}\abs{M_t}^2\ge
\frac{\abs{\delta^{\prime\prime}}^2}{4}
\right)\\&\qquad\le
\frac{4}{\abs{\delta^{\prime\prime}}^2}
\Mean_{s,\om}\left[\abs{M_{s^\prime}}^2\right]\\ &\qquad\le
\frac{4}{\abs{\delta^{\prime\prime}}^2}
\Mean_{s,\om}\left[ \sum_{i\le d} \int_s^{s^\prime} c_t^{ii}\,dt
+\int_s^{s^\prime}\int_{\R^d} 
\left(\abs{z}^2\wedge C_0^\prime\right) \,K_t(dz)\,dt
\right]\\ &\qquad\le
\frac{4}{\abs{\delta^{\prime\prime}}^2}\cdot
(s^\prime-s) (dC_0^\prime+C_0^{\prime\prime}).
\end{align*}
Together with \eqref{E6:Reg}, we get
\begin{align*}
A_1\le (s^\prime-s)C^{\prime\prime}+\frac{\eps}{2}
\end{align*}
for some constant $C^{\prime\prime}>0$ that does not
depend on $s$, $s^\prime$,  $\omega$, and $\om^\prime$ provided that
\eqref{E7:Reg} holds. I.e., if
\begin{align*}
\dist_\infty((s,\om),(s^\prime,\om^\prime))<\frac{\eps}
{2C^{\prime\prime}}\wedge \frac{\delta^{\prime\prime}}
{2C_0^\prime},
\end{align*}
then $A_1<\eps$.
\end{proof}

\begin{lemma}\label{L:BSDE-DPP}
Fix $(s,\om)\in\bar{\Lambda}$ and $t\in [s,T]$.
For $\Prob_{s,\om}$-a.e.~$\tiom\in\Omega$,
the processes $Y^{s,\om}$ and $Y^{t,\tiom}$
are $\Prob_{t,\tiom}$-indistinguishable on $[t,T]$.
\end{lemma}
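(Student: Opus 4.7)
The plan is to exploit the flow property of the underlying martingale problem together with uniqueness of the BSDE. Specifically, I will show that the triple $(Y^{s,\om},Z^{s,\om},U^{s,\om})$, when restricted to $[t,T]$, satisfies under $\Prob_{t,\tiom}$ (for $\Prob_{s,\om}$-a.e.~$\tiom$) exactly the same BSDE whose unique solution defines $(Y^{t,\tiom},Z^{t,\tiom},U^{t,\tiom})$; by uniqueness the two solutions must agree.

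The first step is to identify $\Prob_{t,\tiom}$ as a regular conditional probability of $\Prob_{s,\om}$ given $\cF^0_{t+}$, for $\Prob_{s,\om}$-a.e.~$\tiom$. This is precisely the content expected from Appendix~A: one takes \emph{any} regular conditional probability $\Prob_{s,\om}(\cdot\mid\cF^0_{t+})(\tiom)$; by the characterization of a solution to the martingale problem via test martingales and a standard monotone-class/countable-exception argument, for $\Prob_{s,\om}$-a.e.~$\tiom$ this kernel solves the martingale problem for $(p_tB,p_tC,p_t\nu)$ starting at $(t,\tiom)$. Uniqueness of that martingale problem (Assumption~\ref{A:diffChar}) forces it to equal $\Prob_{t,\tiom}$.

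The second step is to transfer the BSDE from $\Prob_{s,\om}$ to $\Prob_{t,\tiom}$. Writing the defining equation for $(Y^{s,\om},Z^{s,\om},U^{s,\om})$ between $r$ and $T$ for $r\in[t,T]$, the identity
\begin{align*}
Y^{s,\om}_r &= \xi + \int_r^T f_u\bigl(X,Y^{s,\om}_u,Z^{s,\om}_u,\textstyle\int U^{s,\om}_u(z)\eta_u(z)K_u(dz)\bigr)\,du \\
&\qquad - \int_r^T Z^{s,\om}_u\,dX^{c,s,\om}_u - \int_r^T\!\!\int_{\R^d} U^{s,\om}_u(z)\,(\mu^X-\nu)(du,dz)
\end{align*}
holds $\Prob_{s,\om}$-a.s., hence, by the conditioning step, $\Prob_{t,\tiom}$-a.s.~for $\Prob_{s,\om}$-a.e.~$\tiom$. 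Because the characteristics of $X$ on $[t,T]$ under $\Prob_{t,\tiom}$ coincide with those of $X$ on $[t,T]$ under $\Prob_{s,\om}$ (both are given by $(p_tB,p_tC,p_t\nu)$), the continuous martingale part $X^{c,s,\om}$ restricted to $[t,T]$ agrees $\Prob_{t,\tiom}$-a.s.~with $X^{c,t,\tiom}$, and $\mu^X-\nu$ is the same compensated jump measure in both settings. Thus the restriction of $(Y^{s,\om},Z^{s,\om},U^{s,\om})$ to $[t,T]$ is a solution on $(\Omega,\cF^0_T,\Prob_{t,\tiom})$ of the BSDE whose unique solution is $(Y^{t,\tiom},Z^{t,\tiom},U^{t,\tiom})$.

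The third step is to apply uniqueness of the BSDE on $[t,T]$ under $\Prob_{t,\tiom}$, which, as noted in the remark following the definition of $u^0$, is guaranteed by the representation property of $(\Prob_{t,\tiom},\F^0_+)$-local martingales together with Assumption~\ref{A:SemilinCoeff}. Indistinguishability of $Y^{s,\om}$ and $Y^{t,\tiom}$ on $[t,T]$ follows, since both are right-continuous and agree $\Prob_{t,\tiom}$-a.s.~at every fixed time. The main obstacle is the measure-theoretic bookkeeping in the first step: one must verify that the $\Prob_{s,\om}$-null set on which $\Prob_{s,\om}(\cdot\mid\cF^0_{t+})(\tiom)$ fails to equal $\Prob_{t,\tiom}$ does not depend on the particular processes $(Y^{s,\om},Z^{s,\om},U^{s,\om})$, and that the stochastic integrals (defined originally under $\Prob_{s,\om}$) can be identified with the corresponding integrals under $\Prob_{t,\tiom}$; both are standard consequences of the martingale-problem uniqueness and can be handled via the tools of Appendix~A.
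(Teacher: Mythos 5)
Your sketch follows the same overall strategy as the paper --- identify $\Prob_{t,\tiom}$ as the regular conditional probability of $\Prob_{s,\om}$ given $\cF^0_{t+}$, argue that the restriction of $Y^{s,\om}$ to $[t,T]$ solves under $\Prob_{t,\tiom}$ the BSDE that defines $Y^{t,\tiom}$, and conclude by uniqueness --- but it glosses over the one step that the paper is careful about, and the gloss is where the gap is.

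Specifically, you assert that the stochastic integrals
$\int_r^T Z^{s,\om}_u\,dX^{c,s,\om}_u$ and $\int_r^T\int_{\R^d} U^{s,\om}_u(z)\,(\mu^X-\nu)(du,dz)$,
constructed under $\Prob_{s,\om}$, can be identified with the corresponding integrals under $\Prob_{t,\tiom}$, calling this a ``standard consequence of martingale-problem uniqueness'' handled by Appendix~A. Appendix~A, however, transfers \emph{martingales} across conditional measures (Theorem~\ref{T:cpd_rightLim}, Lemma~\ref{L:MGproblem}, Proposition~\ref{L:shiftMG_augment}); it does not by itself transfer stochastic integrals. A stochastic integral is only a $\Prob_{s,\om}$-a.s.~limit, the exceptional $\Prob_{s,\om}$-null set need not be $\Prob_{t,\tiom}$-negligible for $\Prob_{s,\om}$-a.e.~$\tiom$, and the integrand $Z^{s,\om}$ is $\F^{s,\om}$-predictable but need not be $\F^{t,\tiom}$-predictable. (The identification of the continuous local martingale parts $X^{c,s,\om}$ and $X^{c,t,\tiom}$ themselves, which you also use, is a separate nontrivial fact proved as Lemma~\ref{L:SDE-DPP} in Appendix~C.) The paper sidesteps all of this: instead of transferring the two stochastic integrals, it transfers the single bounded right-continuous $\F^0_+$-adapted martingale
\begin{align*}
M_r := Y^{s,\om}_r-Y^{s,\om}_t-\int_t^r f_\theta\Bigl(X,Y^{s,\om}_\theta,Z^{s,\om}_\theta,\int_{\R^d}U^{s,\om}_\theta(z)\,\eta_\theta(z)\,K_\theta(dz)\Bigr)\,d\theta,
\end{align*}
to which Proposition~\ref{L:shiftMG_augment} applies directly, and then rebuilds the integrand pair from scratch under $\Prob_{t,\tiom}$ via the martingale representation property of $(\Prob_{t,\tiom},\F^0_+)$-local martingales, iterating until a triple with $Y$-component $Y^{s,\om}$ that solves the target BSDE is produced. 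Only then is uniqueness invoked.

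So the skeleton of your argument is right, but the middle step as stated is not justified; it should be replaced by the martingale-transfer plus martingale-representation-and-iteration argument that the paper actually uses.
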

\begin{proof}
Let $\Omega^\prime$ be the set of all
$\om^\prime\in\Omega$ such that the process
$M$ on $[s,T]$ defined by
\begin{align*}
M_r:=Y_r^{s,\om}-Y_t^{s,\om}-
\int_t^r f_\theta(X,Y^{s,\om}_\theta,
Z^{s,\om}_\theta,
\int_{\R^d}
 U^{s,\om}_\theta(z)\eta_\theta(z)K_\theta(dz))
\,d\theta
\end{align*}
is an $(\F^{t,\om^\prime},
\Prob_{t,\om^{\prime}})$-martingale.
By Proposition~\ref{L:shiftMG_augment},
$\Prob_{s,\om}(\Omega^\prime)=1$. Now, let 
$\tiom\in\Omega^\prime$. 
Put $(Z^0,U^0):=(Z^{s,\om},U^{s,\om})$.
Since
$M$ is an 
$(\F^{t,\tiom},\Prob_{t,\tiom})$-martingale,
we can, for every $n\in\N_0$, define $(Z^{n+1},U^{n+1})$
inductively by
\begin{align*}
Y_r^{s,\om}&=\xi+\int_r^T
f_\theta(X,Y_\theta^{s,\om},Z^n_\theta,
\int_{\R^d}
 U^n_\theta(z)\eta_\theta(z)\,K_\theta(dz))
\,d\theta
-\int_r^T Z^{n+1}_\theta\,dX^{c,t,\tiom}_\theta
\\ &\qquad
-\int_r^T\int_{\R^d} U^{n+1}_\theta(z)
\,(\mu^X-\nu)(d\theta,dz),
\quad r\in [t,T],\quad
\text{$\Prob_{t,\tiom}$-a.s.}
\end{align*}
Note that $(Z^n,U^n)$ converges 
to some limit $(Z,U)$ and that $(Y^{s,\om},Z,U)$
solves 
\begin{align*}
Y_r^{s,\om}&=\xi+\int_r^T
f_\theta(X,Y_\theta^{s,\om},Z_\theta,
\int_{\R^d} U_\theta(z)\eta_\theta(z)
\,K_\theta(dz))\,d\theta
-\int_r^T Z_\theta\,dX^{c,t,\tiom}_\theta
\\ &\qquad
-\int_r^T \int_{\R^d}
 U_\theta(z)\,(\mu^X-\nu)(d\theta,dz),
\quad r\in [t,T],\quad
\text{$\Prob_{t,\tiom}$-a.s.}
\end{align*}
Uniqueness for BSDEs concludes the proof.
\end{proof}
\begin{remark}\label{R:u0}
Fix $(s,\om)\in\bar{\Lambda}$. By 
Lemma~\ref{L:BSDE-DPP} and by 
Proposition~\ref{C:SMPwide},  
for every $t\in [s,T]$, and for
 $\Prob_{s,\om}$-a.e.~$\tiom\in\Omega$,
 \begin{align*}
 u^0(t,\tiom)&=
 \Mean_{t,\tiom} [Y^{s,\om}_t]=
 \Mean_{s,\om} [Y_t^{s,\om}\vert\cF^0_{t+}]
 (\tiom=Y_t^{s,\om}(\tiom).
\end{align*}
If $u^0\in C(\bar{\Lambda})$, then
 $u^0$ and $Y^{s,\om}$
 are 
$\Prob_{s,\om}$-in\-dis\-tin\-guish\-a\-ble
on $[s,T]$
because both processes
are right-continuous. 
\end{remark}
\begin{proof}[Proof of 
Theorem~\ref{T:Existence}]
By Proposition~\ref{P:Regularity}, $u^0\in C_b(\bar{\Lambda})$. Thus $u^0$ is bounded,
right-continuous, non-anticipating, 
and $\Prob_{s,\om}$-quasi-left-continuous
for every $(s,\om)\in\bar{\Lambda}$.
Keeping Remark~\ref{R:u0} in mind, one
can easily show that $u^0$ is a viscosity subsolution. To do so,
follow the lines of the corresponding part
of the proof of Theorem~\ref{T:Cons}
and replace in \eqref{E0:Cons}
\begin{align*}
&\mathcal{L} u\quad\text{by}\quad
f\left(
X,u,Z^{0,\om},\int U^{0,\om}(z)\eta(z)\,K(dz)
\right)
\intertext{and everywhere}
&\partial_\om u\quad\text{by}\quad Z^{0,\om},\quad
\mathcal{I} u\quad\text{by}\quad 
\int U^{0,\om}(z)\eta(z)\,K(dz),\quad\text{and}
\quad
\nabla^2_z u  
\quad\text{by}\quad
U^{0,\om}(z).
\end{align*}
Similarly, one can show that $u^0$ is a
viscosity supersolution.
\end{proof}
\section{Partial Comparison 
and Stability}\label{SS:PartCom}
Before we begin to prove the partial comparison
principle itself, we need to establish some auxiliary
results about BSDEs, reflected BSDEs (RBSDEs), and
optimal stopping.
\subsection{BSDEs with jumps and nonlinear expectations}
Given $(s,\om)\in\bar{\Lambda}$,
$L>0$, $\tau\in\mathcal{T}_s(\mathbb{F}^{s,\om})$,
and an $\mathcal{F}^{s,\om}_\tau$-measurable
random variable $\tilde{\xi}:\Omega\to\R$, 
denote by
\begin{align*}
(Y^{s,\om}(L,\tau,\tilde{\xi}),
Z^{s,\om}(L,\tau,\tilde{\xi}),
U^{s,\om}(L,\tau,\tilde{\xi}))
\end{align*}
the solution to the BSDE
\begin{align*}
Y_t&=\tilde{\xi}+\int_t^T 
\bfone_{\{r<\tau\}}\, L\left[
\abs{\sigma_r^\top Z_r}_1+
\int_{\R^d} U_r(z)^+\,\eta_r(z)\,K_r(dz)
\right]\,dr\\
&\qquad -\int_t^T Z_r\,dX^{c,s,\om}_r
-\int_t^T\int_{\R^d}
U_r(z)\,(\mu^X-\nu)(dr,dz),\quad
t\in [s,T],\quad\text{$\Prob_{s,\om}$-a.s.}
\end{align*}
\begin{remark}
Note that in the driver of the BSDE
above we use the positive part
$U(z)^+$ instead of the absolute value
$\abs{U(z)}$ in order for the comparison
principle for BSDEs to hold
(see \cite{BarlesEtAl97}).
\end{remark}
\begin{lemma}\label{L:gExpect}
We have
\begin{align}\label{E1:gExpect}
\overline{\mathcal{E}}^L_{s,\om} 
[\tilde{\xi}]=
\Mean_{s,\om} [Y_s^{s,\om}(L,\tau,\tilde{\xi})].
\end{align}
\end{lemma}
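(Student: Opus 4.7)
The plan is to establish the two inequalities separately using a Girsanov-type change of measure. The key observation is that the driver $L[|\sigma_r^\top Z_r|_1 + \int U_r(z)^+\eta_r(z)K_r(dz)]$ of the BSDE is exactly the pointwise supremum over admissible pairs $(H,W)$ of $Z_r^\top c_r H_r + \int U_r(z) W_r(z) K_r(dz)$, so the BSDE is the value function of a linear control problem.

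First I would fix $\Prob\in\mathcal{P}^L(s,\om)$ with density $\Gamma^{H,W}_T$, where $|\sigma^\top H|_\infty\le L$ and $0\le W\le L\eta$. By Girsanov's theorem for semimartingales (Theorem III.3.24 in \cite{JacodShiryaevBook}), $X^{c,s,\om}-(cH)\sint t$ is a $\Prob$-local martingale and the $\Prob$-compensator of $\mu^X$ is $(1+W)\nu$. Substituting into the BSDE for $(Y,Z,U):=(Y^{s,\om}(L,\tau,\tilde{\xi}),Z^{s,\om}(L,\tau,\tilde{\xi}),U^{s,\om}(L,\tau,\tilde{\xi}))$ and taking $\Prob$-expectations (using a standard localization to neutralize the local-martingale parts, and observing that after $\tau$ the BSDE reduces to a martingale so $Z,U$ vanish on $\llb\tau,T\rrb$) yields
\begin{align*}
Y_s^{s,\om}&=\Mean^\Prob[\tilde{\xi}]+\Mean^\Prob\!\left[\int_s^\tau\!\!\Big\{L|\sigma_r^\top Z_r|_1+L\!\int U_r(z)^+\eta_r(z)K_r(dz)\\
&\qquad\qquad\qquad\qquad\qquad-Z_r^\top c_r H_r-\int U_r(z)W_r(z)K_r(dz)\Big\}dr\right],
\end{align*}
where we used that $Y_s^{s,\om}$ is $\cF_s^{s,\om}$-measurable and $\Prob=\Prob_{s,\om}$ on $\cF_s^{s,\om}$.

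Next, I would invoke the elementary duality bounds $Z^\top cH=(\sigma^\top Z)^\top(\sigma^\top H)\le|\sigma^\top Z|_1|\sigma^\top H|_\infty\le L|\sigma^\top Z|_1$ and $\int UWK\le\int U^+ WK\le L\int U^+\eta K$, both valid since $|\sigma^\top H|_\infty\le L$ and $0\le W\le L\eta$. This shows the integrand above is a.s.\ nonnegative, whence $\Mean^\Prob[\tilde{\xi}]\le Y_s^{s,\om}=\Mean_{s,\om}[Y_s^{s,\om}]$. Taking the supremum over $\Prob\in\mathcal{P}^L(s,\om)$ gives the $\le$ direction of \eqref{E1:gExpect}.

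For the reverse inequality, I would achieve equality in the duality bounds by the explicit pointwise maximizers $\tilde{H}^*_r:=L\,\mathrm{sign}(\sigma_r^\top Z_r)$ (componentwise), then $H^*:=(\sigma_r^\top)^{-1}\tilde{H}^*_r$ if $d>1$ or $H^*:=\sigma_r^{-1}\tilde{H}^*_r\bfone_{\{\sigma_r\neq 0\}}$ if $d=1$, together with $W^*_r(z):=L\eta_r(z)\bfone_{\{U_r(z)>0\}}$; these are predictable because $Z$ and $U$ are. Then $|\sigma^\top H^*|_\infty\le L$ and $0\le W^*\le L\eta$. Since $H^*$ and $W^*$ are bounded and $\int(|z|^2\wedge C_0^\prime)F(dz)<\infty$, the Doléans–Dade exponential $\Gamma^{H^*,W^*}$ is a true (uniformly integrable) martingale, and $W^*\ge 0$ keeps it strictly positive, so $d\Prob^*:=\Gamma^{H^*,W^*}_T\,d\Prob_{s,\om}$ defines an element of $\mathcal{P}^L(s,\om)$. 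Under $\Prob^*$ the integrand in the display above vanishes identically on $\llb s,\tau\llb$, yielding $Y_s^{s,\om}=\Mean^{\Prob^*}[\tilde{\xi}]\le\overline{\mathcal{E}}^L_{s,\om}[\tilde{\xi}]$.

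The main obstacle is the verification that $\Gamma^{H^*,W^*}$ is a genuine martingale and the clean justification of the Girsanov computation in the presence of the jump term; both are handled by the boundedness of $H^*,W^*$ combined with Assumption~\ref{A:diffChar}, together with standard localization of the two stochastic-integral martingales in the BSDE. The remaining subtlety is purely notational: ensuring the optimal $H^*$ and $W^*$ are chosen on the appropriate predictable versions of $Z$ and $U$, which is immediate from the construction of the BSDE solution.
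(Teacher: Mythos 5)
Your proposal is correct and follows essentially the same route as the paper: both proceed by a change-of-measure argument with the same explicit maximizers $\tilde H^{i}=L\,\mathrm{sign}((\sigma^\top Z)^i)$ and $W^*(z)=L\eta(z)\bfone_{\{U(z)>0\}}$, and the same pointwise duality bounds $Z^\top cH\le L|\sigma^\top Z|_1$ and $\int UW\,K\le L\int U^+\eta\,K$ for the upper bound. The only cosmetic difference is that you invoke Girsanov's theorem to pass expectations under $\Prob$, whereas the paper computes $\Mean_{s,\om}[\Gamma_\tau Y_\tau]$ directly via an explicit integration-by-parts for $\Gamma Y$ (Lemma~\ref{L:IntegrByParts}); these are two phrasings of the same calculation.
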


\begin{proof}
For the sake of readability, we omit
to write  $^{s,\om}(L,\tau,\tilde{\xi})$ in 
this proof.

Given a process $H$ and a random field $W$,
let $\Gamma=\Gamma^{H,W}$ be the solution to
\begin{align*}
\Gamma=1+(\Gamma_-H)\sint X^{c,s,\om}+
(\Gamma_-W)\ast(\mu^X-\nu)
\end{align*}
on $[s,T]$ with $\Gamma=1$ on $[0,s)$,
$\Prob_{s,\om}$-a.s. Since
$Z=\bfone_{\llb s,\tau\llb} Z$,
$U(z)=\bfone_{\llb s,\tau\llb} U(z)$, and
\begin{align*}
Y&=Y_s-\bfone_{\llb s,\tau\llb}\, L\left[\abs{\sigma^\top Z}_1+
\int_{\R^d} LU(z)^+\,\eta K(dz)\right]\sint t
\\&\qquad+
\sum_i Z^i\sint X^{i,c,s,\om}+U\ast (\mu^X-\nu),
\end{align*}
integration-by-parts (Lemma~\ref{L:IntegrByParts})
yields
\begin{equation}\label{E1a:gExpect}
\begin{split}
\Gamma Y&=Y_s+\bfone_{\llb s,\tau\llb}\Gamma
\left[-L
\abs{\sigma^\top Z}_1+
\sum_{i,j} H^i Z^j c^{ij}
\right]\sint t\\ &\qquad+
\bfone_{\llb s,\tau\llb}\Gamma\left[\int_{\R^d}
-LU(z)^+\,\eta(z)\,K(dz) +
\int_{\R^d} W(z)U(z)\,K(dz)
\right]\sint t\\ &\qquad +
\text{ martingale}\\
&=:Y_s+A^1+A^2+\text{ martingale}.
\end{split}
\end{equation}
Our goal is to choose $H$ and $W$ so that the 
drift term $A^1+A^2$ vanishes.
Let us first deal with $A^2$. If $W$ is defined
by
\begin{align}\label{E2:gExpect}
W(z)=L\eta(z).\bfone_{\{
U(z)> 0
\}},
\end{align}
then $A^2=0$ and $0\le W(z)\le L\eta(z)$.
Next, we deal with $A^1$. We need 
\begin{align}\label{E3:gExpect}
H^\top c Z=(H^\top\sigma)(\sigma^\top Z)=
L\abs{\sigma^\top Z}_1
\end{align}
to hold. To this end, put $\tilde{Z}=\sigma^\top Z$
and $\tilde{H}=\sigma^\top H$. Then
\eqref{E3:gExpect} is equivalent to 
\begin{align*}
\sum_i \tilde{H}^i \tilde{Z}^i=
L\sum_i \abs{\tilde{Z}^i}.
\end{align*}
Thus, if
\begin{align}\label{E4:gExpect}
\tilde{H}^i=L
\frac{\abs{\tilde{Z}^i}}{\tilde{Z}^i}.
\bfone_{\{\tilde{Z}^i\neq 0\}}\qquad\text{and}
\qquad
H=\begin{cases}
(\sigma^\top)^{-1}\tilde{H}
&\text{if $d>1$,}\\
\sigma^{-1}\tilde{H}.\bfone_{\{\sigma\neq 0\}}
&\text{if $d=1$,}
\end{cases}
\end{align}
then we get \eqref{E3:gExpect}, i.e., $A^1=0$,
and, moreover, we have
$\abs{\sigma^\top H}_\infty=
\abs{\tilde{H}}_\infty\le L$. Consequently,
for $H$ defined by \eqref{E4:gExpect}
and $W$ defined by \eqref{E2:gExpect},
we have
\begin{align}\label{E5:gExpect}
\overline{\mathcal{E}}^L_{s,\om}[\tilde{\xi}]\ge 
\Mean_{s,\om}\left[
\Gamma^{H,W}_\tau Y_\tau
\right]=\Mean_{s,\om} [Y_s].
\end{align}

On the other hand, for every process
$H$ with $\abs{\sigma^\top H}_\infty\le L$
and every random field $W$ with 
$0\le W(z)\le L\eta(z)$, we have
\begin{align*}
H^\top cZ&\le 
\abs{\sigma^\top H}_\infty 
\abs{\sigma^\top Z}_1 \le 
L\abs{\sigma^\top Z}_1\qquad\text{and}\qquad
W(z)U(z)\le L\eta(z) U(z)^+,
\end{align*}
which, by \eqref{E1a:gExpect}, yields
\begin{align*}
\Mean_{s,\om}\left[
\Gamma^{H,W}_\tau Y_\tau
\right]\le \Mean_{s,\om}[Y_s].
\end{align*}
This, together with \eqref{E5:gExpect},
 establishes \eqref{E1:gExpect}.
\end{proof}

\subsection{RBSDEs with jumps}
Our proof of the partial comparison principle
relies heavily on the theory of RBSDEs. See
\cite{ElKarouiEtAl97_RBSDEs},
\cite{HamadeneOuknine03},
\cite{CrepeyMatoussi08_RBSDEs},
and Chapter~14 of \cite{DelongBook}
for more details.

Fix a bounded, right-continuous, 
$\F$-adapted process 
$R:\bar{\Lambda}\to\R$ that is  
$\Prob_{s,\om}$-quasi-left-continous 
on $[s,T]$ for every $(s,\om)\in\bar{\Lambda}$.
Fix also $L\ge 0$.

For every $(s,\om)\in\bar{\Lambda}$
and $\ch\in\cT_s(\F^{s,\om})$, 
there exists, because of
 the martingale
representation property 
(Theorem III.4.29 in \cite{JacodShiryaevBook}),
 a unique solution 
\begin{align*} 
 (\bar{Y},\bar{Z},\bar{U},\bar{K})=
 (\bar{Y}^{s,\om}(L,\ch,R),
 \bar{Z}^{s,\om}(L,\ch,R),
 \bar{U}^{s,\om}(L,\ch,\R),
 \bar{K}^{s,\om}(L,\ch,\R))
\end{align*}
with $\bar{Y}=\bar{Y}_{\cdot\wedge\ch}$,
$\bar{Z}=\bfone_{\{\cdot\le\ch\}}\bar{Z}$,
$\bar{U}=\bfone_{\{\cdot\le\ch\}}\bar{U}$,
and $\bar{K}=\bar{K}_{\cdot\wedge\ch}$
 to the following RBSDE with lower barrier $R$
and random terminal time $\ch$
 (cf.~Remark~2.4~in
  \cite{CrepeyMatoussi08_RBSDEs}):
\begin{align*}
\begin{split}
\bar{Y}_t&=
R_\ch+\int_t^T \bfone_{\{r\le\ch\}}\,L\left[
\abs{\sigma^\top_r\bar{Z}_r}_1+
\int_{\R^d} \bar{U}_r(z)^+\,\eta_r(z)\,
K_r(dz)
\right]\,dr + 
\bar{K}_\ch-\bar{K}_{t}\\
&\quad -
\int_t^T \bar{Z}_r\,dX^{c,s,\om}_r -
\int_t^T \int_{\R^d} \bar{U}_r(z)
\,(\mu^X-\nu)(dr,dz),
t\in [s,T],\text{ $\Prob_{s,\om}$-a.s.,}
\end{split}\\
\bar{Y}_{t}&\ge R_{t\wedge\ch},
t\in [s,T],\text{ $\Prob_{s,\om}$-a.s.,}\\
\int_s^T &(\bar{Y}_{t}-R_{t})
\,d\bar{K}_{t}=0,\quad
\bar{K}^{s,\om}_s=0,\quad
\text{$\bar{K}$ is continuous
and nondecreasing.}
\end{align*}

\begin{lemma}\label{L:RBSDEs_DPP}
Fix 
$\tau\in\cT_s(\F^{s,\om})$
and $\ch\in\cH_{s,\om}$
with $\tau\le\ch$, $\Prob_{s,\om}$-a.s.
 Then, for
$\Prob_{s,\om}$-a.e.~$\tiom\in\Omega$,
the processes 
$\bar{Y}^{s,\om}(L,\ch,R)$ and
$\bar{Y}^{\tau,\tiom}(L,\ch,R)$ are
$\Prob_{\tau,
\tiom}$-indistinguishable on $[\tau(\tiom),T]$
and  $\bar{K}^{\tau,\tiom}(L,\ch,R)=\bar{K}^{s,\om}(L,\ch,R)-
\bar{K}^{s,\om}_{\tau(\tiom)}(L,\ch,R)$
 on $[\tau(\tiom),T]$.
\end{lemma}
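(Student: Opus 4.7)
The plan is to mimic the proof of Lemma~\ref{L:BSDE-DPP}, but now one has to also transport the reflecting process $\bar{K}^{s,\om}$ and verify that the obstacle inequality and Skorokhod flat-off condition survive the conditioning. I would first fix $(s,\om)\in\bar{\Lambda}$ and the quadruple $(\bar{Y},\bar{Z},\bar{U},\bar{K}):=(\bar{Y}^{s,\om}(L,\ch,R),\bar{Z}^{s,\om}(L,\ch,R),\bar{U}^{s,\om}(L,\ch,R),\bar{K}^{s,\om}(L,\ch,R))$ given by the RBSDE under $\Prob_{s,\om}$. My goal is to show that, after excluding a $\Prob_{s,\om}$-null set of $\tiom$, the shifted quadruple
\begin{align*}
(\bar{Y}_r,Z_r,U_r,\bar{K}_r-\bar{K}_{\tau(\tiom)}),\quad r\in[\tau(\tiom),T],
\end{align*}
(for appropriately re-identified integrands $Z,U$) solves the same RBSDE under $\Prob_{\tau,\tiom}$ with initial time $\tau(\tiom)$, lower barrier $R$, and hitting time $\ch$; uniqueness of RBSDEs then gives the claim.

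Next, following the first step of Lemma~\ref{L:BSDE-DPP}, consider the process
\begin{align*}
M_r &:= \bar{Y}_r-\bar{Y}_{s}+\int_s^{r} \bfone_{\{\theta\le\ch\}}\,L\Bigl[\abs{\sigma_\theta^\top\bar{Z}_\theta}_1+\!\int_{\R^d}\!\bar{U}_\theta(z)^+\eta_\theta(z)\,K_\theta(dz)\Bigr]d\theta +\bar{K}_r-\bar{K}_s.
\end{align*}
By construction, $M$ is an $(\F^{s,\om},\Prob_{s,\om})$-martingale. By Proposition~\ref{L:shiftMG_augment}, there is a $\Prob_{s,\om}$-null set $N$ outside which $M-M_{\tau(\tiom)}$ is an $(\F^{\tau,\tiom},\Prob_{\tau,\tiom})$-martingale. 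Also, by Lemma~\ref{L:ShiftingHittingtime} (applied to $\ch$) and standard disintegration against the regular conditional distribution $\Prob_{\tau,\tiom}$, the pathwise properties $\bar{Y}_{\cdot\wedge\ch}\ge R_{\cdot\wedge\ch}$, continuity and monotonicity of $\bar{K}$, and the flat-off identity $\int(\bar{Y}-R)\,d\bar{K}=0$ hold $\Prob_{\tau,\tiom}$-a.s.\ for all $\tiom$ outside a (possibly enlarged) $\Prob_{s,\om}$-null set. The terminal matching $\bar{Y}_\ch=R_\ch$ likewise transfers.

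Having the martingale property of $M-M_{\tau(\tiom)}$ under $\Prob_{\tau,\tiom}$, I invoke Theorem~III.4.29 in \cite{JacodShiryaevBook} (the representation property of $(\Prob_{\tau,\tiom},\F^0_+)$-local martingales relative to $X$), which, exactly as in Lemma~\ref{L:BSDE-DPP}, yields by an iterative Picard-type construction a pair $(Z,U)$ such that the quadruple $(\bar{Y},Z,U,\bar{K}-\bar{K}_{\tau(\tiom)})$ solves the RBSDE under $\Prob_{\tau,\tiom}$ on $[\tau(\tiom),T]$ with data $(L,\ch,R)$. Uniqueness of this RBSDE (cf.~\cite{CrepeyMatoussi08_RBSDEs}) gives the desired indistinguishability of $\bar{Y}$ with $\bar{Y}^{\tau,\tiom}(L,\ch,R)$ and the identification of the reflecting process.

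The main obstacle is a bookkeeping issue rather than a conceptual one: one needs to simultaneously (i) transfer the martingale component via Proposition~\ref{L:shiftMG_augment}, (ii) transfer the pathwise reflection/obstacle data via disintegration, and (iii) re-identify the stochastic integrals $\bar{Z}\sint X^{c,s,\om}$ and $\bar{U}\ast(\mu^X-\nu)$ as integrals with respect to the different martingale $X^{c,\tau,\tiom}$ and the compensated measure under $\Prob_{\tau,\tiom}$. Step (iii) is exactly what the inductive construction of $(Z^{n+1},U^{n+1})$ in the proof of Lemma~\ref{L:BSDE-DPP} is designed to handle, and I expect no genuinely new difficulty beyond what is already resolved there.
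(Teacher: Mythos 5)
Your proposal follows exactly the route the paper sketches: mimic the Picard-iteration argument of Lemma~\ref{L:BSDE-DPP}, use Proposition~\ref{L:shiftMG_augment} (noting it applies to stopping times $\tau$ rather than deterministic times) to transfer the martingale part, transfer the obstacle/Skorokhod data by disintegration, and conclude via uniqueness for RBSDEs. This is essentially the same argument, just with the bookkeeping the paper leaves implicit filled in.
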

\begin{proof}
Proceed as in the proof of Lemma~\ref{L:BSDE-DPP}
and note that Proposition~\ref{L:shiftMG_augment}
also applies to stopping times.
Utilizing uniqueness for RBSDEs will conclude
the proof.
\end{proof}

Given $\ch\in\cT_s(\F^{s,\om})$,
consider the optimal stopping times
\begin{align*}
\tau^\ast_{s,\om;\ch}&:=\inf\{t\ge s:
\bar{Y}_t^{s,\om}(L,\ch,R)=R_{t\wedge\ch}\},\\
\tau^{\ast\ast}_{s,\om;\ch}&:=\inf\{t\ge s:
\bar{K}^{s,\om}_t(L,\ch,R)>0\}.
\end{align*}
Note that, since $\bar{K}^{s,\om}(L,\ch,R)$ is continuous,
we have
$\tau^\ast_{s,\om;\ch}\le
\tau^{\ast\ast}_{s,\om;\ch}\wedge\ch$.

\begin{lemma}\label{L:PathSnell}
If $\ch\in\cT_s(\F^{s,\om})$, then
\begin{align}\label{E1:PathSnell}
\Mean_{s,\om}[\bar{Y}^{s,\om}_s(L,\ch,R)]=
\sup_{\tau\in\cT_s(\F^{s,\om})}
\overline{\mathcal{E}}^L_{s,\om}
[R_{\tau\wedge\ch}].
\end{align}
\end{lemma}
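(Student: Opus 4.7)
The plan is to establish equality via two opposite inequalities, in the spirit of the classical Snell envelope characterization, but adapted to the nonlinear expectation $\overline{\mathcal{E}}^L_{s,\om}$ through Lemma~\ref{L:gExpect}.

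For the direction $\Mean_{s,\om}[\bar{Y}^{s,\om}_s(L,\ch,R)]\ge\sup_{\tau}\overline{\mathcal{E}}^L_{s,\om}[R_{\tau\wedge\ch}]$, I would fix an arbitrary $\tau\in\cT_s(\F^{s,\om})$ and view the RBSDE on the stochastic interval $\llb s,\tau\wedge\ch\rrb$. Using $\bar{K}^{s,\om}_s=0$, the monotonicity $\bar{K}^{s,\om}_{\tau\wedge\ch}\ge 0$, and the obstacle inequality $\bar{Y}^{s,\om}_{\tau\wedge\ch}\ge R_{\tau\wedge\ch}$, the comparison principle for BSDEs with jumps (\cite{BarlesEtAl97}) would yield
\begin{align*}
\bar{Y}^{s,\om}_s(L,\ch,R)\ge Y^{s,\om}_s(L,\tau\wedge\ch,R_{\tau\wedge\ch}).
\end{align*}
Taking $\Mean_{s,\om}$ and invoking Lemma~\ref{L:gExpect} (applied with stopping time $\tau\wedge\ch\in\cT_s(\F^{s,\om})$ and $\cF^{s,\om}_{\tau\wedge\ch}$-measurable terminal datum $R_{\tau\wedge\ch}$) gives $\overline{\mathcal{E}}^L_{s,\om}[R_{\tau\wedge\ch}]$ on the right-hand side. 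Taking the supremum over $\tau$ yields one inequality.

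For the reverse inequality I would use $\tau^\ast=\tau^\ast_{s,\om;\ch}$. Since the process $\bar{K}^{s,\om}$ is continuous and $\tau^\ast\le\tau^{\ast\ast}\wedge\ch$, the reflecting process is identically zero on $\llb s,\tau^\ast\rrb$. Consequently, the RBSDE reduces on this interval to a pure BSDE with terminal time $\tau^\ast$ and terminal condition $\bar{Y}^{s,\om}_{\tau^\ast}(L,\ch,R)=R_{\tau^\ast\wedge\ch}=R_{\tau^\ast}$ (the first equality is the definition of $\tau^\ast$, the second uses $\tau^\ast\le\ch$). Uniqueness for BSDEs then identifies $\bar{Y}^{s,\om}_s(L,\ch,R)=Y^{s,\om}_s(L,\tau^\ast,R_{\tau^\ast})$, so that Lemma~\ref{L:gExpect} produces
\begin{align*}
\Mean_{s,\om}[\bar{Y}^{s,\om}_s(L,\ch,R)]=\overline{\mathcal{E}}^L_{s,\om}[R_{\tau^\ast\wedge\ch}]\le\sup_{\tau\in\cT_s(\F^{s,\om})}\overline{\mathcal{E}}^L_{s,\om}[R_{\tau\wedge\ch}],
\end{align*}
which closes the loop.

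The main technical hurdle is justifying that $\tau^\ast$ is genuinely an $\F^{s,\om}$-stopping time and that the reflecting term $\bar{K}^{s,\om}$ truly vanishes up to $\tau^\ast$. The first point follows from the right-continuity of $\bar{Y}^{s,\om}(L,\ch,R)$ and $R$ together with the usual-conditions assumption on $\F^{s,\om}$ via the début theorem. The second relies crucially on continuity of $\bar{K}^{s,\om}$, which in turn is a consequence of the $\Prob_{s,\om}$-quasi-left-continuity imposed on $R$: it prevents positive jumps of the obstacle that would force $\bar{K}^{s,\om}$ to activate strictly before $\tau^{\ast\ast}$. Everything else is a direct application of the comparison and uniqueness results already in force for BSDEs with jumps together with Lemma~\ref{L:gExpect}.
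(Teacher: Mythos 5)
Your proposal is correct and follows essentially the same route as the paper's proof: the forward inequality via the BSDE comparison principle applied on $\llb s,\tau\wedge\ch\rrb$ followed by Lemma~\ref{L:gExpect}, and the reverse inequality by reducing the RBSDE to a plain BSDE on $\llb s,\tau^\ast_{s,\om;\ch}\rrb$ (using that $\bar{K}$ is continuous hence vanishes up to $\tau^\ast$) and invoking Lemma~\ref{L:gExpect} again. The additional technical remarks you make about $\tau^\ast$ being a stopping time and the role of quasi-left-continuity of $R$ are consistent with what the paper uses (the continuity of $\bar K$ is in fact built into the RBSDE formulation above the lemma, and the observation $\tau^\ast\le\tau^{\ast\ast}\wedge\ch$ is recorded there as well).
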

A corresponding result for RBSDEs without jumps
has been proven in \cite{Morlais13RBSDEs}.
We follow the approach in \cite{BayraktarYao12QBSDEs}, where 
quadratic RBSDEs without jumps are studied.
\begin{proof}[Proof of Lemma~\ref{L:PathSnell}]
Provided there is no danger of
confusion, we omit to write 
$^{s,\om}(L,\ch,R)$ in this proof.
Let us first fix a stopping time
$\tau\in\cT_s(\F^{s,\om})$. Note that
\begin{align*}
\bar{Y}_t&=
\bar{Y}_{\tau\wedge\ch}+
\int_t^T \bfone_{\{r\le\tau\wedge\ch\}}\,L\left[
\abs{\sigma^\top_r\bar{Z}_r}_1+
\int_{\R^d}\bar{U}_r(z)^+\,
\eta_r(z)\,K_r(dz)
\right]\,dr\\
&\qquad +\bar{K}_{\tau\wedge\ch}-\bar{K}_t
-\int_t^T \bfone_{\{r\le\tau\wedge\ch\}}
\,\bar{Z}_r\,
dX^{c,s,\om}_r\\
&\qquad -\int_t^T \int_{\R^d}
\bfone_{\{r\le\tau\wedge\ch\}}\,
\bar{U}_r(z)\,(\mu^X-\nu)(dr,dz),\,
t\in [s,T],\,\text{$\Prob_{s,\om}$-a.s.}
\end{align*}
Since $\bar{Y}_{\tau\wedge\ch}
\ge R_{\tau\wedge\ch}$
and $\bar{K}_{\tau\wedge\ch}-\bar{K}_t\ge 0$,
the comparison principle for BSDEs with jumps
(combine, e.g., the proofs of 
Theorem~4.2 of \cite{CrepeyMatoussi08_RBSDEs}
and Theorem~5.1 of 
\cite{BayraktarYao12QBSDEs}) yields
\begin{align*}
\bar{Y}^{s,\om}_t(L,\ch,R)\ge 
Y^{s,\om}_t(L,\tau\wedge\ch,R_\tau),
\quad s\le t\le T,
\qquad\text{$\Prob_{s,\om}$-a.s.}
\end{align*}

Consequently, by Lemma~\ref{L:gExpect},
\begin{align}\label{E2:PathSnell}
\Mean_{s,\om}[\bar{Y}_s]\ge 
\sup_{\tau\in\cT_s(\F^{s,\om})}
\overline{\mathcal{E}}^L_{s,\om}
[R_{\tau\wedge\ch}].
\end{align}

Next, consider the (optimal) stopping time
$\tau^\ast:=\tau^\ast_{s,\om;\ch}$. Since
$\bar{K}=0$ on $\llb s,\tau^\ast\rrb$,
$\tau^\ast\le\ch$,
and $\bar{Y}_{\tau^\ast}=
R_{\tau^\ast}$, we have
\begin{align*}
\bar{Y}^{s,\om}_{\cdot\wedge\tau^\ast}
(L,\ch,R)=
Y^{s,\om}
(L,\tau^\ast\wedge\ch,R_{\tau^\ast\wedge\ch}).
\end{align*}
Thus, by Lemma~\ref{L:gExpect},
$\Mean_{s,\om}[\bar{Y}_s]=
\overline{\mathcal{E}}^L_{s,\om}
[R_{\tau^\ast\wedge\ch}]$.
Together with \eqref{E2:PathSnell}, we get
\eqref{E1:PathSnell}.
\end{proof}

\begin{lemma}\label{L:PathSnellII}
If $\ch\in\cH_{s,\om}$, then
for $\Prob_{s,\om}$-a.e.~$\tiom\in\Omega$,
\begin{align*}
\bar{Y}^{s,\om}_{\tau^\ast_{s,\om;\ch}}
(\tiom;L,\ch,R)=
\sup_{
\tau\in\cT_{ 
\tau^{\ast}_{s,\om;\ch}(\tiom)
 }
(\F^{\tau^\ast_{s,\om;\ch},\tiom})
}\overline{\mathcal{E}}^L_{
\tau^{\ast}_{s,\om;\ch}(\tiom)
}
[R_{\tau\wedge\ch}].
\end{align*}
\end{lemma}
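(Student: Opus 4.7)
The plan is to reduce Lemma~\ref{L:PathSnellII} to Lemma~\ref{L:PathSnell} via the dynamic programming property of RBSDE solutions established in Lemma~\ref{L:RBSDEs_DPP}, applied at the optimal stopping time $\tau^\ast := \tau^\ast_{s,\om;\ch}$ itself.

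First, since $\tau^\ast\in\cT_s(\F^{s,\om})$ and $\tau^\ast\le\ch$ holds $\Prob_{s,\om}$-a.s., Lemma~\ref{L:RBSDEs_DPP} applies with $\tau=\tau^\ast$. It yields, for $\Prob_{s,\om}$-a.e.~$\tiom\in\Omega$, that the processes $\bar{Y}^{s,\om}(L,\ch,R)$ and $\bar{Y}^{\tau^\ast(\tiom),\tiom}(L,\ch,R)$ are $\Prob_{\tau^\ast(\tiom),\tiom}$-indistinguishable on $[\tau^\ast(\tiom),T]$. Evaluating at $t=\tau^\ast(\tiom)$, the right-hand side is $\cF_{\tau^\ast(\tiom)}$-measurable and hence $\Prob_{\tau^\ast(\tiom),\tiom}$-a.s.\ constant, so
\begin{align*}
\bar{Y}^{s,\om}_{\tau^\ast_{s,\om;\ch}}(\tiom;L,\ch,R)
\;=\;\bar{Y}^{\tau^\ast(\tiom),\tiom}_{\tau^\ast(\tiom)}(L,\ch,R)
\;=\;\Mean_{\tau^\ast(\tiom),\tiom}\bigl[\bar{Y}^{\tau^\ast(\tiom),\tiom}_{\tau^\ast(\tiom)}(L,\ch,R)\bigr].
\end{align*}

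Second, one needs to check that $\ch$, viewed as a stopping time starting from $\tau^\ast(\tiom)$, still belongs to $\cT_{\tau^\ast(\tiom)}(\F^{\tau^\ast(\tiom),\tiom})$ for $\Prob_{s,\om}$-a.e.~$\tiom$. This follows because the process $Y$ and closed set $E$ witnessing $\ch\in\cH_{s,\om}$ make $\ch$ an $\F$-stopping time, and $\F\subset\F^{\tau^\ast(\tiom),\tiom}$; the inequality $\ch\ge\tau^\ast(\tiom)$ $\Prob_{\tau^\ast(\tiom),\tiom}$-a.s.\ is obtained by combining $\tau^\ast\le\ch$ $\Prob_{s,\om}$-a.s.\ with the fact that canonical paths agree with $\tiom$ on $[0,\tau^\ast(\tiom)]$ under $\Prob_{\tau^\ast(\tiom),\tiom}$, in the spirit of Lemma~\ref{L:ShiftingHittingtime}. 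Then Lemma~\ref{L:PathSnell} applied at $(\tau^\ast(\tiom),\tiom)$ in place of $(s,\om)$ gives
\begin{align*}
\Mean_{\tau^\ast(\tiom),\tiom}\bigl[\bar{Y}^{\tau^\ast(\tiom),\tiom}_{\tau^\ast(\tiom)}(L,\ch,R)\bigr]
\;=\;\sup_{\tau\in\cT_{\tau^\ast(\tiom)}(\F^{\tau^\ast(\tiom),\tiom})}\overline{\mathcal{E}}^L_{\tau^\ast(\tiom),\tiom}[R_{\tau\wedge\ch}],
\end{align*}
and chaining the two displays yields the claim.

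The main obstacle I anticipate is the measurability and null-set bookkeeping in the second step: the shift $(s,\om)\mapsto(\tau^\ast(\tiom),\tiom)$ must preserve not merely the values of $\bar{Y}$ but the RBSDE data and the hitting-time structure of $\ch$ simultaneously, all modulo compatible null sets across two distinct probability measures. The substantive analytic content already sits in Lemmas~\ref{L:RBSDEs_DPP} and \ref{L:PathSnell}; the work here is essentially a careful application of the flow property of conditional probability distributions (as invoked, for instance, in Lemma~\ref{L:BSDE-DPP} and Remark~\ref{R:u0}) together with the measurability of $\tau^\ast$.
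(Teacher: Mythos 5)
Your overall route (Lemma~\ref{L:RBSDEs_DPP} to shift the RBSDE value, then Lemma~\ref{L:PathSnell} at the shifted starting point) is exactly the paper's, but the first display in your argument is not actually established by what you wrote, and it is precisely where the nontrivial content of the proof sits.

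Lemma~\ref{L:RBSDEs_DPP} gives you, for $\Prob_{s,\om}$-a.e.~$\tiom$, that $\bar{Y}^{s,\om}$ and $\bar{Y}^{\tau^\ast(\tiom),\tiom}$ are $\Prob_{\tau^\ast(\tiom),\tiom}$-indistinguishable on $[\tau^\ast(\tiom),T]$. That is a statement modulo a $\Prob_{\tau^\ast(\tiom),\tiom}$-null set, and it tells you nothing about the specific number $\bar{Y}^{s,\om}_{\tau^\ast(\tiom)}(\tiom)$ sitting on the left of the claimed identity, since $\{\tiom\}$ itself is typically $\Prob_{\tau^\ast(\tiom),\tiom}$-null. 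Your phrase ``the right-hand side is $\cF_{\tau^\ast(\tiom)}$-measurable and hence $\Prob_{\tau^\ast(\tiom),\tiom}$-a.s.\ constant'' justifies the second equality of the display (a.s.\ constant equals its mean) but not the first: there is no reason a priori that $\bar{Y}^{s,\om}_{\tau^\ast(\tiom)}(\tiom)$ coincides with the $\Prob_{\tau^\ast(\tiom),\tiom}$-a.s.\ constant, because the RBSDE solution $\bar{Y}^{s,\om}$ is only defined up to $\Prob_{s,\om}$-indistinguishability and need not be pointwise non-anticipating. Bridging this requires the conditional-expectation identity $\bar{Y}^{s,\om}_{\ttau}(\tiom)=\Mean_{s,\om}\bigl[\bar{Y}^{s,\om}_{\ttau}\mid\cF^0_{\ttau+}\bigr](\tiom)=\Mean_{\ttau,\tiom}\bigl[\bar{Y}^{s,\om}_{\ttau(\tiom)}\bigr]$ for $\Prob_{s,\om}$-a.e.~$\tiom$, which is what Theorem~\ref{T:cpd_rightLim} (via \eqref{E2:cpd_rightLim}) and Lemma~\ref{L:MGproblem} deliver; to invoke them one first replaces the $\F^{s,\om}$-stopping time $\tau^\ast$ by an $\F^0_+$-stopping time $\ttau$ with $\ttau=\tau^\ast$ $\Prob_{s,\om}$-a.s. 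Only after this disintegration step does Lemma~\ref{L:RBSDEs_DPP} enter, \emph{inside} the expectation $\Mean_{\ttau,\tiom}[\cdot]$, where its a.s.\ statement is exactly what is needed. You do flag this issue at the end of your write-up as ``null-set bookkeeping,'' but it is not bookkeeping that can be deferred: without the c.p.d.\ shift identity and the replacement of $\tau^\ast$ by $\ttau$, the chain of equalities simply does not close.
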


\begin{proof}
Write $\tau^\ast$ instead of
$\tau^{\ast}_{s,\om;\ch}(\tiom)$.
Let $\ttau$ be a stopping
time belonging to $\cT_s(\F^0_+)$ such that
$\ttau=\tau^\ast$, $\Prob_{s,\om}$-a.s.
Note that $\ttau\le \ch$, $\Prob_{s,\om}$-a.s.
Then, for $\Prob_{s,\om}$-a.e.~$\tiom\in\Omega$,
\begin{align*}
\bar{Y}^{s,\om}_{\tau^\ast}(\tiom;L,\ch,R)&=
\bar{Y}^{s,\om}_{\ttau}(\tiom;\L,\ch,R)\\
&=
\Mean_{s,\om}[
\bar{Y}^{s,\om}_{\ttau}(L,\ch,R)\vert\cF^0_{\ttau+}
](\tiom)\\
&=\Mean_{\ttau,\tiom}[
\bar{Y}^{s,\om}_{\ttau(\tiom)}(L,\ch,R)
]
&&\text{by
 \eqref{E2:cpd_rightLim}
  and Lemma~\ref{L:MGproblem}}\\
&=\Mean_{\ttau,\tiom}[
\bar{Y}^{\ttau,\tiom}_{\ttau(\tiom)}
(L,\ch,R)
]
&&\text{by Lemma~\ref{L:RBSDEs_DPP}}\\
&=\sup_{\tau\in\cT_{\ttau(\tiom)}
(\F^{\ttau,\tiom})}
\overline{\mathcal{E}}^L_{\ttau,\tiom}
[R_{\tau\wedge H}]
&&\text{by Lemma~\ref{L:PathSnell}}\\
&=\sup_{\tau\in\cT_{\tau^\ast(\tiom)}
(\F^{\tau^\ast,\tiom})}
\overline{\mathcal{E}}^L_{\tau^\ast,\tiom}
[R_{\tau\wedge H}].
\end{align*}
This concludes the proof.
\end{proof}

\subsection{Partial Comparison}
We will need the following modification of
the partial comparison principle. 
Theorem~\ref{T:PartialComp} can be proven
similarly. In order to formulate our result we need
the following definition. It might be helpful to recall
Definition~\ref{D:TheSets}.

\begin{definition}
Fix $t\in [0,T)$.
The space $\bar{C}_b^{1,2}(\bar{\Lambda}^t)$ is the set of all
universally measurable functionals $u:\bar{\Lambda}^t\to\R$ for which
there exist a sequence  $(\tau_n)_{n\in\N_0}$ of stopping times in $\mathcal{H}_t$
and a collection of functions $$(\vartheta_n(\pi_n;\cdot))_{n\in\N,\pi_n\in\Pi_n^t}$$
on $[t,T]\times\R^d$ such that the following holds:
\begin{enumerate}
\renewcommand{\labelenumi}{(\roman{enumi})}
\item The sequence $(\tau_n)$ is nondecreasing, $\tau_0=t$, $\tau_n<\tau_{n+1}$
if $\tau_n<T$, and, for every $\om\in\Omega$,
there exists an $m\in\N$ such that $\tau_m(\om)=T$.
\item For every $n\in\N$, $\vartheta_n=\vartheta_n(\pi_n;t,x)$ is universally measurable
 and, for every $\pi_n=(s_i,y_i)_{0\le i\le n-1}$, the function
$\vartheta_n(\pi_n;\cdot)$ is 
continuous on $[s_{n-1},T]\times\R^d$ and  belongs to
$C_b^{1,2}([s_{n-1},T)\times O_\eps(y_{n-1}))$
for some $\eps>0$.
\item For every $n\in\N$ and $\om\in\Omega$,
\begin{align*}
&\vartheta_n((\tau_i(\om),X_{\tau_i}(\om))_{0\le i\le n-1};\tau_n(\om),X_{\tau_n}(\om))=\\ &\qquad\qquad
\vartheta_{n+1}((\tau_i(\om),X_{\tau_i}(\om))_{0\le i\le n};\tau_n(\om),X_{\tau_n}(\om)).\bfone_{\{\tau_n<T\}}(\om)
+u(T,\om).\bfone_{\{\tau_n=T\}}(\om)
\end{align*}
\item We have the representation
\begin{align*}
u(s,\om)&=\sum_{n\ge 1} \vartheta_n
((\tau_i(\om),X_{\tau_i}(\om))_{0\le i\le n-1};s,\om_s).\bfone_{\llb\tau_{n-1},\tau_n\llb}(s,\om)\\&\qquad\qquad
+u(T,\om).\bfone_{\{T\}}(s).
\end{align*}
\end{enumerate}
\end{definition}

\begin{theorem}[Partial Comparison II]\label{T:PartialCompII}
Fix $(s,\om)\in\Lambda$.
Let $u^1$ be 
a viscosity subsolution of \eqref{E:PPIDE}
on $\Lambda^s$. Let $u^2\in\bar{C}_b^{1,2}(\bar{\Lambda}^s)$ 
with a corresponding sequence $(\tau_n)$ of stopping times
and  a corresponding collection $(\vartheta_n)$ of functionals
such that, for every $n\in\N$ and every $(r,\tiom)\in\llb \tau_{n-1}, 
\tau_n\llb$, we have, with
$\pi_n=(\ch^{t,\eps}_i(\tiom),X_{\ch^{t,\eps}_i}(\tiom))_{0\le i\le n-1}$,
\begin{align*}
&-\partial_t \vartheta_n(\pi_n;r,\tiom_r)-
\sum_{i=1}^d b^i_r(\tiom)\partial_{x^i}\vartheta_n(\pi_n;r,\tiom_r)-
\frac{1}{2}\sum_{i,j=1}^d c^{ij}_r(\tiom)\partial^2_{x^ix^j}\vartheta_n(\pi_n;r,\tiom_r)\\
&\qquad -\int_{\R^d}\left[\vartheta_n(\pi_n;r,\tiom_r+z)-
\vartheta_n(\pi_n;r,\tiom_r)-\sum_{i=1}^d z^i\partial_{x^i}
\vartheta_n(\pi_n;r,\tiom_r) \right]\,K_r(\tiom,dz)\\
&\qquad -f_r\Biggl(\tiom,\vartheta_n(\pi_n;r,\tiom_r),
\partial_x\vartheta_n(\pi_n;r,\tiom_r),\\
&\qquad\qquad\qquad
\int_{\R^d}\left[ 
\vartheta_n(\pi_n;r,\tiom_r+z)-
\vartheta_n(\pi_n;r,\tiom_r)
\right]\eta_r(\tiom)\,K_r(\tiom,dz)\Biggr)\ge 0.
\end{align*}
Suppose that $u^1_T\le u^2_T$,
$\Prob_{s,\om}$-a.s. Then $u^1(s,\om)\le 
u^2(s,\om)$.
\end{theorem}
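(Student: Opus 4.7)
The plan is to reduce Theorem~\ref{T:PartialCompII} to Theorem~\ref{T:PartialComp} by a backward induction over the pieces $\llb\tau_{n-1},\tau_n\rrb$ determined by $u^2$. On each piece, property~(iii) in the definition of $\bar{C}_b^{1,2}(\bar{\Lambda}^s)$ yields that $u^2$ coincides, along paths with a prescribed history up to $\tau_{n-1}$, with the classically smooth functional $(r,\om')\mapsto \vartheta_n(\pi_n;r,\om'_r)$, which by the standing hypothesis is a classical supersolution of \eqref{E:PPIDE} on that piece. The base case of the induction is the assumption $u^1_T\le u^2_T$ together with property~(i), which ensures that for every $\om$ the increasing sequence $\tau_0=s,\tau_1,\tau_2,\ldots$ reaches $T$ in finitely many steps.

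For the inductive step, fix $n\in\N$ and, on a set of full $\Prob_{s,\om}$-measure, a path $\tiom$ with $\tau_{n-1}(\tiom)<\tau_n(\tiom)$. Set $\pi_n:=(\tau_i(\tiom),X_{\tau_i}(\tiom))_{0\le i\le n-1}$ and define
\[
v(r,\om'):=\vartheta_n(\pi_n;r,\om'_r),\qquad (r,\om')\in \llb\tau_{n-1}(\tiom),\tau_n(\tiom)\rrb.
\]
For paths $\om'$ extending $\tiom_{\cdot\wedge\tau_{n-1}(\tiom)}$, properties~(iii)--(iv) give $v=u^2$, and the theorem's hypothesis on $\vartheta_n$ says that $v$ is a classical supersolution of \eqref{E:PPIDE} on this stochastic interval. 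Applying a stopping-time variant of Theorem~\ref{T:PartialComp} (see below) at base point $(\tau_{n-1}(\tiom),\tiom)$, with terminal time $\ch=\tau_n$ and terminal data $v_{\tau_n}=u^2_{\tau_n}\ge u^1_{\tau_n}$ (the induction hypothesis), we conclude $u^1_{\tau_{n-1}(\tiom)}(\tiom)\le u^2_{\tau_{n-1}(\tiom)}(\tiom)$. Specializing at $n=1$, $\tiom=\om$, $\tau_0=s$ gives the theorem.

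The principal obstacle is the extension of Theorem~\ref{T:PartialComp} to such a stopping-time variant with terminal time $\ch\in\cH_{t,\tiom}$ and $v$ required to be a classical supersolution only on $\llb t,\ch\rrb$. Such a version should be obtainable along the same lines as Theorem~\ref{T:PartialComp}, replacing $T$ by $\ch$ throughout in the RBSDE/optimal-stopping machinery of Section~\ref{SS:PartCom}; the It\^o formula is only ever applied on $\llb t,\ch\rrb$, so smoothness of $v$ past $\ch$ is not needed. Two secondary issues must be managed: first, $\vartheta_n(\pi_n;\cdot)$ is smooth only on the chart $[s_{n-1},T)\times O_\eps(y_{n-1})$, which forces either a further shrinking of $\ch$ to the first exit of $X$ from that chart or a smooth extension of $\vartheta_n$ preserving the supersolution inequality; since $\tau_n\in\mathcal{H}$, such a refinement is itself in $\mathcal{H}$ and does not affect the induction. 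Second, the joint continuity of $\xi$ and $f$ in the history $\pi_n$ afforded by Assumption~\ref{A:pi} is needed to propagate the inductive step measurably in $\tiom$; regular conditional probabilities from Appendix~A then upgrade the single-path comparison to the full $\Prob_{s,\om}$-a.s.~statement in the inductive hypothesis.
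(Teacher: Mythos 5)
Your proposal takes a genuinely different route from the paper's, but it has a gap that breaks the argument: the backward induction has no base case. In the inductive step $n\to n-1$ you invoke ``$u^1_{\tau_n}\le u^2_{\tau_n}$ (the induction hypothesis)'', but this inequality is never established for any \emph{finite} $n$. Property~(i) in the definition of $\bar{C}_b^{1,2}(\bar{\Lambda}^s)$ guarantees only that for each individual path $\om$ there is some $m(\om)\in\N$ with $\tau_{m(\om)}(\om)=T$; it does \emph{not} give a deterministic $N$ with $\tau_N=T$, $\Prob_{s,\om}$-a.s., and in general $\Prob_{s,\om}(\tau_N<T)>0$ for every $N$. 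The chain of implications $A_n\Rightarrow A_{n-1}$, where $A_n$ stands for ``$u^1_{\tau_n}\le u^2_{\tau_n}$, $\Prob_{s,\om}$-a.s.'', therefore never yields $A_0$. Descending on the random index $m(\tiom)$ does not repair this, since the conditional terminal comparison at $\tau_{n-1}(\tiom)$ quantifies over all extensions $\om'$ of $\tiom$, including ones with $m(\om')$ arbitrarily large; and passing to $n\to\infty$ is blocked because $\tau_n\uparrow T$ only controls the left limit $(u^1-u^2)_{T-}$, whereas $u^1$, $u^2$ are merely right-continuous and the hypothesis concerns $u^1_T\le u^2_T$.

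The paper avoids the problem entirely: it assumes $c:=(u^1-u^2)(s,\om)>0$, introduces the reflected BSDE $\bar Y=\bar Y^{s,\om}(L,\ch,R)$ with barrier $R_t=(u^1-u^2)_t+ct/(2T)$ and random terminal time $\ch=\inf\{t:(u^1-u^2)_t\le 0\}$, and uses the optimal-stopping machinery (Lemmas~\ref{L:PathSnell} and~\ref{L:PathSnellII}) to produce a single path $\om^\ast$ and time $t^\ast:=\tau^\ast(\om^\ast)<\ch(\om^\ast)$. That point automatically lies in exactly one stratum $\llb\tau_{n-1},\tau_n\llb$ of the decomposition of $u^2$, and the contradiction with the viscosity-subsolution property of $u^1$ is obtained once, at that random point, with a test functional built from the corresponding smooth piece $\vartheta_n$. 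No induction over strata occurs; the optimal-stopping argument is precisely what replaces it. A salvage of your piece-by-piece scheme would need, instead of a literal induction, a quantitative estimate showing that some gauge of $(u^1-u^2)^+_{\tau_n}$ propagates backward with controlled error and tends to zero as $n\to\infty$, which is far from automatic. Your secondary points — the stopping-time variant of Theorem~\ref{T:PartialComp}, the smooth extension of $\vartheta_n$ off its chart, and measurability via Assumption~\ref{A:pi} — are plausible and not the issue; the missing base case is what makes the proof fail as written.
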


\begin{proof}
Our proof follows  the 
approach of Lemma~5.7 in \cite{EKTZ11}.
However, due to our more general setting
and different definition of
$\bar{C}_b^{1,2}$,
details are somewhat more involved and therefore
we shall give a complete proof.

Without loss of generality, let $s=0$
and let $f$ be nonincreasing in $y$
(cf.~Remark~3.9 in \cite{EKTZ11}). For the sake of a contradiction,
assume that
\begin{align*}
c:=(u^1-u^2)(0,\om)>0.
\end{align*}
Define a process $R:\bar{\Lambda}\to\R$ by
\begin{align*}
R_t:=(u^1-u^2)_t+\frac{ct}{2T}.
\end{align*}
Note that $R$ is $\Prob_{t,
\tiom}$-quasi-left-continuous
 for every $(t,\tiom)\in\bar{\Lambda}$,
bounded, right-continuous,
and $\F$-adapted. Thus the results
about RBSDEs and optimal stopping in this 
section are applicable. 
Put
\begin{align*}
\ch:=\inf\{t\ge 0: (u^1-u^2)_t\le 0\}.
\end{align*}
Clearly, $\ch\le T$, $\Prob_{0,\om}$-a.s.
Put
\begin{align*}
\bar{Y}&:=\bar{Y}^{0,\om}(L,\ch,R)\qquad
\text{and}\qquad
\tau^\ast:=\inf\{t\ge 0:\bar{Y}_t=R_{t\wedge\ch}\}.
\end{align*}
Since $\bar{Y}_\ch=R_\ch$, we have
$\tau^\ast\le\ch$.
Note that
$\overline{\mathcal{E}}^L_{0,\om}[R_H]\le c/2$,
but, by Lemma~\ref{L:PathSnell},
\begin{align*}
\overline{\mathcal{E}}^L_{0,\om}[R_{\tau^\ast}]
=\Mean_{0,\om}[\bar{Y}_0]\le 
\Mean_{0,\om}[R_0]=c.
\end{align*}
Thus, $\Prob_{0,\om}(\tau^\ast<\ch)>0$.
Consequently,
 there exists an $\om^\ast\in\Omega$ such
that $t^\ast:=\tau^\ast(\om^\ast)<\ch(\om^\ast)$,
that $(t^\ast,\om^\ast)\in\llb\tau_{n-1},\tau_n\llb$ for
some $n\in\N$,
and that, 
according to Lemma~\ref{L:PathSnellII}
and Lemma~\ref{L:ShiftingHittingtime}, from
which we get
the existence of a hitting time
$\tilde{\ch}\in\cH_{t^\ast}$
with $\tilde{\ch}>t^\ast$,
$\tilde{\ch}(\tiom)=\ch(\tiom)$,
and $\tilde{\ch}=\ch$, $\Prob_{0,\om}$-a.s.,
we have
\begin{align*}
R_{t^\ast}(\om^\ast)=
\sup_{\tau\in\cT_{t^\ast}(\F^{t^\ast,\om^\ast})}
\overline{\mathcal{E}}^L_{t^\ast,\om^\ast}
[R_{\tau\wedge\tilde{\ch}}].
\end{align*}
Next, define a process $\tilde{R}:
\bar{\Lambda}^{t^\ast}\to\R$ by
$\tilde{R}_t:=R_t-R_{t^\ast}(\om^\ast)$.
Then
\begin{align*}
0=\tilde{R}_{t^\ast}(\om^\ast)=
\sup_{\tau\in\cT_{t^\ast}(\F^{t^\ast,\om^\ast})}
\overline{\mathcal{E}}^L_{t^\ast,\om^\ast}
[\tilde{R}_{\tau\wedge\tilde{\ch}}].
\end{align*}
Our goal is to establish a decomposition
$\tilde{R}=u^1-\varphi$
such that
$\varphi\in C_b^0(\bar{\Lambda}^{t^\ast})\cap
C^{1,2}_b(\llb t^\ast,\tau\rrb)$ for some
stopping time $\tau\ge t^\ast$.
Since
\begin{align*}
\tilde{R}_t=u^1_t-\left[u^2_t+
(u^1-u^2)(t^\ast,\om^\ast)
-\frac{c(t-t^\ast)}{2T}
\right],
\end{align*}
we get such a decomposition with $\tau=\tau_n$ by setting
$\varphi:=u^1-\tilde{R}$.
Then $\varphi\in\underline{\mathcal{A}}^L u^1
(t^\ast,\om^\ast)$,
and since $(u^1-u^2)(t^\ast,\om^\ast)\ge 0$
(, which follows from $t^\ast<\tilde{\ch}(\om^\ast)$),
we have
\begin{align*}
0&\ge \mathcal{L}\varphi(t^\ast,\om^\ast)-
f_{t^\ast}(\om^\ast,\varphi(t^\ast,\om^\ast),
\partial_\om\varphi(t^\ast,\om^\ast),
\mathcal{I}\varphi(t^\ast,\om^\ast))\\
&=\mathcal{L} u^2(t^\ast,\om^\ast)
+\frac{c}{2T}-
f_{t^\ast}(\om^\ast,\varphi(t^\ast,\om^\ast),
\partial_\om u^2(t^\ast,\om^\ast),
\mathcal{I} u^2(t^\ast,\om^\ast))\\
&> \mathcal{L} u^2(t^\ast,\om^\ast)
-
f_{t^\ast}(\om^\ast,u^2(t^\ast,\om^\ast),
\partial_\om u^2(t^\ast,\om^\ast),
\mathcal{I} u^2(t^\ast,\om^\ast)).
\end{align*}
But this is a contradiction to
$u^2$ being a classical supersolution.
\end{proof}

\subsection{Stability}

\begin{proof}[Proof of Theorem~\ref{T:Stab}]
Let $\eps^\prime>0$ be undetermined for the moment.
Assume that $u$ is not a viscosity 
$L$-supersolution of \eqref{E:PPIDE}.
Then there exist $(s_0,\om)\in\Lambda$
and $\varphi\in\overline{\mathcal{A}}^L u(s_0,\om)$
such that
\begin{align*}
c^\prime:=\mathcal{L}\varphi(s_0,\om)-
f_{s_0}(\om,\varphi(s_0,\om),
\partial_\om\varphi(s_0,\om),
\mathcal{I}\varphi(s_0,\om))<0.
\end{align*}
Without loss of generality, $s_0=0$.
Next, define processes $R$, 
$R^\eps:\bar{\Lambda}\to\R$, $\eps>0$, by
\begin{align*}
R_t:=\varphi_t-u_t-\eps^\prime t,\quad
R_t^\eps:=\varphi_t-u_t^\eps-\eps^\prime t.
\end{align*}
Also, put
\begin{align*}
\tau_1:=\inf\left\{t\ge 0: \mathcal{L}\varphi_t-
f_t(X,\varphi_t,\partial_\om\varphi_t,
\mathcal{I}\varphi_t)\ge\frac{c^\prime}{2}\right\}.
\end{align*}
Note that $\tau_1\in\cH_0$ with $\tau_1>0$,
$\Prob_{0,\om}$-a.s. Thus, since
$\varphi\in\overline{\mathcal{A}}^L u(0,\om)$,
there exists an $\ch\in\cH_{0,\om}$
with $\ch>0$, $\Prob_{0,\om}$-a.s., such that
$0=R_0>\overline{\mathcal{E}}_{0,\om}^L
\left[
R_{\tau_1\wedge\ch}
\right]$
because we have
$\overline{\mathcal{E}}^L_{0,\om}
[(-\eps^\prime)(\tau_1\wedge\ch)]<0$ for otherwise
$\overline{\mathcal{E}}^L_{0,\om}
[(-\eps^\prime)(\tau_1\wedge\ch)]=0=
\overline{\mathcal{E}}^L_{0,\om}
[0]$
would, by  the comparison principle for BSDEs
with jumps (cf.~Theorem~3.2.1 in
\cite{DelongBook}) together with 
Lemma~\ref{L:gExpect}, imply that
$(-\eps^\prime)(\tau_1\wedge\ch)=0$, 
$\Prob_{0,\om}$-a.s., which is a contradiction. 
Now, let $\eps$ sufficiently small so that
$R_0^\eps>
\overline{\mathcal{E}}^L_{0,\om}
[R^\eps_{\tau_1\wedge\ch}]$.
Put
\begin{align*}
\tau_2:=\tau_1\wedge\ch,\quad
\bar{Y}^\eps:=\bar{Y}^{0,\om}
(L,\tau_2,R^\eps),\quad
\tau^\eps:=\inf\{t\ge 0:
\bar{Y}^\eps_t=R^\eps_{t\wedge\tau_2}\},
\end{align*}
where we used the notation of
 Subsection~\ref{SS:PartCom} for RBSDEs.
 Then, $\Prob_{0,\om}(\tau^\eps<\tau_2)>0$
 because otherwise, by Lemma~\ref{L:PathSnell},
 $R_0^\eps\le\bar{Y}_0^\eps=
 \overline{\mathcal{E}}^L_{0,\om}
 [R^\eps_{\tau_2}]<R_0^\eps$.
That is, there exists an $\om^\eps\in\Omega$
such that $t^\eps:=\tau^\eps(\om^\eps)<
\tau_2(\om^\eps)$, that
$\tau_2\in\cH_{t^\eps}$ with
$\tau_2>t^\eps$,
$\Prob_{t^\eps,\om^\eps}$-a.s.~(,
which is possible by
 Lemma~\ref{L:ShiftingHittingtime}), and that
\begin{align*}
R^\eps_{t^\eps}(\om^\eps)=
\sup_{
\tau\in\cT_{t^\eps}\left(
\F^{t^\eps,\om^\eps}
\right)
}\overline{\mathcal{E}}^L_{0,\om}
\left[R^\eps_{\tau\wedge\tau_2}\right].
\end{align*}
Define $\tilde{R}^\eps:\bar{\Lambda}^{t^\eps}\to\R$
by $\tilde{R}^\eps_t
:=R^\eps_t-R_{t^\eps}(\om^\eps)$. Then,
\begin{align*}
\tilde{R}^\eps_t=
\varphi_t-\eps^\prime(t-t^\eps)-u_t^\eps-
[\varphi(t^\eps,\om^\eps)-u(t^\eps,\om^\eps)]
\end{align*}
and, with
$\varphi^\eps_t:=\varphi_t-\eps^\prime(t-t^\eps)
-(\varphi-u)(t^\eps,\om^\eps)$, $t\in [t^\eps,T]$,
we have
\begin{align*}
0=(\varphi^\eps-u^\eps)(t^\eps,\om^\eps)=
\sup_{
\tau\in\cT_{t^\eps}\left(
\F^{t^\eps,\om^\eps}
\right)
}
\overline{\mathcal{E}}^L_{t^\eps,\om^\eps}
\left[
(\varphi^\eps-u^\eps)_{\tau\wedge\tau_2}
\right].
\end{align*}
That is, $\varphi^\eps\in\overline{\mathcal{A}}^L
u^\eps(t^\eps,\om^\eps)$, and thus
\begin{align*}
0&\le\mathcal{L}^\eps\varphi^\eps(t^\eps,\om^\eps)-
f^\eps_{t^\eps}
(\om^\eps,\varphi^\eps(t^\eps,\om^\eps),
\partial_\om\varphi^\eps(t^\eps,\om^\eps),
\mathcal{I}^\eps\varphi^\eps(t^\eps,\om^\eps))\\
&=\mathcal{L}\varphi(t^\eps,\om^\eps)+\eps^\prime-
\Bigl[f^\eps_{t^\eps}
(\om^\eps,u^\eps(t^\eps,\om^\eps),
\partial_\om\varphi(t^\eps,\om^\eps),
\mathcal{I}^\eps\varphi(t^\eps,\om^\eps))\\
&\qquad\qquad -
f_{t^\eps}(\om^\eps,u(t^\eps,\om^\eps),
\partial_\om\varphi(t^\eps,\om^\eps),
\mathcal{I}\varphi(t^\eps,\om^\eps))\Bigr]\\
&\qquad\qquad
 -f_{t^\eps}(\om^\eps,u(t^\eps,\om^\eps),
\partial_\om\varphi(t^\eps,\om^\eps),
\mathcal{I}\varphi(t^\eps,\om^\eps))\\
&\le \frac{c^\prime}{2}+\eps^\prime
-\Bigl[f^\eps_{t^\eps}
(\om^\eps,u^\eps(t^\eps,\om^\eps),
\partial_\om\varphi(t^\eps,\om^\eps),
\mathcal{I}^\eps\varphi(t^\eps,\om^\eps))\\
&\qquad\qquad -
f_{t^\eps}(\om^\eps,u(t^\eps,\om^\eps),
\partial_\om\varphi(t^\eps,\om^\eps),
\mathcal{I}\varphi(t^\eps,\om^\eps))\Bigr].
\end{align*}
Hence, for $\eps^\prime=-c^\prime/8$ and
for sufficiently small $\eps$, uniform
convergence of $f^\eps$, $u^\eps$, $\eta^\eps$,
and $K^\eps$ yields
$0\le c^\prime/4<0$, which is 
a contradiction.
\end{proof}

\section{Comparison} In this subsection, 
Assumption~\ref{A:xi}, Assumption~\ref{A:L},
Assumption~\ref{A:f}, Assumption~\ref{A:jumps},
 and Assumption~\ref{A:pi}
are in force. Furthermore, without loss of generality,
assume that $f$ is nonincreasing in $y$
(cf.~Remark~3.9 in \cite{EKTZ11}).

Given $\delta\in (0,\infty]$, $t\in (-\infty,T]$, and $y\in\R^d$, put
\begin{align*}
O_\delta(y)&:=\{x\in\R^d:\abs{x-y}<\delta\},\\
\bar{O}_\delta(y)&:=\{x\in\R^d:\abs{x-y}\le\delta\},\\
\partial O_\delta(y)&:=\{x\in\R^d:\abs{x-y}=\delta\},\\
Q^\delta_{t,y}&:=(t,T)\times O_\delta(y),\\
\bar{Q}^\delta_{t,y}&:=[t,T]\times \bar{O}_\delta(y),\\
\partial Q^\delta_{t,y}&:=
\left((t,T]\times \partial O_\delta(y)\right)\cup
\left(\{T\}\times O_\delta(y)\right),\\
\partial \bar{Q}^\delta_{t,y}&:=
\left([t,T]\times \partial O_\delta(y)\right)\cup
\left(\{T\}\times O_\delta(y)\right),\\
Q_t^\infty&:=(t,T)\times\R^d,\\
\bar{Q}_t^\infty&:=[t,T]\times\R^d.
\end{align*}

\subsection{Hitting times}
Given $\eps>0$, $t\in [0,T]$, and $y\in\R^d$, define 
hitting times
\begin{align*}
\ch^{t,y,\eps}_0&:= t,\\
\ch^{t,y,\eps}_1&:=
\inf\{s\ge t: X_s\not\in O_\eps(y)\}\wedge T,\\
\ch^{t,y,\eps}_{i+1}&:=
\inf\{s\ge\ch^{t,y,\eps}_i:\abs{
X_s-X_{\ch^{t,y,\eps}_i}
}\ge\eps\}\wedge T.
\end{align*}
Also  put $\ch_j^{t,\eps}:=\ch_j^{t,X^t_t,\eps}$, that is,
\begin{align*}
\ch_0^{t,\eps}&=t,\\
\ch_1^{t,\eps}&=\inf\{s\ge t:\abs{X_s-X_t}\ge\eps\}\wedge T,\\
\ch_{j+1}^{t,\eps}&=\inf\{s\ge\ch_j^{t,\eps}:
\abs{X_s-X_{\ch_j^{t,\eps}}}\ge\eps\}\wedge T.
\end{align*}

\begin{lemma}\label{L:hittingLeftLimit}
Let $(G_n)_n$ be an increasing sequence
of non-empty open subsets of $\R^d$
with $G=\cup_n G_n$.
Let $Y$ be a $d$-dimensional,
c\`adl\`ag, and $\F^0_+$-adapted process
that is $\Prob$-quasi-left-continuous
for some probability measure $\Prob$ on
$(\Omega,\cF^0_T)$. Suppose that $Y_0\in G_1$,
$\Prob$-a.s. Consider the first-exit times
\begin{align}\label{E1:firstExit}
\tau_G&:=\inf\{t\ge 0: Y_t\in G^c\}\wedge T,\\
\label{E2:firstExit}
\tau_{G_n}&:=\inf\{t\ge 0:
Y_t\in G^c_n\}\wedge T,\quad n\in\N.
\end{align}
Then $\lim_n \tau_{G_n}=\tau_G$,
$\Prob$-a.s.
Moreover, $\lim_n Y_{\tau_{G_n}}=Y_{\tau_G}$,
$\Prob$-a.s.
\end{lemma}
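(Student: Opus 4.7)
The plan is to set $\tau_*:=\lim_n \tau_{G_n}$ and to show simultaneously that $\tau_*=\tau_G$ and $Y_{\tau_{G_n}}\to Y_{\tau_G}$, by combining the quasi-left-continuity of $Y$ with the fact that the complements $G_n^c$ form a decreasing sequence of closed sets with intersection $G^c$. As a first step I would note that $G_n\uparrow G$ gives $G_{n+1}^c\subseteq G_n^c$ and $G^c\subseteq G_n^c$, so $\tau_{G_n}$ is nondecreasing in $n$ and bounded above by $\tau_G$; hence $\tau_*:=\lim_n\tau_{G_n}\le\tau_G$, $\Prob$-a.s. Since $G_n^c$ is closed and $Y$ is c\`adl\`ag and $\F^0_+$-adapted, each $\tau_{G_n}$ is an $\F^0_+$-stopping time, and right-continuity of $Y$ forces $Y_{\tau_{G_n}}\in G_n^c$ on $\{\tau_{G_n}<T\}$.

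Next I would prove $Y_{\tau_{G_n}}\to Y_{\tau_*}$, $\Prob$-a.s. Split $\Omega$ into $A:=\bigcap_n\{\tau_{G_n}<\tau_*\}$ and its complement. On $A^c$, the sequence $\tau_{G_n}$ is eventually equal to $\tau_*$, so convergence is trivial. On $A$, the definition of left limit gives $Y_{\tau_{G_n}}\to Y_{\tau_*-}$; moreover, $\tau_*\bfone_A+\infty\bfone_{A^c}$ is a predictable stopping time, announced by the strictly increasing sequence $\tau_{G_n}\bfone_A+\infty\bfone_{A^c}$. Quasi-left-continuity of $Y$ under $\Prob$ (see Proposition~I.2.26 of \cite{JacodShiryaevBook}) therefore forces $Y_{\tau_*-}=Y_{\tau_*}$ on $A$, $\Prob$-a.s., and the two cases combine.

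Finally I would upgrade $\tau_*\le\tau_G$ to equality. On $\{\tau_{G_n}=T\text{ for some }n\}$ one has $\tau_*=T$, and $\tau_G\le T$ forces $\tau_G=T=\tau_*$. On the complementary event, $\tau_{G_n}<T$ for every $n$, hence $Y_{\tau_{G_n}}\in G_n^c$; for each fixed $m$ the inclusion $Y_{\tau_{G_n}}\in G_m^c$ holds for all $n\ge m$, and letting $n\to\infty$ via the previous paragraph together with the closedness of $G_m^c$ gives $Y_{\tau_*}\in G_m^c$, whence $Y_{\tau_*}\in\bigcap_m G_m^c=G^c$ and $\tau_G\le\tau_*$. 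Combining with the first paragraph yields $\tau_G=\tau_*$, and $Y_{\tau_{G_n}}\to Y_{\tau_G}$ follows from the second paragraph. The main obstacle, I expect, is the bookkeeping around quasi-left-continuity on $A$: one must verify that $A$ is $\F^0_+$-measurable and that the auxiliary stopping time $\tau_*\bfone_A+\infty\bfone_{A^c}$ is genuinely predictable with announcing sequence $\tau_{G_n}\bfone_A+\infty\bfone_{A^c}$, so that the predictable-jump characterization of quasi-left-continuity can be legitimately invoked.
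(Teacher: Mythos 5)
Your overall plan is sound, and it is essentially the paper's argument reorganized — you establish $Y_{\tau_{G_n}}\to Y_{\tau_*}$ first and then deduce $\tau_*=\tau_G$ from it, whereas the paper first shows $\Prob(\tau_*<\tau_G)=0$ by a contradiction argument and then reads off the $Y$-convergence. Both versions rely on exactly the same two ingredients (a predictability/quasi-left-continuity step and the closedness of the $G_m^c$), used in the reverse order. What your reshuffling buys you is a slightly leaner accounting: you invoke the closedness argument once, in your step~3, whereas the paper uses it twice, once to show $\tau_{G_n}<\tau_*$ on $\{\tau_*<\tau_G\}$ and once to show $\abs{\Delta Y_{\tau_*}}>0$ there.

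That said, your step~2 as written has the gap you yourself flag, and it is genuine. For $\tau_{G_n}\bfone_A+\infty\bfone_{A^c}$ to be a stopping time you need $A\in\mathcal{F}^0_{\tau_{G_n}+}$, i.e.\ $A\cap\{\tau_{G_n}\le t\}\in\mathcal{F}^0_{t+}$ for every $t$; but your $A=\bigcap_m\{\tau_{G_m}<\tau_*\}$ involves $\tau_{G_m}$ for all $m$ and the limit $\tau_*$, so there is no reason for it to be measurable with respect to the filtration at the (much earlier) time $\tau_{G_n}$. The paper dodges this by building each announcing time from the \emph{per-index} event $A_n=\{\tau_{G_n}<\tau_G\}$, which does lie in $\mathcal{F}^0_{\tau_{G_n}+}$ by the standard comparison of two stopping times, and by truncating at $T(1-n^{-1})$ to keep the sequence strictly below its limit. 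If you want to keep your ordering, replace $A$ in the $n$-th announcing time by $\{\tau_{G_n}<\tau_*\}$ (which is in $\mathcal{F}^0_{\tau_{G_n}+}$ since $\tau_*$ is also a stopping time) and truncate; the limiting predictable time then agrees with $\tau_*$ on $A$ and the quasi-left-continuity argument goes through. Simpler still: Proposition~I.2.26 of \cite{JacodShiryaevBook} gives, as an equivalent characterization of quasi-left-continuity, that $Y_{\sigma_n}\to Y_\sigma$ $\Prob$-a.s.\ for \emph{every} increasing sequence of stopping times $\sigma_n\uparrow\sigma$ — with no predictability or announcing-sequence requirement. Applying this directly to $\tau_{G_n}\uparrow\tau_*$ gives your step~2 in one line; the paper itself invokes this form of I.2.26 for the final $Y$-convergence. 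Your step~3 then closes the argument exactly as you describe.
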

\begin{proof}
Without loss of generality, we can assume that
that $\tau_G$ and $\tau_{G_n}$, $n\in\N$,
are $\F^0_+$-stopping times. Clearly,
$(\tau_{G_n})_n$ is increasing and
\begin{align*}
\tau:=\sup_n \tau_{G_n}\le \tau_G.
\end{align*}
Consider the sets $A:=\{\tau<\tau_G\}$ and
$A_n:=\{\tau_{G_n}<\tau_G\}$, $n\in\N$. We have to show
that $\Prob(A)=0$.

Let us first note that, for every $n\in\N$,
we have $\tau_{G_n}<\tau$ on $A$ because
otherwise there exists
an $\om\in A$ and an $n_0\in\N$ such that,
for every $n\ge n_0$,
$\tau_{G_n}(\om)=\tau(\om)$, which 
yields $Y_\tau(\om)\in\cap_{n\ge n_0} G_n^c=G^c$,
i.e., $\tau(\om)\ge\tau_G(\om)$ and thus
$\om\not\in A$.
Consequently, the stopping times
\begin{align*}
\tilde{\tau}&:=\tau.\bfone_A+T.\bfone_{A^c},\\
\tilde{\tau}_{G_n}&:=
\left(\tau_{G_n}.\bfone_{A_n}+T.\bfone_{A^c_n}
\right)\wedge [T(1-n^{-1})],\quad n\in\N,
\end{align*}
satisfy the following:
For every $n\in\N$,
$\tilde{\tau}_{G_n}<\tilde{\tau}$, i.e.,
$\tilde{\tau}$ is $\F^0_+$-predictable.
Thus, using $\Prob$-quasi-left-continuity of $Y$,
we get 
\begin{align}\label{E1:hittingLeftLimit}
\Prob(\tilde{\tau}<T)&=
\Prob(\{\Delta Y_{\tilde{\tau}}=0\}\cap 
\{\tilde{\tau}<T\})=
\Prob(\{\Delta Y_\tau =0\}\cap A).
\end{align}
Next, note that $\abs{\Delta Y_\tau}>0$ on $A$
because otherwise there exists an $\om\in A$
such that $\Delta Y_\tau(\om)=0$
and then, since, for every $m\in\N$,
$\{Y_{\tau_{G_n}}(\om)\}_{n\ge m}$ takes
values in $G_m^c$ and $G_m^c$ is closed,
we get $Y_\tau(\om)\in \cap_m G_m^c=G^c$,
i.e., $\tau(\om)\ge\tau_G(\om)$ and thus
$\om\not\in A$. Therefore,
$\Prob(\{\Delta Y_\tau=0\}\cap A)=0$
and, together with
\eqref{E1:hittingLeftLimit}, 
we get
$\Prob(A)=0$. 

I.e., we have shown that
$\lim_n {\tau_{G_n}}=\tau_G$,
$\Prob$-a.s.
Hence, by Proposition~I.2.26 of 
\cite{JacodShiryaevBook},
$\lim_n Y_{\tau_{G_n}}=Y_{\tau_G}$,
$\Prob$-a.s.
\end{proof}

In the following statement and its proof,
the random times $\tau_G$ and $\tau_{G_n}$,
$n\in\N$, resp., are defined by
\eqref{E1:firstExit} and
\eqref{E2:firstExit}, resp.
Also, we do not need the full strength 
of Assumption~\ref{A:L}, in particular,
$(B,C,\nu)$ is allowed to be random;
 only Part~(iii) of
 Assumption~\ref{A:L} is actually
used. 
\begin{lemma}\label{L:ItoHitting}
Let $(G_n)_n$ be an increasing 
sequence of open connected subsets of $\R^d$
with $G=\cup_n G_n$.
 Let $H$ be an open
subset of $\R^d$ with $G\subseteq H$.
Put $Q_n:=[0,T)\times G_n$, $n\in\N$,
$Q:=[0,T)\times G$, and
$R:=[0,T)\times H$.
Suppose that 
there exists an $\eps\in (0,1)$ such
that, for all $n\in\N$, 
\begin{align*}
\mathrm{dist}(G^c_{n+1},G_n)\le 
\frac{\eps}{n(n+1)}.
\end{align*}
Let $v\in C(\bar{R})\cap C^{1,2}(Q)$
and $x\in G_1$. Then
there exists an
$(\F^0_+,\Prob_{0,x})$-martingale $M$ such that
\begin{align*}
&v(\tau_G,X_{\tau_G})-
v(0,x)=\int_0^{\tau_G}\Biggl\{
\partial_t v(t,X_t)+
\sum_{i=1}^d b^i_t\partial_{x^i} v(t,X_t)\\
&\qquad +
\frac{1}{2}\sum_{i,j=1}^d
 c_t^{ij}\partial^2_{x^ix^j}
v(t,X_t)\\&\qquad+\int_{\abs{z}\le C_0^\prime}
\left[ 
v(t,X_t+z)-v(t,X_t)-\sum_{i=1}^d z^i
\partial_{x^i} v(t,X_t)
\right]\, K_t(dz)\Biggr\}
\, dt\\&\qquad 
+M_{\tau_G},\quad
\text{$\Prob_{0,x}$-a.s.}
\end{align*}
\end{lemma}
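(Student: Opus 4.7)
My plan is to localize the Itô formula to the random intervals $\llb 0, \tau_{G_n}\rrb$ on which $v$ is $C^{1,2}$ along the trajectory of $X$, and then pass to the limit $n\to\infty$ using Lemma~\ref{L:hittingLeftLimit}. Fix $n\in\N$. Because $G_n$ has positive distance from $G_{n+1}^c$, the process $X_{t-}$ stays in $\overline{G_n}\subset G$ throughout $\llb 0,\tau_{G_n}\rrb$, so $v$ and all derivatives appearing in the statement are well-defined along $(t,X_{t-})$. Applying the general Itô formula (e.g.\ Theorem~I.4.57 in \cite{JacodShiryaevBook}) to $v$ and the $(\Prob_{0,x},\F^0_+)$-semimartingale $X$, using the canonical representation $X=x+B+X^{c,0,x}+z\ast(\mu^X-\nu)$ from Remark~\ref{R:jumpSize} together with $dB_t=b_t\,dt$, $d\langle X^{c,0,x},X^{c,0,x}\rangle_t=c_t\,dt$, and $\nu(dt,dz)=K_t(dz)\,dt$, and splitting the jump sum as $\mu^X=(\mu^X-\nu)+\nu$ in order to absorb the compensator into the drift, I obtain
\begin{align*}
v(\tau_{G_n},X_{\tau_{G_n}})-v(0,x)=\int_0^{\tau_{G_n}}\Psi(t,X_{t-})\,dt+M^n_{\tau_{G_n}},
\end{align*}
where $\Psi$ is the integrand appearing in the lemma and
\begin{align*}
M^n_{\cdot}=\int_0^{\cdot\wedge\tau_{G_n}}\nabla_x v(s,X_{s-})^\top dX^{c,0,x}_s+\int_0^{\cdot\wedge\tau_{G_n}}\!\int\bigl[v(s,X_{s-}+z)-v(s,X_{s-})\bigr](\mu^X-\nu)(ds,dz)
\end{align*}
is a local $(\Prob_{0,x},\F^0_+)$-martingale.

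Next I would pass to the limit $n\to\infty$. By Lemma~\ref{L:hittingLeftLimit}, $\tau_{G_n}\uparrow\tau_G$ and $X_{\tau_{G_n}}\to X_{\tau_G}$, $\Prob_{0,x}$-almost surely; continuity of $v$ on $\bar R$ then yields $v(\tau_{G_n},X_{\tau_{G_n}})\to v(\tau_G,X_{\tau_G})$. For the drift integral, $X_{t-}\in G$ for every $t<\tau_G$, so $\Psi(t,X_{t-})$ is well-defined on $[0,\tau_G)$; I would invoke dominated convergence, controlling the pointwise jump integrand $v(t,X_{t-}+z)-v(t,X_{t-})-z^\top\nabla_x v(t,X_{t-})$ by a second-order Taylor remainder of $v$ against $|z|^2\wedge|z|$ and exploiting the splitting $K=K_{1,\eps}+K_{2,\eps}$ in Assumption~\ref{A:L}(iii) together with boundedness of $v$ on $\bar R$ to dominate the large-jump part. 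To upgrade $M^n$ to a global true martingale $M$, I would define $M$ on $[0,T]$ by the same stochastic integrals stopped at $\tau_G$ and verify square-integrability using boundedness of $v$ on $\bar R$, of $b$ and $\sigma$ from Assumption~\ref{A:diffChar}, and of $\int(|z|^2\wedge 1)\,K_t(dz)$.

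The principal technical obstacle is the drift integral near $\tau_G$: the derivatives $\nabla_x v$ and $\nabla^2_{xx}v$ need not be globally bounded on $Q$ and may blow up as the trajectory approaches $\partial G$. This is where the hypothesis $\mathrm{dist}(G^c_{n+1},G_n)\le\eps/(n(n+1))$ is essential, since the summability $\sum_n\eps/(n(n+1))=\eps$ of the shell-widths permits a telescoping comparison of the moduli of $\nabla_x v$ and $\nabla^2_{xx}v$ on successive compact sets $\overline{Q_n}$, producing a uniform-in-$n$ dominating function for $\Psi(t,X_{t-})$ on $[0,\tau_G)$ and thereby justifying the passage to the limit in the drift term. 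The jump-measure splitting in Assumption~\ref{A:L}(iii) and boundedness of $v$ on $\bar R$ handle the integro-differential contribution, and the identity then follows upon identifying the a.s.\ limit of $M^n_{\tau_{G_n}}$ with $M_{\tau_G}$.
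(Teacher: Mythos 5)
Your proof takes a genuinely different route from the paper's, and it has two gaps.

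The paper does not apply It\^o's formula to $v$ directly. Instead, for each $n$ it constructs a globally smooth function $v^{n+1}\in C^{1,2}(\bar R)$ agreeing with $v$ on $Q_{n+1}$ and satisfying $\abs{v-v^{n+1}}_R\le a_{n+1}$, applies the classical It\^o formula to $v^{n+1}$ stopped at $\tau_{G_n}$ (legitimate, since $v^{n+1}$ and its derivatives are bounded on the compact set $\bar R$), and then estimates the error incurred when $v^{n+1}$ is replaced by $v$ in the part of the jump integral where $\abs z>\eps/(n(n+1))$. That error is controlled by the tolerances $a_{n+1}$, which are \emph{tuned against} the splitting constants of Assumption~\ref{A:L}(iii) via the choice $a_{n+1}=1/\bigl((L_{1/n}\vee n)(n+1)\bigr)$, making the error term vanish as $n\to\infty$. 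Nothing in your write-up substitutes for this construction or for the error estimate.

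The first gap in your argument is the claim that Theorem~I.4.57 of \cite{JacodShiryaevBook} can be applied to $v$ itself on $\llb 0,\tau_{G_n}\rrb$. That theorem requires global $C^2$ regularity, whereas $v$ is only $C^{1,2}$ on $Q$ and merely continuous elsewhere on $\bar R$. The localization you invoke does not directly justify this: the inclusion $\overline{G_n}\subset G$ that you rely on is not guaranteed by the hypotheses, and the jump measure can place the post-jump position $X_{t-}+z$ outside $G$, so that the It\^o integrand involves $v$ evaluated where it is not smooth. The paper's approximation by globally $C^{1,2}$ functions $v^n$ exists precisely to avoid this difficulty.

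The second gap is the asserted use of the shell-width hypothesis. You claim that the summability $\sum_n\eps/(n(n+1))=\eps$ yields, by telescoping, a uniform-in-$n$ dominating function for $\Psi(t,X_{t-})$ and hence a bound on $\nabla_x v$ and $\nabla^2_{xx}v$ near $\partial G$. This does not follow: the shell widths constrain the geometry of the sets $G_n$, but say nothing about the growth of the derivatives of $v$ near $\partial G$, which can be arbitrarily fast. In the paper's proof the radius $\eps/(n(n+1))$ plays an entirely different role: it is the threshold at which the jump integral is split, so that for small $z$ the integrand involves $v$ exactly (because $X_t+z$ remains in $G_{n+1}$, where $v=v^{n+1}$), while for larger $z$ the bound $\abs{v^{n+1}-v}\le a_{n+1}$ is integrated against the two pieces $K_{1,1/n}$ and $K_{2,1/n}$ from Assumption~\ref{A:L}(iii). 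For the same reason, your proposed verification that the limit $M$ is a square-integrable martingale, which appeals only to boundedness of $v$ on $\bar R$, does not control the integrand $\nabla_x v(s,X_{s-})$ in your candidate stochastic integral.
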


\begin{proof}
Let $(a_n)_n$ be a sequence of 
positive real numbers converging to $0$.
For every $n\in\N$, let
$v^n\in C^{1,2}(\bar{R})$ such that
$v=v^n$ on $Q_n$ and
$\abs{v-v^n}_R\le a_n$.
By Lemma~\ref{L:hittingLeftLimit},
$v(\tau_G,X_{\tau_G})=\lim_n
v^{n+1}(\tau_{G_n},X_{\tau_{G_n}})$,
$\Prob_{0,x}$-a.s. Moreover,  for 
every $n\in\N$, there exists,
by It\^o's formula an 
$(\F^0_+,\Prob_{0,x})$-martingale $M^n$
such that
\begin{align*}
&v^{n+1}(\tau_{G_n},X_{\tau_{G_n}})-
v(0,x)=\int_0^{\tau_{G_n}}\Biggl\{
\partial_t v(t,X_t)+
\sum_{i=1}^d b^i_t\partial_{x^i} v(t,X_t)\\
&\qquad +
\frac{1}{2}\sum_{i,j=1}^d
 c_t^{ij}\partial^2_{x^ix^j}
v(t,X_t)\\&\qquad+\int_{\abs{z}\le C_0^\prime}
\Biggl[ 
v(t,X_t+z).\bfone_{\left\{\abs{z}\le
 \frac{\eps}{n(n+1)}\right\}}+
v^{n+1}(t,X_t+z).\bfone_{\left\{\abs{z}> 
\frac{\eps}{n(n+1)}\right\}}
\\ &\qquad\qquad-v(t,X_t)-\sum_{i=1}^d z^i
\partial_{x^i} v(t,X_t)
\Biggr]\, K_t(dz)\Biggr\}
\, dt\\&\qquad 
+M^n_{\tau_{G_n}},\quad
\text{$\Prob_{0,x}$-a.s.}
\end{align*}
Now, if
\begin{align*}
a_{n+1}=\frac{1}{(L_{1/n}\vee n)(n+1)},
\quad n\in\N,
\end{align*}
then, by Part~(iii) of Assumption~\ref{A:L},
\begin{align*}
&\abs{\int_0^{\tau_{G_n}}
\int_{\frac{\eps}{n(n+1)}
<\abs{z}\le C_0^\prime} 
[v^{n+1}(t,X_t+z)-v(t,X_t+z)]\,K_t(dz)\,dt}\\
&\quad \le 
\int_0^{\tau_{G_n}} \int_{
\frac{\eps}{n(n+1)}<\abs{z}
\le C_0^\prime} a_{n+1} \,K_t(dz)\,dt\\
&\quad \le T\left[
\int_{\frac{\eps}{n(n+1)}<\abs{z}\le C_0^\prime}
a_{n+1} \, K_{t,1,1/n}(dz)+
\int_{\frac{\eps}{n(n+1)}<\abs{z}\le C_0^\prime}
a_{n+1}\, K_{t,2,1/n}(dz)
\right]\\
&\quad \le T\left[
\frac{n(n+1)a_{n+1}}{\eps})
\int_{\frac{\eps}{n(n+1)}<\abs{z}\le C_0^\prime}
\abs{z}  K_{t,1,1/n}(dz)+
a_{n+1} (L_{1/n}\vee n)
\right]\\
&\quad \le 
T\left[\frac{1}{(L_{1/n}\vee n)\eps} 
+\frac{1}{n+1}\right]\to 0
\end{align*}
as $n\to\infty$. This concludes the proof.
\end{proof}

\begin{remark}\label{R:hitting}
If $(b,c,K)$ is constant, $t\in [0,T]$, and $x\in\R^d$, then
\begin{align*} 
\Mean_{t,x}[\ch_1^{t,\eps}]=\Mean_{0,x} [t+[\ch_1^\eps\wedge (T-t)]].
\end{align*}
Indeed, using $(a_1\wedge a_2)-t=(a_1-t)\wedge(a_2-t)$, we get
\begin{align*}
\Mean_{t,x}[(\ch_1^{t,\eps}-t]&=
\Mean_{t,x}[(\inf\{s\ge t: |X_s-X_t|\ge\eps\}\wedge T)-t]\\
&=\Mean_{0,x}[(\inf\{s\ge t: |X_{s-t}-X_0|\ge\eps\}\wedge T)-t]\\
&=\Mean_{0,x}[((t+\inf\{r\ge 0: |X_r-X_0|\ge\eps\})\wedge T)-t]\\
&=\Mean_{0,x}[\inf\{r\ge 0:|X_r-X_0|\ge\eps\}\wedge (T-t)]\\
&=\Mean_{0,x}[\ch_1^\eps\wedge (T-t)].
\end{align*}
\end{remark}

\begin{proposition}\label{R:hittingTime}
Fix $(s,\om)\in\bar{\Lambda}$ and $\eps>0$.
Then the map $x\mapsto\ch_1^{s,x,\eps}(\om)$,
$\R^d\to [s,T]$, is universally measurable.
\end{proposition}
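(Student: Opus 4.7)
Fix $(s,\om)\in\bar{\Lambda}$ and $\eps>0$, and set
\begin{equation*}
\phi(x):=\ch_1^{s,x,\eps}(\om)=\inf\{r\in[s,T]:|\om_r-x|\ge\eps\}\wedge T.
\end{equation*}
The plan is to show that every sublevel set $\{x\in\R^d:\phi(x)\le t\}$, $t\in[s,T]$, is analytic in $\R^d$ and therefore universally measurable. Since the sets $[s,t]$, $t\in[s,T]$, generate $\mathcal{B}([s,T])$, this will give universal measurability of $\phi$.

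The first step is to observe that the infimum defining $\phi(x)$ is attained whenever it is strictly less than $T$. Indeed, writing $r^\ast:=\inf\{r\in[s,T]:|\om_r-x|\ge\eps\}<T$, one picks $r_n\downarrow r^\ast$ with $|\om_{r_n}-x|\ge\eps$; the right-continuity of $\om$ then forces $|\om_{r^\ast}-x|\ge\eps$, so $r^\ast$ itself lies in the set. Consequently, for $t\in[s,T)$,
\begin{equation*}
\{x\in\R^d:\phi(x)\le t\}=\pi_{\R^d}(E_t),\qquad
E_t:=\{(r,x)\in[s,t]\times\R^d:|\om_r-x|\ge\eps\},
\end{equation*}
where $\pi_{\R^d}$ denotes projection onto the second factor.

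The second step is to verify that $E_t$ is Borel. Because $\om$ is c\`adl\`ag, the map $r\mapsto\om_r$ is Borel measurable, hence $(r,x)\mapsto\om_r$ is Borel on $[s,t]\times\R^d$; composing with the continuous map $(y,x)\mapsto|y-x|$ yields a Borel function there, and $E_t$ is its preimage of $[\eps,\infty)$. The projection theorem for analytic sets then gives that $\pi_{\R^d}(E_t)$ is analytic in $\R^d$, and analytic sets are universally measurable. Combining this with $\{x:\phi(x)\le T\}=\R^d$ finishes the argument.

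The only conceptually delicate point is that one should not expect Borel measurability of $\phi$: a semicontinuity argument fails because the path $\om$ may touch the sphere $\partial O_\eps(x)$ without crossing it, so a small perturbation of $x$ can shift $\phi$ by a nontrivial amount. This is precisely why the projection step is needed and why the conclusion is phrased in terms of universal rather than Borel measurability.
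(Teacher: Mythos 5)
Your proof is correct and essentially the same as the paper's: the paper writes $\ch_1^{s,x,\eps}(\om)$ as the d\'ebut $D_A(x)$ of the Borel set $A=\{(t,x)\in[s,T]\times\R^d:|\om_t-x|\ge\eps\}\cup(\{T\}\times\R^d)$ and invokes Theorem~III.44 of Dellacherie--Meyer, whereas you unfold that citation into its proof---using right-continuity of $\om$ to identify each sublevel set $\{x:\phi(x)\le t\}$, $t<T$, with the projection of a Borel subset of $[s,t]\times\R^d$ and then applying the measurable projection theorem for analytic sets. Both routes rest on the same projection-theorem machinery, so the substance agrees; yours is simply self-contained where the paper cites.
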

\begin{proof}
Note that we can express $x\mapsto\ch_1^{s,x,\eps}(\om)$ as
the deb\`ut of the set
\begin{align*}
A:=\{(t,x)\in [s,T]\times\R^d: \abs{\om_t-x}\ge\eps\}\cup\{(T,x):x\in\R^d\},
\end{align*} 
i.e., $\ch_1^{s,x,\eps}(\om)=D_A(x)$, where
$D_A:\R^d\mapsto [0,T]$ is defined by
\begin{align*}
D_A(x):=\inf\{t\in [s,T]: (t,x)\in A\}.
\end{align*}
Since $A\in\mathcal{B}([s,T])\otimes\mathcal{B}(\R^d)$ as inverse image
of a Borel set under a Borel measurable map,
we can deduce from Theorem~III.44 in \cite{DellacherieMeyer}
that $D_A$ is universally measurable.
\end{proof}
\begin{remark}\label{R2:hittingTime}
If $d=1$, then $\ch_1^{t,x,\eps}$ can be written
as  infimum of first-passage times that are monotone in $x$
and thus  $x\mapsto\ch_1^{s,x,\eps}(\om)$ is even Borel measurable.
\end{remark}

\begin{lemma} [Shifting of hitting times]\label{L:ShiftingHitting-new}
Let $\eps>0$, $t\in [0,T]$, $y\in\R$, $\om\in\Omega$,
and $s=\ch_1^{t,y,\eps}(\om)$.
Then
$\ch^{t,y,\eps}_{i+1}=
\ch^{s,\om_s,\eps}_i$,
$i\in\N_0$, $\Prob_{s,\om}$-a.s.
\end{lemma}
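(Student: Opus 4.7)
The plan is to argue by induction on $i\in\N_0$, exploiting two facts: first, under $\Prob_{s,\om}$ the canonical process $X$ coincides with $\om$ on $[0,s]$ almost surely (since $\Prob_{s,\om}$ solves the corresponding martingale problem started at $(s,\om)$), and in particular $X_s=\om_s$ $\Prob_{s,\om}$-a.s.; and second, the iterative definitions of the hitting times $\ch_{i}^{t,y,\eps}$ and $\ch_{i}^{s,\om_s,\eps}$.

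For the base case $i=0$, note that by convention $\ch_0^{s,\om_s,\eps}=s$, so it suffices to verify $\ch_1^{t,y,\eps}=s$ $\Prob_{s,\om}$-a.s. By hypothesis $\ch_1^{t,y,\eps}(\om)=s$, and this value is determined solely by $\om|_{[t,s]}$ (either $\om_s\notin O_\eps(y)$, or $s=T$ and the path stays in $O_\eps(y)$ throughout $[t,T]$; by right-continuity of $\om$ together with openness of $O_\eps(y)$, no other possibility can produce the infimum). Since $X\equiv\om$ on $[0,s]$ $\Prob_{s,\om}$-a.s., we obtain $\ch_1^{t,y,\eps}=s$ $\Prob_{s,\om}$-a.s.

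For the inductive step with $i=1$, combining $\ch_1^{t,y,\eps}=s$ and $X_s=\om_s$ ($\Prob_{s,\om}$-a.s.) yields
\begin{align*}
\ch_2^{t,y,\eps}
&=\inf\{r\ge \ch_1^{t,y,\eps}:\abs{X_r-X_{\ch_1^{t,y,\eps}}}\ge\eps\}\wedge T\\
&=\inf\{r\ge s:\abs{X_r-\om_s}\ge\eps\}\wedge T=\ch_1^{s,\om_s,\eps},\qquad\Prob_{s,\om}\text{-a.s.}
\end{align*}
For $i\ge 2$, the inductive hypothesis $\ch_{i}^{t,y,\eps}=\ch_{i-1}^{s,\om_s,\eps}$ $\Prob_{s,\om}$-a.s.\ forces $X_{\ch_{i}^{t,y,\eps}}=X_{\ch_{i-1}^{s,\om_s,\eps}}$ $\Prob_{s,\om}$-a.s., and then the infimum defining $\ch_{i+1}^{t,y,\eps}$ coincides almost surely with that defining $\ch_{i}^{s,\om_s,\eps}$, closing the induction.

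No genuine obstacle is expected: the statement is essentially a reindexing that follows from the definitions once the almost-sure identification $X|_{[0,s]}=\om|_{[0,s]}$ under $\Prob_{s,\om}$ is invoked. The only step requiring a small argument is the base case, where one must note that the first exit time from the open set $O_\eps(y)$ depends only on the path restriction to $[t,s]$.
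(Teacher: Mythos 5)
Your proof is correct and follows essentially the same route as the paper: the paper dispatches the base case $i=0$ by invoking its Remark about first-hitting times of closed sets (here the complement of $O_\eps(y)$), which is the same observation you spell out directly using right-continuity, and then both proofs close the induction by the identical reindexing argument. The only cosmetic difference is that you phrase the base case via openness of $O_\eps(y)$ rather than closedness of its complement, but these are the same fact.
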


\begin{proof}
The case $i=0$ follows from Remark~\ref{R:ShiftMg}.
Next, let $i=1$. Then, by  Remark~\ref{R:ShiftMg},
\begin{align*}
\ch_2^{t,y,\eps}
=\inf\{r\ge s:\abs{
X_r-X_s}\ge\eps\}\wedge T
=\ch_1^{s,\om_s,\eps},\quad\text{$\Prob_{s,\om}$-a.s.}
\end{align*}
Finally, assume that the statement is true for some $i\in\N$.
Then
\begin{align*}
\ch^{t,y,\eps}_{i+2}
=\inf\{r\ge\ch_{i+1}^{s,\om_s,\eps}:
\abs{
X_r-X_{\ch_i^{t,\om_s,\eps}}
}\ge\eps\}\wedge T
=\ch_{i+1}^{s,\om_s,\eps},\quad\text{$\Prob_{s,\om}$-a.s.}
\end{align*}
Mathematical induction concludes the proof.
\end{proof}

\subsection{Regularity of path-frozen approximations}
We are going to 
define candidate solutions for so-called path-frozen 
integro-differential equations
by means stochastic representation.  To this end
fix  $\eps>0$ and $(s^\ast,\om^\ast)\in\Lambda$. 
Next, define a map
$g^{s^\ast,\om^\ast}=g:\Pi^{s^\ast}_\infty\to\R$ by
\begin{align*}
g(\pi_\infty):=
\xi(\om^\ast.\bfone_{[0,s^\ast)}+
\sum_{i\in\N_0} x_i.\bfone_{[t_i,t_{i+1})}+x_\infty.\bfone_{\{T\}})
\end{align*}
and a map $\tilde{f}^{s^\ast,\om^\ast}=\tilde{f}:\R\times
\Pi^{s^\ast}_\infty\times\R\times\R^d\times\R\to\R$ by
\begin{align*}
\tilde{f}_t(\pi_\infty,y,z,p):=
f_{(t\vee s^\ast)\wedge T}(\om^\ast.\bfone_{[0,s^\ast)}+
\sum_{i\in\N_0} x_i.\bfone_{[t_i,t_{i+1})}+x_\infty.\bfone_{\{T\}},y,z,p).
\end{align*}

Given $i\in\N$, $\pi_i=(s_0,y_0;\ldots;s_{i-1},y_{i-1})\in\Pi_i^{s^\ast}$
with $s^\ast=s_0$, $s=s_{i-1}$, and $y=y_{i-1}$, denote, for every $(t,x)\in [s,T]
\times\R^d$, by
\begin{align*}
\left(
\tilde{Y}^{s^\ast,\om^\ast,\eps;\pi_i,t,x},
\tilde{Z}^{s^\ast,\om^\ast,\eps;\pi_i,t,x},
\tilde{U}^{s^\ast,\om^\ast,\eps;\pi_i,t,x}
\right)=\left(
\tilde{Y}^{\pi_i,t,x},
\tilde{Z}^{\pi_i,t,x},
\tilde{U}^{\pi_i,t,x}
\right)
\end{align*}
the solution of the BSDE
\begin{align*}
&\tilde{Y}^{\pi_i,t,x}_r=
g(\pi_i;(\ch_j^{t,y-x,\eps},x+X_{\ch_j^{t,y-x,\eps}})_{j\in\N});x+X_T)\\
&\quad +\int_r^T\tilde{f}_{\tilde{r}}\Bigl(
(\pi_i;(\ch_j^{t,y-x,\eps},x+X_{\ch_j^{t,y-x,\eps}})_{j\in\N});x+X_T),
\\&\quad\qquad
\qquad\qquad
\tilde{Y}^{\pi_i,t,x}_{\tilde{r}},\tilde{Z}^{\pi_i,t,x}_{\tilde{r}},\int_{\R^d} \tilde{U}^{\pi_i,t,x}_{\tilde{r}}(z)\,\eta_{\tilde{r}}(z)\,
K_{\tilde{r}}(dz)
\Bigr)\,d\tilde{r}\\&\quad
-\int_r^T \tilde{Z}^{\pi_i,t,x}_{\tilde{r}}\,dX^{c,t,\bfnull}_{\tilde{r}}
-\int_r^T\int_{\R^d}\tilde{U}^{\pi_i,t,x}_{\tilde{r}}(z)\,
(\mu^X-\nu)(d\tilde{r},dz),\,r\in [t,T],\,\text{$\Prob_{t,\bfnull}$-a.s.,}
\end{align*}
and define $\theta_i^{s^\ast,\om^\ast,\eps}(\pi_i;\cdot)=
\theta_i(\pi_i;\cdot):[s,T]\times\R^d\to\R$ by
\begin{align*}
\theta_i(\pi_i;t,x):=\Mean_{t,\bfnull}[\tilde{Y}^{\pi_i;t,x}_t].
\end{align*}

\begin{lemma}[Dynamic programming]\label{L:DPP}
We have
\begin{align*}
\theta_i(\pi_i;t,x)&=
\Mean_{t,\bfnull}\Bigl[
\theta_{i+1}(
\pi_i;\ch^{t,y-x,\eps}_1,x+X_{\ch^{t,y-x,\eps}_1};
\ch^{t,y-x,\eps}_1,x+X_{\ch^{t,y-x,\eps}_1})
\\&\qquad\qquad+\int_t^{\ch^{t,y,\eps}_1}
\tilde{f}_r\Bigl(
(\pi_i;(\ch_j^{t,y-x,\eps},x+X_{\ch_j^{t,y-x,\eps}})_{j\in\N});x+X_T),
\\&\qquad\qquad
\qquad\qquad\qquad
\tilde{Y}^{\pi_i,t,x}_r,\tilde{Z}^{\pi_i,t,x}_r,
\int_{\R^d} \tilde{U}^{\pi_i,t,x}_r(z)\,\eta_r(z)\,
K_r(dz)
\Bigr)\,dr
\Bigr].
\end{align*}
If, additionally, $x\not\in O_\eps(y)$, then
\begin{align*}
\theta_i(\pi_i;t,x)=
\theta_{i+1}(\pi_i;t,x;t,x).
\end{align*}
\end{lemma}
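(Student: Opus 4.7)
The plan is to derive the first identity by splitting the BSDE defining $\tilde Y^{\pi_i,t,x}$ at the hitting time $\tau_1:=\ch_1^{t,y-x,\eps}$ and taking $\Prob_{t,\bfnull}$-expectations. Writing the BSDE for $r\in[t,\tau_1]$ and using that the stochastic integrals against $X^{c,t,\bfnull}$ and $\mu^X-\nu$ are $(\Prob_{t,\bfnull},\F^0_+)$-martingales (in fact one has the representation property, as in Remark after Lemma~\ref{L:BSDE-ChangeMeasure}), one obtains
\begin{align*}
\Mean_{t,\bfnull}[\tilde Y^{\pi_i,t,x}_t]
 &=\Mean_{t,\bfnull}[\tilde Y^{\pi_i,t,x}_{\tau_1}]
 +\Mean_{t,\bfnull}\!\left[\int_t^{\tau_1}\tilde f_r(\ldots)\,dr\right].
\end{align*}
So everything reduces to identifying $\Mean_{t,\bfnull}[\tilde Y^{\pi_i,t,x}_{\tau_1}]$ with $\Mean_{t,\bfnull}[\theta_{i+1}(\pi_i;\tau_1,x+X_{\tau_1};\tau_1,x+X_{\tau_1})]$.

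To establish that identity I would take a regular conditional $\Prob_{t,\bfnull}$-distribution given $\cF^0_{\tau_1+}$, which, by the martingale-problem arguments of Appendix~A (Proposition~\ref{L:shiftMG_augment} / Lemma~\ref{L:MGproblem}), coincides with $\Prob_{\tau_1(\om),\om}$ for $\Prob_{t,\bfnull}$-a.e.\ $\om$. Fixing such an $\om$, let $s:=\tau_1(\om)$ and $y':=x+\om_s$. By Lemma~\ref{L:ShiftingHitting-new} applied with $y$ replaced by $y-x$,
\begin{align*}
\ch_{j+1}^{t,y-x,\eps}=\ch_j^{s,\om_s,\eps}\quad\Prob_{s,\om}\text{-a.s.,\ \ }j\in\N_0,
\end{align*}
so that on $[s,T]$ the BSDE for $\tilde Y^{\pi_i,t,x}$ under $\Prob_{s,\om}$ has terminal data and driver indexed by $\ch_j^{s,\om_s,\eps}$ and carries the extended path-tuple $(\pi_i;s,y')$. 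Because $(b,c,K)$ is constant by Assumption~\ref{A:L}, the shifted process $\tilde X:=X-\om_s\cdot\bfone_{[s,T]}$ has the same $(\F^0_+,\Prob_{s,\om})$-law on $[s,T]$ as $X$ does under $\Prob_{s,\bfnull}$, and the hitting times translate as $\ch_j^{s,\om_s,\eps}=\ch_j^{s,\bfnull,\eps}$ when expressed in terms of $\tilde X$. Rewriting the terminal condition with $x+X_{\ch_j^{s,\om_s,\eps}}=y'+\tilde X_{\ch_j^{s,\om_s,\eps}}$, uniqueness for the BSDE (together with measurability of $\theta_{i+1}$, for which one uses Proposition~\ref{R:hittingTime} and Assumption~\ref{A:pi}) yields
\begin{align*}
\Mean_{s,\om}[\tilde Y^{\pi_i,t,x}_s]=\Mean_{s,\bfnull}[\tilde Y^{(\pi_i;s,y'),s,y'}_s]=\theta_{i+1}(\pi_i;s,y';s,y').
\end{align*}
Integrating this over $\Prob_{t,\bfnull}$ gives the first assertion.

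For the second assertion, observe that if $x\notin O_\eps(y)$, then $|y-x|\ge\eps$, and under $\Prob_{t,\bfnull}$ one has $X_t=\bfnull$, so $X_t=0\notin O_\eps(y-x)$. By the definition of $\ch_1^{t,y-x,\eps}$ this forces $\tau_1=t$, $\Prob_{t,\bfnull}$-a.s.; the integral term in the first identity then vanishes and the conclusion $\theta_i(\pi_i;t,x)=\theta_{i+1}(\pi_i;t,x;t,x)$ follows because $x+X_t=x$ a.s.

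The main obstacle is the shift/identification step: one must carefully transfer the BSDE from $\Prob_{s,\om}$ to $\Prob_{s,\bfnull}$ by spatial translation and invoke Lemma~\ref{L:ShiftingHitting-new} together with BSDE uniqueness, while also ensuring the required measurability in $\pi_i$ and $(t,x)$ so that the outer $\Prob_{t,\bfnull}$-expectation makes sense; the stochastic-integral split and the boundary case are essentially routine once these ingredients are in place.
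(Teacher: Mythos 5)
Your proposal follows essentially the same route as the paper: split the BSDE for $\tilde Y^{\pi_i,t,x}$ at $\tau_1=\ch_1^{t,y-x,\eps}$, take expectations, identify $\Mean_{t,\bfnull}[\tilde Y^{\pi_i,t,x}_{\tau_1}]$ by conditioning with the regular conditional distribution (the paper invokes Proposition~\ref{C:SMPwide}), transfer the hitting-time indexing with Lemma~\ref{L:ShiftingHitting-new}, reduce from $\Prob_{\tau_1(\om),\om}$ to $\Prob_{\tau_1(\om),\bfnull}$ using the spatial translation permitted by deterministic characteristics (the paper packages this through Lemma~\ref{L:BSDE-ChangeMeasure}, which uses the gluing map $\tiom\mapsto\om.\bfone_{[0,s)}+(\tiom+\om_s).\bfone_{[s,T]}$ rather than your $X\mapsto X-\om_s$; these are equivalent here since the law of $X$ on $[s,T]$ under $\Prob_{s,\cdot}$ is independent of the starting path), and finish with BSDE uniqueness. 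The handling of the case $x\notin O_\eps(y)$, forcing $\tau_1=t$ under $\Prob_{t,\bfnull}$, is also the argument the paper implicitly relies on. Correct, same approach.
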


\begin{proof}
Skip superscript $\eps$. Put $\tau:=\ch_1^{t,y-x}$.
Since
\begin{align*}
\theta_i(\pi_i;t,x)&=
\Mean_{t,0}\Bigl[
\tilde{Y}^{\pi_i,t,x}_\tau
+\int_t^\tau
\tilde{f}_r\Bigl(
(\pi_i;(\ch_j^{t,y-x,\eps},x+X_{\ch_j^{t,y-x,\eps}})_{j\in\N});x+X_T),
\\&\qquad\qquad
\qquad\qquad\qquad
\tilde{Y}^{\pi_i,t,x}_r,\tilde{Z}^{\pi_i,t,x}_r,
\int_{\R^d} \tilde{U}^{\pi_i,t,x}_r(z)\,\eta_r(z)\,
K_r(dz)
\Bigr)\,dr
\Bigr]
\end{align*}
it suffices to show that 
\begin{align}\label{E1:L:DPP}
\Mean_{t,\bfnull}[
\tilde{Y}^{\pi_i,t,x}_\tau]=
\Mean_{t,\bfnull}[
\theta_{i+1}(
\pi_i;\ch^{t,y-x,\eps}_1,x+X_{\ch^{t,y-x,\eps}_1};
\ch^{t,y,\eps}_1,x+X_{\ch^{t,y,\eps}_1})].
\end{align}
By Corollary~\ref{C:SMPwide},
\begin{align}\label{E2:L:DPP}
\Mean_{t,\bfnull}[
\tilde{Y}^{\pi_i,t,x}_\tau]&=
\int\int \tilde{Y}^{\pi_i,t,x}_{\tau(\om)}(\tiom)\,\Prob_{\tau,\om}(d\tiom)\,
\Prob_{t,\bfnull}(d\om).
\end{align}
For every $\om\in\Omega$, define a process $Y^\om$ on $[\tau(\om),T]$
by
\begin{align}\label{E3:L:DPP}
Y^\om_r:=\tilde{Y}^{\pi_i,t,x}(\om.\bfone_{[0,\tau(\om))}+
(X+\om_{\tau(\om)}).\bfone_{[\tau(\om),T]}).
\end{align}
Note that, by Lemma~\ref{L:ShiftingHitting-new}, 
for $\Prob_{\tau,\om}$-a.e.~$\tiom\in\Omega$,
\begin{align*}
&(\pi_i;(\ch_j^{t,y-x,\eps}(\tiom),x+X_{\ch_j^{t,y-x,\eps}}(\tiom))_{j\in\N},x+\tiom_T)\\
&\qquad =
(\pi_i;(\ch_j^{\tau(\om),\om_{\tau(\om)}}(\tiom),
x+X_{\ch_j^{\tau(\om),\om_{\tau(\om)}}}(\tiom))_{j\in\N_0},x+\tiom_T).
\end{align*}
Thus (cf.~Lemma~\ref{L:BSDE-ChangeMeasure} and 
Lemma~\ref{L:BSDE-DPP}), 
for $\Prob_{t,\bfnull}$-a.e.~$\om\in\Omega$,
there exists a pair $(Z^\om,U^\om)$
such that $(Y^\om,Z^\om,U^\om)$ is the solution to the BSDE
\begin{align*}
&Y^\om_r=g\left(\pi_i;\left(\ch_j^{\tau(\om),\om_{\tau(\om)}},
x+\om_{\tau(\om)}+X_{\ch_j^{\tau(\om),\om_{\tau(\om)}}}\right)_{j\in\N_0},
x+\om_{\tau(\om)}+X_T\right)\\
&\,+\int_r^T
\tilde{f}_{\tilde{r}}\Biggl(
\left(
\pi_i;\left(\ch_j^{\tau(\om),\om_{\tau(\om)}},
x+\om_{\tau(\om)}+X_{\ch_j^{\tau(\om),\om_{\tau(\om)}}}\right)_{j\in\N_0},
x+\om_{\tau(\om)}+X_T
\right),\\ &\,\qquad\qquad\qquad\qquad 
Y^{\om}_{\tilde{r}},Z^{\om}_{\tilde{r}},
\int_{\R^d} U^{\om}_{\tilde{r}}(z)\,\eta_{\tilde{r}}(z)\,K_{\tilde{r}}(dz)
\Biggr)\,d\tilde{r} \\ 
&\,+\int_r^T Z^{\om}_{\tilde{r}}\,dX^{c,\tau(\om),\bfnull}
+\int_r^T\int_{\R^d} U_{\tilde{r}}^{\om}(z)\,(\mu^X-\nu)(d\tilde{r},dz),\,
r\in [\tau(\om),T],\,\text{$\Prob_{\tau(\om),\bfnull}$-a.s.}
\end{align*}
Together with \eqref{E2:L:DPP} and \eqref{E3:L:DPP}, we obtain
\begin{align*}
\Mean_{t,\bfnull}\left[
\tilde{Y}^{\pi_i;t,x}_\tau
\right]&=
\int\int Y^{\om}_{\tau(\om)}(\tiom)\,
\Prob_{\tau(\om),\bfnull}(d\tiom)\,
\Prob_{t,\bfnull}(d\om)\\
& =\int\int 
Y^{
(\pi_i;\tau(\om),x+\om_{\tau(\om)}),
\tau(\om),x+\om_{\tau(\om)}
}_{\tau(\om)}(\tiom)\,
\Prob_{\tau(\om),\bfnull}(d\tiom)\,
\Prob_{t,\bfnull}(d\om)\\
& =\int
\theta_{i+1}(\pi_i;\tau(\om),x+\om_{\tau(\om)};
\tau(\om),x+\om_{\tau(\om)})\,
\Prob_{t,\bfnull}(d\om).
\end{align*}
Thus \eqref{E1:L:DPP} has been established.
\end{proof}

Fix $(s^\ast,\om^\ast)\in\bar{\Lambda}$
and $\eps\in(0,1)$. We will  write $g$ instead
of $g^{s^\ast,\om^\ast}$ and $\tilde{f}$ instead of
$\tilde{f}^{s^\ast,\om^\ast}$. 
The generic notation for an element of
$\Pi_i^{s^\ast}$ is
\begin{align*}
\pi_i=(s_0,y_0;\ldots;s_{i-1},y_{i-1}),\,
s=s_{i-1},\,y=y_{i-1}.
\end{align*}
Define a function
$h_i^{s^\ast,\om^\ast,\eps}=h_i^\eps:\Pi_i^{s^\ast}\times\R\times\R\to\R$ by
\begin{align*}
h_i^\eps(\pi_i;t,x):=
\theta_{i+1}^{s^\ast,\om^\ast,\eps} 
(\pi_i;(s\vee t)\wedge T,x;(s\vee t)\wedge T,x). 
\end{align*}
The following result is needed for the approximation of 
the functions $\theta_{i+1}(\pi_i;\cdot)$ by smooth functions.
\begin{lemma}\label{L:ContRestr}
Let
$\pi_i\in\Pi_i^{s^\ast}$. Then
 $(t,x)\mapsto h_i^\eps(\pi;t,x)$,
$\R\times\R^d\to\R$, is continuous.
\end{lemma}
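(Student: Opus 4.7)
The plan is to observe that $h_i^\eps(\pi_i;t,x)=\Theta((s\vee t)\wedge T,x)$, where $\Theta(t',x):=\theta_{i+1}^\eps(\pi_i;t',x;t',x)$ is a function on $[s,T]\times\R^d$, and then to prove that $\Theta$ is jointly continuous; this reduction is immediate because $(t,x)\mapsto((s\vee t)\wedge T,x)$ is continuous on $\R\times\R^d$ into $[s,T]\times\R^d$.

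For each fixed $(t',x)\in[s,T]\times\R^d$, $\Theta(t',x)$ is the deterministic initial value of a BSDE on $[t',T]$ under $\Prob_{t',\bfnull}$ whose terminal condition is $\xi$ applied to the piecewise-constant path built from the fixed prefix determined by $\pi_i$, the value $x$ on $[t',\ch_1^{t',0,\eps})$, and the values $x+X_{\ch_j^{t',0,\eps}}$ on the subsequent pieces, with $x+X_T$ at $T$; the driver is $f$ evaluated similarly. Since Assumption~\ref{A:L} forces $(b,c,K)$ to be constant, the martingale problem is time-homogeneous, so the law of $(X_{t'+r})_{r\in[0,T-t']}$ under $\Prob_{t',\bfnull}$ coincides with the law of $(X_r)_{r\in[0,T-t']}$ under $\Prob_{0,\bfnull}$. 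Mimicking Lemma~\ref{L:BSDE-ChangeMeasure}, I would rewrite the BSDE on the common probability space $(\Omega,\cF^0_T,\Prob_{0,\bfnull})$, after which its data depends on $(t',x)$ only through the horizon $T-t'$, the spatial shift $x$, and the iterated hitting times $\tau_j^\eps:=\inf\{r\ge\tau_{j-1}^\eps:|X_r-X_{\tau_{j-1}^\eps}|\ge\eps\}\wedge(T-t')$ with $\tau_0^\eps:=0$.

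Next I would verify continuity of the BSDE data in $(t',x)$. Thanks to the nondegeneracy of the diffusion part (Assumption~\ref{A:L}(i)) together with Assumption~\ref{A:jumps}, each $\tau_j^\eps$ is totally inaccessible under $\Prob_{0,\bfnull}$, and the set of paths on which some $\tau_j^\eps$ fails to depend continuously on the parameter $T-t'$---i.e., paths that ``touch and return'' at level $\eps$ or whose jumps fall exactly at a truncation time---has $\Prob_{0,\bfnull}$-measure zero. On its complement the piecewise-constant path depends continuously on $(t',x)$ in the weak $M_1$-topology and in $\dist_\infty$; Assumption~\ref{A:xi} (for $\xi$) together with Assumption~\ref{A:pi} and Assumption~\ref{A:SemilinCoeff} (for $f$) therefore yields $\Prob_{0,\bfnull}$-a.s.\ convergence of the terminal condition and, uniformly in $(y,z,p)$, of the driver. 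Uniform boundedness upgrades this to $L^2(\Prob_{0,\bfnull})$-convergence by dominated convergence.

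Finally I would invoke standard stability for BSDEs with jumps (Lipschitz driver in $(y,z,p)$ by Assumption~\ref{A:SemilinCoeff}, bounded data, $L^2$-convergent terminals, drivers convergent in $L^1([0,T]\otimes\Prob_{0,\bfnull})$, and common horizon after extending each solution constantly beyond its own horizon) to conclude $\Theta(t_n',x_n)\to\Theta(t_0',x_0)$ as $(t_n',x_n)\to(t_0',x_0)$. The main obstacle I anticipate is the $\Prob_{0,\bfnull}$-almost sure continuity of the hitting times $\tau_j^\eps$ in the parameter $T-t'$: controlling the exceptional set of paths forces a careful use of the nondegenerate diffusion from Assumption~\ref{A:L}(i) and of the total inaccessibility of jump times ensured by Assumption~\ref{A:jumps}, after which the remainder is routine BSDE analysis.
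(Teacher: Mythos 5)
The broad strategy is the same as the paper's — shift via time-homogeneity to $\Prob_{0,\bfnull}$ (Remark~\ref{R:hitting}), establish $\Prob_{0,\bfnull}$-a.s.\ convergence of the piecewise-constant paths, and then close with BSDE stability — but there is a genuine misdirection in the middle step that would leave a hole if carried out as written.

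First, the claim that only an exceptional null set of paths obstructs continuity of the truncated hitting times in the parameter $T-t'$ is off target: since $\tau_j^\eps = \ch^\eps_j\wedge(T-t')$, these times are in fact Lipschitz in $T-t'$ for \emph{every} path, and nondegeneracy (Assumption~\ref{A:L}(i)) or total inaccessibility (Assumption~\ref{A:jumps}) play no role here. The actual obstruction is elsewhere: when $t^n\le t^0$ the path $\tiom(x,s^n)$ contains a piece of the form $(x+\om_{r_\iota}).\bfone_{[s^n+r_\iota,T)}$ whose length shrinks to zero as $s^n\uparrow s^0$, and this shrinking indicator does \emph{not} vanish in $d_{J_1}$ or in $\dist_\infty$ regardless of how generic $\om$ is. This is exactly the phenomenon recorded in Remark~\ref{R:notJ1}, and it occurs for every path, not merely on an exceptional set; it is the reason Assumption~\ref{A:xi} and Lemma~\ref{R:M1} (the $M_1$ estimate for shrinking indicator functions) are indispensable. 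Consequently your assertion that ``the piecewise-constant path depends continuously on $(t',x)$ \ldots in $\dist_\infty$'' is false — that failure is precisely why the $M_1$-topology is introduced, and the two topologies cannot be invoked interchangeably at this point.

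Second, the paper's proof is bidirectional in a way your sketch does not reflect. For $t^n\ge t^0$ the truncation only removes pieces ($\iota^n=\iota$ after a harmless reduction), and Lemma~\ref{L:PathsCont} gives a direct $d_{J_1}$ estimate; the only a.s.\ input here is left-continuity of $\om$ at $T-s^0$, which comes from quasi-left-continuity, not from total inaccessibility of the hitting times $\ch^\eps_j$. For $t^n\le t^0$ the truncation \emph{adds} pieces ($\iota^n\ge\iota$), values at the new times converge by right-continuity (no null set required), and the $M_1$-metric is needed to absorb the shrinking indicator near $T$. Your proposal, as written, does not distinguish these two regimes or explain where the $M_1$ bound enters, and the reduction to ``a.s.\ continuity of hitting times'' would not produce the needed path convergence even after correcting the $\dist_\infty$ claim. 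The remaining steps (uniform-in-$x$ convergence via Lemma~\ref{L:DCTpara}, BSDE stability for the common horizon) are fine once the path convergence in $d_p$ is obtained, and match the paper's use of standard a priori estimates.
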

\begin{remark}
If $\xi$ were only $d_{J_1}$-continuous, then,
in contrast to corresponding mappings in 
\cite{ETZ12a} and \cite{ETZ12b},
the mapping
$(y;t,x)\mapsto \xi(y.\bfone_{[0,t)}+
x.\bfone_{[t,T]})$
cannot be expected to be continuous.
For example, assume that $d=1$ and let
$(y^0;t^0,x^0):=(1;0,2)$ and
$(y^n;t^n,x^n):=(1;1/n,2)$.
Then $(y^n;t^n,x^n)\to(y^0;t^0,x^0)$
as $n\to\infty$, but
\begin{align*}
d_{J_1}
(y^n.\bfone_{[0,t^n)}+x^n.\bfone_{[t^n,T]},
y^0.\bfone_{[0,t^0)}+x^0.\bfone_{[t^0,T]})\ge 1.
\end{align*}
\end{remark}

\begin{remark}\label{R:notJ1}
If $\xi$ is just $d_{J_1}$-continuous,
then we cannot expect
$(t,x)\mapsto\theta_i^\eps(\pi_i;t,x)$
on $\overline{Q}^{2C_0^\prime}_{s,y}\setminus Q^\eps_{s,y}$
to be left-continuous in $t$. To
see this, assume that $d=1$ and let $t^0=T$ and
$t^n\uparrow t^0$ with $t^n<T$. Then,
given $\om\in\Omega$, we have, for sufficiently
large $n$ and with $y_j=0$,
$j=0$, $\ldots$, $i-1$, (which we can assume
without loss of generality,) 
\begin{align*}
&\vert
g(\pi_i;(t^n+[\ch_j^\eps(\om)\wedge (T-t^n)],
x+X_{\ch^\eps_j\wedge (T-t^n)}(\om))_{j\in\N_0};
x+X_{T-t^n}(\om))\\&\qquad\qquad-
g(\pi_i;(T,x+X_0(\om))_{j\in\N_0};x+X_0(\om))\vert\\
&\qquad =
\abs{\xi((x+\om_0).\bfone_{[t^n,T]})-
\xi((x+\om_0).\bfone_{\{T\}})}
\end{align*}
but
$d_{J_1}(z.\bfone_{[t^n,T]},z.\bfone_{\{T\}})\ge 1$
for $z\ne 0$. However
$d_{M_2}(z.\bfone_{[t^n,T]},z.\bfone_{\{T\}})\to 0$
as $n\to\infty$.
\end{remark}
\begin{proof}[Proof of Lemma~\ref{L:ContRestr}]
Let $(t^n,x_n)\to (t^0,x_0)$
in $\R\times\R^d$
as $n\to\infty$. For every $n\in\N$,
\begin{align*}
\abs{h_i^\eps(\pi_i;t^0,x_0)-
h_i^\eps(\pi_i;t^n,x_n)}&\le
\abs{h_i^\eps(\pi_i;t^0,x_0)-
h_i^\eps(\pi_i;t^0,x_n)}+\\ &\qquad
\abs{h_i^\eps(\pi_i;t^0,x_n)-
h_i^\eps(\pi_i;t^n,x_n)}\\
&=:A^x_n+A^t_n. 
\end{align*}
Note that, for every 
$(t,x)\in\R\times\R^d$,
with $t^\prime:=(t\vee s)\wedge T$,
\begin{equation}\label{E1:ContRestr}
\begin{split}
h_i^\eps(\pi_i;t,x)&=
\Mean_{t^\prime,\bfnull}\Biggl[
g\left(\pi_i;t^\prime,x;\left(\ch_j^{t^\prime,\eps},
x+X_{\ch_j^{t^\prime,\eps}}\right)_{j\in\N};x+X_T\right)\\
&\qquad\qquad +\int_{t^\prime}^T \tilde{f}_r
\Biggl(
\left(
\pi_i;t^\prime,x;\left(\ch_j^{t^\prime,\eps},
x+X_{\ch_j^{t^\prime,\eps}}\right)_{j\in\N};x+X_T
\right),\\
&\qquad\qquad\qquad
\tilde{Y}^{(\pi_i;t^\prime,x);t^\prime,x}_r,\tilde{Z}^{(\pi_i;t^\prime,x);t^\prime,x}_r,
\int_{\R^d} \tilde{U}^{(\pi_i;t^\prime,x);t^\prime,x}_r(z)\,\eta_r(z)\,K(dz)
\Biggr)\,dr
\Biggr]
\end{split}
\end{equation}
and also (cf.~Remark~\ref{R:hitting})
\begin{equation}\label{E2:ContRestr}
\begin{split}
&\Mean_{t^\prime,\bfnull}\left[
g\left(\pi_i;t^\prime,x;\left(\ch_j^{t^\prime,\eps},
x+X_{\ch_j^{t^\prime,\eps}}\right)_{j\in\N};x+X_T\right)\right]\\
&\qquad =
\Mean_{0,\bfnull}\left[
g\left(\pi_i;\left(t^\prime+[\ch_j^\eps\wedge (T-t^\prime)],
x+X_{\ch_j^\eps\wedge (T-t^\prime)}\right)_{j\in\N_0};
x+X_{T-t^\prime}\right)\right].
\end{split}
\end{equation}
as well as
\begin{equation}\label{E3:ContRestr}
\begin{split}
&\Mean_{t^\prime,\bfnull}\Biggl[
\int_{t^\prime}^T \tilde{f}_r
\Biggl(
\left(
\pi_i;t^\prime,x;\left(\ch_j^{t^\prime,\eps},
x+X_{\ch_j^{t^\prime,\eps}}\right)_{j\in\N};x+X_T
\right),\\
&\qquad\qquad\qquad
\tilde{Y}^{(\pi_i;t^\prime,x);t^\prime,x}_r,\tilde{Z}^{(\pi_i;t^\prime,x);t^\prime,x}_r,
\int_{\R^d} \tilde{U}^{(\pi_i;t^\prime,x);t^\prime,x}_r(z)\,\eta_r(z)\,K(dz)
\Biggr)\,dr
\Biggr]
\\
&\qquad =
\Mean_{0,\bfnull}\Biggl[
\int_0^{T-t^\prime} \tilde{f}_{r+t^\prime}
\Biggl(
\left(\pi_i;\left(t^\prime+[\ch_j^\eps\wedge (T-t^\prime)],
x+X_{\ch_j^\eps\wedge (T-t^\prime)}\right)_{j\in\N_0};
x+X_{T-t^\prime}\right),\\
&\qquad\qquad\qquad
\hat{Y}^{t^\prime,x}_r,\hat{Z}^{t^\prime,x}_r,
\int_{\R^d}\tilde{U}^{t^\prime,x}_r(z)\,\eta_r(z)\,K(dz)
\Biggr)\,dr
\Biggr],
\end{split}
\end{equation}
where $(\hat{Y}^{t^\prime,x},\hat{Z}^{t^\prime,x},\hat{U}^{t^\prime,x})$ 
is the solution of the BSDE
\begin{equation}\label{E4:ContRestr}
\begin{split}
&\hat{Y}^{t^\prime,x}_r=
g\left(\pi_i;\left(t^\prime+[\ch_j^\eps\wedge (T-t^\prime)],
x+X_{\ch_j^\eps\wedge (T-t^\prime)}\right)_{j\in\N_0};
x+X_{T-t^\prime}\right)\\
&\quad +\int_r^{T-t^\prime}\tilde{f}_{\tilde{r}+t^\prime}\Biggl(
\left(\pi_i;\left(t^\prime+[\ch_j^\eps\wedge (T-t^\prime)],
x+X_{\ch_j^\eps\wedge (T-t^\prime)}\right)_{j\in\N_0};
x+X_{T-t^\prime}\right),\\
&\quad\qquad
\qquad\qquad
\hat{Y}^{t^\prime,x}_{\tilde{r}},\hat{Z}^{t^\prime,x}_{\tilde{r}},
\int_{\R^d} \hat{U}^{t^\prime,x}_{\tilde{r}}(z)\,\eta_{\tilde{r}}(z)\,
K(dz)
\Biggr)\,d\tilde{r}\\&\quad
-\int_r^{T-t^\prime} \hat{Z}^{t^\prime,x}_{\tilde{r}}\,dX^{c,0,\bfnull}_{\tilde{r}}
-\int_r^{T-t^\prime}\int_{\R^d}\hat{U}^{t^\prime,x}_{\tilde{r}}(z)\,
(\mu^X-\nu)(d\tilde{r},dz),\,r\in [0,T-t^\prime],\,\text{$\Prob_{0,\bfnull}$-a.s.}
\end{split}
\end{equation}

Since $\xi$ is uniformly continuous
under $d_U$, since
$f$ is uniformly continuous under $\dist_\infty$ in $(t,\om)$ uniformly
in $(y,z,p)$, and since $d_{M_1}\le d_U$, one can show using
BSDE standard techniques 
(keeping \eqref{E1:ContRestr} in mind) 
that there exists a constant $C^\prime=C^\prime(t^0)>0$
such that 
\begin{align*}
A^x_n\le C^{\prime}\rho_0(\abs{x_0-x_n}).
\end{align*}

Put $s^n:=(t^n\vee s)\wedge T$.
To show convergence of $A^t_n$, let us
initially fix $\om\in\Omega$. Set
\begin{align*}
r_j&=r_j(\om):=\ch_j^\eps(\om),\,j\in\N_0,\\
\iota&=\iota(\om):=\max\{j\in\N_0:s^0+r_j\le T\}.
\end{align*}
We treat first the
case that $t^n\ge t^0$, whence $s^n\ge s^0$. Since
$\om$ is fixed, we can and will assume that,
without loss of generality,
\begin{align*}
s^n+r_\iota\le T.
\end{align*} 
Since $s^0\le T-r_\iota$, we have
$s_n\in[s^0,T-r_\iota]$. Since, for
every $n\in\N_0$ and $x\in\R$,
\begin{align*}
&g(\pi_i;(s^n+[r_j\wedge(T-s^n)],
\om_{r_j\wedge (T-s^n)})_{j\in\N_0};x+\om_{T-s^n})
\\ &\qquad=
\xi\Biggl(
\sum_{j=0}^{i-2} y_j.\bfone_{[s_j,s_{j+1})}
+y.\bfone_{[s,s^n)}
+\sum_{j=0}^{\iota-1}
(x+\om_{r_j}).\bfone_{[s^n+r_j,s^n+r_{j+1})}\\
&\qquad\qquad\qquad
+(x+\om_{r_\iota}).\bfone_{[s^n+r_\iota,T)}
+(x+\om_{T-s^n}).\bfone_{\{T\}}
\Biggr)\\
&\qquad=:\xi(\tiom(x,s^n)).
\end{align*}
we have, by Lemma~\ref{L:PathsCont},
\begin{align*}
\sup_x \abs{\xi(\tiom(x,s^n))-\xi(\tiom(x,s^0)}
&\le \sup_x \rho_0(d_{J_1}(\tiom(x,s^n),
\tiom(x,s^0)) \\
&\le \sup_x \rho_0(2(s^n-s^0)+
\abs{
\om_{T-s^n}-\om_{T-s^0}
}\to 0
\end{align*}
as $n\to\infty$ provided $\om$ is left-continuous at 
$T$, which, however is the case for
$\Prob_{0,\bfnull}$-a.e.~$\om$ because
$X$ is $\Prob_{0,\bfnull}$-quasi-left-continuous.
A corresponding result can be shown for the driver
$\tilde{f}(\cdots)$ of the BSDE \eqref{E4:ContRestr}.
Thus, keeping \eqref{E2:ContRestr} and \eqref{E3:ContRestr} in mind,
we can employ standard a-priori estimates for BSDEs
(cf.~Lemma~3.1.1 in \cite{DelongBook}) to deduce that,
for every $x\in\R^d$,
$h^\eps_i(\pi_i;t^n,x)\to
h^\eps_i(\pi_i;t^0,x)$ as $n\to\infty$ with $t^n\ge t$.
Hence, by Lemma~\ref{L:DCTpara},
$(t^n,x_n)\to (t^0,x_0)$  as $n\to\infty$ with $t^n\ge t^0$ implies
\begin{align*}
A^t_n\le\sup_x\abs{
h^\eps_i(\pi_i;t^n,x)-
h^\eps_i(\pi_i;t^0,x)
}\to 0
\end{align*}
as $n\to\infty$.

Now we treat the case $t^n\le t^0$. Again, fix
$\om=(\om^k)_{k\le d}\in\Omega$ and, in addition to the notation
introduced in the previous paragraph, set
\begin{align*}
\iota^n=\iota^n(\om):=
\max\{j\in\N_0: s^n+r_j\le T\}.
\end{align*}
Note that $\iota\le\iota^n$. Recall that
\begin{align*}
\tiom(x,s^0)&=(\tiom(x,s^0)^k)_{k\le d}=
\sum_{j=0}^{i-2} y_j.\bfone_{[s_j,s_{j+1})}
+y.\bfone_{[s,s^0)}
\\&\quad +\sum_{j=0}^{\iota-1}
(x+\om_{r_j}).\bfone_{[s^0+r_j,s^0+r_{j+1})}
+(x+\om_{r_\iota}).\bfone_{[s^0+r_\iota,T)}
+(x+\om_{T-s^0}).\bfone_{\{T\}}
\end{align*}
For any $n\in\N$, let
\begin{align*}
\bar{\om}(x,s^n)&=(\bar{\om}(x,s^n)^k)_{k\le d}:=
\sum_{j=0}^{i-2} y_j.\bfone_{[s_j,s_{j+1})}
+y.\bfone_{[s,s^n)}
\\&\quad+\sum_{j=0}^{\iota-1}
(x+\om_{r_j}).\bfone_{[s^n+r_j,s^n+r_{j+1})}\\
&\quad
+(x+\om_{r_\iota}).\bfone_{[s^n+r_\iota,
(s^n+r_{\iota+1})\wedge T)}
+\sum_{j=\iota+1}^{\iota^n-1}
(x+\om_{r_j}).\bfone_{
[s^n+r_j,s^n+r_{j+1})
}\\
&\quad +
\bfone_{\{\iota\}^c}(\iota^n).(x+\om_{r_{\iota^n}}).
\bfone_{[s^n+r_{\iota^n},T)}
+(x+\om_{T-s^n}).\bfone_{\{T\}}.
\end{align*}
Right-continuity of $\om$ yields 
$\om_{r_j}=\om_{r_j\wedge (T-s^n)}\to\om_{T-s^0}$
as $n\to\infty$ for $j\le\iota^n$.
Hence, since $d_{M_1}$ is a metric, by the triangle inequality 
together with Lemma~\ref{R:M1},
\begin{align*}
d_p(\bar{\om}(x,s^n),\tiom(x,s^0))
\le \max_{k\le d} d_{M_1}(\bar{\om}(x,s^n)^k,\tiom(x,s^0)^k)\to 0
\end{align*}
uniformly in $x$ as $n\to\infty$.
Thus, corresponding considerations
as in the previous paragraph yield
\begin{align*}
\sup_x \abs{\xi(\bar{\om}(x,s^n))-
\xi(\tiom(x,s^0))}\le 
\sup_x \rho_0(d_p
(\bar{\om}(x,s^n),\tiom(x,s^0))
\to 0
\end{align*}
and
\begin{align*}
\sup_x\abs{
h_i^\eps(\pi_i;t^n,x)-
h_i^\eps(\pi_i;t^0,x)}\to 0.
\end{align*}
as $n\to\infty$.

This concludes the proof.
\end{proof}
\subsection{Path-frozen integro-differential
equations}
Let $\eps\in (0,c_0^\prime)$.
Given $(s,y)\in [s^\ast,T]\times\R^d$,
let $K^{4C_0^\prime}_{s,y}:=[s,T]\times \prod_{i=1}^d 
[y^i-4C_0^\prime,y^i+4C_0^\prime]$.
Then, by the Weierstrass approximation
theorem, for any $\delta>0$,
there exists a polynomial
$h^{\eps,\delta}_i(\pi_i;\cdot)$ on $\R\times\R^d$
such that
\begin{align*}
\abs{
h^{\eps,\delta}_i(\pi_i;\cdot)-
h^{\eps}_i(\pi_i;\cdot)}_{K^{4C_0^\prime}_{s,y}}
<\delta.
\end{align*}
Since we can take multivariate Bernstein polynomials
as approximating functions 
(see, e.g., Appendix B of \cite{HeitzingerThesis}),
we can and will, by Assumption~\ref{A:pi}, assume
 that the mapping
$(\pi_i;t,x)\mapsto 
h^{\eps,\delta}_i(\pi_i;\cdot)$,
$\Pi_i^{s^\ast}\times\R\times\R^d\to\R$,
is continuous.
Put \begin{align*}
\overh_i^{\eps,\delta}
:=h_i^{\eps,\delta}+\delta.
\end{align*}

Now, we proceed similarly as in the approach of the proof of Lemma~5.4 
in the first arxiv version
of \cite{ETZ12b} to define inductively (in three steps) a functional
$\psi^{s^\ast,\om^\ast,\eps}\in \bar{C}_b^{1,2}(\bar{\Lambda}^{s^\ast})$,
which will have
 properties that are needed in the proof of Theorem~\ref{T:Comparison}
below.
To this end, let us, first of all, introduce
some notation.

\begin{definition}\label{D:AppendixC}
Let $0<\delta< c_0^\prime< C_0^\prime$
and $2C_0^\prime <\delta^\prime<\infty$.
(Recall
that $c_0^\prime$ is a lower bound and
$C_0^\prime$ is an upper bound of the jump size
of $X$.
See Remark~\ref{R:jumpSize} and Assumption~\ref{A:jumps}.)
Let $(t^\ast,y^\ast)\in (-\infty,T)\times\R^d$.
Put
$D:=O_\delta(y^\ast)$, $D^\prime:=O_{\delta^\prime}(y^\ast)$,
$Q:=(t^\ast,T)\times O_\delta(y^\ast)$,
$Q^\prime:=(t^\ast,T)\times O_{\delta^\prime}(y^\ast)$,
etc. 
 Fix $\alpha\in (0,1)$ and 
 $h\in C^\infty(\bar{Q})$.
 Set
\begin{align*}
\mathcal{C}_\alpha(h):=\{
w:\bar{Q}^\prime\to\R\quad&\text{such that
$w\in C^{2,\alpha}_{\loc}(Q)$}\\
&\text{with $\abs{\partial_t w}_Q$,
$\abs{\partial_x w}_Q$,
$\abs{\partial^2_{xx} w}_Q$ being bounded}\\
&\text{and that $w=h$ in $(t^\ast,T)\times(D^\prime\setminus D)$ and on
$\{T\}\times D^\prime$}
\}.
\end{align*}
Let $\check{f}=\check{f}(t,y,z,p):\bar{Q}\times\R\times\R^d\times\R\to\R$
be a function.
Define $\bar{\eta}:(-\infty,T]\to\R$ by $\bar{\eta}(t):=\eta_{t\vee 0}$.
Given $t\in (-\infty,T]$, define an operator $\mathbf{I}^h_t=\mathbf{I}_t$ on 
$\mathcal{C}_\alpha(h)$ 
by 
\begin{align*}
\mathbf{I}_t w(t,x):=\int_{c_0^\prime\le \abs{z}
\le C_0^\prime}
\left[h(t,x+z)\,\bar{\eta}(t)\right]\,K(dz)
-w(t,x)\bar{\eta}(t)K(\R^d).
\end{align*}
Define a mapping $\tilde{F}=\tilde{F}(t,x,y,z,w):\bar{Q}\times\R\times\R^d\times
\mathcal{C}_\alpha(h)\to\R$ by
\begin{align*}
\tilde{F}(t,x,y,z,w):=\check{f}(t,y,z,\mathbf{I}_t w(t,x)).
\end{align*}
Given $v\in\mathcal{C}_\alpha(h)$, put
\begin{align*}
\tilde{F}[v](t,x):=\tilde{F}(t,x,v(t,x),\partial_x v(t,x),v(\cdot,\cdot)).
\end{align*}
Define an operator $\mathbf{L}^h=\mathbf{L}$ on $\mathcal{C}_\alpha(h)$ by
\begin{align*}
\mathbf{L}w(t,x)&:=
-\partial_t w(t,x)-\sum_{i=1}^d \bar{b}^i)\,\partial_{x^i} w(t,x)-
\frac{1}{2}\sum_{i,j=1}^d \bar{c}^{i,j}\,\partial_{x^i x^j} w(t,x)\\
&\qquad
-\int_{\R^d}\left[
h(t,x+z)-\sum_{i=1}^d z^i\,\partial_{x^i} w(t,x)
\right]\,K(dz)\\
&\qquad +w(t,x)K(\R^d).
\end{align*}
\end{definition}

\begin{remark} Given the context of the preceding
definition, we have
\begin{align*}
\mathbf{I}^h_t w(t,x)&:=\int_{\R^d}
[w(t,x+z)-w(t,x)]\,\bar{\eta}(t)\,\bar{K}(t,dz),\\
\mathbf{L}^hw(t,x)&:=
-\partial_t w(t,x)-\sum_{i=1}^d b^i\,\partial_{x^i} w(t,x)-
\frac{1}{2}\sum_{i,j=1}^d c^{i,j}\,\partial_{x^i x^j} w(t,x)\\
&\qquad
-\int_{\R^d}\left[
w(t,x+z)-w(t,x)-\sum_{i=1}^d z^i\,\partial_{x^i} w(t,x)
\right]\,K(dz).
\end{align*}
\end{remark}

Given $y\in\R^d$
and $h\in C^\infty(\R\times\R^d)$,
 put $D_y:=O_\eps(y)$
 $D^\prime_y:=O_{2C_0^\prime}(y)$, and let 
the function space $\mathcal{C}_\alpha(h,y)$ be
defined as the space $\mathcal{C}_\alpha(h)$ with
$\bar{Q}^\prime=\bar{Q}^{2C_0^\prime}_{-1,y}$
and $Q=\bar{Q}^\eps_{-1,y}$.

\textit{Step~1.} Let $\pi_1=(s^\ast,y)$. 
Set $\delta_1:=\eps/4$.
Write $h=\overh_1^{\eps,\delta_1}(\pi_1;\cdot)$.
Write $\hat{f}(t,\cdot)=\tilde{f}_t((\pi_1;(T,y)_{j\in\N};y),\cdot)$
and let $\tilde{F}$ as well as $\tilde{F}[\cdot]$ be defined as in Definition~\ref{D:AppendixC}. By 
standard PDE theory,
there exists a function
$w_1^{s^\ast,\om^\ast,\eps}(\pi_1;\cdot)=
w_1(\pi_1;\cdot)=w_1\in\mathcal{C}_\alpha(h,y)$
such that
\begin{align*}
\mathbf{L}w_1-\tilde{F}[w_1]=0\,\,\text{in $Q^\eps_{-1}$, }
w_1=h\,\,\text{in $(-1,T)\times D^\prime_y\setminus D_y$, }
w_1=h\,\,\text{on $\{T\}\times D^\prime_y$.}
\end{align*}
Define a function
$v_1^{s^\ast,\om^\ast,\eps}(\pi_1;\cdot)=
v_1(\pi_1;\cdot)=v_1$ on $\bar{Q}^{2C_0^\prime}_{-1,y}$ by
\begin{align*}
v_1(\pi_1;t,x):=w_1(\pi_1;t,x)-w_1(\pi_1;\pi_1)+\theta_1(\pi_1;\pi_1)+\frac{\eps}{2}.
\end{align*}
Then $v_1(\pi_1;\cdot)\in\mathcal{C}_\alpha(h^\prime,y)$ 
for some $h^\prime\in C^\infty(\R\times\R^d)$
and
\begin{align}\label{E1:LStep1}
\mathbf{L} v_1-\tilde{F}[v_1]&\ge 0
 \,\,\text{in $Q^\eps_{-1,y}$,} 
\\ \label{E3:LStep1}
v_1(\pi_1;\pi_1)&=\theta_1(\pi_1;\pi_1)
+\frac{\eps}{2},
\\ \label{E2:LStep1}
\quad v_1&\ge h^\eps_1(\pi_1;\cdot)\,\,\text{in 
$[s^\ast,T)\times D^\prime_y\setminus D_y$ and
on  $\{T\}\times D^\prime_y$.}
\end{align}
To see that \eqref{E2:LStep1} is true, it suffices, by definition of $v_1$, to show
that $\eps/2+\theta_1(\pi_1;\pi_1)-w_1(\pi_1;\pi_1)\ge 0$. Indeed, noting
that $f$ is non-anticipating and using the dynamic programming principle
(Lemma~\ref{L:DPP}), we can employ the comparison principle for BSDEs
with jumps together with constant-translatibility
 of sublinear expectations 
 (see, e.g., \cite{PengGexpect}) to deduce that
$h\le h_i^{\eps,\delta}(\pi_1;\cdot)+2\delta_1$ implies
$w_1(\pi_1;\pi_1)\le \theta_1(\pi_1;\pi_1)+\eps/2$.
 Note that It\^o's formula together
 with quasi-left-continuity makes it possible
 to represent $w_1$ as BSDE
(see Lemmas~\ref{L:hittingLeftLimit} 
and \ref{L:ItoHitting}).

Define
\begin{align*}
\psi^{s^\ast,\om^\ast,\eps}(t,\om):=
v_1(s^\ast,\om_{s^\ast};
t,\om_t)+\sum_{j=1}^\infty \delta_j,
\quad s^\ast\le t\le\ch_1^{s^\ast,\eps}(\om).
\end{align*}
Then, by \eqref{E3:LStep1},
\begin{align*}
\theta_1^{s^\ast,\om^\ast,\eps}
(s^\ast,\om_{s^\ast};s^\ast,\om_{s^\ast})
<\psi^{s^\ast,\om^\ast,\eps}(s^\ast,\om)
<\theta_1^{s^\ast,\om^\ast,\eps}
(s^\ast,\om_{s^\ast};s^\ast,\om_{s^\ast})+\eps.
\end{align*}
Universal measurability of
 $(t,\om)\mapsto v_1(s^\ast,
\om_{s^\ast};t,\om_t)$
follows from Assumption~\ref{A:pi}.

\textit{Step~2.} 
Let $\pi_2=(s_0,y_0;s,y)\in\Pi_2^{s^\ast}$.
Set $\delta_2:=\eps/8$.
Write $h=\bar{h}^{\eps,\delta_2}_2(\pi_2;\cdot)$
and $\hat{f}(t,\cdot)=\tilde{f}_t((\pi_2;(T,y)_{j\in\N};y),\cdot)$
and let $\tilde{F}$ as well as $\tilde{F}[\cdot]$ be defined as in Definition~\ref{D:AppendixC}. By 
standard PDE theory,
there exists a function
$w_2^{s^\ast,\om^\ast,\eps}(\pi_2;\cdot)=
w_2(\pi_2;\cdot)=w_2\in\mathcal{C}_\alpha(h,y)$
such that
\begin{align*}
\mathbf{L}w_2-\tilde{F}[w_2]=0\,\,\text{in $Q^\eps_{-1}$, }
w_2=h\,\,\text{in $(-1,T)\times D^\prime_y\setminus D_y$, }
w_2=h\,\,\text{on $\{T\}\times D^\prime_y$.}
\end{align*}
Define a function
$v_2^{s^\ast,\om^\ast,\eps}(\pi_2;\cdot)=
v_2(\pi_2;\cdot)=v_2$ on $\bar{Q}^{2C_0^\prime}_{-1,y}$ by
\begin{align*}
v_2(\pi_2;t,x):=w_2(\pi_2;t,x)-w_2(\pi_2;s,y)+v_1(\pi_1;s,y)+\delta_1.
\end{align*}
Then $v_2
(\pi_2;\cdot)\in\mathcal{C}_\alpha(h^\prime,y)$
for some $h^\prime\in C^\infty(R\times\R^d)$
 and
\begin{align}\label{E1:LStep2}
\mathbf{L} v_2-\tilde{F}[v_2]&\ge 0
 \,\,\text{in $Q^\eps_{-1,y}$,} 
\\ \label{E3:LStep2}
v_2(\pi_2;s,y)&=v_1(\pi_1;s,y)+\delta_1,
\intertext{and, if $\eps\le\abs{y-y_0}\le 2C_0^\prime$, then}
\label{E2:LStep2}
\quad v_2&\ge h^\eps_2(\pi_2;\cdot)\,\,\text{in 
$[s,T)\times D^\prime_y\setminus D_y$ and
on  $\{T\}\times D^\prime_y$.}
\end{align}
To see that \eqref{E2:LStep2} is true, let
$(t,x)\in [s,T)\times D^\prime_y\setminus D_y$
or $(t,x)\in \{T\}\times D^\prime_y$.
Then, by \eqref{E2:LStep1} in Step~1,
\begin{align*}
v_2(\pi_2;t,x)&=h(t,x)+v_1(\pi_1;s,y)
-w_2(\pi_2;s,y)+\delta_1 \\
&\ge h_2^\eps(\pi_2;t,x)+v_1(\pi_1;s,y)
-w_2(\pi_2;s,y)+\delta_1\\
&= h_2^\eps(\pi_2;t,x)\\ &\qquad +
\bar{h}_1^{\eps,\delta_1}(s_0,y_0;s,y)+
\theta_1(s^\ast,y_0;s^\ast,y_0)
-w_1(s^\ast,y_0;s^\ast,y_0)+2\delta_1
\\ &\qquad-w_2(\pi_2;s,y)+\delta_1.
\end{align*}
That is, we have to show that
\begin{equation}\label{E5:LStep2}
\begin{split}
w_2(\pi_2;s,y)&\le
\bar{h}_1^{\eps,\delta_1}(s^\ast,y_0;s,y)
+\theta_1(s^\ast,y_0;s^\ast,y_0)
-w_1(s^\ast,y_0;s^\ast,y_0)
+3\delta_1.
\end{split}
\end{equation}
Note that, similarly as in Step~1, one can show that
$$w_2(\pi_2;s,y)\le\theta_2(\pi_2;s,y)+2\delta_2.$$
We also have $\theta_2(\pi_2;s,y)=h^\eps_1(s^\ast,y_0;s,y)$
because $\eps\le\abs{y-y_0}$. Thus 
\begin{align*}
w_2(\pi_2;s,y)\le \bar{h}_1^{\eps,\delta_1
}(s^\ast,y_0;s,y)+2\delta_2,
\end{align*}
and together with 
$2\delta_1\le
\theta_1(s^\ast,y_0;s^\ast,y_0)
-w_1(s^\ast,y_0;s^\ast,y_0)$
from Step~1 we get \eqref{E5:LStep2} and consequently
\eqref{E2:LStep2}.

Define
\begin{align*}
\psi^{s^\ast,\om^\ast,\eps}(t,\om)&:=
v_2^{s^\ast,\om^\ast,\eps}
(s^\ast,\om_{s^\ast};\ch_1^{s^\ast,\eps}(\om),
X_{\ch_1^{s^\ast,\eps}}(\om);
t,\om_t) \\ &\qquad\qquad
+\sum_{j=2}^\infty \delta_j,
\quad \ch_1^{s^\ast,\eps}(\om)< t\le
\ch_2^{s^\ast,\eps}(\om).
\end{align*}
Note that, by Step~1 and 
by definition of $v_2$,
\begin{align*}
\psi^{s^\ast,\om^\ast,\eps}
(\ch_1^{s^\ast,\eps},\om)=
v_2^{s^\ast,\om^\ast,\eps}
(s^\ast,\om_{s^\ast};
\ch_1^{s^\ast,\eps}(\om),
X_{\ch_1^{s^\ast,\eps}}(\om);
\ch_1^{s^\ast,\eps}(\om),
X_{\ch_1^{s^\ast,\eps}}(\om))
+\sum_{j=2}^\infty \delta_j.
\end{align*}
Universal measurability of
$$(t,\om)\mapsto v_2(s^\ast,\om_{s^\ast};
\ch_1^{s^\ast,\eps}(\om),
X_{\ch_1^{s^\ast,\eps}}(\om);
t,\om_t),$$
follows from Assumption~\ref{A:pi}
and standard BSDE error estimates.

\textit{Step~3 ($i\to{i+1}$).} Let $i\in\N$. 
Set $\delta_j:=\eps/2^{j+1}$, $j\in\N$.
For every 
$\pi_j=(s_0,y_0
;\ldots;s_{j-1},y_{j-1})\in\Pi_j^{s^\ast}$,
$j\in\N$, 
there exists, by standard PDE theory,
$w_j^{s^\ast,\om^\ast,\eps}(\pi_j;\cdot)
=w_j(\pi_j;\cdot)=w_j \in\mathcal{C}_\alpha
(\bar{h}^{\eps,\delta_j}_j(\pi_j;\cdot),y_{j-1})$
such that
\begin{align}
\mathbf{L}w_j-
\tilde{f}_t((\pi_j;(T,y_{j-1})_{k\in\N};y_{j-1}),w_j,\partial_x w_j,
\mathbf{I}_t w_j)&=0\,\,\text{in $Q^\eps_{-1,y_{j-1}}$, }\\
\label{Ew2:Step3}
w_j=\bar{h}^{\eps,\delta_j}_j(\pi_j;\cdot)\,\,\text{in $(-1,T)\times D^\prime_y\setminus D_{y_{j-1}}$, }
w_j&=\bar{h}^{\eps,\delta_j}_j(\pi_j;\cdot)\,\,\text{on $\{T\}\times D^\prime_{y_{j-1}}$.}
\end{align}
Define $v_j^{s^\ast,\om^\ast,\eps}(\pi_j;\cdot)
=v_j(\pi_j;\cdot)$ on $\bar{Q}_{-1,y_{j-1}}^{2C^\prime_0}$ 
recursively by
\begin{align}\label{Ev:Step3}
v_j(\pi_j;t,x):=w_j(\pi_j;t,x)+v_{j-1}(\pi_j)-w_j(\pi_j;s_{j-1},y_{j-1})+\delta_{j-1}.
\end{align}

Suppose that the following
induction hypothesis holds:
\begin{quotation}
For every $j\in\{1,\ldots,i\}$,
\begin{align*}
\mathbf{L}v_j-
\tilde{f}_t((\pi_j;(T,y_{j-1})_{k\in\N};y_{j-1}),v_j,\partial_x v_j,
\mathbf{I}_t v_j)&\ge 0\,\,\text{in $Q^\eps_{-1,y_{j-1}}$, }
\end{align*}
and, if 
$\eps\le\abs{y_{k+1}-y_k}\le 2$,
for every $k\in\{0,\ldots,j-1\}$, then
\begin{align*}
 v_j(\pi_j;\cdot)\ge h_j^\eps(\pi_j;\cdot)\quad
 \text{in $(s_{j-1},T)\times D^\prime_{y_{j-1}}\setminus D_{y_{j-1}}$
 and on $\{T\}\times D^\prime_{y_{j-1}}$.}
\end{align*}
\end{quotation}

Let $i\ge 2$. Fix $\pi_{i+1}=(s_0,y_0;\ldots;
s_i,y_i)\in\Pi^{s^\ast}_{i+1}$ with
 $s=s_i$ and $y=y_i$. Let
$\pi_j:=(s_0,y_0;\ldots;s_{j-1},y_{j-1})$, 
$j=1$, $\ldots$, $i-2$. Then $v_{i+1}(\pi_{i+1};\cdot)\in
\mathcal{C}_\alpha(h^\prime,y)$
for some $h^\prime\in C^\infty(\R\times\R^d)$
 and
\begin{align}\label{E1:LStep3}
\mathbf{L}v_{i+1}-
\tilde{f}_t((\pi_{i+1};(T,y_{})_{k\in\N};y_{}),v_{i+1},\partial_\om v_{i+1},
\mathbf{I}_t v_{i+1})&\ge 0\,\,\text{in $Q^\eps_{-1,y_{}}$, }
\\ \label{E3:LStep3}
v_{i+1}(\pi_{i+1};s,y)&=v_i(\pi_i)
+\delta_i,
\end{align}
and, if 
\begin{align}\label{E0:Step3} 
\eps\le \abs{y_{j+1}-y_j}\le 2,\quad
 j=0, \ldots, i-1,
\end{align}
then
\begin{align} \label{E2:LStep3}
\quad v_{i+1}&\ge 
h^\eps_{i+1}(\pi_{i+1};\cdot) 
&&\text{in $(s_{},T)\times D^\prime_{y_{}}\setminus D_{y_{}}$
 and on $\{T\}\times D^\prime_{y_{}}$.}
\end{align}
To see that \eqref{E2:LStep3} is true, let 
$(t,x)\in\bar{Q}^{2C_0^\prime}_{s,y}\setminus Q^\eps_{s,y}$.
Then
\begin{align*}
&v_{i+1}(\pi_{i+1};t,x) \\&\quad\ge 
h_{i+1}^\eps(\pi_{i+1};t,x)\\ &\quad\qquad
+v_i(\pi_i;s,y)-w_{i+1}(\pi_{i+1};s,y)
+\delta_i 
&&\text{ by \eqref{E0:Step3}}\\ &\quad \ge 
h_{i+1}^\eps(\pi_{i+1};t,x)\\ &\quad\qquad
+\bar{h}^{\eps,\delta_i}_i(\pi_i;s,y)
+\boxed{v_{i-1}(\pi_i)} \\ &\quad\qquad
-w_i(\pi_i;s_{i-1},y_{i-1})+\delta_{i-1}\\
&\quad\qquad
-w_{i+1}(\pi_{i+1};s,y)
+\delta_i  
&&\text{by \eqref{Ew2:Step3}, \eqref{Ev:Step3},
\eqref{E0:Step3}} \\ &\quad \ge
h_{i+1}^\eps(\pi_{i+1};t,x)\\ &\quad\qquad
+\bar{h}^{\eps,\delta_i}_i(\pi_i;s,y)
\\ &\quad\qquad
+\boxed{\bar{h}^{\eps,\delta_{i-1}}_{i-1}
(\pi_i)+v_{i-2}(\pi_{i-2})
-w_{i-1}(\pi_{i-1};s_{i-2},y_{i-2})
+\delta_{i-2} 
}
\\ &\quad\qquad
-w_i(\pi_i;s_{i-1},y_{i-1})+\delta_{i-1}\\
&\quad\qquad
-w_{i+1}(\pi_{i+1};s,y)
+\delta_i 
&&\text{by \eqref{Ew2:Step3}, \eqref{Ev:Step3},
\eqref{E0:Step3}} \\ &\quad =
h_{i+1}^\eps(\pi_{i+1};t,x)\\ &\quad\qquad
+\bar{h}^{\eps,\delta_i}_i(\pi_i;s,y)
+\bar{h}^{\eps,\delta_{i-1}}_{i-1}
(\pi_i) \\&\quad\qquad+
(\delta_i+\delta_{i-1}+\delta_{i-2})
\\&\quad\qquad
-(w_{i-1}(\pi_{i-1};s_{i-2},y_{i-2})
+w_i(\pi_i;s_{i-1};y_{i-1})
+w_{i+1}(\pi_{i+1};s,y)) \\ &\quad\qquad+
\boxed{v_{i-2}(\pi_{i-1})} \\ &\quad\ge\ldots
\\&\quad \ge 
h^\eps_{i+1}(\pi_{i+1};t,x) \\&\quad\qquad+
\left[
\bar{h}^{\eps,\delta_i}_i(\pi_{i+1})
+\ldots+
\bar{h}^{\eps,\delta_1}_1(\pi_2)+
\theta_1(\pi_1;\pi_1)
\right] \\&\quad\qquad+
\left[
(\delta_i+\ldots+\delta_1)+2\delta_1
\right]\\&\quad\qquad -
\left[
w_{i+1}(\pi_{i+1};s,y)+
w_i(\pi_i;s_{i-1},y_{i-1})+\ldots+w_1(\pi_1;s_0;y_0)
\right].
\end{align*}
I.e., we have to show that
\begin{equation}\label{E4:LStep3}
\begin{split}
&\left[
w_{i+1}(\pi_{i+1};s,y)+
w_i(\pi_i;s_{i-1},y_{i-1})+\ldots+w_1(\pi_1;s_0;y_0)
\right]\\
&\qquad\le
\left[
\bar{h}^{\eps,\delta_i}_i(\pi_{i+1})
+\ldots+
\bar{h}^{\eps,\delta_1}_1(\pi_2)+
\theta_1(\pi_1;\pi_1)
\right]\\
&\qquad\qquad
+\left[
(\delta_i+\ldots+\delta_1)+2\delta_1
\right].
\end{split}
\end{equation}
Again, similarly, as in Step~1, one can show that,
for every $j\in\{2,\ldots,i+1\}$, we have
$w_j(\pi_j;s_{j-1},y_{j-1})\le 
\theta_j(\pi_j;s_{j-1},y_{j-1})+2\delta_j$.
Also,  $\eps\le\abs{y_{j-1}-y_j}$,
$j=0$, $\ldots$, $i-1$, implies that,
for every $j\in\{2,\ldots,i+1\}$, we have
$\theta_j(\pi_j;s_{j-1},y_{j-1})=
h_{j-1}^\eps(\pi_{j-1};s_{j-1},y_{j-1})$, which yields
\begin{align*}
w_j(\pi_j;s_{j-1},y_{j-1})\le 
h_{j-1}^\eps(\pi_{j-1};s_{j-1},y_{j-1})+2\delta_j.
\end{align*}
Together with
$w_1(\pi_1;\pi_1)
\le \theta_1(\pi_1;\pi_1)+2\delta_1$,
from Step~1 and with 
$$2\delta_{i+1}+\ldots+2\delta_2=
(\delta_i+\ldots+\delta_1),$$
we get \eqref{E4:LStep3} and thus \eqref{E2:LStep3}.

Define
\begin{align*}
\psi^{s^\ast,\om^\ast,\eps}(t,\om):=
v_{i+1}(
(\ch_j^{s^\ast,\eps}(\om),
X_{\ch_j^{s^\ast,\eps}}(\om))_
{0\le j\le i};
t,\om_t)+\sum_{j=i+1}^\infty \delta_j,
\quad \ch_i^{s^\ast,\eps}(\om)< t\le
\ch_{i+1}^{s^\ast,\eps}(\om).
\end{align*}
Note that, by the induction hypothesis
by definition of $v_{i+1}$,
\begin{align*}
\psi^{s^\ast,\om^\ast,\eps}(\ch_i^{s^\ast,\eps},\om)=
v_{i+1}(
(\ch_j^{s^\ast,\eps}(\om),
X_{\ch_j^{s^\ast,\eps}}(\om))_
{0\le j\le i};
\ch_i^{s^\ast,\eps}(\om),
X_{\ch_i^{s^\ast,\eps}}(\om))+\sum_{j=i+1}^\infty \delta_j.
\end{align*}
As in Step~2,  universal measurability of 
\begin{align*}
(t,\om)\mapsto v_{i+1}(
(\ch_j^{s^\ast,\eps}(\om),
X_{\ch_j^{s^\ast,\eps}}(\om))_
{0\le j\le i};t,\om_t), 
\end{align*}
follows from Assumption~\ref{A:pi}
and standard BSDE error estimates.

By mathematical induction, we obtain the following
result.
\begin{lemma}
The mapping $\psi^{s^\ast,\om^\ast,\eps}:
\bar{\Lambda}\to\R$ defined in Step~1, Step~2,
and Step~3 belongs to
$\bar{C}_b^{1,2}(\bar{\Lambda})$.
\end{lemma}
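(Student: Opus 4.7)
The plan is to verify the four defining properties of $\bar{C}_b^{1,2}(\bar{\Lambda}^{s^\ast})$ with the natural candidates $\tau_n:=\ch_n^{s^\ast,\eps}$ and
$\vartheta_n(\pi_n;t,x):=v_n^{s^\ast,\om^\ast,\eps}(\pi_n;t,x)+\sum_{j=n}^\infty \delta_j$,
where each $v_n(\pi_n;\cdot)$ is extended continuously (e.g., via Tietze) from its natural domain $\bar{O}_{2C_0^\prime}(y_{n-1})$ to all of $\R^d$. On the stochastic interval $\llb\tau_{n-1},\tau_n\rrb$ the process $X$ stays within $\bar{O}_\eps(X_{\tau_{n-1}})$ apart from a possible terminal jump of size at most $C_0^\prime$, so any such extension is immaterial for the representation of $\psi^{s^\ast,\om^\ast,\eps}$.

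Property (i) is routine: the $\ch_n^{s^\ast,\eps}$ lie in $\mathcal{H}_{s^\ast}$ as iterated first-exit times, are nondecreasing by construction with $\ch_0^{s^\ast,\eps}=s^\ast$, and satisfy $\ch_n^{s^\ast,\eps}<\ch_{n+1}^{s^\ast,\eps}$ on $\{\ch_n^{s^\ast,\eps}<T\}$ because right-continuity of $X$ forces $\abs{X_s-X_{\ch_n^{s^\ast,\eps}}}<\eps$ for $s$ slightly above $\ch_n^{s^\ast,\eps}$; c\`adl\`ag paths have no accumulation of $\eps$-sized oscillations in a bounded interval, so $\ch_m^{s^\ast,\eps}(\om)=T$ for some $m=m(\om)\in\N$. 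For Property (ii), the universal measurability of $\vartheta_n$ jointly in $(\pi_n;t,x)$ is already recorded at the end of Step~3: the Bernstein-polynomial smoothing of $h_n^\eps$ depends continuously on $\pi_n$ thanks to Assumption~\ref{A:pi}, and standard continuous-dependence estimates for the underlying Cauchy-Dirichlet problem (equivalently, for the associated BSDE with parameter) propagate this continuity to $w_n$ and hence to $v_n$. For fixed $\pi_n$, continuity of $v_n(\pi_n;\cdot)$ on $[s_{n-1},T]\times\R^d$ follows from $v_n(\pi_n;\cdot)\in\mathcal{C}_\alpha(h^\prime,y_{n-1})$ and the chosen continuous extension, while membership in $C_b^{1,2}([s_{n-1},T)\times O_\eps(y_{n-1}))$ is built into the definition of $\mathcal{C}_\alpha$ (interior $C^{2,\alpha}_{\loc}$-regularity together with boundedness of $\partial_t$, $\partial_x$ and $\partial^2_{xx}$).

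Property (iii) is a one-line algebraic consequence of the recursion: specializing \eqref{Ev:Step3} to $(s_n,y_n)=(\tau_n(\om),X_{\tau_n}(\om))$ cancels the two $w_{n+1}$ terms and leaves $v_{n+1}(\pi_{n+1};\tau_n,X_{\tau_n})=v_n(\pi_n;\tau_n,X_{\tau_n})+\delta_n$, so $\vartheta_{n+1}(\pi_{n+1};\tau_n,X_{\tau_n})=\vartheta_n(\pi_n;\tau_n,X_{\tau_n})$ on $\{\tau_n<T\}$; on $\{\tau_n=T\}$ the required identity $\vartheta_n(\pi_n;T,X_T)=\psi^{s^\ast,\om^\ast,\eps}(T,\om)$ is exactly the terminal piece of the piecewise definition. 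Property (iv) then reduces to reading off the piecewise formula from Steps~1--3 once the boundary values have been matched by (iii). The only point that is more than bookkeeping is the measurability and continuity in $\pi_n$ claimed in (ii); this is precisely what forced the Bernstein smoothing and Assumption~\ref{A:pi}, and it should be regarded as the main technical obstacle. Everything else reduces to the explicit recursion and standard properties of the hitting times $\ch_n^{s^\ast,\eps}$.
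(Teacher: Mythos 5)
Your proof is correct and follows essentially the same approach as the paper, which relegates the entire content of this lemma to the construction in Steps 1--3 and then dispatches it with a one-line appeal to induction. You make explicit the choice $\tau_n=\ch_n^{s^\ast,\eps}$ and $\vartheta_n=v_n+\sum_{j\ge n}\delta_j$, the continuous extension off $\bar{O}_{2C_0^\prime}(y_{n-1})$, the verification that c\`adl\`ag paths force $\ch_m^{s^\ast,\eps}=T$ for some finite $m$, the use of Assumption~\ref{A:pi} and the Bernstein smoothing for universal measurability in (ii), and the cancellation in \eqref{Ev:Step3} giving $v_{n+1}(\pi_{n+1};s_n,y_n)=v_n(\pi_n;s_n,y_n)+\delta_n$, hence the matching condition (iii) with the telescoping offsets. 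These are exactly the checks implicit in Steps 1--3, so this is the paper's argument with the bookkeeping written out.
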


\subsection{Proof of Comparison}

\begin{definition}
Let $(t,\om)\in\Lambda$.
Denote by $\overline{\mathcal{D}}(t,\om)$ 
(resp.~$\underline{\mathcal{D}}(t,\om)$) the set of all
$\varphi\in\bar{C}_b^{1,2}(\bar{\Lambda}^t)$ with corresponding
sequences $(\tau_n)$ of stopping times and corresponding collections
$(\vartheta_n)$ of functionals such that the following holds:
\begin{enumerate}
\renewcommand{\labelenumi}{(\roman{enumi})}
\item For every $n\in\N$ and every $(r,\tiom)\in\llb
\tau_{n-1},\tau_n\llb$, we have, 
with
$\pi_n=(\ch^{t,\eps}_i(\tiom),X_{\ch^{t,\eps}_i}(\tiom))_{0\le i\le n-1}$,
\begin{align*}
\mathbf{L}\vartheta_n(\pi_n;r,\tiom_r)-
f_r(\tiom,\vartheta_n(\pi_n;r,\tiom_r),
\partial_\om\vartheta_n(\pi_n;r,\tiom_r),
\mathbf{I}_r\vartheta_n(\pi_n;r,\tiom_r))\ge\,\,\text{(resp.~$\le$) $0$.}
\end{align*}
\item For $\Prob_{t,\om}$-a.e.~$\tiom\in\Omega$,
\begin{align*}
\varphi(T,\tiom)\ge\,\,\text{(resp.~$\le$) $\xi(\tiom)$.}
\end{align*}
\end{enumerate}
\end{definition}


\begin{proof}[Proof of Theorem~\ref{T:Comparison}]
\begin{align*}
Put
\overline{u}(t,\om):=\inf\{\varphi(t,\om):\varphi\in\overline{\mathcal{D}}(t,\om)\},\quad
\underline{u}(t,\om):=\sup\{\varphi(t,\om):\varphi\in\underline{\mathcal{D}}(t,\om)\}.
\end{align*}
We assert that $\overline{u}(t,\om)\le\underline{u}(t,\om)$. To show this,
we proceed nearly exactly as in the corresponding part of the proof of Proposition~7.5
in \cite{ETZ12a}. Define functionals $\overline{\psi}^{t,\om,\eps}$,
$\underline{\psi}^{t,\om,\eps}$ on $\bar{\Lambda}^t$ by
\begin{align*}
\overline{\psi}^{t,\om,\eps}_r:=\psi^{t,\om,\eps}_r+\rho(2\eps)[1+T-r],\quad
\underline{\psi}^{t,\om,\eps}_r:=\psi^{t,\om,\eps}_r-\rho(2\eps)[1+T-r].
\end{align*}
Note that $\overline{\psi}^{t,\om,\eps}$, $\underline{\psi}^{t,\om,\eps}
\in\bar{C}^{1,2}(\bar{\Lambda}^t)$ and the corresponding sequences of
stopping times are in both cases $(\ch^{t,\eps}_n)$ and the corresponding
collections of functionals are $(\overline{v}_n)$ and $(\underline{v}_n)$, resp.,
defined by 
\begin{align*}
\overline{v}_n(\cdot;r,\cdot):=v_n(\cdot;r,\cdot)+\rho(2\eps)[1+T-r],\quad
\underline{v}_n(\cdot;r,\cdot):=v_n(\cdot;r,\cdot)-\rho(2\eps)[1+T-r].
\end{align*}
Moreover, $\overline{\psi}^{t,\om,\eps}\in\overline{\mathcal{D}}(t,\om)$ because,
whenever $(r,\tiom)\in\llb\ch^{t,\eps}_{n-1},\ch^{t,\eps}_n \llb$ 
for some $n\in\N$, we have, with
$\pi_n=(\ch^{t,\eps}_i(\tiom),X_{\ch^{t,\eps}_i}(\tiom))_{0\le i\le n-1}$,
\begin{align*}
&\mathbf{L}\overline{v}_n(\pi_n;r,\tiom_r)-
f_r(\tiom,\overline{v}_n(\pi_n;r,\tiom_r),
\partial_x\overline{v}_n(\pi_n;r,\tiom_r),
\mathbf{I}_r\overline{v}_n(\pi_n;r,\tiom_r))\\
&\ge \mathbf{L}v_n(\pi_n;r,\tiom_r)
+\rho_0(2\eps)
-f_r(\tiom,v_n(\pi_n;r,\tiom_r),
\partial_x v_n(\pi_n;r,\tiom_r),
\mathbf{I}_r v_n(\pi_n;r,\tiom_r))\\
&\ge  \mathbf{L}v_n(\pi_n;r,\tiom_r)
-\tilde{f}_r((\pi_n;(T,\tiom_r)_{k\in\N};\tiom_r),\\ &\qquad\qquad
\qquad\qquad\qquad\qquad
v_n(\pi_n;r,\tiom_r),
\partial_x v_n(\pi_n;r,\tiom_r),
\mathbf{I}_r v_n(\pi_n;r,\tiom_r))\\&\ge 0
\end{align*}
and, similarly, $\overline{\psi}^{t,\om,\eps}_T\ge \xi$, 
$\Prob_{t,\om}$-a.s. Thus $\overline{u}(t,\om)\le\overline{\psi}^{t,\om,\eps}(t,\om)$
and, similarly, one can show that $\underline{\psi}^{t,\om,\eps}(t,\om)\le
\underline{u}(t,\om)$. Consequently,
$\overline{u}(t,\om)-\underline{u}(t,\om)\le 2\rho_0(2\eps)[1+T-t]$. Letting $\eps\downarrow 0$ yields $\overline{u}(t,\om)\le\underline{u}(t,\om)$.

Finally, by the partial comparison principle
(Theorem~\ref{T:PartialCompII}), $u^1(t,\om)\le\overline{u}(t,\om)$
and $\underline{u}(t,\om)\le u^2(t,\om)$. Our previous assertion yields then
$u^1(t,\om)\le u^2(t,\om)$.
\end{proof}

\appendix
\section{Martingale problems and 
regular conditioning}

The results in this appendix are actually valid in a more
general context than in our canonical setup and might be
of independent interest. In particular,
$(B,C,\nu)$ can be as general as in ¤III.2a.~of 
\cite{JacodShiryaevBook}, in which case standard conventions 
of \cite{JacodShiryaevBook} are in force.

First, we recall the definitions  of
\cite{SVBook} for conditional probability
distributions (c.p.d.) and regular
conditional probability distributions (r.c.p.d).
A c.p.d.~of a probability measure $\Prob$ on
$(\Omega,\cF_T^0)$ given a sub $\sigma$-field
$\cF\subseteq\cF_T^0$ is a collection
$\{\Prob_\om\}_{\om\in\Omega}$ of probability measures
satisfying the following:
\begin{enumerate}
\renewcommand{\labelenumi}{(\roman{enumi})}
\item For every $A\in\cF_T^0$, the map
$\om\mapsto\Prob_\om(A)$ is $\cF$-measurable.
\item For every $A\in\cF_T^0$ and every $B\in\cF$,
\begin{align*}
\Prob(A\cap B)=\int_A \Prob_\om(B)\,\Prob(d\om).
\end{align*}
\end{enumerate} 
If a c.p.d.~$\{\Prob_\om\}_{\om\in\Omega}$ given $\cF$ satisfies
$\Prob_{\om}(A(\om))=1$ for $\Prob$-a.e~$\om\in\Omega$, where
$A(\om):=\cap\{A\in\cF:x\in A\}$, then we call $\{\Prob_\om\}_{\om\in\Omega}$
an r.c.p.d.~given $\cF$.

The following two results are straight-forward
generalizations of Theorem~6.1.3 and
Theorem~6.2.2 in \cite{SVBook}. For unexplained notation 
we refer to \cite{JacodShiryaevBook}.
\begin{proposition}\label{T:shiftChar}
Let $X$ be an $(\Filt,\Prob)$-semimartingale
with characteristics $(B,C,\nu)$
after time $s\in [0,T]$,
 $\tau\in\mathcal{T}_s(\rawFilt)$
and $\{\Prob_\om\}_{\om\in\Omega}$ be a c.p.d.~of 
$\Prob$ given $\rawField{\tau}$.
Then, for $\Prob$-a.e.~$\om\in\Omega$,
the process $X$ is an $(\Filt,\Prob_\om)$-semimartingale
with characteristics 
$(p_{\tau(\om)}B,p_{\tau(\om)}C,p_{\tau(\om)}\nu)$
after time $\tau(\om)$.
\end{proposition}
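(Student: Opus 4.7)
The plan is to characterize the triplet $(B,C,\nu)$ via a countable family of local martingales and then to transfer each of them to the c.p.d.\ measures $\Prob_\om$ individually.

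\emph{Step~1 (martingale characterization).} First I would invoke standard results from Chapter~II of \cite{JacodShiryaevBook} (in particular Theorem~II.2.42 and the discussion of \S II.2.c) to obtain the following equivalence: $X$ is an $(\Filt,\Prob)$-semimartingale on $[s,T]$ with characteristics $(B,C,\nu)$ iff a countable family $\{M^n\}_{n\in\N}$ of processes, each a fixed explicit functional of $X$, $B$, $C$, and $\nu$, consists of $(\Filt,\Prob)$-local martingales on $[s,T]$. A concrete choice combines complex exponential martingales $\exp(\mathrm{i}u\cdot(X-X_s)-\Psi^u)$ indexed by $u\in\Q^d$ (capturing $B$, $C$, and the small-jump part of $\nu$) with $W\ast(\mu^X-\nu)$ for $W$ ranging over a countable dense subset of bounded predictable integrands vanishing near the origin (capturing the large-jump part of $\nu$). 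The key point is that the map $(B,C,\nu)\mapsto\{M^n\}$ is both explicit and injective, so the local-martingale property of the $M^n$'s both determines and is determined by the triplet.

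\emph{Step~2 (conditional martingale lemma).} Next I would use the following tool, an extension to the present c\`adl\`ag setup of Theorem~1.2.10 of \cite{SVBook}: if $M$ is an $(\Filt,\Prob)$-local martingale on $[s,T]$, then for $\Prob$-a.e.\ $\om$ the shifted process
\begin{equation*}
\widetilde{M}^\om_t := M_t - M_{\tau(\om)}, \qquad t\in[\tau(\om),T],
\end{equation*}
is an $(\Filt,\Prob_\om)$-local martingale on $[\tau(\om),T]$. For bounded $M$ this reduces, via the defining identity of a c.p.d.\ and optional sampling, to the $\Prob$-unconditional martingale equality between values of $M$ sampled at $\Filt$-stopping times $\rho_1\le\rho_2$ lying in $[\tau(\om),T]$. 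The general case is handled by fixing an $\Filt$-localizing sequence for $M$ and verifying that it remains localizing under $\Prob_\om$ outside a $\Prob$-null set of $\om$'s.

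\emph{Step~3 (assembly).} Finally I would apply Step~2 to each $M^n$ from Step~1, obtaining $\Prob$-null exceptional sets $N_n$, and set $N:=\bigcup_n N_n$. For every $\om\notin N$, all of the shifted processes $\widetilde{M}^{n,\om}$ are $(\Filt,\Prob_\om)$-local martingales on $[\tau(\om),T]$; by construction of the shift operator $p_\tau$ these are precisely the defining local martingales associated with the triplet $(p_{\tau(\om)}B,\,p_{\tau(\om)}C,\,p_{\tau(\om)}\nu)$ after time $\tau(\om)$, so the converse direction of Step~1 concludes. The main obstacle will be Step~2: the conditional martingale property must be verified uniformly in the stochastic starting time $\tau(\om)$, with the localizing sequence chosen in an $\om$-measurable way; once that is in place, the remainder of the argument is bookkeeping of countably many $\Prob$-null sets.
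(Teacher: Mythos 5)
Your plan is correct and follows essentially the same route as the paper: characterize the triplet by a countable family of local martingales (Step~1), transfer the local-martingale property to the conditional measures via the Stroock--Varadhan conditional-martingale lemma, Theorem~1.2.10 of \cite{SVBook}, after localization (Step~2), and then aggregate the countably many exceptional null sets and invoke the converse direction of the characterization (Step~3). The only cosmetic difference is the choice of characterizing family in Step~1: the paper works directly with the real processes $X(h)-B$, $M(h)^iM(h)^j-\tilde C^{ij}$, and $g\ast\mu^X-g\ast\nu$ for $g\in\mathcal{C}^+(\R^d)$ (via Theorem~II.2.21 of \cite{JacodShiryaevBook}), and after transferring uses the canonical decomposition of $X(h)$ past $\tau(\om)$ to read off the shifted characteristics, whereas you propose the complex exponential martingales indexed by $u\in\Q^d$ plus integrals $W\ast(\mu^X-\nu)$; both are equivalent characterizations from \S II.2 of \cite{JacodShiryaevBook}, and the exponential choice is arguably tidier because multiplication by the $\cF^0_{\tau+}$-measurable factor $\exp(\mathrm{i}u\cdot(X_{\tau(\om)}-X_s)-\Psi^u_{\tau(\om)})$ converts the original exponential into the shifted one trivially. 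You also correctly flag the localization of Theorem~1.2.10 as the technical crux; the paper handles it exactly as you anticipate, by citing the argument of Lemma~III.2.48 of \cite{JacodShiryaevBook}.
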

\begin{proof}
By Theorem~II.2.2.1 in \cite{JacodShiryaevBook},
the processes
$X(h)-B-X_s$,
$M(h)^iM(h)^j-\tilde{C}^{ij}$, $i$, $j\le d$,
and $g\ast(p_s \mu^X)-g\ast\nu$,
$g\in\mathcal{C}^+(\R^d)$ (see
\cite{JacodShiryaevBook} for the definition
of $\mathcal{C}^+(\R^d)$),
are $(\Filt,\Prob)$-local martingales
 after time~$s$. 
 Hence, by Theorem~1.2.10 in \cite{SVBook}
(,which, after localization, is applicable
 by the same argument as Lemma~III.2.48
 in \cite{JacodShiryaevBook}
 in the proof of Theorem~III.2.40, p.~165,
 in \cite{JacodShiryaevBook}), there exists a 
 $\Prob$-null set $N\subset\Omega$ such that,
 for every $\om\in\Omega\setminus N$,
 the processes
 $X(h)-p_{\tau(\om)}B-X_{\tau(\om)}$,
 $M(h)^i M(h)^j-p_{\tau(\om)}\tilde{C}^{ij}
 -M(h)^i_{\tau(\om)}-M(h)^j_{\tau(\om)}$,
 $i$, $j\le d$,
 and $g\ast(p_{\tau(\om)}\mu^X)-
 g\ast(p_{\tau(\om)}\nu)$,
 $g\in\mathcal{C}^+(\R^d)$
 are local martingales.
 Hence, since the canonical decomposition of
 $X(h)$ after time $\tau(\om)$,
  $\om\in\Omega\setminus N$, is 
  \begin{align*}
  X(h)=X_{\tau(\om)}+M(h)-M(h)_{\tau(\om)}
  +B(h)-B(h)_{\tau(\om)},
  \end{align*}
 Theorem~II.2.21 in \cite{JacodShiryaevBook}
 concludes the proof. 
\end{proof}
\begin{cor}\label{C:strongMarkov}
Suppose that, for every 
$(s,\om)\in [0,T]\times\Omega$,
there exists a unique solution
$\Prob_{s,\om}$ of
the martingale problem for $(p_sB,p_sC,p_s\nu)$
starting at $(s,\om)$.
Then, for every $\tau\in\mathcal{T}_s(\rawFilt)$,
 the family 
$\{\Prob_{\tau(\tiom),\tiom}\}_{\tiom\in\Omega}$
is an r.c.p.d.~of $\Prob_{s,\om}$ given $\rawField{\tau}$.
\end{cor}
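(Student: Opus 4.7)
The plan is to combine Proposition~\ref{T:shiftChar} with the hypothesized uniqueness for martingale problems. Since $\Omega=\mathbb{D}([0,T],\R^d)$ is Polish (under a Skorohod topology) and $\cF_T^0$ is its Borel $\sigma$-field, a standard disintegration theorem (e.g.~Theorem~1.1.6 in \cite{SVBook}) guarantees the existence of an r.c.p.d.~$\{\tilde{\Prob}_{\tiom}\}_{\tiom\in\Omega}$ of $\Prob_{s,\om}$ given $\rawField{\tau}$. The task is then to identify $\tilde{\Prob}_{\tiom}$ with $\Prob_{\tau(\tiom),\tiom}$ for $\Prob_{s,\om}$-a.e.~$\tiom$.

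First, I would apply Proposition~\ref{T:shiftChar} to $\tau$ and $\{\tilde{\Prob}_{\tiom}\}_{\tiom\in\Omega}$ to conclude that, outside a $\Prob_{s,\om}$-null set $N_1$, the canonical process $X$ is a $(\Filt,\tilde{\Prob}_{\tiom})$-semimartingale with characteristics $(p_{\tau(\tiom)}B,p_{\tau(\tiom)}C,p_{\tau(\tiom)}\nu)$ after time $\tau(\tiom)$. Next, I would verify the ``starting condition'' $X.\bfone_{[0,\tau(\tiom)]}=\tiom.\bfone_{[0,\tau(\tiom)]}$, $\tilde{\Prob}_{\tiom}$-a.s. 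Since $\tau\in\cT_s(\rawFilt)$, both $\tau$ and the stopped process $X_{\cdot\wedge\tau}$ are $\rawField{\tau}$-measurable. The atom
\begin{align*}
A(\tiom):=\bigcap\{A\in\rawField{\tau}:\tiom\in A\}
\end{align*}
is therefore contained in $\{\om^\prime\in\Omega:\tau(\om^\prime)=\tau(\tiom),\ \om^\prime_{\cdot\wedge\tau(\tiom)}=\tiom_{\cdot\wedge\tau(\tiom)}\}$, and by the r.c.p.d.~property we have $\tilde{\Prob}_{\tiom}(A(\tiom))=1$ off a $\Prob_{s,\om}$-null set $N_2$. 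Consequently, for $\tiom\notin N_1\cup N_2$, the measure $\tilde{\Prob}_{\tiom}$ solves the martingale problem for $(p_{\tau(\tiom)}B,p_{\tau(\tiom)}C,p_{\tau(\tiom)}\nu)$ starting at $(\tau(\tiom),\tiom)$, and uniqueness forces $\tilde{\Prob}_{\tiom}=\Prob_{\tau(\tiom),\tiom}$.

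To conclude, I would verify the two defining properties of an r.c.p.d.: the measurability of $\tiom\mapsto\Prob_{\tau(\tiom),\tiom}(A)$ with respect to $\rawField{\tau}$ for each $A\in\cF_T^0$ follows by redefining $\Prob_{\tau(\tiom),\tiom}$ on the null set $N_1\cup N_2$ to coincide with $\tilde{\Prob}_{\tiom}$ (the latter being $\rawField{\tau}$-measurable by construction), and the integration identity then transfers immediately from $\tilde{\Prob}_{\tiom}$. The main obstacle I anticipate is the joint handling of the measurability of $\tau$ and the shift: one must ensure that the exceptional null set in Proposition~\ref{T:shiftChar} is genuinely $\Prob_{s,\om}$-null (independent of $\tiom$), which is why the r.c.p.d.\ formulation (rather than a mere c.p.d.) is used; this is ultimately what allows the atomic identification $X_{\cdot\wedge\tau(\tiom)}=\tiom_{\cdot\wedge\tau(\tiom)}$ to coexist with the correct post-$\tau$ characteristics and thereby trigger uniqueness.
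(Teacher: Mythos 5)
Your proposal is correct and follows essentially the same route as the paper: take an r.c.p.d.\ of $\Prob_{s,\om}$ given $\rawField{\tau}$, invoke Proposition~\ref{T:shiftChar} to obtain the shifted characteristics, and let uniqueness of the martingale problem identify the conditional measures with $\Prob_{\tau(\tiom),\tiom}$. Your atom argument for the starting condition $X_{\cdot\wedge\tau(\tiom)}=\tiom_{\cdot\wedge\tau(\tiom)}$ is a useful elaboration of a step that the paper's proof leaves implicit in the phrase ``starting at $(\tau(\tiom),\tiom)$.''
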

\begin{proof}
Let $\{\Prob_\tiom\}_{\tiom\in\Omega}$ be an r.c.p.d.~of
$\Prob_{s,\om}$ given $\rawField{\tau}$.
By Proposition~\ref{T:shiftChar}, for 
$\Prob_{s,\om}$-a.e.~$\tiom\in\Omega$,
$\Prob_\tiom$ is a solution of the martingale
problem for $$(p_{\tau(\tiom)}B,p_{\tau(\tiom)}C,
p_{\tau(\tiom)}\nu)$$
starting at $(\tau(\tiom),\tiom)$.
By uniqueness, 
$\Prob_\tiom=\Prob_{\tau(\tiom),\tiom}$.
\end{proof}

The next result is crucial.
\begin{theorem}[Proof communicated by
R.~Mikulevicius]\label{T:cpd_rightLim}
Let $\Prob$ be a probability measure on $(\Omega,\cF_T^0)$.
Let $\tau\in\mathcal{T}(\Filt)$.
Let $\{\Prob_\om\}_{\om\in\Omega}$
be a c.p.d.~of $\Prob$ given $\Field{\tau}$.
Then, for every $\om\in\Omega$,
\begin{align}\label{E:cpd_rightLim}
\Prob_\om(X_t=\om_t,\,0\le t\le\tau(\om))=1.
\end{align}
\end{theorem}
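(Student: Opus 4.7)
The plan is to reduce the claim to countably many events lying in $\Field{\tau}$, use the defining c.p.d.~identity on those events, and then lift the resulting rational--grid identity to a full path identity via right--continuity of paths. The endpoint $t=\tau(\om)$ requires a separate argument that goes through the $\Field{\tau}$--measurability of the random variable $X_\tau$.

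First I would fix a countable basis $\mathcal{A}$ of $\cB(\R^d)$ and observe that for every $q\in\Q\cap[0,T]$ and $B\in\mathcal{A}$, the event $\{X_q\in B\}\cap\{\tau\ge q\}$ lies in $\Field{\tau}$: its intersection with $\{\tau<t\}$ is empty for $t\le q$, while for $t>q$ each of $\{X_q\in B\}$, $\{\tau<q\}$ and $\{\tau<t\}$ lies in $\cF^0_t$ (note that $\tau\in\cT(\Filt)$ gives $\{\tau<s\}\in\cF^0_s$ for every $s$). Applying the defining c.p.d.~identity $\Prob_\om(A)=\bfone_A(\om)$ for $A\in\Field{\tau}$ to each of the countably many pairs $(q,B)$ yields a single $\Prob$--null set $N_1$ outside of which
\begin{align*}
\Prob_\om\bigl(\{X_q\in B\}\cap\{\tau\ge q\}\bigr)=\bfone_{\{\om_q\in B\}}\,\bfone_{\{q\le\tau(\om)\}}
\end{align*}
holds simultaneously for all admissible pairs $(q,B)$.

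Fix next $\om\notin N_1$. Taking $B=\R^d$ gives $\Prob_\om(\tau\ge q)=\bfone_{\{q\le\tau(\om)\}}$ for every rational $q$, and monotone continuity of $\Prob_\om$ along rational sequences $q_n\uparrow\tau(\om)$ and $q_n\downarrow\tau(\om)$ upgrades this to $\Prob_\om(\tau=\tau(\om))=1$. For every $q\in\Q\cap[0,\tau(\om)]$, combining $\Prob_\om(\tau\ge q)=1$ with the preceding display forces $\Prob_\om(X_q\in B)=\bfone_{\{\om_q\in B\}}$ for all $B\in\mathcal{A}$, whence $\Prob_\om(X_q=\om_q)=1$ since $\mathcal{A}$ separates points. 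A countable intersection then yields $\Prob_\om(X_q=\om_q\text{ for every }q\in\Q\cap[0,\tau(\om)])=1$, and right--continuity of the canonical $X$ on $\mathbb{D}$ together with right--continuity of $\om$ extends this to $X_t=\om_t$ for every $t\in[0,\tau(\om))$, $\Prob_\om$--almost surely.

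For the endpoint I would invoke that $X_\tau\colon\tiom\mapsto\tiom_{\tau(\tiom)}$ is $\Field{\tau}$--measurable, which follows from $\Filt$--progressive measurability of the c\`adl\`ag, $\Filt$--adapted canonical process. A further countable--basis argument (applied now to $\{X_\tau\in B\}\in\Field{\tau}$) produces a $\Prob$--null set $N_2$ outside of which $\Prob_\om(X_\tau=\om_{\tau(\om)})=1$; combined with the $\Prob_\om$--a.s.~identification $\tau=\tau(\om)$ obtained above, this reads $\Prob_\om(X_{\tau(\om)}=\om_{\tau(\om)})=1$, so that \eqref{E:cpd_rightLim} holds for every $\om\notin N_1\cup N_2$. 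Redefining $\Prob_\om:=\delta_\om$ on the null set $N_1\cup N_2$ (which preserves the c.p.d.~property) gives the conclusion for every $\om\in\Omega$. The principal obstacle is precisely this endpoint step: one cannot apply the c.p.d.~identity directly to the set $\{X_{\tau(\om)}=\om_{\tau(\om)}\}$ because the time $\tau(\om)$ depends on the particular $\om$ being conditioned on, so one must detour through the $\Field{\tau}$--measurable random variable $X_\tau$ together with the a.s.~identification of the random time $\tau$ with the deterministic value $\tau(\om)$ under $\Prob_\om$.
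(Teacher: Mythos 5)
Your proposal is correct, and it takes a genuinely different route from the paper's proof. The paper's argument first proves (by a monotone-class argument) a disintegration identity
\begin{align*}
\Mean^\Prob[\bar H\mid\cF^0_{\tau+}](\om)=\int H(\tiom,\om)\,\Prob_\om(d\tiom)
\end{align*}
valid for all bounded $\cF^0_T\otimes\cF^0_{\tau+}$-measurable $H(\tiom,\om)$, and then applies it with the single well-chosen indicator $H=\bfone_A$, $A=\{(\tiom,\om):\tiom_{t\wedge\tau(\om)}=\om_{t\wedge\tau(\om)}\}$, so that $\bar H\equiv 1$ forces $\Prob_\om(X_{t\wedge\tau(\om)}=\om_{t\wedge\tau(\om)})=1$ a.e.; taking $t=T$ here captures the endpoint $t=\tau(\om)$ automatically, with no separate argument. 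Your approach instead works with the countable family $\{X_q\in B\}\cap\{\tau\ge q\}$, $q\in\Q\cap[0,T]$, $B$ in a countable basis, each of which you correctly verify lies in $\cF^0_{\tau+}$ because $\{\tau<s\}\in\cF^0_s$ for the $\F^0_+$-stopping time $\tau$; you then apply the elementary fact $\Prob_\om(A)=\bfone_A(\om)$ a.e.\ for each fixed $A\in\cF^0_{\tau+}$ and accumulate the countably many exceptional sets. This buys you an argument that uses only standard facts about conditional expectations, stopping times, and progressive measurability, with no monotone-class step and no two-variable machinery; the price is the separate endpoint argument via $\cF^0_{\tau+}$-measurability of $X_\tau$ and the $\Prob_\om$-a.s.~identification $\tau=\tau(\om)$, which you handle correctly. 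Both proofs circumvent the same obstruction (non-countable generation of $\cF^0_{\tau+}$ rules out a genuine proper r.c.p.d.) by isolating a countable structure sufficient for the claim, and both finish by redefining $\Prob_\om$ on the exceptional $\Prob$-null set.
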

\begin{proof}
\textit{Step~1.} 
Fix a bounded 
$\cF^0_T\otimes\cF^0_{\tau+}$-measurable
function $H:\Omega\times\Omega\to\R$
and put $\bar{H}(\tiom):=H(\tiom,\tiom)$.
We claim that, for $\Prob$-a.e.~$\om\in\Omega$,
\begin{align}\label{E2:cpd_rightLim}
\Mean^\Prob [\bar{H}\vert\cF^0_{\tau+}]
(\om)=
\int H(\tiom,\om)\,\Prob_\om(d\tiom).
\end{align}
If $H$ is of the form 
$H(\tiom,\om)=G_1(\tiom)G_2(\om)$,
$G_1$ $\cF^0_T$-measurable,
$G_2$ $\cF^0_{\tau+}$-measurable, then,
for $\Prob$-a.e.~$\om\in\Omega$,
\begin{align*}
\Mean^\Prob[\bar{H}\vert\cF^0_{\tau+}](\om)&=
G_2(\om)\Mean^\Prob[G_1\vert\cF^0_{\tau+}](\om)
\\&=G_2(\om)\int G_1(\tiom)\,\Prob_\om(d\tiom)=
\int H(\om,\tiom)\,\Prob_\om(d\tiom).
\end{align*}
A monotone-class argument yields the claim.

\textit{Step~2.} Fix $t\in [0,T]$ and define 
$H:\Omega\times\Omega\to\R$ by
$H(\tiom,\om):=\bfone_{A}(\tiom,\om)$, where
\begin{align*}
A:=\{(\tiom,\om)\in\Omega\times\Omega:
\tiom_{t\wedge \tau(\om)}=
\om_{t\wedge\tau(\om)}\}.
\end{align*}
Since $H$ is
$\cF^0_T\otimes\cF^0_{\tau+}$-measurable
and $\bar{H}(\tiom)=1$, Step~1 yields that,
for $\Prob$-a.e.~$\om\in\Omega$,
\begin{align*}
1=\Mean^\Prob[\bar{H}\vert\cF^0_{\tau+}]
=\int \bfone_{A}(\tiom,\om)\,\Prob_\om(d\tiom)
=\Prob_\om(X_{t\wedge\tau(\om)}
=\om_{t\wedge\tau(\om)}). 
\end{align*}
Thus \eqref{E:cpd_rightLim} holds 
up to a $\Prob$-null set and on this null set
we can redefine $\Prob_\om$
 such that \eqref{E:cpd_rightLim} holds there, too
(cf.~p.~34 in \cite{SVBook}).
This concludes the proof.
\end{proof}
\begin{remark}
Note that the $\sigma$-field $\cF^0_{\tau+}$ is not
countably generated,
which yields non-existence of 
 r.c.p.d.~(see \cite{BlackwellDubins75},
where r.c.p.d.~in our sense are called
proper r.c.p.d.).
 Hence we cannot rely
on the corresponding proof in \cite{SVBook},
when $\cF^0_{\tau+}$ is replaced by
$\cF^0_\tau$ and $\tau\in\mathcal{T}(\F^0)$.
\end{remark}
The following result is an
adaption
of Lemma~2 in
 \cite{MikuleviciusPragarauskas92MGproblem}
 to our setting. Again, for unexplained notation, see \cite{JacodShiryaevBook}
 and also \cite{RogersWilliamsI}.
\begin{lemma}\label{L:MGproblem}
Let $(s,\om)\in\bar{\Lambda}$,
$\Prob$  be a solution of the
martingale problem for 
$(p_sB,p_s C,p_s\nu)$ starting
at $(s,\om)$, $\tau\in\mathcal{T}_s(\F^0_+)$,
and
$\{\Prob_\tiom\}_{\tiom\in\Omega}$ be
a c.p.d.~of $\Prob$ given $\cF^0_{\tau+}$.
Then, for $\Prob$-a.e.~$\tiom\in\Omega$,
the probability measure 
$\Prob_{\tiom}$ is a
solution of the martingale problem
for
$(p_{\tau(\tiom)}B,p_{\tau(\tiom)} C,
p_{\tau(\tiom)}\nu)$ starting
at $(\tau(\tiom),\tiom)$.
\end{lemma}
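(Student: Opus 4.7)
The plan is to combine Theorem~\ref{T:cpd_rightLim} (to handle the starting condition) with an adaptation of Proposition~\ref{T:shiftChar} (to handle the characteristics), both applied with the right-continuous sigma-field $\cF^0_{\tau+}$ in place of $\cF^0_\tau$.

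First, I would invoke Theorem~\ref{T:cpd_rightLim} directly with the $\F^0_+$-stopping time $\tau$ and the c.p.d.\ $\{\Prob_\tiom\}$ given $\cF^0_{\tau+}$. This yields $\Prob_\tiom(X_t=\tiom_t,\,0\le t\le\tau(\tiom))=1$ for every $\tiom\in\Omega$, which is precisely the ``starting at $(\tau(\tiom),\tiom)$'' requirement $X.\bfone_{[0,\tau(\tiom)]}=\tiom.\bfone_{[0,\tau(\tiom)]}$, $\Prob_\tiom$-a.s.

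Next, I would reproduce the argument of Proposition~\ref{T:shiftChar}, but with the right-continuous variants throughout. Since $\Prob$ solves the martingale problem for $(p_sB,p_sC,p_s\nu)$ starting at $(s,\om)$, Theorem~II.2.21 of \cite{JacodShiryaevBook} implies that the processes $X(h)-p_sB-X_s$, $M(h)^iM(h)^j-p_s\tilde{C}^{ij}$ ($i,j\le d$), and $g\ast p_s\mu^X-g\ast p_s\nu$ ($g\in\mathcal{C}^+(\R^d)$) are $(\Filt,\Prob)$-local martingales after time $s$. Localizing these and applying (an $\F^0_+$-variant of) Theorem~1.2.10 of \cite{SVBook}, the c.p.d.\ $\{\Prob_\tiom\}$ given $\cF^0_{\tau+}$ produces, for $\Prob$-a.e.\ $\tiom\in\Omega$, the $(\Filt,\Prob_\tiom)$-local martingales
\begin{align*}
&X(h)-p_{\tau(\tiom)}B-X_{\tau(\tiom)},\\
&M(h)^iM(h)^j-p_{\tau(\tiom)}\tilde{C}^{ij}-M(h)^i_{\tau(\tiom)}M(h)^j_{\tau(\tiom)},\\
&g\ast p_{\tau(\tiom)}\mu^X-g\ast p_{\tau(\tiom)}\nu,
\end{align*}
after time $\tau(\tiom)$. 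Since the canonical decomposition of $X(h)$ after $\tau(\tiom)$ is $X(h)=X_{\tau(\tiom)}+[M(h)-M(h)_{\tau(\tiom)}]+[B(h)-B(h)_{\tau(\tiom)}]$, a second application of Theorem~II.2.21 of \cite{JacodShiryaevBook} identifies $(p_{\tau(\tiom)}B,p_{\tau(\tiom)}C,p_{\tau(\tiom)}\nu)$ as the $\Prob_\tiom$-characteristics of $X$ after $\tau(\tiom)$. Combined with the first step, this shows that $\Prob_\tiom$ solves the martingale problem for $(p_{\tau(\tiom)}B,p_{\tau(\tiom)}C,p_{\tau(\tiom)}\nu)$ starting at $(\tau(\tiom),\tiom)$.

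The main obstacle is the adaptation of Theorem~1.2.10 of \cite{SVBook} from the raw-filtration setting ($\cF^0_\tau$ and $\F^0$-stopping times) to the right-continuous setting ($\cF^0_{\tau+}$ and $\F^0_+$-stopping times). The argument is the same as the one indicated in the proof of Proposition~\ref{T:shiftChar} via Lemma~III.2.48 of \cite{JacodShiryaevBook} for the Markovian case: after localization by a sequence of $\F^0_+$-stopping times, the optional sampling identity characterizing the local martingale property is transferred to the c.p.d.\ because it is preserved under the conditional-expectation operator with respect to $\cF^0_{\tau+}$. Once this variant is in hand, the remainder of the proof is a routine application of the martingale-problem characterization in \cite{JacodShiryaevBook}.
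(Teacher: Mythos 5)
Your proposal has the right overall shape and is the same route the paper takes: use Theorem~\ref{T:cpd_rightLim} for the starting condition, transfer the local-martingale property of $X(h)-p_sB-X_s$, $M(h)^iM(h)^j-p_s\tilde C^{ij}$, and $g*p_s\mu^X-g*p_s\nu$ to the c.p.d.~after localization, and then read off the characteristics via Theorem~II.2.21 of \cite{JacodShiryaevBook}. However, the step you dismiss as a ``routine'' $\F^0_+$-variant of Theorem~1.2.10 of \cite{SVBook} is precisely where the difficulty lies, and your attribution is misleading: the paper's Remark immediately following Theorem~\ref{T:cpd_rightLim} explicitly warns that one \emph{cannot} rely on the corresponding proof in \cite{SVBook} here, because $\cF^0_{\tau+}$ is not countably generated and a proper r.c.p.d.\ need not exist. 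Proposition~\ref{T:shiftChar} and Lemma~III.2.48 of \cite{JacodShiryaevBook} treat the raw $\sigma$-field $\cF^0_\tau$, where r.c.p.d.\ do exist, so citing them as ``the same argument'' glosses over exactly the obstruction the paper is at pains to address.

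Concretely, two ingredients are missing from your sketch. First, the mechanism that replaces the r.c.p.d.\ structure is the integral-representation claim established in Step~1 of the proof of Theorem~\ref{T:cpd_rightLim}, namely $\Mean^\Prob[\bar H\vert\cF^0_{\tau+}](\om)=\int H(\tiom,\om)\,\Prob_\om(d\tiom)$ for bounded $\cF^0_T\otimes\cF^0_{\tau+}$-measurable $H$; this, not the SV theorem, is what lets one deduce $\int_A\Mean^{\Prob_\tiom}[\eta(M^{\sigma_l}_{r'\vee\tau(\tiom)}-M^{\sigma_l}_{r\vee\tau(\tiom)})]\,\Prob(d\tiom)=0$. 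Second, that identity only yields a $\Prob$-null set for each fixed $(l,r,r',\eta)$; to conclude one needs a single $\Prob$-null set outside which all the optional-sampling identities hold simultaneously, which the paper obtains via the countable dense family $\Xi$ (and the fact that $\mathcal C^+(\R^d)$ can be taken countable), followed by right-continuity. Without the countable-dense-family reduction the argument does not close.
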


\begin{remark}\label{R:ShiftMg}

(i) Each $\F^0$-stopping time $\tau$ satisfies $\Prob_{\tau,\om} (\tau=\tau(\om))=1$ for every $\om$. This follows easily from Galmarino's
test (see \cite{DellacherieMeyer}).

(ii) Given a right-continuous ${\F}$-adapted process $Y$ such that $Y_t(\om)=Y_t(\om_{\cdot\wedge t})$
(this is sometimes nearly impossible to verify) and a closed  subset $E$ of $\R$,
the ${\F}$-stopping time
$\tau:=\inf\{t\ge 0:Y_t\in E\}\wedge T$
satisfies $\Prob_{\tau,\om} (\tau=\tau(\om))=1$. To see this, let 
$\om$ and $\tiom$ two paths that coincide on $[0,\tau(\om)]$.
First note that $\tau(\om)=T$ or, by right-continuity,  $Y_\tau(\om)=Y_{\tau(\om)}(\tiom)\in E$.
Moreover, if $0\le t<\tau(\om)$, then $Y_t(\om)=Y_t(\tiom)\not\in E$. Hence
 $\tau(\tiom)=\tau(\om)$.

(iii) If we assume that the set $E$ in the preceding paragraph is open instead of closed, then
a corresponding result does not necessarily hold. For example, let 
$T=2$, $\tau=\inf\{t\ge 0: |X_t|>1\}\wedge T$,
$\om\in\Omega$ be defined by $\om_t=t$, and $\tiom_t:=t.\bfone_{[0,1]}+(2-t).\bfone_{(1,T]}$.
Then $\tau(\om)=1$ but $\tau(\tiom)=T$.
\end{remark}
\begin{proof}[Proof of Lemma~\ref{L:MGproblem}]
First, note that, by Theorem~\ref{T:cpd_rightLim},
for $\Prob$-a.e.~$\tiom\in\Omega$, we have
$X_t=\tiom_t$, $0\le t\le \tau(\tiom)$.

In the next two steps, let $M=(M_t)_{t\ge s}$ be 
one of following processes:
\begin{align*}
&X(h)-p_sB-X_s,\\
&M(h)^iM(h)^j-p_s\tilde{C}^{ij},\, i,
j\le d,\\ 
&g\ast(p_s\mu^X)-g\ast(p_s\nu),\,
g\in\mathcal{C}^+(\R^d).
\end{align*}
Here, we can and will assume that
$\mathcal{C}^+(\R^d)$ is countable. 

\textit{Step~1.} 
 By Theorem~II.2.2.1 in \cite{JacodShiryaevBook}, 
$M$ is an $(\F^0_+,\Prob)$-local martingale.
Moreover, $M$ is $\F^0$-adapted.
Let $(\sigma_l)_l$ be a corresponding localizing
sequence of $\F^0$-stopping times
(cf.~the proof of Lemma~III.2.48 
in \cite{JacodShiryaevBook}).
Without loss of generality, let us assume that
$M^{\sigma_l}$ is bounded.
 Then,
for every $A\in\cF^0_{\tau+}$, $\l\in\N$, $r$, 
$r^\prime\in [0,T]$ with $r\le r^\prime$,
and $\eta\in\mathrm{b}\cF^0_r$,
we can apply the claim in Step~1 of the proof of 
Theorem~\ref{T:cpd_rightLim} to get
\begin{align*}
&\int_A \Mean^{\Prob_\tiom}
 [\eta (M^{\sigma_l}_{r^\prime\vee\tau(\tiom)}-
 M^{\sigma_l}_{r\vee\tau(\tiom)})]\,\Prob(d\tiom)\\
 &\qquad=\int_A \Mean^\Prob 
[\eta \Mean^\Prob[
M^{\sigma_l}_{r^\prime\vee\tau}
-M^{\sigma_l}_{r\vee\tau_k}
\vert \cF^0_{(r\vee\tau)+}
]\vert\cF^0_{\tau+}]\,\Prob(d\tiom)
=0.
\end{align*}

\textit{Step~ 2.} 
For every $n\in\N$, 
fix a countable dense
subset $J_n$ of $C^\infty_0(\R^{dn})$
 with respect to the
locally uniform topology.
For every $r\in [0,T]\cap(\Q\cup\{T\})$, denote
by $\Xi_r$ the set of all $\eta:\Omega\to\R$
of the form $\eta=f(X_{s_n},\ldots,X_{s_1})$
for some $n\in\N$, $s_1$, $\ldots$, 
$s_n\in [0,r]\cap(\Q\cup\{r\})$ with 
$s_1\le\ldots\le s_n$, and $f\in J_n$.
Put $\Xi:=\cap_r \Xi_r$.
Since $\Xi$ is countable, there exists, by
Step~1, a set
$\Omega_{M}\subset\Omega$ with
$\Prob(\Omega_{M})=1$ such that,
for every $l\in\N$,
 every $r$, $r^\prime\in [0,T]\cap(\Q\cup\{T\})$
with $r\le r^\prime$, every
$\eta\in\Xi_r$, and every $\tiom\in\Omega_{M}$,
\begin{align}\label{E2:MGproblem}
\Mean^{\Prob_\tiom}[\eta(
M^{\sigma_l}_{r^\prime\vee\tau(\tiom)}-
 M^{\sigma_l}_{r\vee\tau(\tiom)}
)]=0.
\end{align}
Since $\sigma(\Xi_r)=\cF^0_r$,
 \eqref{E2:MGproblem} holds
 also for every $\eta\in\mathrm{b}\cF^0_r$.
Right-continuity of $M$ implies that
$M^{\sigma_l}_{\tau(\tiom)\vee\cdot}$ is an 
$(\F^0_+,\Prob_\tiom)$-martingale after time
$\tau(\tiom)$. Hence $M$ is 
an $(\F^0_+,\Prob_\tiom)$-local martingale
after time  $\tau(\tiom)$.

\textit{Step~3}.
Since 
$\Prob(\cap_{M} \Omega_{M})=1$,
Step~2 and a second application of Theorem~II.2.2.1
in \cite{JacodShiryaevBook} conclude
the proof.
\end{proof}

\begin{proposition}\label{C:SMPwide}
For every $(s,\om)\in \bar{\Lambda}$,
$\eta\in\mathrm{b}\cF^{s,\om}_T$, and 
$\tau\in\mathcal{T}_s(\F^{s,\om})$,
\begin{align*}
\Mean_{\tau,X} [\eta]=
\Mean_{s,\om} [\eta\vert\cF^{s,\om}_\tau],
\quad\text{$\Prob_{s,\om}$-a.s.}
\end{align*}
\end{proposition}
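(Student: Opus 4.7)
The plan is to reduce the identity to one about the raw filtration $\F^0_+$ and then invoke Lemma~\ref{L:MGproblem} together with uniqueness of the martingale problem, in the spirit of Corollary~\ref{C:strongMarkov} but with the right-limit $\cF^0_{\ttau+}$ in place of $\cF^0_\tau$ and allowing $\tau$ to be an $\F^{s,\om}$-stopping time.

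First I would pick raw-filtration representatives of $\tau$ and $\eta$. Since $\F^{s,\om}$ is by construction the right-continuous $\Prob_{s,\om}$-augmentation of $\F^0_+$, a standard argument yields an $\F^0_+$-stopping time $\ttau\in\cT_s(\F^0_+)$ with $\ttau=\tau$ $\Prob_{s,\om}$-a.s.\ and an $\cF^0_T$-measurable bounded function $\tilde\eta$ with $\tilde\eta=\eta$ $\Prob_{s,\om}$-a.s., together with the identification of $\cF^{s,\om}_\tau$ with $\sigma(\cF^0_{\ttau+},\mathcal{N}_{s,\om})$ modulo $\Prob_{s,\om}$-null sets. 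The latter ensures that $\Prob_{s,\om}$-a.s.\ the conditional expectations $\Mean_{s,\om}[\eta\vert\cF^{s,\om}_\tau]$ and $\Mean_{s,\om}[\tilde\eta\vert\cF^0_{\ttau+}]$ agree, and the left-hand side $\Mean_{\tau,X}[\eta]$ is then to be read as the function $\tiom\mapsto\Mean_{\ttau(\tiom),\tiom}[\tilde\eta]$ on $\Omega$. Consequently the assertion reduces to showing
\begin{align*}
\Mean_{\ttau(\tiom),\tiom}[\tilde\eta]=\Mean_{s,\om}[\tilde\eta\vert\cF^0_{\ttau+}](\tiom)\quad\text{for $\Prob_{s,\om}$-a.e.\ $\tiom\in\Omega$.}
\end{align*}

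Next I would fix a conditional probability distribution $\{\Prob_{\tiom}\}_{\tiom\in\Omega}$ of $\Prob_{s,\om}$ given $\cF^0_{\ttau+}$; its existence is standard on the canonical Skorohod space. By Lemma~\ref{L:MGproblem}, for $\Prob_{s,\om}$-a.e.\ $\tiom$ the measure $\Prob_{\tiom}$ is a solution of the martingale problem for $(p_{\ttau(\tiom)}B,p_{\ttau(\tiom)}C,p_{\ttau(\tiom)}\nu)$ starting at $(\ttau(\tiom),\tiom)$. Uniqueness of that martingale problem, which is in force under Assumption~\ref{A:diffChar}, then forces $\Prob_{\tiom}=\Prob_{\ttau(\tiom),\tiom}$ for $\Prob_{s,\om}$-a.e.\ $\tiom$. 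The defining property of a c.p.d.\ gives
\begin{align*}
\Mean_{s,\om}[\tilde\eta\vert\cF^0_{\ttau+}](\tiom)=\int\tilde\eta\,d\Prob_{\tiom}=\int\tilde\eta\,d\Prob_{\ttau(\tiom),\tiom}=\Mean_{\ttau(\tiom),\tiom}[\tilde\eta]
\end{align*}
for $\Prob_{s,\om}$-a.e.\ $\tiom$, which is exactly the required identity.

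The main obstacle is the null-set bookkeeping in the first step: one has to choose $\ttau$ and $\tilde\eta$ in a way compatible with the intended interpretation of $\Mean_{\tau,X}[\eta]$, and identify $\cF^{s,\om}_\tau$ modulo $\Prob_{s,\om}$-null sets with an augmentation of $\cF^0_{\ttau+}$ so that conditioning on the two $\sigma$-fields agrees a.s. Once this reduction is in place, the conceptual core of the proof is a short combination of Lemma~\ref{L:MGproblem}, uniqueness from Assumption~\ref{A:diffChar}, and the definition of a c.p.d.; this parallels the proof of Corollary~\ref{C:strongMarkov}, with Lemma~\ref{L:MGproblem} playing the role of Proposition~\ref{T:shiftChar} at the right-continuous stopping time $\ttau$.
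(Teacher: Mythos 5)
Your proposal takes essentially the same route as the paper: choose an $\F^0_+$-stopping time $\ttau$ agreeing with $\tau$ a.s., use the c.p.d.\ of $\Prob_{s,\om}$ given $\cF^0_{\ttau+}$, invoke Lemma~\ref{L:MGproblem} plus uniqueness of the martingale problem to identify $\Prob_\tiom$ with $\Prob_{\ttau(\tiom),\tiom}$, and conclude from the defining property of a c.p.d.

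One ordering subtlety is worth flagging. You reduce the claim up front by replacing $\eta$ with an $\cF^0_T$-measurable version $\tilde\eta$ and asserting that $\Mean_{\tau,X}[\eta]$ ``is to be read as'' $\tiom\mapsto\Mean_{\ttau(\tiom),\tiom}[\tilde\eta]$. To justify this you need that $\eta=\tilde\eta$ $\Prob_{\ttau(\tiom),\tiom}$-a.s.\ for $\Prob_{s,\om}$-a.e.\ $\tiom$, i.e.\ that a $\Prob_{s,\om}$-null set $B\in\cF^0_T$ containing $\{\eta\neq\tilde\eta\}$ remains $\Prob_{\ttau(\tiom),\tiom}$-null for a.e.\ $\tiom$. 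That fact is itself an instance of the proposition applied to $\bfone_B\in\mathrm{b}\cF^0_T$, so doing this reduction first is circular. The paper avoids this by reversing the order: it first proves the identity for $\eta\in\mathrm{b}\cF^0_T$ (its Step~1, which also contains the $\cF^{s,\om}_\tau=\cF^{s,\om}_\ttau$ and augmentation-of-$\cF^0_{\ttau+}$ arguments you gesture at), and only then uses that result to push the null set $B$ through and extend to $\mathrm{b}\cF^{s,\om}_T$ (its Step~2). With the two halves of your argument swapped, the proof is correct and matches the paper's.
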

\begin{proof}
Note that there exists
a $\ttau\in\mathcal{T}_s(\F^0_+)$ and
set $\Omega^\prime\subset\Omega$ such that
$\Prob_{s,\om}(\Omega^\prime)=1$ and
$\tau(\tiom)=\ttau(\tiom)$ for every
$\tiom\in\Omega^\prime$. 
Also note that there exists a
c.p.d.~$\{\Prob_{\tiom}\}_{\tiom\in\Omega}$
of $\Prob_{s,\om}$ given $\cF^0_{\ttau+}$,
which, by Theorem~\ref{T:cpd_rightLim},
satisfies
\begin{align*}
\Prob_{\tiom}(X_t=\tiom_t,\,
0\le t\le \ttau(\tiom))=1.
\end{align*}

\textit{Step 1.}
We will show that, for every $\eta\in\mathrm{b}\cF^0_T$,
$\Mean\left[\eta\vert\cF^{s,\om}_\tau\right]=
\Mean_{\ttau,X}\left[\eta\right]$, $\Prob$-a.s.

 First, we show 
that $\cF^{s,\om}_\tau=\cF^{s,\om}_\ttau$. To this end, let
$A\in\cF^{s,\om}_\tau$ and $t\in [s,T]$.
Then 
\begin{align*}
A\cap\{\ttau\le t\}=
[A\cap\{\tau\le t\}\cap\{\tau=\ttau\}]
\cup[A\cap\{\ttau\le t\}\cap\{\tau\neq\ttau\}]
\in\cF^{s,\om}_t.
\end{align*}
That is, $\cF^{s,\om}_\tau
\subseteq\cF^{s,\om}_\ttau$.
The other inclusion can be shown in the same way.
Consequently, using Lemma~\ref{L:MGproblem},
we have
\begin{align}\label{E:SMPwide}
\Mean\left[
\eta\vert\cF^{s,\om}_\tau
\right](\tiom)=
\Mean\left[
\eta\vert\cF^{s,\om}_\ttau
\right](\tiom)=^{\substack{(\ast)}}\,\,
\Mean\left[\eta\vert\cF^0_{\ttau+}\right](\tiom)
=\Mean^{\Prob_\tiom}[\eta]
=\Mean_{\ttau,\tiom}[\eta]
\end{align}
for $\Prob_{s,\om}$-a.e.~$\tiom\in\Omega$. Note that
the equality $(\ast)$ follows from the fact 
that first
for every $A\in\cF^{s,\om}_\ttau$
there exist, by Theorem~II.75.3 in
\cite{RogersWilliamsI},
sets $A^\prime\in\cF^0_{\ttau+}$
and $N\in \mathcal{N}_{s,\om}$
such that $A=A^\prime\cup N$ 
and then
\begin{align*}
\Mean_{s,\om}\left[
\bfone_A 
\Mean_{s,\om}\left[
\eta\vert\cF^{s,\om}_\ttau\right]
\right]&=
\Mean_{s,\om}\left[\bfone_A \eta\right]=
\Mean_{s,\om}\left[\bfone_{A^\prime}\eta\right]=
\Mean_{s,\om}\left[
\bfone_{A^\prime}\Mean_{s,\om}\left[
\eta\vert\cF^0_{\ttau+}
\right]\right]\\&=
\Mean_{s,\om}\left[\bfone_A\Mean_{s,\om}\left[
\eta\vert\cF^0_{\ttau+}
\right]\right]
\end{align*}
and that second $\Mean_{s,\om}\left[
\eta \vert\cF^0_{\ttau+}\right]$
is $\cF^{s,\om}_\ttau$-measurable.
Finally,
\begin{align*}
\Mean_{\tau,X} [\eta]=
\bfone_{\{\tau=\ttau\}}.\Mean_{\ttau,X}[\eta]+
\bfone_{\{\tau\neq\ttau\}}.\Mean_{\tau,X}[\eta],
\quad\text{$\Prob_{s,\om}$-a.s.}
\end{align*}
Thus, the map $\tiom\mapsto
\Mean_{\tau(\tiom),\tiom}[\eta]$
is $\cF^{s,\om}_\tau$-measurable and
$\Mean_{\tau,X}[\eta]=
\Mean_{s,\om}[\eta\vert\cF^{s,\om}_\tau]$,
$\Prob_{s,\om}$-a.s.

\textit{Step 2.} To finish the proof of the 
corollary, we can, without loss
of generality because of Step~1, assume
that $\eta=\bfone_A$ with $A\subset B\in\cF_T^0$
and $\Prob_{s,\om}(B)=0$. Using Step~1, we have
\begin{align*}
0\le\Mean_{\tau(\tiom),\tiom}\bfone_A\le 
\Mean_{\tau(\tiom),\tiom}\bfone_B=
\Mean_{s,\om}[\bfone_B\vert\cF^{s,\om}_\tau](\tiom)=
\Mean_{s,\om}[\bfone_A\vert\cF^{s,\om}_\tau](\tiom)=0
\end{align*}
for $\Prob_{s,\om}$-a.e.~$\tiom\in\Omega$.
\end{proof}
\begin{proposition}\label{L:shiftMG_augment}
Fix $(s,\om)\in\bar{\Lambda}$. 
Let $M$ be a bounded right-continuous
$\F^0_+$-adapted
$(\F^{s,\om},\Prob_{s,\om})$-martingale.
Let $\tau\in\mathcal{T}_s(\F^{s,\om})$.
Then, for $\Prob_{s,\om}$-a.e.~$\tiom\in\Omega$,
$M$ is an 
$(\F^{\tau,\tiom},\Prob_{\tau,\tiom})$-martingale
after time $\tau(\tiom)$. A corresponding
result holds for sub- and supermartingales.
\end{proposition}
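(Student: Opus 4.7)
The plan is to bootstrap from the filtration $\F^{s,\om}$ back to the raw right-continuous filtration $\F^0_+$, apply the regular-conditioning machinery developed in Lemma~\ref{L:MGproblem} and Theorem~\ref{T:cpd_rightLim} to identify a c.p.d.~with the measures $\Prob_{\ttau(\tiom),\tiom}$, and then re-augment. Concretely, as in the first step of the proof of Proposition~\ref{C:SMPwide}, there exists $\ttau\in\mathcal{T}_s(\F^0_+)$ with $\tau=\ttau$, $\Prob_{s,\om}$-a.s., and $\cF^{s,\om}_\tau=\cF^{s,\om}_{\ttau}$. Since $\F^{s,\om}$ is the $\Prob_{s,\om}$-augmentation of $\F^0_+$ and $M$ is $\F^0_+$-adapted and right-continuous, $M$ is in fact an $(\F^0_+,\Prob_{s,\om})$-martingale.

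Next, I would fix a c.p.d.~$\{\Prob_\tiom\}_{\tiom\in\Omega}$ of $\Prob_{s,\om}$ given $\cF^0_{\ttau+}$. By Lemma~\ref{L:MGproblem}, for $\Prob_{s,\om}$-a.e.~$\tiom$ the measure $\Prob_\tiom$ solves the shifted martingale problem starting at $(\ttau(\tiom),\tiom)$, and by the uniqueness assumption underlying Assumption~\ref{A:diffChar} we may identify $\Prob_\tiom=\Prob_{\ttau(\tiom),\tiom}$. The core point is then to verify that $M$ is itself an $(\F^0_+,\Prob_\tiom)$-martingale after time $\ttau(\tiom)$ off a common $\Prob_{s,\om}$-null set. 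Imitating the countability argument in Step~2 of the proof of Lemma~\ref{L:MGproblem}, choose a countable collection $\Xi_r$ of bounded $\cF^0_r$-measurable functions (of the form $f(X_{s_n},\dots,X_{s_1})$ with $f$ in a countable dense subset of $C^\infty_0$ and rational $s_j\le r$) whose linear hull is dense in a sufficient class. By the claim in Step~1 of the proof of Theorem~\ref{T:cpd_rightLim}, for every rational pair $r\le r'$ and every $\eta\in\Xi_r$,
\begin{align*}
\int_A \Mean^{\Prob_\tiom}[\eta(M_{r'\vee\ttau(\tiom)}-M_{r\vee\ttau(\tiom)})]\,\Prob_{s,\om}(d\tiom)
=\Mean_{s,\om}[\bfone_A\,\eta(M_{r'\vee\ttau}-M_{r\vee\ttau})]=0
\end{align*}
for every $A\in\cF^0_{\ttau+}$, by optional sampling applied to the bounded $(\F^0_+,\Prob_{s,\om})$-martingale $M$. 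Since the system is countable, one common null set $N$ works for all $(r,r',\eta)$. Right-continuity of $M$ and a monotone-class extension across $\sigma(\Xi_r)=\cF^0_r$ then promote this identity to all $\eta\in\mathrm{b}\cF^0_r$, giving the martingale property under $\Prob_\tiom$ after time $\ttau(\tiom)$ for every $\tiom\notin N$.

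Finally, I would translate from $\F^0_+$ to $\F^{\tau,\tiom}$. Since $\F^{\tau,\tiom}$ is precisely the $\Prob_{\tau,\tiom}$-completion of $\F^0_+$ by null sets, any $(\F^0_+,\Prob_{\tau,\tiom})$-martingale is automatically an $(\F^{\tau,\tiom},\Prob_{\tau,\tiom})$-martingale; combined with $\Prob_\tiom=\Prob_{\ttau(\tiom),\tiom}=\Prob_{\tau(\tiom),\tiom}$ for $\Prob_{s,\om}$-a.e.~$\tiom$, this yields the claim. The sub- and supermartingale cases are identical, replacing the equality $=0$ above by $\le 0$ (resp.~$\ge 0$) and noting the inequality is preserved after integrating over $A\in\cF^0_{\ttau+}$ and removing the $\eta$-test via approximation by nonnegative test functions. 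The main obstacle is the countability/measurability bookkeeping in Step~4: ensuring a single exceptional $\Prob_{s,\om}$-null set handles all conditioning instants simultaneously, and verifying that $\ttau$ exists with the required $\F^0_+$-measurability; both issues are dispatched by the arguments already used in Lemma~\ref{L:MGproblem} and Proposition~\ref{C:SMPwide}.
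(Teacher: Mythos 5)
Your proof is correct and follows essentially the same route as the paper's: reduce $\tau$ to an $\F^0_+$-stopping time $\ttau$, fix a c.p.d.~of $\Prob_{s,\om}$ given $\cF^0_{\ttau+}$ identified with $\{\Prob_{\ttau(\tiom),\tiom}\}$ via Lemma~\ref{L:MGproblem} and Theorem~\ref{T:cpd_rightLim}, run a countable test class $\Xi_r$ to obtain one exceptional null set, and then lift from $\F^0_+$ to $\F^{\tau,\tiom}$ by completeness. The only cosmetic difference is that you verify the key identity by integrating over $A\in\cF^0_{\ttau+}$ and invoking optional sampling for the $(\F^0_+,\Prob_{s,\om})$-martingale $M$ at $r\vee\ttau\le r'\vee\ttau$, whereas the paper writes the same content as a pointwise chain of conditional expectations via~\eqref{E2:cpd_rightLim} and~\eqref{E:SMPwide} and a conditioning on $\cF^{s,\om}_{r\vee\ttau}$; these two presentations are equivalent.
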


\begin{proof}
Let $\ttau\in\mathcal{T}_s(\F^0_+)$ satisfy
$\ttau=\tau$, $\Prob_{s,\om}$-a.s.
For every $n\in\N$, fix a countable dense
subset $J_n$ of $C_b(\R^n)$ with respect to the
locally uniform topology.
For every $r\in [0,T]\cap(\Q\cup\{T\})$, denote
by $\Xi_r$ the set of all $\eta:\Omega\to\R$
of the form $\eta=f(X_{s_n},\ldots,X_{s_1})$
for some $n\in\N$, $s_1$, $\ldots$, 
$s_n\in [0,r]\cap(\Q\cup\{r\})$ with 
$s_1\le\ldots\le s_n$, and $f\in J_n$.
Put $\Xi:=\cap_r \Xi_r$.
Since $\Xi$ is countable, there exists, by
Step~1 of the proof of
Proposition~\ref{C:SMPwide}, a set
$\Omega^\prime\subset\Omega$ with
$\Prob(\Omega^\prime)=1$ 
 such that
for every $r$, $r^\prime\in [0,T]\cap(\Q\cup\{T\})$
with $r\le r^\prime$, every
$\eta\in\Xi_r$, and every $\tiom\in\Omega^\prime$,
\begin{align*}
\Mean_{\tau,\tiom}\left[
\eta M_{r^\prime\vee \tau(\tiom)}
\right]&=
\Mean_{\ttau,\tiom}\left[
\eta M_{r^\prime\vee \ttau(\tiom)}
\right] \\
&=\Mean_{s,\om}\left[
\eta M_{r^\prime\vee\ttau}\vert\cF^{0}_{\ttau+}
\right](\tiom) &&\text{by
 \eqref{E2:cpd_rightLim}}\\
&=\Mean_{s,\om}\left[
\eta M_{r^\prime\vee\ttau}\vert\cF^{s,\om}_{\ttau}
\right](\tiom) &&\text{by
 \eqref{E:SMPwide}}\\
 &=\Mean_{s,\om}\left[
 \eta\Mean_{s,\om}\left[
 M_{r^\prime\vee\ttau}\vert
 \cF^{s,\om}_{r\vee\ttau}
 \right]\vert\cF^{s,\om}_{\ttau}
 \right](\tiom)&&\text{since
  $\eta\in\mathrm{b}\cF^0_r\subset
 \mathrm{b}\cF_{r\vee\ttau}$
 }\\
 &=\Mean_{s,\om}\left[
 \eta M_{r\vee\ttau}\vert\cF^{s,\om}_{\ttau}
 \right](\tiom)\\
 &=\Mean_{s,\om}\left[
 \eta M_{r\vee\ttau}\vert\cF^{0}_{\ttau+}
 \right](\tiom) &&\text{by \eqref{E:SMPwide}}\\
 &=\Mean_{\ttau,\tiom}\left[
 \eta M_{r\vee\ttau(\tiom)}
 \right] &&\text{by
 \eqref{E2:cpd_rightLim}
 }\\
 &=\Mean_{\tau,\tiom}
 \left[\eta M_{r\vee\tau(\tiom)}
 \right]. 
\end{align*}
Since $\cF^0_r=\sigma(\Xi_r)$,
$\Mean_{\tau,\tiom}
[\eta M_{r^\prime\vee\tau(\tiom)}]=
\Mean_{\tau,\tiom}[\eta M_{r\vee\tau(\tiom)}]$
for every $\eta\in\mathrm{b}\cF^0_r$
hence, also for every
$\eta\in\mathrm{b}\cF^{\tau,\tiom}_r$,
because, by Proposition~III.4.32 in
\cite{JacodShiryaevBook}, we have
$\cF^0_r\vee\mathcal{N}_{\tau,\tiom}=
\cF^{\tau,\tiom}_r$.
Next, let $0\le \hat{s}\le s^\prime<T$,
and let 
$(r_n)$ and $(r^\prime_n)$ be sequences
in $[0,T]\cap\Q$ with $\hat{s}=\inf_n r_n$,
 $s^\prime=\inf_n r^\prime_n$, 
 and $r_n\le r^\prime_n$.
 Then, for every
 $k\in\N$, every $\tiom\in\Omega^\prime$,
 and every 
 $\eta\in\mathrm{b}\cF^{\tau,\tiom}_{\hat{s}}$,
 we have, by right-continuity of $M$ and
 the dominated convergence theorem,
 \begin{align*}
 \Mean_{\tau,\tiom}\left[
 \eta M_{s^\prime\vee\tau(\tiom)}
 \right]&=
 \lim_n \Mean_{\tau,\tiom}\left[
 \eta M_{r^\prime_n\vee\tau(\tiom)}\right]\\&=
  \lim_n \Mean_{\tau,\tiom}\left[
 \eta M_{r_n\vee\tau(\tiom)}\right]=
 \Mean_{\tau,\tiom}\left[
 \eta M_{\hat{s}\vee\tau(\tiom)}
 \right].
 \end{align*}
 Since $M$ is 
 $\F^0_+$-adapted, it is 
 also $\F^{\tau,\tiom}$-adapted 
 and thus $M_{\cdot\vee\tau(\tiom)}$ is an
 $(\F^{\tau,\tiom},\Prob_{\tau,\tiom})$-martingale 
 after time $\tau(\tiom)$. 
\end{proof}

\section{Skorohod's  topologies}\label{S:Skorohod}
In this appendix,
we  recall some definitions
and basic results from 
\cite{WhittBook}. Put $\D:=\D([0,T],\R)$.
The \emph{completed graph} of a
path $\om\in\D$
is defined as the set
\begin{align*}
\Gamma_\om:=\{
(t,x)\in [0,T]\times\R:\exists \alpha\in [0,1]:
x=\alpha\om_{t-}+(1-\alpha)\om_t
\},
\end{align*}
where $\om_{0-}:=\om_0$.
We equip $\Gamma_\om$ with a linear order $\le$
defined as follows: Given $(t,x)$, 
$(t^\prime,x^\prime)\in\Gamma_\om$, we write
$(t,x)\le (t^\prime,x^\prime)$ if either 
$t<t^\prime$ or both, $t=t^\prime$ as
well as $\abs{x-\om_{t-}}\le\abs{x^\prime-\om_{t-}}$
hold. A \emph{parametric representation} of
$\om$ is a mapping 
$(r,z):[0,1]\to\Gamma_\om$ that is continuous,
nondecreasing, and surjective. The
set of all parametric representations of $\om$
is denoted by $\Pi(\om)$. Define  
 $d_{M_1}:\D\times\D\to\R_+$ by
\begin{align*}
d_{M_1}(\om,\om^\prime):=
\inf_{
\substack{
(r,z)\in\Pi(\om)\\(r^\prime,z^\prime)
\in\Pi(\om^\prime)
}
}
\norm{r-r^\prime}_\infty\vee
\norm{z-z^\prime}_\infty.
\end{align*}
Note that $d_{M_1}$ is a metric (Theorem~12.3.1 in \cite{WhittBook}).

\begin{lemma}\label{R:M1}
Let  $0\le t^n\le t^0\le T$.
Put $\om^n=\bfone_{[t^n,T]}$, 
$\om=\bfone_{[t^0,T]}\in \D$.
Then $d_{M_1}(\om,\om^n)\le t^0-t^n$.
\end{lemma}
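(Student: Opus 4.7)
The plan is to bound $d_{M_1}(\om,\om^n)$ by exhibiting a compatible pair of parametric representations of $\om$ and $\om^n$ whose sup-distance is at most $t^0-t^n$. Because both paths are simple step functions with exactly one jump (from $0$ to $1$ at $t^0$ and $t^n$, respectively), their completed graphs consist of three distinguishable pieces: a horizontal segment on the $x$-axis before the jump, a vertical segment from $0$ to $1$ at the jump time, and a horizontal segment at height $1$ after the jump. The idea is to parametrize these three pieces in the \emph{same} order and over the \emph{same} subintervals of $[0,1]$ for both paths, so that the ``height'' coordinate $z$ matches exactly and only the ``time'' coordinate $r$ differs, by precisely the displacement of the jump location.

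Concretely, I would fix any $0<a<b<1$ and define a parametric representation $(r,z)\in\Pi(\om)$ by
\begin{align*}
r(s)&=\tfrac{s}{a}\,t^0,\ z(s)=0,\quad s\in[0,a],\\
r(s)&=t^0,\ z(s)=\tfrac{s-a}{b-a},\quad s\in[a,b],\\
r(s)&=t^0+\tfrac{s-b}{1-b}(T-t^0),\ z(s)=1,\quad s\in[b,1],
\end{align*}
and, by the identical recipe with $t^0$ replaced by $t^n$, a parametric representation $(r^n,z^n)\in\Pi(\om^n)$. Each of $(r,z)$ and $(r^n,z^n)$ is continuous, nondecreasing (with respect to the order $\le$ on the completed graphs introduced before the statement), and surjective onto the corresponding completed graph, hence is a legitimate element of $\Pi(\om)$ and $\Pi(\om^n)$, respectively.

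By construction $z\equiv z^n$, so $\|z-z^n\|_\infty=0$. A direct computation on the three subintervals gives
\begin{align*}
|r(s)-r^n(s)|&=\tfrac{s}{a}(t^0-t^n)\le t^0-t^n,\quad s\in[0,a],\\
|r(s)-r^n(s)|&=t^0-t^n,\quad s\in[a,b],\\
|r(s)-r^n(s)|&=\bigl(1-\tfrac{s-b}{1-b}\bigr)(t^0-t^n)\le t^0-t^n,\quad s\in[b,1].
\end{align*}
Hence $\|r-r^n\|_\infty\vee\|z-z^n\|_\infty\le t^0-t^n$, and the definition of $d_{M_1}$ as an infimum yields $d_{M_1}(\om,\om^n)\le t^0-t^n$.

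No step is genuinely delicate here; the only mild subtlety is handling the degenerate cases $t^0=0$, $t^n=T$, or $t^0=t^n$, but these are immediate (for instance, $t^0=0$ forces $t^n=0$ and both paths coincide, so $d_{M_1}=0\le t^0-t^n$, and similarly one may drop the first or last segment of the parametrization when needed). The core of the argument is the realization that synchronizing the two parametrizations over $[a,b]$ so that the ``vertical'' piece of the completed graph is traversed in lockstep annihilates the height discrepancy, leaving only the horizontal shift to contribute.
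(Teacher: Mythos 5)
Your proof is correct and follows essentially the same strategy as the paper's: pick parametric representations of both completed graphs over a common partition $0<a<b<1$ of $[0,1]$ so that the vertical (jump) segment is traversed in lockstep, making $z\equiv z^n$ and reducing the bound to $\norm{r-r^n}_\infty$. The only difference is presentational: the paper normalizes to specific values ($T=2$, $t^0=1$, $t^n=1-n^{-1}$, with a separate case $t^0=T$), whereas you keep $t^0,t^n,T$ general, which in fact absorbs the paper's case split automatically (when $t^0=T$ your third piece degenerates to the constant point $(T,1)$, which is still a valid parametric representation since only monotonicity and surjectivity, not injectivity, are required), and you dispose of the trivially degenerate boundary cases directly.
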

\begin{proof}
Fix $0<a<b<1$.
We distinguish between the cases $t^0<T$ and
$t^0=T$.

(i) Without loss of generality, let $T=2$, 
$t^0=1$, and $t^n=1-n^{-1}$. Then
$\om=\bfone_{[1,2]}$
and $\om^n=\bfone_{[1-n^{-1},2]}$.
Define $(r,z)\in\Pi(\om)$ and 
$(r^n,z^n)\in\Pi(\om^n)$ by
\begin{align*}
r(t)&:=\frac{t}{a}\bfone_{[0,a]}(t)
+\bfone_{(a,b]}(t)+
\left(
\frac{1-t}{1-b}\cdot 1+\frac{t-b}{1-b}\cdot 2
\right)\bfone_{(b,1]}(t),\\
r^n(t)&:=\frac{t}{a}(1-n^{-1})\bfone_{[0,a]}(t)
+(1-n^{-1})\bfone_{(a,b]}(t)
\\ &\qquad\qquad+\left(
\frac{1-t}{1-b}\cdot(1-n^{-1})+
\frac{t-b}{1-b}\cdot 2
\right)\bfone_{(b,1]}(t),\\
z(t)=z^n(t)&:=
\frac{t-a}{b-a}\bfone_{[a,b]}(t)+\bfone_{(b,1]}(t).
\end{align*}
Then $\norm{r-r^n}_\infty=n^{-1}$
and $\norm{z-z^n}_\infty=0$. Thus
$d_{M_1}(\om,\om^n)\le n^{-1}$.

(ii) Without loss of generality, let $T=t^0=1$
and $t^n=1-n^{-1}$. Then $\om=\bfone_{\{T\}}$
and $\om^n=\bfone_{[1-n^{-1},1]}$. 
Define $(r,z)\in\Pi(\om)$ and 
$(r^n,z^n)\in\Pi(\om^n)$ by
\begin{align*}
r(t)&:=\frac{t}{a}\bfone_{[0,a]}(t)+
\bfone_{(a,1]}(t),\\
r^n(t)&:=\frac{t}{a}(1-n^{-1})\bfone_{[0,a]}(t)+
(1-n^{-1})\bfone_{(a,b]}(t)\\&\qquad\qquad+
\left(
\frac{1-t}{1-b}\cdot(1-n^{-1})+
\frac{t-b}{1-b}\cdot 1
\right)\bfone_{(b,1]}(t),\\
z(t)=z^n(t)&:=
\frac{t-a}{b-a}\bfone_{[a,b]}(t)+
\bfone_{(b,1]}(t).
\end{align*}
Then $\norm{r-r^n}_\infty=n^{-1}$,
$\norm{z-z^n}_\infty=0$ and consequently
$d_{M_1}(\om,\om^n)\le n^{-1}$.
\end{proof}

The  
$d_{M_2}$-metric on $\mathbb{D}$ is defined by,
\begin{align*}
d_{M_2}(\om,\tiom):=
m_H(\Gamma_\om,\Gamma_{\tiom}),
\end{align*}
where $m_H$
is the \emph{Hausdorff distance}, that is,
given closed sets $A$, $B\subseteq [0,T]\times\R$,
\begin{align*}
m_H(A,B):=
\left[
\sup_{a\in A}\inf_{b\in B} \abs{a-b}
\right]\vee
\left[
\sup_{b\in B}\inf_{a\in A} \abs{b-a}
\right].
\end{align*}
The uniform topology on $\mathbb{D}$ and
the (usual) Skorohod $J_1$-topology are induced by the following metrics:
\begin{align*}
d_U(\om,\tiom)&:=\norm{\om-\tiom}_\infty,\\
d_{J_1}(\om,\tiom)&:=
\inf_\lambda \sup_{[s\in [0,T]}
\abs{\lambda(s)-s}\vee\abs{\om_{\lambda(s)}-
\om^\prime_s},
\end{align*}
 where $\lambda$ runs through the set of all strictly
increasing continuous functions from $[0,T]$ to $[0,T]$
satisfying $\lambda(0)=0$
and  $\lambda(T)=T$.

\begin{remark}
We have 
$d_{M_1}\le d_{J_1}\le d_U$ (Theorem~12.3.2 in \cite{WhittBook}).
\end{remark}

\section{Auxiliary results}
\begin{lemma}\label{L:XjumpsWhenUjumps}
Let $u\in C(\bar{\Lambda})$.
Then $\Delta u_t>0$ implies
$\Delta X_t>0$.
\end{lemma}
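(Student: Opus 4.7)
The plan is to prove the contrapositive: if $\Delta X_t(\om)=0$, meaning $\om_t=\om_{t-}$, then $\Delta u_t(\om)=0$. Once this is established, any nonzero jump of $u$ (in particular a strictly positive one) forces a jump of $X$.

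First I would observe that whenever $\om_t=\om_{t-}$, the two stopped paths $\om_{\cdot\wedge t}$ and $\om_{\cdot\wedge t-}$ coincide pointwise on $[0,T]$: for $s<t$ both equal $\om_s$, at $s=t$ both equal $\om_{t-}=\om_t$, and for $s>t$ both are constant with value $\om_t=\om_{t-}$. Hence as elements of $\Omega$ they are literally the same path.

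Next I would invoke the two facts recorded in Remark~\ref{R:C(Lambda)}. Since $u\in C(\bar{\Lambda})$, part~(i) gives that $u$ is non-anticipating, so
\[
u_t(\om)=u(t,\om)=u(t,\om_{\cdot\wedge t}),
\]
and part~(ii) gives the left-limit representation
\[
u_{t-}(\om)=u(t,\om_{\cdot\wedge t-}).
\]
Combining these with the identity $\om_{\cdot\wedge t}=\om_{\cdot\wedge t-}$ obtained in the previous step yields $u_t(\om)=u_{t-}(\om)$, i.e.\ $\Delta u_t(\om)=0$. This completes the contrapositive, and hence the proof.

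There is no real obstacle here; the argument is a one-line consequence of the non-anticipativity of $u$ together with the explicit formula for $u_{t-}$ from Proposition~1 of \cite{ContFournie10_JFA} (as recalled in Remark~\ref{R:C(Lambda)}(ii)). The only subtlety worth double-checking is the definition of $\om_{\cdot\wedge t-}$: it is the path that agrees with $\om$ on $[0,t)$ and is constant equal to $\om_{t-}$ on $[t,T]$, so that when $\om_t=\om_{t-}$ it genuinely coincides with $\om_{\cdot\wedge t}$.
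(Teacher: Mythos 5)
Your proof is correct, and it takes a genuinely different route from the paper. The paper argues directly: assuming $\abs{\Delta u_t(\om)}=c>0$, it uses continuity of $u$ at $(t,\om)$ under $\dist_\infty$ to produce a $\delta>0$ with the property that $\dist_\infty((t,\om),(t-m^{-1},\om))\ge\delta$ for all large $m$, subtracts off the time contribution $m^{-1}$ to get a lower bound on $\sup_{s\in[t-m^{-1},t]}\abs{\om_s-\om_{t-m^{-1}}}$, and then lets $m\to\infty$ to conclude $\abs{\Delta X_t}\ge\delta/2$. That last limit step is a small calculation with c\`adl\`ag paths that the paper leaves implicit. You instead take the contrapositive and reduce everything to the single algebraic observation that $\om_{\cdot\wedge t}=\om_{\cdot\wedge t-}$ when $\om_t=\om_{t-}$, at which point non-anticipativity and the formula $u_{t-}(\om)=u(t,\om_{\cdot\wedge t-})$ from Remark~\ref{R:C(Lambda)}(ii) finish it in one line. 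Your argument is shorter and avoids the limiting argument entirely, at the cost of leaning on the Cont--Fourni\'e left-limit representation (Proposition~1 of \cite{ContFournie10_JFA}) as a black box, whereas the paper's argument uses only the continuity of $u$ under $\dist_\infty$. There is no circularity: Remark~\ref{R:C(Lambda)}(ii) is established independently of this lemma. One minor point worth noting is that for $d>1$, $\Delta X_t$ is vector-valued, so the conclusion $\Delta X_t>0$ should be read as $\abs{\Delta X_t}>0$, exactly as both your proof and the paper's implicitly do.
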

\begin{proof}
Fix $(t,\om)\in\Lambda$ and let $c:=\abs{\Delta u(t,w)}>0$. Then,
there exists an $m_0$ such that, for every $m\ge m_0$,
\begin{align*}
\abs{u(t,\om)-u(t-m^{-1},\om)}\ge c/2.
\end{align*}
Since $u\in C^0(\Lambda)$, there exists a $\delta=\delta(t,\om)>0$
(independent from $m$) such that, for every
$(t^\prime,\om^\prime)\in\Lambda$,
\begin{align*}
\dist_\infty((t,\om),(t^\prime,\om^\prime))<\delta\Rightarrow
\abs{u(t,\om)-u(t^\prime,\om^\prime)}<c/2.
\end{align*}
Let $t^\prime=t-m^{-1}$, $\om^\prime=\om$. Then
\begin{align*}
\dist_\infty((t,\om),(t^\prime,\om^\prime))&=
m^{-1}+\sup_{s\in[0,T]}\abs{\om_{s\wedge t}-\om_{s\wedge(t-m^{-1})}}\\
&=m^{-1}+\sup_{s\in[t-m^{-1},t]}\abs{\om_s-\om_{t-m^{-1}}}\\&>\delta
\end{align*}
if $m\ge m_0$. Let $m_1\ge m_0$ be large enough so that
$m^{-1}<\delta/2$ whenever $m\ge m_1$. Now, let $m\ge m_1$. Then
\begin{align*}
\sup_{s\in[t-m^{-1},t]}\abs{\om_s-\om_{t-m^{-1}}}>\delta/2.
\end{align*}
Letting $m\to\infty$ yields
$\abs{\Delta X_t(\om)}\ge \delta/2>0$.
\end{proof}

\begin{lemma}\label{L:DCTpara}
Let $(S,\frak{S},\mu)$ be a finite
measure space, $(P,\frak{P})$ be a measurable space,
and $\{f_n\}_{n\in\N_0}$ be a uniformly bounded
family of measurable functions
from $S\times P$ to $[0,\infty)$ such that,
for $\mu$-a.e.~$s\in S$,
\begin{align*}
\sup_{p\in P}
\abs{f_n(s,p)-f(s,p)}\to 0
\end{align*}
as $n\to\infty$. Then
\begin{align*}
\sup_{p\in P} \int
\abs{f_n(s,p)-f(s,p)}\,\mu(ds) \to 0
\end{align*}
as $n\to\infty$.
\end{lemma}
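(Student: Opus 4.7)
The plan is to reduce to a direct application of dominated convergence by converting the ``$\sup_p$ inside the integral'' hypothesis into a uniform bound on the set of $s$ where the supremum is large.

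First I would fix a common bound $M\ge 0$ with $|f_n(s,p)|, |f(s,p)|\le M$ for all $n$, $s$, $p$ (so that $|f_n-f|\le 2M$ everywhere), and fix an arbitrary $\eps>0$. I would then introduce
\begin{align*}
B_n:=\{s\in S:\sup_{p\in P}|f_n(s,p)-f(s,p)|>\eps\}.
\end{align*}
By hypothesis, for $\mu$-a.e.~$s\in S$ one has $\sup_p|f_n(s,p)-f(s,p)|\to 0$, so $\bfone_{B_n}(s)\to 0$ for $\mu$-a.e.~$s\in S$. Since $\bfone_{B_n}\le 1$ and $\mu(S)<\infty$, the ordinary dominated convergence theorem yields $\mu(B_n)\to 0$ as $n\to\infty$.

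The key estimate is then, for every $p\in P$,
\begin{align*}
\int |f_n(s,p)-f(s,p)|\,\mu(ds)
&=\int_{B_n}|f_n(s,p)-f(s,p)|\,\mu(ds)+\int_{S\setminus B_n}|f_n(s,p)-f(s,p)|\,\mu(ds)\\
&\le 2M\mu(B_n)+\eps\mu(S),
\end{align*}
since on $S\setminus B_n$ we have $|f_n(s,p)-f(s,p)|\le\sup_{p'}|f_n(s,p')-f(s,p')|\le\eps$. Taking $\sup_{p\in P}$ on the left (the bound on the right does not depend on $p$) and then $\limsup_{n\to\infty}$ gives
\begin{align*}
\limsup_{n\to\infty}\sup_{p\in P}\int|f_n(s,p)-f(s,p)|\,\mu(ds)\le\eps\mu(S),
\end{align*}
and since $\eps>0$ is arbitrary, the $\limsup$ equals $0$, which proves the claim.

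The only real subtlety is the measurability of $B_n$: the pointwise supremum of an uncountable family of measurable functions in general need not be measurable. This is the main obstacle, but it is a mild one and can be handled either by working with outer measure (noting that our a.e.-convergence hypothesis still forces $\mu^\ast(B_n)\to 0$ and that the key estimate above uses $B_n$ only as a bookkeeping set for the pointwise bound), or, as is standard in this paper, by passing to the universal completion of $\frak S$ in which such suprema are measurable; in either case the argument above goes through unchanged.
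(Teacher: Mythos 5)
Your $\eps$-level-set decomposition is the right idea and is morally the same argument as the paper's, but the measurability of $B_n$ is a genuine gap, and neither of your proposed workarounds is correct as stated. The outer-measure route fails in general: from $\bfone_{B_n}(s)\to 0$ for $\mu$-a.e.~$s$ one cannot conclude $\mu^\ast(B_n)\to 0$, because $\mu^\ast$ is not countably additive and is not continuous along the decreasing sets $\bigcup_{n\ge m}B_n$. For instance, $[0,1]$ can be partitioned into countably many disjoint sets $B_n$ each of full outer measure; then $\bfone_{B_n}\to 0$ everywhere while $\mu^\ast(B_n)\equiv 1$. The universal-completion route needs structural hypotheses absent from the lemma: $B_n$ is the $S$-projection of the $\frak{S}\otimes\frak{P}$-measurable set $\{|f_n-f|>\eps\}$, and such projections are universally measurable only for special $(P,\frak{P})$ (e.g.~standard Borel), whereas the lemma allows $(P,\frak{P})$ to be an arbitrary measurable space.

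The correct repair, which is exactly the device the paper uses, is to replace the pointwise supremum over $p$ by the $\mu$-essential supremum. The function $s\mapsto g_n(s):=\esssup^\mu_{p\in P}\abs{f_n(s,p)-f(s,p)}$ is $\frak{S}$-measurable by construction, since it agrees $\mu$-a.e.~with the supremum over a countable subfamily; for every $p$ one has $\abs{f_n(\cdot,p)-f(\cdot,p)}\le g_n$ $\mu$-a.e.; and $g_n\le\sup_{p}\abs{f_n(\cdot,p)-f(\cdot,p)}$ $\mu$-a.e., so $g_n\to 0$ $\mu$-a.e.~by hypothesis. Since $0\le g_n\le 2M$ and $\mu(S)<\infty$, dominated convergence (equivalently, the paper's one-line reverse-Fatou computation) gives
\begin{align*}
\sup_{p\in P}\int\abs{f_n(s,p)-f(s,p)}\,\mu(ds)\le\int g_n(s)\,\mu(ds)\to 0.
\end{align*}
If you prefer your level-set packaging, redefine $B_n:=\{s:g_n(s)>\eps\}$; then $B_n\in\frak{S}$, $\mu(B_n)\to 0$ by dominated convergence, and your key estimate goes through verbatim with no further measurability subtleties.
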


\begin{proof}
The proof follows the lines of a standard proof of
the Dominated Convergence Theorem. By Fatou's lemma,
\begin{align*}
&\limsup_{n\to\infty}
\sup_{p\in P}\int
\abs{f_n(s,p)-f(s,p)}\,\mu(ds)\\
&\qquad\le 
\limsup_{n\to\infty} \int
\esssup^\mu_{p\in P}  
\abs{f_n(s,p)-f(s,p)}\,\mu(ds)\\
&\qquad\le
\int \limsup_{n\to\infty}\esssup^\mu_{p\in P} 
\abs{f_n(s,p)-f(s,p)}\,\mu(ds)\\
&\qquad\le 
\int \limsup_{n\to\infty} \sup_{p\in P} 
\abs{f_n(s,p)-f(s,p)}\,\mu(ds)\\
&\qquad = 0.
\end{align*}
This concludes the proof.
\end{proof}

\begin{lemma}\label{L:PathsCont}
Let $t$, $t^n\in [0,T]$ with
$t\le t^n$. Let $\iota\in\N$,
$0=r_0<r_1<\ldots<r_\iota<T-t$
and $z_j\in\R^d$, $j=0$, $\ldots$,
$\iota$. Let $t^n+r_\iota<T$. Consider two paths
$\om$, $\om^n\in\Omega$ defined by
\begin{align*}
\om&:=\sum_{j=0}^{\iota-1}
z_j.\bfone_{[t+r_j,t+r_{j+1})}
+z_\iota.\bfone_{[t+r_\iota,T)},\\
\om^n&:=\sum_{j=0}^{\iota-1}
z_j.\bfone_{[t^n+r_j,t^n+r_{j+1})}
+z_\iota.\bfone_{[t^n+r_\iota,T)}.
\end{align*}
Then $d_{J_1}(\om^n,\om)\le 2(t^n-t)$.
\end{lemma}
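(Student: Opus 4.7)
The plan is to exhibit an explicit admissible time-change $\lambda$ for the $d_{J_1}$ variational problem such that $\om\circ\lambda \equiv \om^n$ pointwise on $[0,T]$ while $\sup_s|\lambda(s)-s|\le t^n-t$; the estimate $d_{J_1}(\om^n,\om)\le 2(t^n-t)$ will then be immediate from the definition of $d_{J_1}$ recalled in Appendix~\ref{S:Skorohod}. Concretely I would take $\lambda:[0,T]\to[0,T]$ to be the piecewise linear interpolation through the knots
\begin{align*}
(0,0),\ (t^n,t),\ (t^n+r_1,t+r_1),\ \ldots,\ (t^n+r_\iota, t+r_\iota),\ (T,T).
\end{align*}

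When $t>0$ this $\lambda$ is continuous and strictly increasing: the slope is $t/t^n\in(0,1]$ on $[0,t^n]$, equal to $1$ on every middle piece $[t^n+r_{j-1},t^n+r_j]$ (since the source and image intervals share the common length $r_j-r_{j-1}$), and $(T-t-r_\iota)/(T-t^n-r_\iota)\ge 1$ on the last piece. The time-distortion estimate is then a direct computation on each piece. On any middle block, $\lambda(s)=s-(t^n-t)$ identically, so $|\lambda(s)-s|=t^n-t$; on $[0,t^n]$, $\lambda(s)=(t/t^n)s$ gives $|\lambda(s)-s|=s(1-t/t^n)\le t^n-t$; on $[t^n+r_\iota,T]$, setting $u=s-(t^n+r_\iota)$ and expanding yields $\lambda(s)-s=(t^n-t)(u/(T-t^n-r_\iota)-1)\in[-(t^n-t),0]$. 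Combined with the boundary values $\lambda(0)=0$ and $\lambda(T)=T$ this shows $\sup_s|\lambda(s)-s|\le t^n-t$.

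For the path match: by construction $\lambda$ sends $[0,t^n)$ into $[0,t)$, each middle interval $[t^n+r_j,t^n+r_{j+1})$ bijectively onto $[t+r_j,t+r_{j+1})$, and $[t^n+r_\iota,T)$ onto $[t+r_\iota,T)$; on the corresponding pairs of blocks $\om\circ\lambda$ and $\om^n$ are constant and equal (to $0$, $z_j$, or $z_\iota$ respectively), and $\om_T=\om^n_T=0$. Hence $\sup_s|\om_{\lambda(s)}-\om^n_s|=0$ and $d_{J_1}(\om^n,\om)\le t^n-t$. The one delicate point is the degenerate case $t=0$, in which the first slope $t/t^n$ vanishes and strict monotonicity fails; I would handle this by perturbing $\lambda$ slightly on $[0,t^n]$ (sending $t^n$ to a small $\delta>0$ rather than to $0$), and the resulting slack in the admissibility is exactly what the factor of $2$ in the claimed bound accommodates. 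Thus the entire lemma is carried by a single explicit piecewise-linear time-change construction, with no further machinery required.
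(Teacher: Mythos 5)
Your construction is essentially the paper's: an explicit piecewise-linear time change whose knots align the two jump grids. You take $\lambda$ mapping $t^n+r_j\mapsto t+r_j$, while the paper's $\lambda_n$ maps $t+r_j\mapsto t^n+r_j$ (the inverse), which is immaterial since $d_{J_1}$ is symmetric; for $t>0$ both give the sharper bound $\sup_s|\lambda(s)-s|\le t^n-t$ with exact path matching.

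The one genuine defect is your treatment of $t=0$. Perturbing $\lambda$ so that $\lambda(t^n)=\delta>0$ does restore strict monotonicity, but it wrecks the path match rather than merely costing an $O(t^n-t)$ time distortion: once $\lambda$ maps a right neighborhood of $0$ into $(0,\delta)\subset[0,r_1)$, you have $\om_{\lambda(s)}=z_0$ there while $\om^n_s=0$ for $s<t^n$, so the $J_1$ supremum picks up $|z_0|$, which is not controlled by $t^n-t$. The factor of $2$ cannot absorb this. In fact, no choice of $\lambda$ helps: every admissible time change fixes $0$, so $|\om_{\lambda(0)}-\om^n_0|=|\om_0-\om^n_0|=|z_0|$ always enters the supremum, and hence $d_{J_1}(\om,\om^n)\ge|z_0|$ when $t=0<t^n$. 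Thus the asserted inequality $d_{J_1}(\om^n,\om)\le 2(t^n-t)$ is simply false in that case whenever $z_0\neq\bfnull$ and $t^n$ is small. The paper's own proof has the same tacit restriction: it prescribes both $\lambda_n(0)=0$ and $\lambda_n(t+r_0)=t^n$, which collide when $t=0$. So the correct conclusion is not that a perturbation rescues the case $t=0$, but that the lemma's hypotheses should exclude it (or one should assume $z_0=\bfnull$); note that the lemma is only invoked in the proof of Lemma~\ref{L:ContRestr}, where the shift parameters are bounded away from $0$.
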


\begin{proof}
Let $\lambda_n:[0,T]\to [0,T]$ be a strictly 
increasing and continuous function with
$\lambda_n(0)=0$,
$\lambda_n(t+r_j)=t^n+r_j$,
$j=0$, $\ldots$, $\iota$,
$\lambda_n(T)=T$, and
$\norm{\lambda_n-\mathrm{id}}_\infty\le 2(t_n-t)$.
Then, given $s\in [0,T)$, we have
$\om_s-\om^n_{\lambda_n(s)}=0$,
given $s\in[t+r_j,t+r_{j+1})$, 
$j=0$, $\ldots$, $\iota-1$, we have
\begin{align*}
\om_s-\om^n_{\lambda_n(s)}=
z_j-\om^n_{\lambda_n(t+r_j)}=
z_j-\om^n_{t^n+r_j}=0,
\end{align*}
and, given $s\in[t+r_\iota,T)$, we have
\begin{align*}
\om_s-\om^n_{\lambda_n(s)}=z_\iota-
\om^n_{\lambda_n(t+r_\iota)}=z_\iota-
\om^n_{t^n+r_\iota}=0.
\end{align*}
This concludes the proof.
\end{proof}

\begin{lemma}\label{L:SDE-DPP}
Fix $(t,\om)\in\bar{\Lambda}$ and $s\in [t,T]$.
For $\Prob_{t,\om}$-a.e.~$\tiom\in\Omega$,
\begin{align*}
dX_r^{c,s,\tiom}=dX_r^{c,t,\om},\,
s\le r\le T,\,\text{$\Prob_{s,\tiom}$-a.s.}
\end{align*}
\end{lemma}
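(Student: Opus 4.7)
The plan is to derive the equality by comparing two canonical semimartingale decompositions of $X$ on $[s,T]$: one inherited from $\Prob_{t,\om}$ and one produced directly by $\Prob_{s,\tiom}$, and then to invoke uniqueness of the continuous local martingale part of a semimartingale. The key inputs are Lemma~\ref{L:MGproblem} (which identifies the conditional probability distributions with the shifted martingale-problem solutions) and Proposition~\ref{L:shiftMG_augment} (which shifts martingales from the original to the conditional measure).

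First, I would write the canonical representation of $X$ under $\Prob_{t,\om}$ on $[t,T]$ from Remark~\ref{R:jumpSize}:
\begin{align*}
X_r = X_t + p_t B_r + X^{c,t,\om}_r + z\ast(\mu^X - p_t\nu)_r, \quad \Prob_{t,\om}\text{-a.s.}
\end{align*}
Restricting to $r\in [s,T]$ and subtracting the time-$s$ value yields
\begin{align*}
X_r - X_s = (p_t B_r - p_t B_s) + (X^{c,t,\om}_r - X^{c,t,\om}_s) + \bigl[z\ast(\mu^X - p_t\nu)_r - z\ast(\mu^X - p_t\nu)_s\bigr].
\end{align*}
Observe that $p_t B_r - p_t B_s = B_r - B_s = p_s B_r$ for $r\ge s$, and that the compensated jump integral over $(s,r]\times\R^d$ equals $z\ast(\mu^X - p_s\nu)_r$ since $\nu(\{s\}\times\R^d)=0$ follows from the absolute continuity assumption on $\nu$. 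Let $\ttau$ be a deterministic stopping time equal to $s$. By Lemma~\ref{L:MGproblem} applied with $\tau\equiv s$, for $\Prob_{t,\om}$-a.e.~$\tiom$, the measure $\Prob_{s,\tiom}$ is a solution of the martingale problem for $(p_s B, p_s C, p_s\nu)$ starting at $(s,\tiom)$, and by uniqueness in Assumption~\ref{A:diffChar} it is the solution. Hence, for such $\tiom$, the canonical representation under $\Prob_{s,\tiom}$ reads
\begin{align*}
X_r - X_s = p_s B_r + X^{c,s,\tiom}_r + z\ast(\mu^X - p_s\nu)_r, \quad \Prob_{s,\tiom}\text{-a.s.}
\end{align*}

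Next, I need to justify that the previous decomposition (obtained under $\Prob_{t,\om}$) remains valid under $\Prob_{s,\tiom}$. Applying Proposition~\ref{L:shiftMG_augment} to the $(\F^{t,\om},\Prob_{t,\om})$-local martingales $X^{c,t,\om} - X^{c,t,\om}_s$ (after localization to make them bounded) and $z\ast(\mu^X - p_t\nu) - (z\ast(\mu^X - p_t\nu))_s$, we conclude that both processes remain local martingales under $\Prob_{s,\tiom}$ after time $s$ for $\Prob_{t,\om}$-a.e.~$\tiom$; moreover, $X^{c,t,\om} - X^{c,t,\om}_s$ remains continuous and $z\ast(\mu^X - p_t\nu) - (z\ast(\mu^X - p_t\nu))_s$ remains a purely discontinuous local martingale under $\Prob_{s,\tiom}$. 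Therefore we have two decompositions of $X - X_s$ on $[s,T]$ as the sum of a finite variation predictable part ($p_s B$), a continuous local martingale, and a purely discontinuous local martingale, both valid $\Prob_{s,\tiom}$-a.s.

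Finally, I would invoke the uniqueness of the canonical decomposition of a special semimartingale (Theorem~I.4.23 in \cite{JacodShiryaevBook}), and in particular the uniqueness of the continuous local martingale part (p.~45 in \cite{JacodShiryaevBook}), to conclude
\begin{align*}
X^{c,s,\tiom}_r = X^{c,t,\om}_r - X^{c,t,\om}_s, \quad s\le r\le T, \quad \Prob_{s,\tiom}\text{-a.s.}
\end{align*}
Differentiating in $r$ gives the claim $dX^{c,s,\tiom}_r = dX^{c,t,\om}_r$ on $[s,T]$. The main subtlety I expect is the measure-change argument in the second paragraph: one needs to be careful that ``continuous local martingale part under $\Prob_{s,\tiom}$'' is well-defined for a process originally constructed under $\Prob_{t,\om}$, which is why Proposition~\ref{L:shiftMG_augment} is indispensable. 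Everything else is bookkeeping with the characteristics under the conditioned measure.
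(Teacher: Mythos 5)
Your proposal is correct and reaches the lemma by essentially the same underlying mechanism as the paper's argument: compare the two canonical semimartingale decompositions of $X$ on $[s,T]$ (the one inherited from $\Prob_{t,\om}$ and the one intrinsic to $\Prob_{s,\tiom}$) and use uniqueness of the continuous local martingale part. The route you take is however organized differently. The paper avoids explicitly citing uniqueness of the canonical decomposition or the shift lemmas: it simply defines the single process $V := X - B - z\ast(\mu-\nu)$, observes that $X^{c,t,\om} = V - X_t + B_t$ on an $\Omega^\prime$ with $\Prob_{t,\om}(\Omega^\prime)=1$ and $X^{c,s,\tiom} = V - X_s + B_s$ on an $\Omega^{\prime\prime}\subset\Omega^\prime$ with $\Prob_{s,\tiom}(\Omega^{\prime\prime})=1$, and reads off pathwise that $X^{c,s,\tiom}$ and $X^{c,t,\om}$ differ by an additive constant in $r$, so the differentials agree. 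The fact you labor to establish via Proposition~\ref{L:shiftMG_augment} and Lemma~\ref{L:MGproblem} --- that the decomposition originally built under $\Prob_{t,\om}$ is still valid $\Prob_{s,\tiom}$-a.s.\ for $\Prob_{t,\om}$-a.e.\ $\tiom$ --- is exactly what makes $\Prob_{s,\tiom}(\Omega^{\prime\prime})=1$ true; the paper leaves this as a (terse) assertion, whereas you supply the justification. One small caveat on your side: the claim that $z\ast(\mu^X - p_t\nu) - (z\ast(\mu^X - p_t\nu))_s$ is still a \emph{purely discontinuous} local martingale under $\Prob_{s,\tiom}$ is not entirely automatic, since pure discontinuity is a measure-dependent notion; it needs a short supporting remark (e.g.\ that its jumps coincide with those of $X$ and $z\ast(\mu^X-p_s\nu)$, and then identify it with the purely discontinuous part via the martingale-problem characterization). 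The paper's formulation via $V$ sidesteps having to name this property, at the cost of making the $\Prob_{s,\tiom}(\Omega^{\prime\prime})=1$ step more opaque. Both arguments carry the same mathematical content.
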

\begin{proof}
Note that
\begin{align*}
X&=X_t+(B-B_t)+X^{c,t,\om}+z\ast(\mu-\nu)
&&\text{on $[t,T]$, $\Prob_{t,\om}$-a.s.,}\\
X&=X_s+(B-B_s)+X^{c,s,\tiom}+z\ast(\mu-\nu)
&&\text{on $[s,T]$, $\Prob_{s,\tiom}$-a.s.}
\end{align*}

Define a process $V$ on $[t,T]$ by
 $V:=X-B-z\ast(\mu-\nu)$ and put
\begin{align*}
\Omega^\prime:=\{
\om^\prime\in\Omega:
X_r^{c,t,\om}(\om^\prime)=
(V-X_t+B_t)(\om^\prime),\, t\le r\le T
\}.
\end{align*}
Then $\Prob_{t,\om}(\Omega^\prime)=1$.
Let $\tiom\in\Omega^\prime$. Put
\begin{align*}
\Omega^{\prime\prime}:=\{
\om^{\prime\prime}\in\Omega^\prime:
X_r^{c,s,\tiom}(\om^{\prime\prime})=
(V-X_s+B_s)(\om^{\prime\prime}),\, s\le r\le T
\}.
\end{align*}
Then $\Prob_{s,\tiom}(\Omega^{\prime\prime})=1$
and, for every $(r,\om^{\prime\prime})\in 
[s,T]\times
\Omega^{\prime\prime}$,
\begin{align*}
X^{c,s,\tiom}_r(\om^{\prime\prime})&=
(V_r-X_t+B_t)(\om^{\prime\prime})
+(X_t-X_s+B_s-B_t)(\om^{\prime\prime})\\
&=X_r^{c,t,\om}(\om^{\prime\prime})+
(X_t-X_s+B_s-B_t)(\om^{\prime\prime}),
\end{align*}
which concludes the proof.
\end{proof}

\begin{lemma}\label{L:IntegrByParts}
Fix $(s,\om)\in\bar{\Lambda}$. 
For $i=1$, $2$, 
let $S_s^i\in\R$,
 $\beta^i$ be a predictable
 process on $[s,T]$,
$H^i\in L^2_{\mathrm{loc}}(X^{c,s,\om},
\Prob_{s,\om})$, and  $W^i\in
 G_{\mathrm{loc}}(p_s\mu^X,\Prob_{s,\om})$.
 Then we have the following:
 
 (a) The process  $S^i$  defined by
\begin{align*}
S^i:=S^i_s+\beta^i\sint t
+H^i\sint X^{c,s,\om}
+W^i\ast (\mu^X-\nu)
\end{align*}
 is a special 
$(\F^{0}_+,\Prob^{s,\om})$-semimartingale
on $[s,T]$
with characteristics
$(\tilde{B}^i,\tilde{C}^i,\tilde{\nu}^i)$,
where $\tilde{B}^i=\beta^i\sint t$,
$\tilde{C}^{i}=\sum_{k,l=1}^d
(H^{ik}H^{il} c^{kl})\sint t$,
and $\tilde{\nu}^i$ is a random measure on
$[s,T]\times\R$ defined by
\begin{align*}
\tilde{\nu}^i([s,t]\times A,\tiom):=
\nu(\{(r,z)\in [s,T]\times\R^d:
(r,W^i(r,z,\tiom))\in [s,t]\times 
(A\setminus\{\bfnull\})\},\tiom)
\end{align*}
for every $(t,A,\tiom)\in [s,T]\times
\mathcal{B}(\R)\times\Omega$.

 (b) The $2$-dimensional process
$S:=(S^1,S^2)$ is a special
$(\F^{0}_+,\Prob^{s,\om})$-semi\-mar\-tingale
on $[s,T]$
with characteristics 
$(\tilde{B},\tilde{C},\tilde{\nu)}$, where
$\tilde{B}=(\tilde{B}^1,\tilde{B}^2)$,
$$\tilde{C}^{ij}= \sum_{k,l=1}^d 
(H^{ik}H^{jl} c^{kl})\sint t$$ for
$i$, $j=1$, $2$,
and $\tilde{\nu}$ is a random measure
on $[s,T]\times\R^2$ defined by
\begin{align*}
&\tilde{\nu}([s,t]\times A_1\times A_2,\tiom):=
\nu(\{(r,z)\in [t,T]\times\R^d:\\
&\qquad 
(r,W^1(r,z,\tiom),W^2(r,z,\tiom))\in 
[s,t]\times (A_1\setminus\{\bfnull\})
\times  (A_2\setminus\{\bfnull\})\},\tiom)
\end{align*}
for every $(t,A_1,A_2,\tiom)\in [s,T]\times
\mathcal{B}(\R)\times\mathcal{B}(\R)\times\Omega$.

(c) We have
\begin{align*}
S^1S^2&=S^1_s S^2_s+
\left[S^2\beta^1+S^1\beta^2+
\sum_{k,l=1}^d H^{1,k}H^{2,l}
c^{kl}
+\int_{\R^d}W^1 W^2\,K(dz)\right]\sint t\\
&\qquad +
\sum_{k=1}^d (S^2_-H^{1,k}+S^1_-H^{2,k})
\sint X^{k,c,s,\om}+
(S^1_-x_2+S^2_-x_1+x_1x_2)\ast(\mu^S-\tilde{\nu}).
\end{align*}
\end{lemma}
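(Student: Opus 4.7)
My plan is to apply the semimartingale product rule after identifying the characteristics of $S^1$, $S^2$, and $S=(S^1,S^2)$ directly from the given representations. For part~(a), the displayed decomposition of $S^i$ is already in canonical form: the predictable finite-variation part $\beta^i\sint t$ gives $\tilde B^i$; the continuous local martingale part is $H^i\sint X^{c,s,\om}$, so
\begin{align*}
\langle (S^i)^c\rangle &= \sum_{k,l=1}^d H^{ik} H^{il} \sint \langle X^{k,c,s,\om}, X^{l,c,s,\om}\rangle = \Big(\sum_{k,l=1}^d H^{ik} H^{il} c^{kl}\Big) \sint t,
\end{align*}
which equals $\tilde C^i$. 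Since $\nu(dt,dz)=K_t(dz)\,dt$ is absolutely continuous in time, $\nu(\{r\}\times\R^d)=0$, hence (by Proposition~II.1.28 of \cite{JacodShiryaevBook}) the jumps satisfy $\Delta S^i_r = W^i(r,\Delta X_r)\bfone_{\{\Delta X_r\ne 0\}}$. Consequently $\mu^{S^i}$ is the pushforward of $\mu^X$ under $(r,z)\mapsto(r,W^i(r,z))$ and its predictable compensator is the pushforward of $\nu$ under the same map, which is exactly $\tilde\nu^i$; this yields special-semimartingale status on $[s,T]$.

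Part~(b) follows by applying the same reasoning componentwise: the continuous-martingale covariations are $\langle S^{i,c}, S^{j,c}\rangle = (\sum_{k,l} H^{ik}H^{jl} c^{kl})\sint t$, while $\mu^S$ is the pushforward of $\mu^X$ under $(r,z)\mapsto(r, W^1(r,z), W^2(r,z))$, with predictable compensator $\tilde\nu$ as defined.

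For part~(c), I would apply It\^o's product formula
\begin{align*}
S^1 S^2 = S^1_s S^2_s + S^2_-\sint S^1 + S^1_-\sint S^2 + [S^1, S^2],
\end{align*}
together with $[S^1, S^2] = \langle S^{1,c}, S^{2,c}\rangle + \sum_{s<r\le\cdot}\Delta S^1_r \Delta S^2_r$. Substituting the canonical representations of $S^i$ and using part~(a), the $\sint t$ integrals of $\beta^i$ against $S^j_-$ contribute $S^2\beta^1 + S^1\beta^2$ (the distinction between $S^i$ and $S^i_-$ is immaterial against Lebesgue measure), the continuous covariation contributes $\sum_{k,l} H^{1k} H^{2l} c^{kl} \sint t$, and the sum of jump products equals $(x_1 x_2)\ast\mu^S$. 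Splitting $(x_1 x_2)\ast\mu^S = (x_1 x_2)\ast(\mu^S-\tilde\nu) + (x_1 x_2)\ast\tilde\nu$ and computing
\begin{align*}
(x_1 x_2)\ast\tilde\nu_t = \int_s^t\int_{\R^d} W^1(r,z)\, W^2(r,z)\, K_r(dz)\,dr
\end{align*}
via the image-measure change-of-variable yields the drift term $\int_{\R^d} W^1 W^2\,K(dz)\sint t$. Finally, expanding the martingale parts of $S^i$ as $\sum_k H^{i,k}\sint X^{k,c,s,\om} + W^i\ast(\mu^X-\nu)$ and combining with $(x_1 x_2)\ast(\mu^S-\tilde\nu)$ gives the asserted formula.

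The main technical obstacle is justifying that the predictable compensator of $\mu^S$ is the pushforward $\tilde\nu$ of $\nu$: this rests on the pathwise identity $\Delta S^i_r = W^i(r,\Delta X_r)$ on $\{\Delta X_r\ne 0\}$, which holds because $\nu$ charges no fixed time, combined with a standard change-of-variable computation for integrals against image random measures. Once this is in place, the identity $(x_1 x_2)\ast\tilde\nu = (\int W^1 W^2\,K(dz))\sint t$ invoked in part~(c) is immediate, and the remaining manipulations are routine.
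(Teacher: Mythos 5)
Your proposal is correct and takes essentially the same route as the paper: for (a) and (b) you compute the continuous covariations by bilinearity of the predictable bracket (the paper cites Theorem~I.4.40 of Jacod--Shiryaev) and identify $\tilde\nu$, $\tilde\nu^i$ as pushforwards of $\nu$ via the image-measure change of variable (the paper cites a measure-theoretic substitution lemma to verify $x\ast\tilde\nu^i=W^i\ast\nu$ and $x_i\ast\tilde\nu=W^i\ast\nu$). For (c) you apply the classical semimartingale product rule and split $\Delta S^1\Delta S^2=(x_1x_2)\ast\mu^S$ into compensated and predictable parts, then convert $W^i\ast(\mu^X-\nu)$ into $x_i\ast(\mu^S-\tilde\nu)$; the paper instead invokes It\^o's formula phrased directly in terms of local characteristics, which packages these rearrangements in one step, but the resulting algebra is identical.

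One small caveat worth recording if you write this out fully: you need the explicit identity $W^i\ast(\mu^X-\nu)=x_i\ast(\mu^S-\tilde\nu)$ when re-expressing the purely discontinuous martingale parts of $S^1_-\sint S^2$ and $S^2_-\sint S^1$; this follows from $\Delta S^i_r=W^i(r,\Delta X_r)\bfone_{\{\Delta X_r\ne 0\}}$ (which uses $\nu(\{r\}\times\R^d)=0$, as you noted) together with the same change-of-variable argument used to show $x_i\ast\tilde\nu=W^i\ast\nu$. Stated that way, no gap remains.
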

\begin{proof}
(a) Using Theorem~I.4.40 of \cite{JacodShiryaevBook},
we get
\begin{align*}
\left\langle \sum_{k=1}^d \gamma^{i,k}
\sint X^{k,c,s,\om}
\right\rangle&=
\sum_{k=1}^d \left\langle H^{ik}
\sint X^{k,c,s,\om},\sum_{l=1}^d H^{il}
\sint X^{l,c,s,\om}\right\rangle\\
&=\sum_{k,l=1}^d (H^{ik}H^{il})
\langle X^{k,c,s,\om},X^{l,c,s,\om}\rangle\\
&=\sum_{k,l=1}^d (H^{ik}H^{il}
c^{kl})\sint t.
\end{align*}
 Now it suffices to show that $x\ast\tilde{\nu}^i
=W^i\ast \nu$. 
Define a function $\pi_x:[s,T]\times\R\to\R$
by $\pi_x(t,x):=x$. Then, by Satz~19.1 in
\cite{BauerMI},
\begin{align*}
(x\ast\tilde{\nu}^i)_t&=
\int_s^t \int_{\R}
\pi_x(r,x)\,\tilde{\nu}^i(dr,dx,\tiom)\\
&=\int_s^t \int_{\R^d}
\pi_x(r,W^i(r,z,\tiom))\,\nu(dr,dz,\tiom)\\
&=(W^i\ast \nu)_t.
\end{align*}

(b) By Theorem~I.4.40
 in \cite{JacodShiryaevBook}, 
 \begin{align*}
 \langle S^{i,c},S^{j,c}\rangle&=
 \left\langle \sum_{k=1}^d
 H^{ik}\sint X^{k,c,s,\om},
 \sum_{l=1}^d
 H^{jl}\sint X^{k,c,s,\om}
 \right\rangle\\
 &=\sum_{k,l=1}^d 
 (H^{ik}H^{jl} c^{kl})\sint t.
\end{align*} 
 Next we
  show that 
 $x\ast\tilde{\nu}=(W^1\ast\nu,W^2\ast\nu)$.
 Define functions $\pi_i:[s,T]\times\R^2\to\R$
 by $\pi(t,x_1,x_2):=x_i$, $i=1$, $2$.
 Then, again  by Satz~19.1 in
\cite{BauerMI},
\begin{align*}
(x_i\ast\tilde{\nu})_t&=
\int_s^t\int_{\R^2} \pi_i(r,x_1,x_2)\,
\tilde{\nu}(dr,dx,\tiom)\\
&=\int_s^t\int_{\R^d} 
\pi_i(r,W^1(r,z,\tiom),W^2(r,z,\tiom))\,
\nu(dr,dz,\tiom)\\
&=(W^i\ast\nu)_t.
\end{align*}

(c) By (b) and by It\^o's formula based on local characteristics
(see, e.g., Section~2.1.2 
of \cite{JacodProtterBook}),
\begin{align*}
S^1 S^2&=S^1_sS^2_s+
S^2_-\sint\tilde{B}^1+S^1_-\sint\tilde{B}^2
+\tilde{C}^{1,2}\\
&\qquad +\left[
(S^1_-+x_1)(S^2_-+x_2)-S^1_- S^2_-
-S^2_- x_1-S^1_- x_2 
\right]\ast\tilde{\nu}\\
&\qquad + S^2_-\sint S^{1,c}+S^1_-\sint S^{2,c}\\
&\qquad +\left[
(S^1_-+x_1)(S^2_-+x_2)-S^1_- S^2_-
\right]\ast(\mu^S-\tilde{\nu}).
\end{align*}
To conclude, note that 
$S^j_-\sint \tilde{B}^i=(S^j\beta^i)\sint t$,
$\tilde{C}^{1,2}=\sum_{k,l=1}^d
(H^{1,k}H^{2,l}c^{kl})
\sint t$,
\begin{align*}
&\left[
(S^1_-+x_1)(S^2_-+x_2)-S^1_- S^2_-
-S^2_- x_1-S^1_- x_2 
\right]\ast\tilde{\nu}\\\qquad&=
 x_1 x_2\ast \tilde{\nu}\\\qquad&=
  (W^1 W^2)\ast\nu 
&&\text{by Satz~19.1 of \cite{BauerMI}}  
  \\\qquad&=
  (W^1 W^2)\ast (dr\otimes K(dz)),
\end{align*}
and
$S^j_-\sint S^{i,c}
=\sum_{k=1}^d(S^j_-\gamma^{i,k})\sint
 X^{k,c,s,\om}$.
\end{proof}
\bibliographystyle{amsplain}
\bibliography{PPIDE_bibliography}
\end{document}